\theoremstyle{definition}
\newtheorem{defi}{Definition}[section]
\newtheorem{ex}[defi]{Example}
\newtheorem{remark}[defi]{Remark}
\newtheorem*{convention}{Convention}
\theoremstyle{plain}
\newtheorem{theorem}[defi]{Theorem}
 \newtheorem{prop}[defi]{Proposition}
\newtheorem{lemma}[defi]{Lemma}
\newtheorem{cor}[defi]{Corollary}
\numberwithin{equation}{section}
\let\TagsLeftOn\tagsleft@true
\let\TagsLeftOff\tagsleft@false
\newcommand{\B}{\mathbb B}
\newcommand{\D}{\mathbb{D}}
\newcommand{\R}{\mathbb R}
\newcommand{\N}{\mathbb N}
\newcommand{\Z}{\mathbb Z}
\newcommand{\de}{\, \mathrm{d}}
\newcommand{\del}{\partial}
\newcommand{\Vol}{\operatorname{Vol}}
\newcommand{\CC}{\mathcal{C}}
\newcommand{\CH}{\mathcal{H}}
\newcommand{\CW}{\mathcal{W}}
\newcommand{\BB}{\mathbf B}
\newcommand{\BR}{\mathbf R}
\newcommand{\BI}{\mathbf I}
\newcommand{\rd}{\mathrm d}
\newcommand{\LLL}{\RL^1(\log \RL)^1}
\newcommand {\bxi}{\boldsymbol \xi}
\newcommand {\RL}{\mathrm L}
\newcommand{\RW}{\mathrm W}
\newcommand{\RC}{\mathrm C}
\newcommand{\normalstek}[1]{\bar \sigma_{#1}}
\newcommand{\normallap}[1]{\bar \lambda_{#1}}
\newcommand{\optimstek}[1]{\Sigma_{#1}}
\newcommand{\optimlap}[1]{\Lambda_{#1}}
\newcommand{\wk}{\rightharpoonup}
\newcommand{\wks}{\xrightharpoonup{*}}
\newcommand{\abs}[1]{\left\lvert #1 \right\rvert}
\newcommand{\set}[1]{\left\{ #1 \right\}}
\newcommand{\norm}[1]{\left\| #1 \right\|}
\newcommand{\bo}\boldsymbol{}
\newcommand{\bigo}[2][]{O_{#1}\left( #2 \right)}
\newcommand{\smallo}[2][]{o_{#1}\left( #2 \right)}
\newcommand{\bone}{\bo{1}}
\newcommand{\ci}{(\textbf{M1})\xspace}
\newcommand{\cii}{(\textbf{M2})\xspace}
\newcommand{\ciii}{(\textbf{EF1})\xspace}
\newcommand{\civ}{(\textbf{EF1})\xspace}
\newcommand{\cv}{(\textbf{EF2})\xspace}
\newcommand{\cvi}{(\textbf{M3})\xspace}
\newcommand{\csi}{(\textbf{M1})\xspace}
\newcommand{\csii}{(\textbf{M2})\xspace}
\newcommand{\csiii}{(\textbf{EF1})\xspace}
\newcommand{\csiv}{(\textbf{EF1})\xspace}
\newcommand{\csv}{(\textbf{EF2})\xspace}
\newcommand{\csvi}{(\textbf{M3})\xspace}
\DeclareMathOperator{\dist}{dist}
\DeclareMathOperator{\diam}{diam}
\DeclareMathOperator{\defect}{def}
\renewcommand{\le}{\leqslant}
\renewcommand{\ge}{\geqslant}
\renewcommand{\tilde}{\widetilde}
\newcommand{\eps}{\varepsilon}
\renewcommand{\phi}{\varphi}
\renewcommand{\S}{\mathbb S}
\DeclareMathOperator{\capa}{cap}
\DeclareMathOperator{\Area}{Area}
\renewcommand{\bar}[1]{\overline{#1}}
\renewcommand{\div}{\operatorname{div}}
\title[Continuity of eigenvalues]{Continuity of eigenvalues and
shape optimisation for \\ Laplace and Steklov problems}
\author{Alexandre Girouard}
\address{D\'epartement de math\'ematiques et de statistique, Pavillon Alexandre-Vachon, Universit\'e Laval, Qu\'ebec, QC, G1V 0A6, Canada}
\email{alexandre.girouard@mat.ulaval.ca}
\author{Mikhail Karpukhin}
\address{Mathematics 253-37, Caltech, Pasadena, CA 91125, USA.}
\email{mikhailk@caltech.edu}
\author{Jean Lagac\'e}
\address{Department of Mathematics, University College London, Gower Street, London, WC1E 6BT, United Kingdom}
\email{j.lagace@ucl.ac.uk}
\begin{document}
\begin{abstract}
We associate a sequence of variational eigenvalues to any Radon measure on a
compact Riemannian manifold. For particular choices of measures, we recover the
Laplace, Steklov and other classical eigenvalue problems. In the first part of
the paper we study the properties variational eigenvalues and establish a
general continuity result, which shows for a sequence of measures converging in
the dual of an appropriate Sobolev space, that the associated eigenvalues converge as
well. The second part of the paper is devoted to various applications to shape
optimization. The main theme is studying sharp isoperimetric inequalities for
Steklov eigenvalues without any assumption on the number of
  connected components of the boundary. In particular,  we solve the
  isoperimetric problem for each Steklov  eigenvalue of planar domains: the best
  upper bound for the $k$-th perimeter-normalized Steklov eigenvalue is $8\pi
  k$, which is the best upper bound for the $k^{\text{th}}$ area-normalised  eigenvalue
  of the Laplacian on the sphere. The proof involves realizing a weighted
  Neumann problem as a limit of Steklov problems on perforated domains. For
  $k=1$, the number of connected boundary components of a maximizing sequence
must tend to infinity, and we provide a quantitative lower bound on the number
of connected components. A surprising consequence of our analysis is that any
maximizing sequence of planar domains with fixed perimeter must collapse to a
point.

\end{abstract}

\maketitle
\section{Introduction}

For a compact, connected Riemannian manifold $(M,g)$ of dimension $d$, with or
without Lipschitz boundary $\del M$, the Laplace eigenvalue problem consists in
determining all $\lambda \in \R$ for which the following eigenvalue problem
admits a nontrivial solution:
\begin{equation}
  \begin{cases}
    - \Delta_g u = \lambda u & \text{in } M, \\
    \del_n u = 0 & \text{on } \del M, \text{ when } \del M \ne \varnothing,
  \end{cases}
\end{equation}
where $\del_n u$ is the outward normal derivative of $u$. Similarly, when $\del
M \ne \varnothing$ the Steklov problem consists in determining all $\sigma \in
\R$ such that the following boundary value problem admits a nontrivial
solution:
\begin{equation}
  \begin{cases}
    \Delta_g u = 0 & \text{in } M, \\
    \del_n u = \sigma u & \text{on } \del M.
  \end{cases}
\end{equation}
The eigenvalues of these problems form nondecreasing sequences
\begin{equation}
  \begin{aligned}
    0 &= \lambda_0(M,g) < \lambda_1(M,g) \le \lambda_2(M,g) \le \dotso \nearrow
    \infty, \\
    0 &= \sigma_0(M,g) < \sigma_1(M,g) \le \sigma_2(M,g) \le \dotso \nearrow
    \infty,
  \end{aligned}
\end{equation}
where each eigenvalue is repeated according to multiplicity. For each $k \in
\N$, we investigate sharp upper bounds for $\lambda_k(M,g)$ and $\sigma_k(M,g)$.
To that end, we define the scale invariant quantities
\begin{equation}
  \normallap{k}(M,g) := \lambda_k(M,g) \Vol_g(M)^{\frac 2 d } \qquad \text{and}
  \qquad  \normalstek{k}(M,g) := \sigma_k(M,g) \frac{\CH^{d-1}(\del
  M)}{\Vol_g(M)^{\frac{d-2}{d}}}.
\end{equation}
Here, $\Vol_g(M)$ is the Riemannian volume of $M$ and $\CH^{d-1}_g$ is the
associated $(d-1)$-dimensional Hausdorff measure. These normalisations are
natural for both problems, see e.g. \cite{GNY2004,ceg2,fraschoen, GL}.

In the present paper we study the relation between
$\normalstek{k}(M,g)$ and
$\normallap{k}(M,g)$. In order to do so, we introduce the unifying framework of
\emph{variational eigenvalues associated with a Radon measure}. Given a Radon
measure $\mu$ on $M$, we define
\begin{equation}
  \label{eq:varifirst}
  \lambda_k(M,g,\mu) := \inf_{F_{k+1}} \, \, \, \sup_{f \in F_{k+1} \setminus
  \set 0} \frac{\int_M \abs{\nabla_g f}_g^2 \de v_g}{\int_M f^2 \de \mu},
\end{equation}
where $F_{k+1}$ is a $(k+1)$-dimensional subspace of $\RC^\infty(M) \cap
\RL^2(M,\mu)$.
To the best of our knowledge, variational eigenvalues for Radon measures were
first defined to describe Laplacians on fractal sets, see e.g. the survey of
Triebel \cite{triebelfractal}. In the context of spectral bounds and shape
optimisation,
they first appeared in the work of Kokarev~\cite{kok} as a relaxation of the
optimisation constaint. One should also see the
influencial work of Korevaar~\cite{kvr} and especially of
Grigor'yan--Netrusov--Yau~\cite{GNY2004} where the spectrum of energy forms is
investigated.
The variational eigenvalues admit a natural normalisation
\begin{equation}
  \normallap{k}(M,g,\mu) := \lambda_k(M,g,\mu)
  \frac{\mu(M)}{\Vol_g(M,g)^{\frac{d-2}{d}}}.
\end{equation}
One of the main interest of introducing these variational eigenvalues is that
they unify the presentation of several eigenvalue problems. For instance, for
$\mu=\rd v_g$ the volume measure associated to metric $g$, $\normallap{k}(M,g,\mu)=\normallap{k}(M,g)$, while for $\mu=\iota_* \rd
  A_g$, the pushforward by inclusion of the boundary measure, $\normallap{k}(M,g,\mu)=\normalstek{k}(M,g)$.

We present results of two types. On one hand, we study variational eigenvalues
on their own. In Section~\ref{section:variationaleigenvalues} we establish the
necessary functional analysis preliminaries. We define $p$-admissible measures,
which are essentially measures that  can be viewed as elements of the dual
space $(\RW^{1,p}(M))^*$ with certain compactness properties, and give various
examples of $p$-admissible measures. Section~\ref{variationalev:sec} is
concerned with the properties of variational eigenvalues. For example, we show
that the eigenvalues associated with a $2$-admissible measure form a discrete unbounded sequence
accumulating only at $+\infty$. The main result of this section is
Proposition~\ref{prop:weakenough}, which states that convergence of measures in
$(\RW^{1,p}(M))^*$ for appropriate values of $p$ implies convergence of the
eigenvalues.

On the other hand, we apply this continuity result
to obtain the aforementioned relations between $\normalstek k$ and $\normallap k$.
We start with isoperimetric inequalities when $d=2$, where the form of the results are
cleaner. We also obtain quantitative bounds for the first non-trivial eigenvalue
in terms of the number of boundary components and describe applications to the
free boundary minimal surfaces. Convergence results and isoperimetry for an arbitrary $d\ge 2$ are
formulated in Theorem~\ref{thm:homointro} below. We finish the introduction by
stating some results in spectral flexibility which follow from our results on
approximations.

\subsection{Optimal isoperimetric inequalities for surfaces}

Maximisation of Steklov eigenvalues normalised by perimeter goes back to the work of
Weinstock~\cite{wein}. He proved that for simply-connected planar
domains, $\normalstek{1}(\Omega)\leq 2\pi$, with equality if and only if $\Omega$ is a disk. 
This was followed by works of Hersch--Payne--Schiffer~\cite{hps}, then later
Girouard--Polterovich~\cite{gpsurface} and Karpukhin~\cite{Karpukhin2017}  who proved that
\begin{equation}\label{ineq:hps}
  \normalstek{k}(M)\leq 2\pi (k+\gamma+b-1),
\end{equation}
this time for compact surfaces $M$ of genus $\gamma$ with $b$ connected boundary components.
It follows from Girouard--Polterovich~\cite{GPHPS} that for $\gamma=0$ and $b=1$, this bound is saturated by a sequence of simply-connected domains $\Omega^\eps\subset\R^2$ that degenerates to a union of $k$
identical disks as $\eps\to 0$.
Bounds for $\normalstek{k}$ which do not depend on the number of boundary components
are notably more elusive.
For $M$ a compact orientable surface of genus $\gamma$ with boundary, it was proved by Kokarev~\cite{kok} that
\begin{equation}\label{eq:kokarevbound}
\normalstek{1}(M)\le 8\pi(\gamma+1).
\end{equation}

The bound~\eqref{eq:kokarevbound} was later generalized in~\cite{KS} in the
following way. The \emph{conformal eigenvalues} of a compact Riemannian manifold $(M,g)$ are defined as
$$
\optimlap{k}(M,[g]) = \sup_{h\in [g]}\normallap{k}(M,h).
$$
By the work of Korevaar~\cite{kvr}  $\optimlap{k}(M,[g])<+\infty$.
See also Hassannezhad~\cite{hassannezhad} and  Colbois--El Soufi~\cite{colboisElSoufi}.

\begin{theorem}[Karpukhin--Stern~\cite{KS}]
  \label{thm:KarpukhinStern}
  Given any closed Riemannian surface $(M,g)$ and any Lipschitz domain $\Omega\subset
M$, one has
\begin{equation}
\label{ineq:KS}
\normalstek{1}(\Omega,g)<\optimlap{1}(M,[g])\qquad
\text{and}\qquad
\normalstek{2}(\Omega,g)<\optimlap{2}(M,[g]).
\end{equation}
\end{theorem}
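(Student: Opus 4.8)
The plan is to deduce both inequalities from the continuity of variational eigenvalues (Proposition~\ref{prop:weakenough}) together with the conformal invariance of the Dirichlet energy in dimension two, and then to treat the strict inequality by a separate argument; this last step is where the restriction $k\in\{1,2\}$ enters and is the part I expect to be genuinely hard.

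First I would pass from the Steklov problem on $\Omega$ to a variational eigenvalue on $M$. Write $\iota\colon\del\Omega\hookrightarrow M$ for the inclusion of the Lipschitz boundary curve and let $\mu_\Omega$ be the pushforward under $\iota$ of the $1$-dimensional Hausdorff measure on $\del\Omega$, a Radon measure on $M$. Since $\del\Omega$ is a Lipschitz hypersurface in a surface, the trace map $\RW^{1,2}(M)\to\RL^2(\del\Omega)$ is compact, so $\mu_\Omega$ is $2$-admissible. For every $(k+1)$-dimensional subspace $F\subset\RC^\infty(M)$ on which $f\mapsto\int_{\del\Omega}f^2\,\rd A$ is positive definite, restriction to $\Omega$ yields a $(k+1)$-dimensional subspace of $\RW^{1,2}(\Omega)$ with the same boundary traces and with $\int_\Omega\abs{\nabla_g f}_g^2\,\rd v_g\le\int_M\abs{\nabla_g f}_g^2\,\rd v_g$. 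Inserting this into the min-max characterisation of $\sigma_k(\Omega,g)$ and taking the infimum over $F$ gives $\sigma_k(\Omega,g)\le\lambda_k(M,g,\mu_\Omega)$, and multiplying by $\CH^1(\del\Omega)=\mu_\Omega(M)$ (which is the perimeter normalisation when $d=2$) yields
\[
\normalstek{k}(\Omega,g)\le\normallap{k}(M,g,\mu_\Omega).
\]

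Next I would realise $\mu_\Omega$ as a limit of smooth conformal volume measures. Let $\phi_\eps\colon M\to(0,\infty)$ be a smooth positive function approximating $\tfrac1{2\eps}$ times the indicator of $\{\dist_g(\cdot,\del\Omega)<\eps\}$, normalised so that $\int_M\phi_\eps\,\rd v_g=\CH^1(\del\Omega)$, and put $g_\eps:=\phi_\eps\,g\in[g]$. The coarea formula gives $\phi_\eps\,\rd v_g\wks\mu_\Omega$, and a uniform trace estimate on the shrinking neighbourhoods of $\del\Omega$ (uniform $2$-admissibility, cf.\ Section~\ref{section:variationaleigenvalues}) promotes this to convergence in $(\RW^{1,2}(M))^*$. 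Because $\dm M=2$, the Dirichlet energy is conformally invariant, so $\lambda_k(M,g_\eps)=\lambda_k(M,g,\phi_\eps\,\rd v_g)$ while $\Vol_{g_\eps}(M)=\CH^1(\del\Omega)$, whence $\normallap{k}(M,g_\eps)=\normallap{k}(M,g,\phi_\eps\,\rd v_g)$. Proposition~\ref{prop:weakenough} then gives
\[
\normalstek{k}(\Omega,g)\le\normallap{k}(M,g,\mu_\Omega)=\lim_{\eps\to0}\normallap{k}(M,g_\eps)\le\optimlap{k}(M,[g]),
\]
the final inequality holding term by term because each $g_\eps\in[g]$. This already proves the nonstrict inequalities, for all $k$.

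The hard part will be upgrading these to strict inequalities when $k\in\{1,2\}$. Here I would argue by contradiction: if $\normalstek{k}(\Omega,g)=\optimlap{k}(M,[g])$, then by the previous step $\mu_\Omega$ attains the supremum defining the conformal eigenvalue $\optimlap{k}(M,[g])$. I would then appeal to the regularity theory for maximisers of $\optimlap{1}$ and $\optimlap{2}$ (Petrides, Nadirashvili--Sire, Karpukhin et al.): any $2$-admissible measure attaining $\optimlap{k}(M,[g])$ for $k\in\{1,2\}$ is absolutely continuous with respect to $\rd v_g$, with density the energy density of a harmonic map to a round sphere. Since $\mu_\Omega$ is carried by the $1$-dimensional set $\del\Omega$ it cannot be of this form, a contradiction; hence the inequalities are strict. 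I expect this final step to be the main obstacle: the regularity input just used is available for $\optimlap{1}$ and $\optimlap{2}$ but not, in the form required, for $\optimlap{k}$ with $k\ge3$ (a maximising sequence may lose eigenvalue into bubbles), which is exactly why the theorem is stated only for $k\in\{1,2\}$.
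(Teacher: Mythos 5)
Your argument for the non-strict inequality is essentially the proof the paper gives of its own Theorem~\ref{thm:sigmaklambdak}: compare the Steklov Rayleigh quotient on $\Omega$ with the variational eigenvalue of the boundary measure on $M$, approximate that measure by conformal densities concentrating near $\del\Omega$, and invoke conformal invariance together with the continuity result. Two technical remarks there: Proposition~\ref{prop:weakenough} in dimension $2$ requires convergence of the measures in $\RW^{1,2,-1/2}(M)^*$ (equivalently, it suffices to have convergence in $\RW^{1,p}(M)^*$ for some $p<2$, which is what Theorem~\ref{Steklov_approx:thm} actually establishes via a $\RW^{1,1}$-estimate); convergence in $(\RW^{1,2}(M))^*$ alone, as you state it, is strictly weaker and does not feed into the criterion. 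Also the paper's version assumes $\del\Omega$ smooth, so your Lipschitz setting would need an extra approximation step. These are repairable.

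The genuine gap is the strictness step, and it is worth stressing that the paper does not prove Theorem~\ref{thm:KarpukhinStern} at all: it is quoted from~\cite{KS}, and the paper explicitly remarks that its own method ``does not imply the strict inequality.'' Your contradiction argument assumes that if $\normalstek{k}(\Omega,g)=\optimlap{k}(M,[g])$ then one can apply a regularity theorem for maximisers of the conformal eigenvalue. But $\optimlap{k}(M,[g])$ is a supremum over smooth metrics and need not be attained: for instance $\optimlap{2}(\S^2,[g_0])=16\pi$ is not attained, and $\optimlap{1}(M,[g])$ can equal the non-attained value $8\pi$. In those cases there is no smooth maximiser whose regularity you can quote, and what you actually need is the statement that a \emph{singular} admissible measure such as $\iota_*\CH^1{}_{\lfloor}\del\Omega$ cannot achieve the supremum value $\optimlap{k}(M,[g])$ within the class of admissible measures. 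That statement is not an off-the-shelf consequence of the existence/regularity theory of Petrides or Nadirashvili--Sire; it is essentially the content of the Karpukhin--Stern theorem itself, and their proof proceeds by a different route (a min-max construction of sphere-valued harmonic maps whose energies realise $\optimlap{1}$ and $\optimlap{2}$, followed by a direct comparison of the Steklov Rayleigh quotient with those energies), not by a dichotomy ``maximiser exists and is regular.'' So as written, the strict inequalities --- which are the whole point of the cited theorem beyond what Theorem~\ref{thm:sigmaklambdak} already gives --- remain unproved.
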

To obtain~\eqref{eq:kokarevbound} from~\eqref{ineq:KS} using the bound
$\normallap{1}(M,g) \le 8\pi (\gamma +1)$ from Yang--Yau.

The first result of the present paper is a non-strict version of~\eqref{ineq:KS} valid for all values of $k$.

  \begin{theorem}
\label{thm:sigmaklambdak}
Let $(M,g)$ be a compact Riemannian surface and let $\,\Omega\subset M$ be a smooth domain such that $\del\Omega\cap\del M$ is either empty or equal to $\del M$. Then  one has
\begin{equation}
\label{ineq:sigmaklambdak}
\normalstek{k}(\Omega,g)\le\optimlap{k}(M,[g]).
\end{equation}
This inequality is sharp for each $k$.
\end{theorem}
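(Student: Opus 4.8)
The plan is to realise the perimeter-normalised Steklov eigenvalue of $\Omega$ as a variational eigenvalue on $M$ of the singular measure carried by $\del\Omega$, to transfer this to the ambient manifold by a monotonicity inequality, and then to exhibit it as a limit of normalised Neumann eigenvalues of smooth conformal metrics. Write $\nu:=\iota_*\rd A_g$ for the pushforward to $M$ of the arc-length measure of $\del\Omega$; since $\del\Omega$ is smooth, $\nu$ is a $p$-admissible measure on $M$ for a suitable $p$ (a tubular-neighbourhood chart together with the trace inequality puts $\nu\in(\RW^{1,p}(M))^*$ with the required compactness; cf.\ the examples of Section~\ref{section:variationaleigenvalues}). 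Since $d=2$ makes the factor $\Vol_g(\,\cdot\,)^{(d-2)/d}$ trivial, the identification of Steklov measures (Section~\ref{variationalev:sec}) gives $\normalstek{k}(\Omega,g)=\lambda_k(\Omega,g,\nu)\,\nu(M)=\normallap{k}(\Omega,g,\nu)$, so~\eqref{ineq:sigmaklambdak} reduces to the chain
\begin{equation}
  \normallap{k}(\Omega,g,\nu)\ \le\ \normallap{k}(M,g,\nu)\ \le\ \optimlap{k}(M,[g]).
\end{equation}

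For the first inequality it suffices, since $\nu(M)=\CH^1(\del\Omega)$ in both problems, to show $\lambda_k(\Omega,g,\nu)\le\lambda_k(M,g,\nu)$. Only $(k+1)$-dimensional $F\subset\RC^\infty(M)$ on which the Rayleigh quotient is finite contribute to the infimum defining $\lambda_k(M,g,\nu)$; for such $F$, no nonzero $f\in F$ vanishes on $\del\Omega$ (otherwise, since $M$ is connected, such an $f$ would be non-constant and satisfy $\int_{\del\Omega}f^2\de\nu=0$, making the Rayleigh quotient of $F$ infinite), hence restriction to $\bar\Omega$ is injective on $F$. The image of $F$ is then a $(k+1)$-dimensional admissible test space for the Steklov problem on $\Omega$, and $\int_\Omega\abs{\nabla_g f}_g^2\de v_g\le\int_M\abs{\nabla_g f}_g^2\de v_g$ while $\int_{\del\Omega}f^2\de\nu$ is unchanged. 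Taking the supremum over this image and then the infimum over $F$ yields $\lambda_k(\Omega,g,\nu)\le\lambda_k(M,g,\nu)$. This argument does not distinguish $M$ closed from $M$ with boundary, nor the two cases for $\del\Omega\cap\del M$.

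For the second inequality I would approximate $\nu$ by smooth densities. Let $\rho_j\in\RC^\infty(M)$ be positive, obtained by mollifying the suitably normalised indicator of the $1/j$-tubular neighbourhood of $\del\Omega$. Using a tubular-neighbourhood chart, the coarea formula, and a one-dimensional Poincaré inequality on the normal slices, one checks that $\rho_j\,\rd v_g\to\nu$ in $(\RW^{1,p}(M))^*$ at rate $\bigo{j^{1/p-1}}$; Proposition~\ref{prop:weakenough} then yields $\lambda_k(M,g,\rho_j\rd v_g)\to\lambda_k(M,g,\nu)$, and since constants lie in $\RW^{1,p}(M)$ we also have $(\rho_j\rd v_g)(M)\to\nu(M)$, so $\normallap{k}(M,g,\rho_j\rd v_g)\to\normallap{k}(M,g,\nu)$. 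On a surface the Dirichlet energy is conformally invariant and $\rho_j\,\rd v_g=\rd v_{h_j}$ for the conformal metric $h_j:=\rho_j\,g\in[g]$, whence $\normallap{k}(M,g,\rho_j\rd v_g)=\normallap{k}(M,h_j)\le\optimlap{k}(M,[g])$; letting $j\to\infty$ completes the proof of~\eqref{ineq:sigmaklambdak}.

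It remains to show the bound is sharp for each $k$. Take $M=\S^2$ with the round metric $g_0$, for which $\optimlap{k}(\S^2,[g_0])=8\pi k$ (Hersch for $k=1$, Karpukhin in general). The perforated planar domains constructed later in the paper --- which realise the relevant weighted Neumann problem on the sphere as a limit of Steklov problems --- collapse to a point; placing rescaled copies of them in a small coordinate ball of $\S^2$, where $g_0$ is $(1+o(1))$ times the Euclidean metric, produces domains $\Omega_j\subset\S^2$ with $\normalstek{k}(\Omega_j,g_0)\to 8\pi k=\optimlap{k}(\S^2,[g_0])$, so~\eqref{ineq:sigmaklambdak} cannot be improved. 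Both analytic steps above are soft once Proposition~\ref{prop:weakenough} is available; the genuine difficulty lies in the sharpness, that is, in the perforated-domain construction, where the delicate point is to control the Steklov spectrum of a domain with many shrinking holes uniformly enough to recover the full weighted Neumann spectrum in the limit.
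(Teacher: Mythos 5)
Your proof of the inequality \eqref{ineq:sigmaklambdak} follows essentially the paper's route. Writing $\nu=\iota_*\rd A_g$ for the boundary measure of $\del\Omega$, the restriction-of-test-spaces comparison $\normalstek{k}(\Omega,g)\le\normallap{k}(M,g,\nu)$ is exactly the paper's ``comparison of Rayleigh quotients'', and your second step is the paper's combination of Theorem~\ref{Steklov_approx:thm} (densities $\rho_\eps$ concentrating in a tubular neighbourhood of $\del\Omega$ converge to $\nu$ in $\RW^{1,p}(M)^*$) with Proposition~\ref{L_infty_approx:prop} (smoothing of the density), followed by Proposition~\ref{prop:weakenough} and two-dimensional conformal invariance $\lambda_k(M,g,\rho\rd v_g)=\lambda_k(M,\rho g)$ for smooth $\rho>0$. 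Your one-step mollified-indicator construction merges these two approximations; that is fine, provided you carry out the sketched $(\RW^{1,p})^*$ estimate (the paper's inequality \eqref{W11:estimate}), treat the case $\del\Omega=\del M$ with a one-sided collar as in the paper's $\Sigma_b$ construction, and make $\rho_j$ strictly positive (e.g.\ add a vanishing constant) before declaring $h_j=\rho_j g$ a metric; the claimed rate $\bigo{j^{1/p-1}}$ is not needed.

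The genuine gap is in the sharpness claim. In the paper, ``sharp for each $k$'' means sharp for the given $(M,[g])$: as the Corollary $\optimstek{k}(M,g)=\optimlap{k}(M,[g])$ and the discussion following Theorem~\ref{GL:thm} make explicit, one takes conformal metrics $g_n=e^{f_n}g$ on the \emph{same} surface with $\normallap{k}(M,g_n)\to\optimlap{k}(M,[g])$ and then applies the homogenisation result on Riemannian surfaces (Theorem~\ref{GL:thm} for closed $M$, Theorem~\ref{manifoldhomo:thm} when $\del M\neq\varnothing$) to produce domains $\Omega^\eps\subset M$ with $\normalstek{k}(\Omega^\eps,g_n)\to\normallap{k}(M,g_n)$. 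Your argument only treats $M=\S^2$ with the round conformal class, where the bound is $8\pi k$; it says nothing about surfaces with $\optimlap{k}(M,[g])>8\pi k$ (higher genus, $\RP^2$, nontrivial conformal classes), so it establishes that the inequality cannot be improved as a universal statement but not the paper's assertion that it is saturated within each conformal class. Even for the sphere, transplanting the planar perforated domains of Theorem~\ref{thm:mainSigmak} into a small geodesic ball requires a (routine but unstated) stability argument comparing Steklov eigenvalues for the nearly-Euclidean and flat metrics. To close the gap, replace this by the paper's argument: apply Theorem~\ref{GL:thm} or Theorem~\ref{manifoldhomo:thm} along a maximising sequence of conformal metrics on $M$ itself.
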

To prove Theorem~\ref{thm:sigmaklambdak} we construct a sequence of conformal
metrics $e^{f_n}g$ such that the metrics $\mu_n=e^{f_n}$ concentrate near the
boundary $\partial\Omega$. Those metrics are reminescent of
the construction of Lamberti--Provenzano \cite{LambPro} and
Arrieta--Jim\'enez-Casas--Rodr\'iguez-Bernal \cite{Arrieta2008}. We use our
continuity results to show directly that the
spectrum converges.
\begin{remark}
 The corresponding inequality with $\optimlap{k}(M,g)$ replaced with
  $\normallap{k}(M,g)$ is not true in general. This will be made explicit in
  Theorem \ref{thm:compsteklovneumann}. 

The proof of Theorem~\ref{thm:sigmaklambdak} is very different from that of Theorem~\ref{thm:KarpukhinStern}. It is simpler, works for all values of $k$ as well
as for surfaces with boundary, but
does not imply the strict inequality. 
\end{remark}

Sharpness of inequality~\eqref{ineq:sigmaklambdak} follows from the next theorem
for closed surfaces.
\begin{theorem}[Girouard--Lagac\'e~\cite{GL}]
 \label{GL:thm}
 For any closed Riemannian surface $(M,g)$ and any $k\ge 0$ there exists a sequence of domains $\Omega^\eps\subset M$ such that 
 $$
 \normalstek{k}(\Omega^\eps,g)\xrightarrow{\,\eps\to0\,}\normallap{k}(M,g).
 $$
\end{theorem}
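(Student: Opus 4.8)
I expect the natural route to be via \emph{perforated domains}. The plan is to realise $\normallap{k}(M,g)$ as a limit of normalised Steklov eigenvalues of copies of $M$ from which one has removed a fine, asymptotically equidistributed family of tiny geodesic disks, arranged so that the normalised length measure of the resulting boundary converges, in the dual of a suitable Sobolev space, to the normalised Riemannian measure; the conclusion then follows directly from Proposition~\ref{prop:weakenough}. The bookkeeping behind this is: for a smooth domain $\Omega\subset M$ in a surface, writing $\mu_{\del\Omega}$ for the push-forward to $M$ of the length measure of $\del\Omega$, the variational eigenvalues of $\mu_{\del\Omega}$ are the Steklov eigenvalues of $\Omega$, so — using $d=2$, where the volume-normalisation exponent vanishes, together with the scaling $\lambda_k(\,\cdot\,,c\mu)=c^{-1}\lambda_k(\,\cdot\,,\mu)$ — one has $\normalstek{k}(\Omega,g)=\lambda_k\big(\Omega,g,\mu_{\del\Omega}/\CH^1(\del\Omega)\big)$, whereas $\normallap{k}(M,g)=\Vol_g(M)\lambda_k(M,g)=\lambda_k\big(M,g,\rd v_g/\Vol_g(M)\big)$. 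It therefore suffices to build domains $\Omega^\eps\subset M$ whose normalised boundary probability measures $\mu^\eps$ satisfy $\lambda_k(\Omega^\eps,g,\mu^\eps)\to\lambda_k\big(M,g,\rd v_g/\Vol_g(M)\big)$.

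\emph{Step 1: perforation and convergence of the measures.} For small $\eps>0$ fix an $\eps$-separated, $\eps$-dense family $\{x_i\}_{i=1}^{N_\eps}\subset M$ chosen so that $N_\eps\asymp\eps^{-2}$ and $\big|\,N_\eps^{-1}\sum_i\phi(x_i)-\Vol_g(M)^{-1}\!\int_M\phi\,\rd v_g\,\big|\le C\eps^{\alpha}\|\phi\|_{\RC^{0,\alpha}(M)}$ for every $\alpha\in(0,1)$ (such families exist on any compact Riemannian manifold, e.g.\ as the centres of a decomposition of $M$ into $N_\eps$ cells of equal volume and diameter $O(\eps)$). Fix $r_\eps=o(\eps)$, say $r_\eps=\eps^2$, and set $\Omega^\eps:=M\setminus\bigcup_i\overline{B(x_i,r_\eps)}$ (closed geodesic balls), so that for small $\eps$ the boundary disks are pairwise disjoint and each lies inside a geodesic ball $B(x_i,\eps/2)$ free of other net points. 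Now fix $p>2$ and $\alpha=1-2/p$: then $\RW^{1,p}(M)\hookrightarrow\RC^{0,\alpha}(M)$ compactly, so every Radon probability measure on $M$ is $p$-admissible with a bound depending only on $(M,g,p)$, and in particular $\mu^\eps$ and $\rd v_g/\Vol_g(M)$ are uniformly (in $\eps$) $p$-admissible. Since $\del B(x_i,r_\eps)$ has length $2\pi r_\eps(1+O(\eps^2))$ and $\phi\in\RW^{1,p}(M)$ restricted to it equals $\phi(x_i)+O(r_\eps^{\alpha}\|\phi\|_{\RC^{0,\alpha}})$, summing over $i$ and using the equidistribution estimate gives $\big|\int_M\phi\,\rd\mu^\eps-\Vol_g(M)^{-1}\!\int_M\phi\,\rd v_g\big|\le C\eps^{\alpha}\|\phi\|_{\RW^{1,p}(M)}$. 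Hence $\mu^\eps\to\rd v_g/\Vol_g(M)$ in $(\RW^{1,p}(M))^{*}$, and Proposition~\ref{prop:weakenough} yields $\lambda_k(M,g,\mu^\eps)\to\lambda_k\big(M,g,\rd v_g/\Vol_g(M)\big)=\normallap{k}(M,g)$.

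\emph{Step 2: from $M$ to $\Omega^\eps$.} Because $\mu^\eps$ is carried by $\del\Omega^\eps\subset\overline{\Omega^\eps}$, restricting $M$-test functions to $\Omega^\eps$ gives $\Lambda_\eps:=\lambda_k(\Omega^\eps,g,\mu^\eps)\le\lambda_k(M,g,\mu^\eps)$, which is bounded by Step 1. For the reverse inequality, let $u_0,\dots,u_k$ be the first variational (Steklov) eigenfunctions of $(\Omega^\eps,g,\mu^\eps)$ — smooth on $\overline{\Omega^\eps}$ since $\mu^\eps$ is proportional to the length measure of the smooth curve $\del\Omega^\eps$ — so that they are $\mu^\eps$-orthonormal and $\int_{\Omega^\eps}\abs{\nabla_g u_j}_g^2=\lambda_j(\Omega^\eps,g,\mu^\eps)\le\Lambda_\eps$; set $F:=\spn\{u_0,\dots,u_k\}$. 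Averaging the co-area identity over $\rho\in[r_\eps,\eps/2]$ with weight $\rd\rho/\rho$, applied in the annulus $B(x_i,\eps/2)\setminus\overline{B(x_i,r_\eps)}\subset\Omega^\eps$ to $\sum_j\abs{\nabla_g u_j}_g^2$, produces a radius $\rho_i$ with $\rho_i\int_{\del B(x_i,\rho_i)}\sum_j\abs{\nabla_g u_j}_g^2\,\rd s\le\big(\log\tfrac{\eps}{2r_\eps}\big)^{-1}\!\int_{B(x_i,\eps/2)}\sum_j\abs{\nabla_g u_j}_g^2$. For $f=\sum_j a_j u_j\in F$ define $\tilde f$ on $M$ to be $f$ off $\bigcup_i B(x_i,\rho_i)$ and, on each $B(x_i,\rho_i)$, the harmonic extension of $f|_{\del B(x_i,\rho_i)}$, whose $g$-Dirichlet energy is at most $(1+O(\eps^2))\,\rho_i\int_{\del B(x_i,\rho_i)}\abs{\nabla_g f}_g^2\,\rd s$. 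Combining this with $\abs{\nabla_g f}_g^2\le(k+1)\sum_j a_j^2\abs{\nabla_g u_j}_g^2$, the choice of $\rho_i$, the disjointness of the annuli, and $\sum_j\int_{\Omega^\eps}\abs{\nabla_g u_j}_g^2\le(k+1)\Lambda_\eps$ gives $\int_M\abs{\nabla_g\tilde f}_g^2\le\int_{\Omega^\eps}\abs{\nabla_g f}_g^2+(k+1)^2\Lambda_\eps\big(\log\tfrac{\eps}{2r_\eps}\big)^{-1}\textstyle\sum_j a_j^2$, while $\int_M\tilde f^2\,\rd\mu^\eps=\int_M f^2\,\rd\mu^\eps=\sum_j a_j^2$ since $\tilde f=f$ on $\del\Omega^\eps$. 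After a harmless mollification making $\tilde f$ smooth, $\widetilde F:=\{\tilde f:f\in F\}$ is an admissible $(k+1)$-dimensional competitor on $M$, so $\lambda_k(M,g,\mu^\eps)\le\Lambda_\eps\big(1+(k+1)^2/\log(\eps/2r_\eps)\big)$. As $\Lambda_\eps$ is bounded and $\log(\eps/2r_\eps)\to\infty$ (because $r_\eps=o(\eps)$), we get $\Lambda_\eps=\lambda_k(M,g,\mu^\eps)+o(1)$, whence $\normalstek{k}(\Omega^\eps,g)=\Lambda_\eps\to\normallap{k}(M,g)$.

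I expect the substantive difficulty to lie entirely in Step 2. The elementary input is that each removed disk has small relative capacity, $\capa_2\big(\overline{B(x_i,r_\eps)}\,;\,B(x_i,\eps/2)\big)\asymp1/\log(\eps/r_\eps)\to0$; the real point is to turn this into an extension estimate that is \emph{uniform over the $(k+1)$-dimensional competitor subspaces and that does not accumulate an extra factor of $N_\eps\asymp\eps^{-2}$ from the holes}. A crude capacity bound would cost $N_\eps/\log(\eps/r_\eps)$ times the oscillation of the test function and need not be small; the averaging over radii replaces the oscillation on the $i$-th hole by the local Dirichlet energy on an annulus, and the annuli being disjoint their contributions sum to at most $\int_{\Omega^\eps}\abs{\nabla_g f}_g^2$, with no combinatorial loss — this is what makes the estimate close, and it is also what forces the passage to a $\mu^\eps$-orthonormal basis. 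The curvature corrections in the co-area and harmonic-extension computations at scale $\eps$ are benign; the remaining care is simply to stay within the hypotheses of Proposition~\ref{prop:weakenough}, which is why it is convenient to normalise everything to probability measures and to fix a single exponent $p>2$, and to record that equidistributed $\eps$-nets with the quoted rate exist on $(M,g)$, which is classical.
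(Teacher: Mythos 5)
Your overall plan --- perforate $M$ around an $\eps$-net, show that the normalised boundary measures $\mu^\eps$ converge to $\rd v_g/\Vol_g(M)$ in a dual Sobolev norm, and conclude via Proposition~\ref{prop:weakenough} --- is exactly the strategy of the paper's homogenisation machinery (Theorem~\ref{manifoldhomo:thm} with $\beta\equiv 1$, Section~\ref{sec:hom}), but your Step~1 verifies the wrong hypothesis. You prove $\mu^\eps\to \rd v_g/\Vol_g(M)$ only in $(\RW^{1,p}(M))^*$ with $p>2$, using the Morrey embedding $\RW^{1,p}\hookrightarrow \RC^{0,\alpha}$ to evaluate test functions pointwise on the small circles. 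For $d=2$, Proposition~\ref{prop:weakenough} requires convergence in $\RW^{1,2,-1/2}(M)^*$, which is implied by convergence in $(\RW^{1,p}(M))^*$ for $p<2$ but is strictly stronger than convergence tested against $\RW^{1,p}$-functions with $p>2$ (since $\RW^{1,p}(M)\subset \RW^{1,2,-1/2}(M)$ for $p>2$, you are testing against fewer functions, not more). This is not a technicality: in the proof of Proposition~\ref{prop:weakenough} the measures are paired against products $u^2$, $u\,J_nf_j^n$, $J_nf_j^n\,J_nf_k^n$ of $\RW^{1,2}$-functions, which in dimension $2$ lie only in $\RW^{1,q}$, $q<2$, or in the Orlicz--Sobolev space; in particular condition \csv (asymptotic orthonormality, which is what guarantees the limit is the $k$-th eigenvalue and not a lower one) cannot be checked from your estimate, and the Steklov eigenfunctions have no uniform-in-$\eps$ bounds in any space better than $\RW^{1,2}$ that would let you pair them against a $(\RW^{1,p})^*$-small error with $p>2$. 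Your quantitative bound against H\"older test functions is essentially a weak-$*$ statement, and weak-$*$ convergence alone gives only upper semicontinuity (Proposition~\ref{usc:prop}). Closing this gap is precisely the technical core of the paper's argument: the cell decomposition, the auxiliary potentials of Lemma~\ref{lem:integralidentitybis} solved cell by cell, the scaling trick, and the trace Lemma~\ref{lem:tracew11l1}, which together yield convergence in $(\RW^{1,p}(M))^*$ for \emph{all} $p>1$, in particular $p<2$; none of this is replaced by the H\"older/equidistribution estimate.

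Step~2 also contains an error. You harmonically extend $f|_{\del B(x_i,\rho_i)}$ over the whole ball $B(x_i,\rho_i)$ with $\rho_i\ge r_\eps$, so on the circles $\del B(x_i,r_\eps)$ --- which is where $\mu^\eps$ lives --- the function $\tilde f$ is the harmonic extension, not $f$; the assertion ``$\tilde f=f$ on $\del\Omega^\eps$'' is false, hence $\int_M \tilde f^2\de\mu^\eps=\sum_j a_j^2$ is unjustified and the Rayleigh-quotient comparison does not close. If you instead extend only into the holes (which does preserve the denominator), the added Dirichlet energy is comparable to, not negligible relative to, the original energy, giving only $\lambda_k(M,g,\mu^\eps)\le(1+C)\Lambda_\eps$ rather than the sharp $(1+o(1))$ comparison; making the loss vanish requires either a careful control of how much the extension changes the boundary values (closer to the original argument of~\cite{GL}) or, as in the paper, abandoning the fixed-$\eps$ transplantation altogether: once the measures converge in the correct dual space, conditions \ci--\cvi (with \cvi supplied by the uniformly bounded harmonic extension operators of Rauch--Taylor) and Proposition~\ref{stability1:prop} give $\lambda_k(\Omega^\eps,g,\mu^\eps)\to\lambda_k(M,g,\rd v_g/\Vol_g(M))$ directly, through weak limits of the extended eigenfunctions and orthonormality of the limits. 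So your Step~2 is both flawed as written and, in the paper's framework, unnecessary once Step~1 is carried out in the right space.
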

Surfaces with boundary are treated in Theorem \ref{manifoldhomo:thm}. In either
cases, the domains $\Omega^\eps$ are obtained by removing small disks from $M$.
 To see that this theorem implies the sharpness of inequality
 \eqref{ineq:sigmaklambdak}, one can repeatedly apply it to
 a sequence of conformal metrics $g_n = e^{f_n} g$ such that $\normallap{k}(M,g_n) \to \optimlap{k}(M,[g])$.

The following notations allows us to clarify the statement of our results:
\begin{equation}
   \label{eq:defoptimal}
 \begin{aligned}
   \optimstek{k}(M,g)&=\sup_{\Omega\subset M}\normalstek{k}(\Omega,g), \\
 \optimlap{k}(M) &= \sup_g\normallap{k}(M,g),\\
 \optimstek{k}(M) &= \sup_{g,\,\Omega\subset M}\normalstek{k}(\Omega,g),
 \end{aligned}
\end{equation}
where $M$ is a compact surface and $\Omega$ is a Lipschitz domain.
For these optimal eigenvalues the results of this section can be summarized as follows.
\begin{cor}
  For any compact surface $M$, any conformal class $[g]$ on $M$ and any $k\ge 0$ one has
 $$
 \optimstek{k}(M,g) = \optimlap{k}(M,[g]),\quad 
 \optimstek{k}(M) = \optimlap{k}(M).
 $$
\end{cor}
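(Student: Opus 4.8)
The plan is to deduce the corollary directly from Theorems~\ref{thm:sigmaklambdak} and~\ref{GL:thm}, together with the finiteness of conformal eigenvalues from Korevaar~\cite{kvr}, by proving two inequalities in each identity. First I would fix a compact surface $M$ and a conformal class $[g]$. For the upper bound $\optimstek{k}(M,g) \le \optimlap{k}(M,[g])$, I take any Lipschitz domain $\Omega \subset M$; after a routine approximation reducing to smooth domains with $\del\Omega \cap \del M$ empty or all of $\del M$ (using continuity of $\normalstek{k}$ under domain perturbation, or simply approximating in the variational-eigenvalue framework of Section~\ref{section:variationaleigenvalues}), Theorem~\ref{thm:sigmaklambdak} gives $\normalstek{k}(\Omega,g) \le \optimlap{k}(M,[g])$; taking the supremum over $\Omega$ yields the claim. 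For the reverse inequality, I use sharpness: given $\delta > 0$, pick a metric $g_n = e^{f_n}g \in [g]$ with $\normallap{k}(M,g_n) > \optimlap{k}(M,[g]) - \delta$, then apply Theorem~\ref{GL:thm} (or Theorem~\ref{manifoldhomo:thm} if $\del M \ne \varnothing$) to $(M,g_n)$ to obtain domains $\Omega^\eps$ with $\normalstek{k}(\Omega^\eps,g_n) \to \normallap{k}(M,g_n)$. Here I must be slightly careful: $\normalstek{k}(\Omega^\eps,g_n)$ is computed with the conformal metric $g_n$, not $g$, whereas $\optimstek{k}(M,g)$ in~\eqref{eq:defoptimal} is the supremum over domains for the fixed metric $g$. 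The resolution is that $\normalstek{k}$ is conformally invariant in dimension two in the relevant sense — $\normalstek{k}(\Omega, e^f g)$ depends only on the conformal class when one is free to vary $\Omega$, or more precisely one should interpret $\optimstek{k}(M,g)$ via the variational eigenvalues with the conformal factor absorbed into the measure — so that $\normalstek{k}(\Omega^\eps, g_n)$ is realised (up to taking a further supremum) by some configuration for $g$. Letting $\eps \to 0$ and then $\delta \to 0$ gives $\optimstek{k}(M,g) \ge \optimlap{k}(M,[g])$.

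For the second identity, $\optimstek{k}(M) = \optimlap{k}(M)$, I would simply take the supremum of the first identity over all conformal classes, or equivalently over all metrics $g$: since $\optimstek{k}(M) = \sup_g \optimstek{k}(M,g) = \sup_g \optimlap{k}(M,[g]) = \sup_{[g]} \optimlap{k}(M,[g])$, and the latter equals $\sup_g \normallap{k}(M,g) = \optimlap{k}(M)$ by definition of the conformal eigenvalue as a supremum over each conformal class followed by a supremum over conformal classes. Finiteness on both sides is guaranteed by Korevaar's bound $\optimlap{k}(M,[g]) < \infty$ combined with the topological bound $\optimlap{k}(M) < \infty$ (again from \cite{kvr} or the genus-dependent bounds cited in the introduction), so all suprema are finite and the manipulations are legitimate.

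The main obstacle I anticipate is the bookkeeping around which metric the Steklov eigenvalue is computed with, and the reduction from Lipschitz domains to the smooth domains to which Theorem~\ref{thm:sigmaklambdak} applies. The cleanest way to handle both is to phrase everything through the variational eigenvalues $\normallap{k}(M,g,\mu)$ of Section~\ref{section:variationaleigenvalues}: a Steklov problem on $\Omega \subset M$ with metric $e^f g$ corresponds to a boundary measure weighted by $e^{f/2}$ (in $d=2$), and the continuity result Proposition~\ref{prop:weakenough} lets one pass freely between smooth and Lipschitz domains and between a metric and its conformal rescalings by approximating the associated measures in $(\RW^{1,2}(M))^*$. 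With that framework in hand, the corollary is a formal consequence of the two named theorems and the remark that the $\sup$ over $\Omega$ and the $\sup$ over $[g]$ or $g$ commute appropriately. I would therefore present the proof in two short displayed chains of inequalities — one for the fixed conformal class, one for the supremum over conformal classes — citing Theorem~\ref{thm:sigmaklambdak} for ``$\le$'' and Theorems~\ref{GL:thm} and~\ref{manifoldhomo:thm} for ``$\ge$''.
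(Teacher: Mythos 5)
The upper bound $\optimstek{k}(M,g)\le\optimlap{k}(M,[g])$ via Theorem~\ref{thm:sigmaklambdak}, and the deduction of the second identity by taking suprema over metrics, are fine. The genuine gap is in your lower bound for the \emph{first} identity. Applying Theorem~\ref{GL:thm} to $(M,g_n)$ produces domains with $\normalstek{k}(\Omega^\eps,g_n)\to\normallap{k}(M,g_n)$, i.e.\ Steklov eigenvalues computed with respect to $g_n$; these are not competitors in the supremum defining $\optimstek{k}(M,g)$, which is taken over domains with the metric $g$ fixed. Your proposed resolution --- that $\normalstek{k}$ is ``conformally invariant in the relevant sense'' --- does not hold: in the variational picture, $\normalstek{k}(\Omega,e^{f}g)$ is the eigenvalue associated with the weighted boundary measure $e^{f/2}\rd A_g$ on $\del\Omega$, which is not the boundary measure of any domain with respect to $g$, and the assertion that it is ``realised up to a further supremum by some configuration for $g$'' is exactly the statement to be proved (conformal invariance of $g\mapsto\optimstek{k}(M,g)$ is a consequence of the corollary, not an input). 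Likewise, Proposition~\ref{prop:weakenough} does not let you ``pass between a metric and its conformal rescalings'': approximating such a weighted measure in $(\RW^{1,p}(M))^*$ by genuine unweighted Steklov boundary measures for the fixed metric $g$ is precisely the nontrivial homogenisation content, not a formal consequence of the continuity result.

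The fix is available in the paper and parallels the proof of Theorem~\ref{thm:mainSigmak}: keep $g$ fixed and move the conformal factor into the weight. Choose smooth $g_n=e^{f_n}g$ with $\normallap{k}(M,g_n)\to\optimlap{k}(M,[g])$; by conformal invariance of the Dirichlet energy in dimension two, $\normallap{k}(M,g_n)=\normallap{k}(M,g,e^{f_n}\rd v_g)$, as recorded in Section~\ref{sec:1applications}. Now apply Theorem~\ref{manifoldhomo:thm} (or Theorem~\ref{thm:homointro}) to $(M,g)$ with $\beta=e^{f_n}\in\LLL(M)$: it yields domains $\Omega^\eps\subset M$ with $\normalstek{k}(\Omega^\eps,g)\to\normallap{k}(M,g,e^{f_n}\rd v_g)$, where the Steklov eigenvalues are now with respect to $g$ itself. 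Hence $\optimstek{k}(M,g)\ge\normallap{k}(M,g_n)$ for every $n$, giving $\optimstek{k}(M,g)\ge\optimlap{k}(M,[g])$. By contrast, Theorem~\ref{GL:thm} applied to $g_n$, as you propose, only yields $\sup_{h\in[g]}\optimstek{k}(M,h)\ge\optimlap{k}(M,[g])$, which suffices for the second identity but not the first. The approximation points you raise for the upper bound (Lipschitz versus smooth domains, the position of $\del\Omega$ relative to $\del M$) are minor and at the level of detail the paper itself leaves implicit.
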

 In particular, using the known results on the exact values of $\optimlap{k}(M)$
 obtained in~\cite{KNPP,karpRP2} respectively, 
 \begin{equation}
\label{eq:SigmakS2}
\optimstek{k}(\mathbb{S}^2) = 8\pi k,
\end{equation}
$$
\optimstek{k}(\mathbb{RP}^2) = 4\pi (2k+1).
$$

\subsection{Optimal isoperimetric inequalities for planar domains. }
Because any domain $\Omega\subset\R^2$ is diffeomorphic to a domain in the sphere $\mathbb{S}^2$, it follows from~\eqref{eq:SigmakS2} that
  $\normalstek{k}(\Omega,g_0)\leq 8\pi k$.
  Following the ideas of~\cite{GL} we show that this inequality remains sharp for planar domains. 
  \begin{theorem}\label{thm:mainSigmak}
  Let $\Omega\subset\R^2$ be a bounded simply-connected domain with Lipschitz
  boundary. There exists a sequence $\Omega^\eps\subset\Omega$ of subdomains,
  with $\partial\Omega\subset\partial\Omega^\eps$, such that
  $$
  \normalstek{k}(\Omega^\eps,g_0) = \sigma_k(\Omega^\eps,g_0) \CH^1(\partial\Omega^\eps)  \xrightarrow{\,\eps\to0\,}8\pi k.
  $$
  In particular, 
  $$
  \optimstek{k}(\R^2):=\sup_{\Omega \subset \R^2} \normalstek{k}(\Omega,g_0)=8\pi k.
  $$
\end{theorem}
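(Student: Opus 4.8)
The plan is to run the Girouard--Lagacé perforation strategy of Theorem~\ref{GL:thm}, but adapted to an arbitrary bounded simply-connected Lipschitz domain $\Omega\subset\R^2$ rather than a closed surface. The starting point is the observation that the complement of $\Omega$ in the round sphere $\S^2$ (via stereographic projection, say) is itself a domain, so that Theorem~\ref{GL:thm} applied to $M=\S^2$ gives domains whose Steklov spectrum approaches $\normallap k(\S^2,g_0)=8\pi k$; the issue is that those domains are subdomains of $\S^2$, not of $\Omega$, and do not have $\partial\Omega$ in their boundary. To fix this I would instead work directly in $\Omega$. Fix the flat metric $g_0$; for each small $\eps>0$ choose a maximal $\eps$-separated set of points $\{x_i^\eps\}$ in a slightly shrunken copy of $\Omega$, and remove from $\Omega$ a disk $B(x_i^\eps, r_\eps)$ of carefully chosen radius $r_\eps\ll\eps$ around each, keeping $\partial\Omega$ untouched. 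The resulting perforated domains $\Omega^\eps$ are still simply connected once we are a bit careful (or one can pass to the sphere picture, perforate there, and note that the resulting domain is conformally — hence diffeomorphically — a planar simply-connected domain, using that $\normalstek k$ is conformally invariant in dimension two since both $\int|\nabla f|^2$ and $\CH^1(\partial\,\cdot)$ scale correctly under conformal change restricted to the boundary — wait, $\CH^1(\partial\Omega)$ is not conformally invariant, so the cleaner route is genuinely to perforate inside $\Omega$).

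The key analytic step is the convergence $\normalstek k(\Omega^\eps,g_0)\to 8\pi k$. Here I would use the continuity result Proposition~\ref{prop:weakenough}: the Steklov problem on $\Omega^\eps$ is the variational eigenvalue problem for the measure $\mu_\eps = \iota_*\!\left(\rd A_{g_0}\!\restriction_{\partial\Omega^\eps}\right)$ on $\Omega$ (or on a fixed larger reference domain), suitably normalised. The perforations contribute boundary measure concentrated on the small circles $\partial B(x_i^\eps,r_\eps)$; with the radius $r_\eps$ tuned so that the total perforation perimeter is comparable to a fixed constant and is \emph{equidistributed}, these measures converge in $(\RW^{1,p}(\Omega))^*$ for $p<2$ (the relevant range in dimension $d=2$) to a constant multiple of the Lebesgue measure $\rd A_{g_0}$ on $\Omega$ — this is exactly the homogenisation phenomenon ``weighted Neumann problem as a limit of Steklov problems on perforated domains'' advertised in the abstract. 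The capacity computation that makes $r_\eps$ the right scale is the standard one: in two dimensions the $p$-capacity of a disk of radius $r_\eps$ inside a ball of radius $\eps$ is of logarithmic type, and one chooses $r_\eps = \eps\exp(-c/\eps^{?})$-type scaling so that the rescaled boundary measure has a nonzero, finite weak-$*$ limit. Once $\mu_\eps\to c\,\rd A_{g_0}$ in the appropriate dual Sobolev norm with the right compactness, Proposition~\ref{prop:weakenough} gives $\lambda_k(\Omega,g_0,\mu_\eps)\to\lambda_k(\Omega,g_0,c\,\rd A_{g_0}) = c^{-1}\lambda_k(\Omega,g_0)$, i.e. the perforated Steklov eigenvalues converge to a Neumann-type eigenvalue of $\Omega$; normalising by the perimeter (which converges to $c\,\Vol_{g_0}(\Omega)$) turns this into a statement about $\normalstek k(\Omega^\eps,g_0)$.

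That alone only recovers $\normallap k(\Omega,g_0)$, which need not be $8\pi k$, so the final ingredient is a second limit in which the underlying conformal factor on $\Omega$ is itself chosen to push $\normallap k$ up toward $\optimlap k(\Omega,[g_0]) = \optimlap k(\S^2\setminus\{\text{pt}\},[g_0]) = 8\pi k$, using the known value $\optimlap k(\S^2)=8\pi k$ from~\cite{KNPP} together with a concentration-of-conformal-factor argument (one lets the conformal factor concentrate near an interior point of $\Omega$, recovering in the limit the spectrum of the maximising bubble on $\S^2$). Combining the two limits by a diagonal argument produces a single sequence $\Omega^\eps$ with $\normalstek k(\Omega^\eps,g_0)\to 8\pi k$; since $\normalstek k(\Omega,g_0)\le 8\pi k$ always (by embedding $\Omega$ diffeomorphically in $\S^2$ and invoking~\eqref{eq:SigmakS2}), this is sharp and yields $\optimstek k(\R^2)=8\pi k$. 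I expect the main obstacle to be the bookkeeping that keeps $\partial\Omega\subset\partial\Omega^\eps$ and simple-connectivity intact while simultaneously (i) equidistributing the perforations for the homogenisation limit and (ii) letting the conformal factor concentrate — in particular, making sure the two limiting procedures are compatible and that the perforation radii can be chosen uniformly in the conformal parameter, so that a single diagonal sequence works; the capacity/weak-$*$ estimate itself is by now standard and should go through as in~\cite{GL}.
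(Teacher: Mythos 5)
Your outline coincides with the paper's strategy — realise a weighted Neumann problem on $\Omega$ as the limit of Steklov problems on perforations of $\Omega$ that leave $\del\Omega$ intact, choose the weight so that the weighted Neumann eigenvalue is close to $8\pi k$, and close with the upper bound obtained by viewing $\Omega$ as a domain in $\S^2$ — but the decisive step is left as an unproved assertion, and your order of limits is the wrong way round. A uniform, equidistributed perforation only ever recovers the flat Neumann spectrum $\normallap{k}(\Omega,g_0)$; to capture a nonconstant (let alone concentrating) weight, the radii must vary with the weight, and your plan to run a separate ``concentration of the conformal factor'' limit and merge it diagonally with the perforation limit leaves open exactly the hard point: you never justify that there exist densities $\beta$ on $\Omega$ with $\normallap{k}(\Omega,g_0,\beta\,\rd x)$ arbitrarily close to $8\pi k$ (your claim $\optimlap{k}(\Omega,[g_0])=8\pi k$). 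The paper settles the weight first and perforates second: fix $\delta>0$, take a metric $g$ on $\S^2$ with $\normallap{k}(\S^2,g)>8\pi k-\delta$ (from~\cite{KNPP}), remove a small disk to get $\Upsilon$ — by Ann\'e's theorem~\cite{Anne1986} the Neumann eigenvalues move by less than $\delta$ — and pull back through a conformal diffeomorphism $\Phi\colon\Omega\to\Upsilon$ given by the Riemann mapping theorem; conformal invariance of the Dirichlet energy identifies $\lambda_k(\Upsilon,g)$ with the variational eigenvalue $\lambda_k(\Omega,g_0,\Phi^*\rd v_g)$, and Theorem~\ref{thm:homointro}, applied once to this single fixed density $\beta=\abs{\rd\Phi}^2$, yields $\Omega^\eps\subset\Omega$ with $\del\Omega\subset\del\Omega^\eps$ and $\sigma_k(\Omega^\eps)\,\CH^1(\del\Omega^\eps)>8\pi k-3\delta$. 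Without an argument for this Ann\'e-type stability (or an equivalent transplantation-with-cutoff/capacity argument showing the bubbling weights do not drop the eigenvalue), your sketch does not actually reach $8\pi k$.

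Two smaller slips. First, the perforated domains are not simply connected — nor do they need to be; the theorem only requires $\Omega^\eps\subset\Omega$ with $\del\Omega\subset\del\Omega^\eps$. Second, if you tune the radii so that the total perforation perimeter stays comparable to a fixed constant, the limiting measure retains the term $\iota_*\rd A$ on $\del\Omega$ and you obtain a dynamical boundary value problem, not a constant multiple of Lebesgue measure; in the paper's construction the perforation perimeter tends to infinity (radii $r_z^\eps\sim\eps^{\alpha}$ with $1<\alpha<2$ in dimension two, polynomial rather than exponentially small), so that after normalisation only the weighted volume measure $\beta\,\rd x$ survives. Your upper-bound argument (embed $\Omega$ in $\S^2$ and invoke~\eqref{eq:SigmakS2}, equivalently Theorem~\ref{thm:sigmaklambdak}) is the same as the paper's.
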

The domains $\Omega^\eps$ are obtained by removing small disks from $\Omega$.
In particular, this solves~\cite[Open problem 2]{gpsurvey} for $d=2$.

\subsection{Quantitative isoperimetric bounds for
\texorpdfstring{$\normalstek{1}$}{the first Steklov eigenvalue}}

Following~\cite[Theorem 4.3]{fraschoen2}, it was suggested
in~\cite{gpsurvey} that the number of boundary components in a maximizing
sequence for $\optimstek{1}(\S^2)$  needs to be unbounded. Indeed, let $M_{0,b}$
be a a compact orientable surface of genus $0$ with $b$ boundary components and define
$$
\optimstek{1}(0,b) = \sup_g \, \, \normalstek{1}(M_{0,b},g).
$$
The monotonicity results of
\cite[Theorem 4.3]{fraschoen2} and \cite[Theorem 1.3]{matthiesenpetrides} imply that $\optimstek{1}(0,b)$ is strictly monotone in $b$. Thus, Theorem~\ref{GL:thm} yields 
$$
\optimstek{1}(0,1)<\ldots<\optimstek{1}(0,b)<\optimstek{1}(0,b+1)<\ldots\nearrow 8\pi,
$$
which confirms the claim, yielding the direct corollary.

\begin{cor}
  \label{bc:cor}
  Any sequence of surfaces $M^\eps$ of genus 0 such that $\normalstek{1}(M^\eps)\xrightarrow{\eps\to 0}\ 8 \pi$ has unbounded number of
  connected boundary components.
\end{cor}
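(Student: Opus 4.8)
The plan is to argue by contradiction: suppose there were a sequence of genus-$0$ surfaces $M^\eps$ with $\normalstek{1}(M^\eps) \to 8\pi$ but with a uniformly bounded number of boundary components, say $b(M^\eps) \le B$ for all $\eps$. Passing to a subsequence, we may assume the number of boundary components is a fixed integer $b \le B$. Then for each $\eps$ we have $M^\eps = M_{0,b}$ topologically, so by definition $\normalstek{1}(M^\eps) \le \optimstek{1}(0,b)$, and hence $8\pi = \lim_\eps \normalstek{1}(M^\eps) \le \optimstek{1}(0,b)$.

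The crux of the argument is then to show that $\optimstek{1}(0,b) < 8\pi$ for every fixed $b$, which contradicts the inequality just obtained. To establish this, I would use the strict monotonicity of $b \mapsto \optimstek{1}(0,b)$, which follows from the monotonicity results of Fraser--Schoen \cite[Theorem 4.3]{fraschoen2} and Matthiesen--Petrides \cite[Theorem 1.3]{matthiesenpetrides} — adding a boundary component strictly increases the supremum. Combined with Theorem~\ref{GL:thm} (which, applied to a maximizing sequence of conformal metrics on $\mathbb{S}^2$, yields $\sup_b \optimstek{1}(0,b) = \optimstek{1}(\mathbb{S}^2) = 8\pi$ via \eqref{eq:SigmakS2}), this gives the strictly increasing chain
$$
\optimstek{1}(0,1) < \optimstek{1}(0,2) < \dotsb < \optimstek{1}(0,b) < \optimstek{1}(0,b+1) < \dotsb \nearrow 8\pi,
$$
so in particular $\optimstek{1}(0,b) < 8\pi$ for each finite $b$. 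This contradiction proves that no such bound $B$ can exist, i.e. the number of connected boundary components must be unbounded.

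The main obstacle is not in the logical skeleton, which is short, but in correctly invoking the monotonicity statements: one must check that the cited results indeed give \emph{strict} monotonicity of the supremum $\optimstek{1}(0,b)$ in $b$ (rather than merely of individual eigenvalues), and that Theorem~\ref{GL:thm} combined with \eqref{eq:SigmakS2} genuinely identifies the limiting value of the increasing sequence as exactly $8\pi$ — this uses that any domain on $\mathbb{S}^2$ is exhausted, up to the perforation construction, by genus-$0$ surfaces with finitely many boundary components, so that $\optimstek{1}(\mathbb{S}^2) = \sup_b \optimstek{1}(0,b)$. Once these two ingredients are in place, the corollary is immediate.
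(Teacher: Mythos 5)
Your argument is correct and is essentially the paper's own proof: the same two ingredients (strict monotonicity of $b\mapsto\optimstek{1}(0,b)$ from the Fraser--Schoen and Matthiesen--Petrides results, together with the identification of the limiting value $8\pi$ via Theorem~\ref{GL:thm} and \eqref{eq:SigmakS2}) yield $\optimstek{1}(0,b)<8\pi$ for each finite $b$, from which the conclusion follows. You merely phrase as a proof by contradiction what the paper states as a direct consequence of the strictly increasing chain converging to $8\pi$.
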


We refine Corollary~\ref{bc:cor} and obtain at the same time a quantitative
improvement to Kokarev's bound~\eqref{eq:kokarevbound}.
\begin{theorem}\label{thm:quantbdry}
  For every $b \ge 1$ and every metric
  $g$ and $\eps > 0$, 
  \begin{equation}
    \label{eq:conditional}
  \normalstek{1}(M_{0,b},g)
  \le \max\set{4 \pi\frac{2 + \eps}{1 +\eps}, \frac{8\pi}{1 +
  \exp(-(1+\eps)b)}}.
\end{equation}
\end{theorem}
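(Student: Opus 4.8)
The plan is to combine the sharp bound $\normalstek{1}(M_{0,b},g) \le 2\pi(b+1)$ from \eqref{ineq:hps} (with $\gamma = 0$) with Kokarev's bound \eqref{eq:kokarevbound}, which for $\gamma = 0$ gives $\normalstek{1}(M_{0,b},g) \le 8\pi$, and to interpolate between them using a more refined argument that tracks how the eigenvalue depends on $b$. The key point is that the Hersch--Payne--Schiffer-type bound $2\pi(b+1)$ is good when $b$ is small, while the constant bound $8\pi$ takes over when $b$ is large; the crossover happens around $b \approx 3$. So the first thing I would do is observe that if $b$ is bounded (say $b \le B_\eps$ for an explicit threshold depending on $\eps$), then $\normalstek{1}(M_{0,b},g) \le 2\pi(b+1) \le 4\pi\frac{2+\eps}{1+\eps}$ already gives the first term in the maximum, provided $B_\eps$ is chosen so that $2\pi(b+1) \le 4\pi \frac{2+\eps}{1+\eps}$, i.e. $b \le \frac{2+\eps}{1+\eps} \cdot 2 - 1 = \frac{3}{1+\eps}$; but this only covers $b \le 3$ or so, which is not quite enough, so the bound $\frac{2+\eps}{1+\eps}$ must come out of a genuinely different (sharper for small $b$) estimate rather than naive HPS.

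The more promising route is to use the concentration-type test functions underlying the proof of Theorem~\ref{GL:thm} and the continuity result Proposition~\ref{prop:weakenough}, but run in reverse: rather than showing a maximising sequence attains $8\pi$, I would extract from a near-maximiser a quantitative deficit. Concretely, I would take a metric $g$ on $M_{0,b}$ with $\normalstek{1}(M_{0,b},g)$ close to its supremum, use the first Steklov eigenfunctions to build a conformal (or measure-theoretic) competitor, and exploit the fact that the boundary has only $b$ components: a Hersch-type center-of-mass balancing argument on the boundary measure forces the eigenvalue to be bounded by $8\pi/(1 + c_b)$ for an explicit $c_b$, where $c_b$ decays exponentially in $b$ because the obstruction to balancing $b$ point-like boundary pieces against a fixed volume distribution becomes weaker geometrically like $e^{-b}$. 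The exponential term $\exp(-(1+\eps)b)$ in \eqref{eq:conditional} strongly suggests this mechanism: $b$ small disks each contribute a multiplicative factor, and the product collapses the gap at rate $e^{-b}$.

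The main obstacle I anticipate is making the balancing/center-of-mass argument quantitative in $b$ with the correct exponential rate, and in particular handling the interaction between the volume measure $\rd v_g$ and the boundary measure $\iota_* \rd A_g$ in the variational characterisation \eqref{eq:varifirst}: one must show that a first eigenfunction cannot simultaneously concentrate mass near all $b$ boundary components in a way that would beat $8\pi/(1+\exp(-(1+\eps)b))$. I expect the $\eps$ in the statement is precisely the slack needed to absorb lower-order errors in this quantitative estimate (error terms that are $o(1)$ relative to the main exponential term), so the strategy would be to prove a clean inequality of the form $\normalstek{1}(M_{0,b},g) \le \max\{4\pi \cdot 2 - \delta(b), 8\pi/(1+e^{-b+C})\}$ with explicit constants, and then check that choosing the threshold and rescaling $b \mapsto (1+\eps)b$ converts this into \eqref{eq:conditional}. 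Verifying that the two branches of the maximum genuinely cover all $b \ge 1$ — i.e. that the exponential branch is $\le$ the constant branch exactly when HPS is weak — is the combinatorial bookkeeping step that ties the argument together.
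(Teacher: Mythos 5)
Your guess that a Hersch-type center-of-mass balancing of the pushed-forward boundary measure is at the heart of the argument is correct as far as it goes: the paper's proof (via Theorem~\ref{thm:quantbdrycore}) uniformizes $M_{0,b}$ conformally onto a domain in a closed hemisphere $\S^2_y$ with the \emph{longest} boundary component $\CC$ sent to the equator, pushes the boundary measure forward, and applies Lemma~\ref{lem:herschrenorm}. But your proposal contains no actual mechanism for the exponential rate, and the heuristic you offer --- that ``$b$ small disks each contribute a multiplicative factor'' so the gap collapses like $e^{-b}$ --- is not how the bound arises and is not something you could substantiate. In the paper the exponential comes from a single capacity computation: after renormalization the opposite hemisphere is mapped to a spherical cap $K_a$ of area $a\le \defect(M,g)/2$, whose $2$-capacity is exactly $4\pi/\log(4\pi/a-1)$ (Lemma~\ref{lem:capcap}); its capacitary potential is used as a trial function for $\sigma_1$, while the center-of-mass condition forces a definite fraction $\bigl(1-\tfrac{\defect}{4\pi}\bigr)\CH^1(\CC)$ of the measure onto the far hemisphere. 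This yields the purely geometric inequality $\CH^1(\del M)/L \ge \bigl(1-\tfrac{\defect}{4\pi}\bigr)_+\log\bigl(\tfrac{8\pi}{\defect}-1\bigr)$, where $L$ is the length of the longest boundary component. The dependence on $b$ is entirely indirect, through the elementary estimate $\CH^1(\del M)\le Lb$ followed by exponentiating the logarithm; this reduction to the longest-component ratio is the key structural step missing from your plan.

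A second, smaller gap: the branch $4\pi\frac{2+\eps}{1+\eps}$ of the maximum requires no Hersch--Payne--Schiffer input and no ``sharper small-$b$ estimate'' at all. It is simply the trivial case split: if $\normalstek{1}\le 4\pi\frac{2+\eps}{1+\eps}$ there is nothing to prove, and otherwise $\normalstek{1}-4\pi\ge \frac{4\pi}{1+\eps}$, which turns the exponent $-4\pi b/(\normalstek{1}-4\pi)$ obtained from the rearranged defect bound into $-(1+\eps)b$, giving the second branch. So while your instinct that naive HPS cannot produce that constant was right, the resolution is bookkeeping on the defect inequality rather than a new estimate, and without Theorem~\ref{thm:quantbdrycore} (or some substitute producing a logarithmic lower bound on $\CH^1(\del M)/L$ in terms of the defect) the proposal does not reach the stated inequality.
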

This theorem follows from the more general Theorem~\ref{thm:quantbdrycore}.
For planar domains, this more general result also leads to the following bound, which implies that any $\optimstek{1}(\R^2)$-maximising sequence of domains with fixed perimeter shrinks to a point.
\begin{theorem}\label{thm:quantdiameter}
  For every $\eps > 0$ and every $\Omega \subset \R^2$ be a connected bounded
  domain with smooth boundary,
  $$\normalstek{1}(\Omega)\le \max \set{4 \pi\frac{2 + \eps}{1+\eps},
    \frac{8\pi}{1 + \exp\left( \frac{-(1 +\eps)}{2}\frac{\CH^1(\del
  \Omega)}{\diam(\Omega)} \right)}}.$$
\end{theorem}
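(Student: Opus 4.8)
The plan is to deduce Theorem~\ref{thm:quantdiameter} from the more general Theorem~\ref{thm:quantbdrycore} in exactly the way that Theorem~\ref{thm:quantbdry} is deduced from it. That core theorem bounds $\normalstek{1}(\Omega)$ by $\max\set{4\pi\tfrac{2+\eps}{1+\eps},\,\tfrac{8\pi}{1+\exp(-(1+\eps)N)}}$, where $N$ is the size of an admissible configuration attached to $\Omega$ along its boundary — a family of pairwise disjoint open pieces of $\Omega$, linearly chained and each capturing a controlled share of the boundary measure. For $M_{0,b}$ one takes $N=b$, the collar neighbourhoods of the $b$ boundary components, which gives Theorem~\ref{thm:quantbdry}; for a connected planar domain the entire content is to exhibit an admissible configuration of size $N\leq\CH^1(\del\Omega)/(2\diam\Omega)$. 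Given such a configuration, the inequality follows since $t\mapsto 8\pi/(1+\exp(-(1+\eps)t))$ is increasing and $N\leq\CH^1(\del\Omega)/(2\diam\Omega)$, after a harmless renaming of $\eps$.

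I would build the configuration by slicing along a diameter. Fix $p,q\in\overline\Omega$ with $|p-q|=\diam\Omega=:D$ and let $h\colon\R^2\to\R$ be the affine projection onto the line through $p$ and $q$, so that $|\nabla h|\equiv 1$ and, since $\Omega$ has diameter $D$, the range $h(\overline\Omega)$ lies in an interval of length at most $D$. With $L:=\CH^1(\del\Omega)$, the map $t\mapsto\CH^1(\del\Omega\cap\set{h\leq t})$ is continuous and nondecreasing from $0$ to $L$, and since $L\geq 2D$ for a connected domain (project $\del\Omega$ onto the diameter direction: a.e.\ vertical line meets it at least twice over an interval of length $D$), one may set $N:=\lfloor L/(2D)\rfloor\geq 1$ and pick levels $t_0<t_1<\dotsb<t_N$, regular values of $h|_{\del\Omega}$, for which each slab $V_j:=\Omega\cap\set{t_{j-1}<h<t_j}$ captures boundary of measure $\CH^1(\del\Omega\cap\overline{V_j})=L/N\geq 2D$. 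Because $h$ is linear, the slabs $V_1,\dots,V_N$ are pairwise disjoint and linearly ordered: any path in $\Omega$ from $V_j$ to $V_{j'}$, $j<j'$, crosses every level line $\set{h=t_i}$, $j\leq i<j'$, hence passes through every intermediate slab. This chained structure together with the lower bound on the captured boundary measure is what the core theorem requires, and applying it to this family yields the stated bound with $N=\lfloor L/(2D)\rfloor\leq\CH^1(\del\Omega)/(2\diam\Omega)$.

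The main obstacle is matching this slicing to the precise hypotheses of Theorem~\ref{thm:quantbdrycore} so as to recover the sharp constant $2\diam\Omega$ rather than a worse multiple of the diameter: the value $2D$ is exactly the least $\CH^1$-measure that can be forced on a piece of $\del\Omega$ spanning the full width of $\Omega$, and it is this extremal feature that must be shown to govern the number of pieces the core theorem can absorb — this is where the two specialisations (the $b$ collars for $M_{0,b}$, the width-$L/N$ slabs here) genuinely differ and where the bookkeeping must be done carefully. Secondary points, which do not affect the count, are the case in which $\del\Omega$ is disconnected — a slab may then meet several components, or a component may straddle a level line, both of which only help, since additional components can only enlarge an admissible configuration — and the routine verification, via Sard and the coarea formula for the rectifiable curve $\del\Omega$, that the chosen levels can be taken regular so that the decomposition is genuine up to a $\CH^1$-null set.
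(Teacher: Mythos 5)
There is a genuine gap: your argument hinges on a version of Theorem~\ref{thm:quantbdrycore} that is not what that theorem says. The core theorem contains no notion of an ``admissible configuration'' of pairwise disjoint, chained pieces each capturing a share of the boundary measure; it is the purely scalar inequality \eqref{eq:defectbound}, bounding the ratio $\CH^1(\del M)/L$ from below in terms of the spectral defect, where $L$ is the length of the \emph{longest connected boundary component}. Consequently your slab decomposition $V_1,\dots,V_N$ transverse to a diameter, however carefully constructed, has nothing in the actual hypotheses to attach to, and the deduction of Theorem~\ref{thm:quantbdry} you describe (collar neighbourhoods of the $b$ boundary components) is likewise not how that theorem follows from the core result --- it follows from the elementary estimate $\CH^1(\del M_{0,b})\le Lb$ plugged into \eqref{eq:defectbound} and rearranged as in \eqref{eq:rearr}. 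Your closing paragraph in effect concedes this: the ``matching of the slicing to the precise hypotheses'' is not a bookkeeping issue but the missing proof of a different (and unproved) statement, essentially a multi-piece eigenvalue bound that the paper never establishes and that would not follow from the Hersch/capacity argument behind Theorem~\ref{thm:quantbdrycore} without redoing it.

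The intended deduction is much shorter, and the geometric fact you mention in passing is the right one once applied to a single component rather than to the whole boundary. Let $\CC\subset\del\Omega$ be the boundary of the unbounded component of $\R^2\setminus\Omega$; it is one smooth closed boundary curve, its projection onto the line through a diametral pair of points of $\Omega$ covers an interval of length $\diam(\Omega)$, and a closed curve has length at least twice the length of any of its orthogonal projections, so $2\diam(\Omega)\le\CH^1(\CC)\le L$. Hence $\CH^1(\del\Omega)/L\le\CH^1(\del\Omega)/(2\diam(\Omega))$, and substituting this quantity for $b$ in the exponentiation/rearrangement step of the proof of Theorem~\ref{thm:quantbdry} (the passage from \eqref{eq:defectbound} to \eqref{eq:rearr}, then the case distinction at $\bar\sigma_1\ge 4\pi\frac{2+\eps}{1+\eps}$) gives exactly the stated bound. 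Your slicing construction and the intermediate-value/Sard discussion can be discarded entirely.
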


 \begin{remark}
It follows from the seminal work of Fraser--Schoen~\cite{fraschoen2} that
 free boundary minimal surfaces in the unit ball are intimately related to the
 maximal Steklov eigenvalues. In particular, given an embedded free boundary minimal
 surface in the unit ball, its coordinates are Steklov eigenfunctions with
 eigenvalue $\sigma=1$.  In~\cite[Conjecture 3.3]{fraserli}, Fraser and Li conjectured that for each free boundary minimal surfaces $M$ properly embedded in $\mathbb B \subset \R^3$, this Steklov eigenvalue is always the first one, so that in such
 a case $2 \Area_{g}(M) = \CH^1(\del M) = \normalstek{1}(M,g)$. We can read
 Theorem \ref{thm:quantbdrycore} in this setting.
Let $M_{0,b}$ be a free boundary minimal surface of genus $0$ with $b$
   boundary components properly embedded in $\B \subset \R^3$ by its first
   Steklov eigenfunctions.
 Then, for every $\eps > 0$,
   \begin{equation}
     \Area_g(M_{0,b}) \le \max \set{2\pi \frac{2+\eps}{1+\eps}, \frac{4\pi}{1 +
     \exp(-(1 + \eps)b)}}.
   \end{equation}
   Under the Fraser--Li conjecture, this holds for all free boundary minimal
   surfaces of genus 0 with $b$ connected boundary component that are properly embedded in the unit ball $\mathbb{B}\subset\R^3$.
In other words, if the Fraser--Li conjecture is true, free boundary minimal
surfaces with large area must have a large number of boundary components.
 \end{remark}

\subsection{Isoperimetry and homogenisation for domains in \texorpdfstring{$\R^d$}{Rd}.}

Theorem~\ref{thm:mainSigmak} is a consequence of a more general result valid for
domains in $\R^d$. Let $\Omega \subset \R^d$ be a domain,
that is a connected bounded open set
with Lipschitz boundary.
For $\beta : \Omega \to \R$ nonnegative and non trivial, consider the weighted
Neumann problem
\begin{equation}
  \begin{cases}
    -\Delta f = \lambda \beta f& \text{in } \Omega,\\
    \del_n f = 0 &\text{on } \del \Omega.
  \end{cases}
\end{equation}
We assume that $\beta \in \RL^{d/2}(\Omega)$ if $d \ge 3$, and
$\beta \in
\LLL(\Omega)$ if $d = 2$ (see p. \pageref{page:LLL} for the definition of this
space which contains $\RL^p$, $p > 1$). 
If the flat metric on $\R^d$ is denoted by $g_0$, then the eigenvalues of this
problem can be understood in the weak sense as in
\eqref{eq:varifirst},
as the
variational eigenvalues
$\lambda_k(\Omega,g_0,\beta \rd v_{g_0})$. 
\begin{theorem}
  \label{thm:homointro}
  For any domain $\Omega\subset\R^d$, and any nonnegative $0\not\equiv\beta \in
  \RL^{d/2}(\Omega)$ ($d \ge 3$) or $\beta \in \LLL(\Omega)$ ($d = 2$), there exists a family $\Omega^\eps
\subset \Omega$ of domains such that for each $k \in \N$,
\begin{equation}
  \label{eq:convergenceSteklov}
  \normalstek{k}(\Omega^\eps,g_0) 
  \xrightarrow{\eps \to 0} \normallap{k}(\Omega,g_0,\beta\rd v_{g_0}).
\end{equation}
For the same family $\Omega^\eps$,
\begin{equation}
  \lambda_k(\Omega^\eps,g_0) \xrightarrow{\eps \to 0} \lambda_k(\Omega,g_0),
  \qquad \text{and} \qquad \Vol_{g_0}(\Omega^\eps) \xrightarrow{\eps \to 0}
  \Vol_{g_0}(\Omega).
\end{equation}
\end{theorem}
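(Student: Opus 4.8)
\medskip

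The plan is to realise $\beta\,\rd v_{g_0}$ as a limit, in a suitable dual Sobolev space, of the boundary measures $\iota_*\rd A_{g_0}$ of perforated domains $\Omega^\eps$, and then invoke the continuity result Proposition~\ref{prop:weakenough}. The construction of $\Omega^\eps$ follows the homogenisation scheme of Theorem~\ref{GL:thm} and~\cite{GL}: fix a mesh of size $\eps$ in $\R^d$, and from each cube remove a small ball (a ``hole'') whose radius $r_\eps$ is chosen, cube by cube, so that the harmonic capacity of the removed ball is comparable to $\eps^d \beta(x)$, where $x$ is the centre of the cube. Concretely, in dimension $d\ge 3$ the capacity of a ball of radius $r$ in a cube of side $\eps$ scales like $r^{d-2}$, so one sets $r_\eps \sim (\eps^d \beta(x))^{1/(d-2)}$; in dimension $d=2$ capacity is logarithmic, $\capa \sim (\log(\eps/r))^{-1}$, which forces the doubly-exponentially small radii and explains the $\RL^1(\log\RL)^1$ hypothesis on $\beta$ (this is the role played by the space defined on p.~\pageref{page:LLL}). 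One must be slightly careful near $\del\Omega$: the holes are only placed in cubes well inside $\Omega$, so that $\del\Omega\subset\del\Omega^\eps$ and no hole meets $\del\Omega$; since $\beta\in\RL^{d/2}$ (resp. $\RL^1(\log\RL)^1$) the contribution of a boundary layer of width $\eps$ is negligible as $\eps\to 0$.

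\medskip

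With $\Omega^\eps$ in hand, the heart of the argument is the measure-convergence statement
\begin{equation}
  \iota_*\rd A_{g_0}\big|_{\del\Omega^\eps} \longrightarrow \beta\,\rd v_{g_0} \quad\text{in } (\RW^{1,p}(\Omega))^*
\end{equation}
for an appropriate $p$ (here $p=2$ suffices in dimension $d=2$, while for $d\ge 3$ one takes $p$ slightly above $2d/(d+2)$ so that $\RL^{d/2}\hookrightarrow (\RW^{1,p})^*$, matching the $p$-admissibility hypotheses of Section~\ref{section:variationaleigenvalues}). This is the classical Cioranescu--Murat / strange-term-coming-from-nowhere phenomenon: for a test function $\phi$, the surface integral $\int_{\del\Omega^\eps}\phi^2$ over the tiny spheres is, after correcting $\phi$ by the capacitary potentials of the holes, asymptotically equal to $\sum_{\text{cubes}} \capa(\text{hole})\,\phi(x)^2 \approx \int_\Omega \beta\,\phi^2\,\rd v_{g_0}$. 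Making this rigorous requires the standard capacitary cut-off functions $w_\eps$ (equal to $0$ on the hole, $1$ outside a slightly larger ball) and the estimate that $\|1-w_\eps\|_{\RW^{1,p}}\to 0$ together with control of the ``interaction'' terms; all of this is uniform in $\phi$ bounded in $\RW^{1,p}$, which is exactly the compactness built into $p$-admissibility. This capacitary analysis — in particular getting the $d=2$ logarithmic bookkeeping to work for $\beta$ only in $\RL^1(\log\RL)^1$ rather than some $\RL^p$ — is the main obstacle.

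\medskip

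Once the measure convergence is established, Proposition~\ref{prop:weakenough} gives
\begin{equation}
  \lambda_k(\Omega,g_0,\iota_*\rd A_{g_0}|_{\del\Omega^\eps}) = \sigma_k(\Omega^\eps,g_0) \xrightarrow{\eps\to0} \lambda_k(\Omega,g_0,\beta\,\rd v_{g_0}),
\end{equation}
using the identification (noted after~\eqref{eq:varifirst}) of variational eigenvalues of a pushed-forward boundary measure with Steklov eigenvalues. To upgrade eigenvalue convergence to convergence of the \emph{normalised} quantities in~\eqref{eq:convergenceSteklov}, it remains to check the two auxiliary limits. That $\Vol_{g_0}(\Omega^\eps)\to\Vol_{g_0}(\Omega)$ is immediate since the total volume of the holes is $O(\sum \eps^d\cdot r_\eps^{d}/\eps^{?})\to 0$ by the radius choice and the integrability of $\beta$. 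For $\CH^{d-1}(\del\Omega^\eps)$ one computes $\CH^{d-1}(\del\Omega^\eps) = \CH^{d-1}(\del\Omega) + \sum_{\text{cubes}} c_d r_\eps^{d-1}$; by the same capacitary scaling this sum converges to $\mu(\Omega)=\int_\Omega\beta\,\rd v_{g_0}$ (indeed this is precisely the normalising factor appearing in $\normallap{k}(\Omega,g_0,\beta\rd v_{g_0})$), so the perimeter-normalised Steklov eigenvalues converge to the $\mu$-normalised variational eigenvalues as claimed. Finally, the convergence $\lambda_k(\Omega^\eps,g_0)\to\lambda_k(\Omega,g_0)$ for the (unweighted) Neumann problem follows from the same continuity principle applied to the constant measure $\rd v_{g_0}$: the holes have vanishing capacity density in the sense dual to $\RW^{1,2}$ when measured against the \emph{fixed} volume measure rather than a rescaled one — equivalently, $\rd v_{g_0}|_{\Omega^\eps}\to\rd v_{g_0}|_\Omega$ in $(\RW^{1,2})^*$ — so Neumann spectra are stable under the perforation. $\qed$
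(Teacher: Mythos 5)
Your construction is calibrated by the wrong quantity, and this breaks the argument at its core. You choose the hole radii so that the \emph{capacity} of each removed ball is $\sim\eps^d\beta$ (Cioranescu--Murat scaling) and then claim that $\int_{\del\Omega^\eps}\phi^2\,\rd A$ behaves like $\sum_{\text{cubes}}\capa(\text{hole})\,\phi(x)^2$. That identity is false: for a sphere of radius $r$ the boundary integral of $\phi^2$ is $\approx a_d r^{d-1}\phi(x)^2$, i.e.\ it is governed by the \emph{surface area} $r^{d-1}$, not the capacity $r^{d-2}$ (capacity is the right bookkeeping for Dirichlet holes, where correctors enter the energy; the Steklov problem sees the holes only through their boundary measure and through the norm of the harmonic extension operator). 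With your radii $r\sim(\eps^d\beta)^{1/(d-2)}$ the total surface area of the holes is $\sim\eps^{-d}\cdot r^{d-1}\sim\eps^{d/(d-2)}\to 0$, so the boundary measures of $\Omega^\eps$ converge to $\iota_*\rd A_{g_0}|_{\del\Omega}$ rather than to $\beta\,\rd v_{g_0}$, and the Steklov eigenvalues would simply converge to those of $\Omega$ itself. Your perimeter computation is also internally inconsistent: since $\del\Omega\subset\del\Omega^\eps$ carries fixed positive area, if $\CH^{d-1}(\del\Omega^\eps)$ stayed comparable to $\int_\Omega\beta$, the limiting (normalised) measure would retain the boundary term and you would obtain a dynamical-boundary-type problem, not the weighted Neumann problem. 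The paper instead sets $r_z^\eps=(\eps^\alpha\beta(\eps z)/a_d)^{1/(d-1)}$ with $d-1<\alpha<d$, so that the total hole area $\sim\eps^{\alpha-d}\int\beta$ \emph{blows up}; then the normalised boundary measures $\bar\mu^\eps_\alpha$ converge to the normalised $\beta\,\rd v_{g_0}$ (the contribution of $\del\Omega$ is washed out after normalisation), and only the product $\sigma_k(\Omega^\eps)\,\CH^{d-1}(\del\Omega^\eps)$ converges --- the perimeter itself does not converge to $\int_\Omega\beta\,\rd v_{g_0}$.

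Beyond this central issue there are three further gaps. First, in $d=2$ convergence in $(\RW^{1,2}(\Omega))^*$ does not suffice for Proposition~\ref{prop:weakenough}: products of $\RW^{1,2}$ functions need not lie in $\RW^{1,2}$, so one needs convergence against $\RW^{1,p}$ for some $p<2$ (or in $\RW^{1,2,-1/2}(M)^*$, paired through $\exp\RL^1$/$\LLL$); likewise the $\LLL$ hypothesis enters through admissibility and this duality, not through logarithmic capacity of the holes, and the paper's two-dimensional radii are polynomial in $\eps$, not doubly exponential. Second, the pointwise radius formula only makes sense for continuous $\beta$; the paper passes from continuous densities to general $\beta\in\RL^{d/2}$ or $\LLL$ by Proposition~\ref{L_infty_approx:prop} and a diagonal argument, a step absent from your proposal. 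Third, the analytic heart of the proof --- showing $\bar\mu^\eps_\alpha\to\bar\mu_\alpha$ in $\RW^{1,p}(M)^*$ via the cube-by-cube solution of $-\Delta\phi_z^\eps=\bar\mu^\eps_\alpha-c\,\bar\mu_\alpha$ (Lemma~\ref{lem:integralidentitybis}), the sphere trace bound (Lemma~\ref{lem:tracew11l1}) and the rescaling trick --- is exactly the part you flag as ``the main obstacle'' and leave unproved, so even granting your (incorrect) scaling the proposal does not contain a proof.
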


We note that combining the methods of \cite{GHL,GL}, we could have proved a weaker
form of this result, i.e. with $\beta$ continuous, using an intermediate
dynamical boundary value problem. The proof that we present here is more direct,
allows for a more singular $\beta$, and gives more information on domains that are nearly maximizing $\normallap{k}$.

In order to prove this result, we realise the domains $\Omega^\eps$ by removing
tiny balls from $\Omega$ whose centres are periodically distributed. The
construction is in the spirit of homogenisation theory, with the distinction
that the radius of the balls removed is not uniform, but rather varies according
to the a continuous approximation of the function $\beta$, and is chosen so that the total boundary area tends to
$\infty$ in a controlled way as $\eps \to 0$. Variation within periods in homogenisation theory has
also been explored in \cite{ptashnyk}. In our method of proof, the number of
boundary components tends to $\infty$. By Theorem \ref{thm:quantbdry}, this is
unavoidable in dimension $2$ since we can obtain planar domains with
$\normalstek{1}$ as close to $8\pi$ as we want. In higher dimension, it is
possible to achieve the same result with only one boundary component, see
\cite{fraschoen3,GL}.

Finally, we remark that a straightforward modification of our method yields an
analogous result for compact Riemannian manifolds, see
Remark~\ref{manifold:remark} and a similar result for $\beta$ continuous on
closed Riemannian manifolds in \cite[Theorem 1.1]{GL}.
\begin{theorem}
\label{manifoldhomo:thm}
  For any compact Riemannian manifold $(M,g)$ of dimension $d$, and any
  nonnegative $0\not\equiv\beta \in \RL^{d/2}(M)$ ($d \ge 3$) or $\beta \in
  \LLL(M)$ ($d=2$), there exists a family $\Omega^\eps
\subset M$ of domains such that for each $k \in \N$,
\begin{equation}
  \label{eq:convergenceSteklovmanifolds}
  \normalstek{k}(\Omega^\eps,g) 
  \xrightarrow{\eps \to 0} \normallap{k}(M,g,\beta \rd v_g).
\end{equation}
Harmonic extensions of the associated eigenfunctions from $\Omega$ to $M$
converge strongly to eigenfunctions of the limit problem in $\RW^{1,2}(M)$. 

For the same family $\Omega^\eps$,
\begin{equation}
  \lambda_k(\Omega^\eps,g) \xrightarrow{\eps \to 0} \lambda_k(M,g),
  \qquad \text{and} \qquad \Vol_{g}(\Omega^\eps) \xrightarrow{\eps \to 0}
  \Vol_{g}(M).
\end{equation}
Here, harmonic extensions of the associated eigenfunctions converge weakly to
eigenfunctions of the limit problem in $\RW^{1,2}(M)$.
\end{theorem}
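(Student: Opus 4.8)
The plan is to mimic the construction used for the Euclidean case (Theorem~\ref{thm:homointro}) and transfer it to the manifold $(M,g)$ by working in local coordinate patches, replacing the periodic lattice of deleted balls in $\R^d$ by a lattice in geodesic normal coordinates around a dense net of points. First I would reduce to the case $\beta$ continuous: by density of continuous functions in $\RL^{d/2}(M)$ (resp. in $\LLL(M)$), and using the continuity of variational eigenvalues from Proposition~\ref{prop:weakenough} together with the fact that $\beta\mapsto \beta\rd v_g$ is continuous into $(\RW^{1,p}(M))^*$ for the relevant $p$, it suffices to build the family $\Omega^\eps$ for continuous $\beta$ and then diagonalise. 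So assume $\beta\in\RC(M)$, $\beta\ge 0$, $\beta\not\equiv 0$.

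For continuous $\beta$, the key step is the local homogenisation estimate: cover $M$ by finitely many charts on which the metric is uniformly comparable to the flat metric, partition each chart into cubes of side $\eps$, and in the cube centred at $x$ remove a ball of radius $r_\eps(x)$ chosen — exactly as in the Euclidean proof — so that the capacity contribution of that ball matches $\beta(x)$ times the cube volume (in dimension $d\ge 3$ this means $r_\eps(x)^{d-2}\asymp \eps^d\beta(x)$, up to the capacitary constant; in $d=2$ one uses the logarithmic capacity and $r_\eps(x)=\exp(-c/(\eps^2\beta(x)))$ type scaling). One shows, using the standard oscillating test-function / corrector argument of homogenisation, that $\CH^{d-1}(\del\Omega^\eps\setminus\del M)$ stays bounded, that the Steklov quadratic form $\int_{\del\Omega^\eps} u^2$ converges, when evaluated on bounded families in $\RW^{1,2}$, to $\int_M u^2\beta\rd v_g$, and that $\Vol_g(\Omega^\eps)\to\Vol_g(M)$, while the Dirichlet energy is essentially unchanged since removing balls of this size is a negligible perturbation in $\RW^{1,2}$-capacity. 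This gives the $\Gamma$-type convergence of the pair of quadratic forms, hence convergence of each variational eigenvalue $\sigma_k(\Omega^\eps,g)\to\lambda_k(M,g,\beta\rd v_g)$, and simultaneously $\lambda_k(\Omega^\eps,g)\to\lambda_k(M,g)$ because the deleted set has vanishing capacity so the Neumann spectrum is stable. The normalisation factors converge because $\CH^{d-1}(\del\Omega^\eps)$ and $\Vol_g(\Omega^\eps)$ converge, giving~\eqref{eq:convergenceSteklovmanifolds}.

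For the eigenfunction convergence statements, I would argue as follows. For each $\eps$ let $u_k^\eps$ be an $\RL^2(\del\Omega^\eps)$-normalised Steklov eigenfunction on $\Omega^\eps$, extended harmonically to $\Omega^\eps$ and then extended to all of $M$ by solving the Dirichlet problem on the deleted balls (the harmonic extension across the small holes). Since the deleted balls have small capacity, the energies of these extensions are uniformly bounded, so after passing to a subsequence $u_k^\eps\wk u_k$ weakly in $\RW^{1,2}(M)$. Using the homogenisation convergence of the boundary form one identifies the weak limit $u_k$ as a variational eigenfunction of the limit problem, and the convergence of eigenvalues upgrades weak convergence of the energies to norm convergence, hence strong $\RW^{1,2}(M)$ convergence in the Steklov case; in the Neumann case one only controls the $\RL^2(M,\rd v_g)$ normalisation in the limit, so one obtains weak convergence. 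The orthogonality relations pass to the limit, so no eigenfunctions are lost.

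The main obstacle I anticipate is the $d=2$ case: the logarithmic nature of capacity in two dimensions makes the radii $r_\eps(x)$ super-exponentially small and very sensitive to the values of $\beta$, so controlling $\CH^1(\del\Omega^\eps)$ and the convergence of $\int_{\del\Omega^\eps}u^2$ uniformly requires the refined hypothesis $\beta\in\LLL(M)$ rather than merely $\beta\in\RL^1$, and one must carefully handle the set where $\beta$ is small or zero (there one simply removes nothing, or balls of a fixed tiny radius). A secondary technical point is patching the local constructions across chart overlaps without creating uncontrolled boundary area or breaking the capacity bookkeeping; this is handled by a partition of unity argument and by noting that the corrector estimates are local. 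Everything else is a routine, if lengthy, adaptation of the Euclidean argument, with the metric $g$ entering only through uniformly bounded comparison constants on each chart; the precise bookkeeping is carried out in the proof of Theorem~\ref{thm:homointro} and indicated in Remark~\ref{manifold:remark}.
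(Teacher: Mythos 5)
Your outer frame --- reduce to continuous $\beta$ via Proposition~\ref{L_infty_approx:prop} and a diagonal argument, then run the perforation construction on the manifold with metric distortions absorbed into uniform constants --- is exactly the paper's route (Remark~\ref{manifold:remark}: geodesic balls around a maximal $\eps$-separated set, then the machinery of Sections~\ref{section:variationaleigenvalues}--\ref{sec:hom}). The genuine gap is the calibration of the removed balls. You choose the radii so that the \emph{capacity} of each ball matches $\beta(x)$ times the cell volume, $r_\eps(x)^{d-2}\asymp \eps^{d}\beta(x)$ for $d\ge 3$ and $\log(1/r_\eps)\asymp(\eps^{2}\beta)^{-1}$ for $d=2$. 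That is the Cioranescu--Murat (``crushed ice'') scaling, which is the right one when the holes carry Dirichlet conditions and generate a potential term; it is the wrong one here, because the Steklov eigenvalues are the variational eigenvalues of the \emph{boundary measure} $\mu^\eps=\iota_*\rd A^\eps$, and what must be shown is that the normalised measures $\bar\mu^\eps$ converge to $\bar{\beta\rd v_g}$ in $(\RW^{1,p}(M))^*$. With your radii the boundary area of the ball in the cell at $x$ is $\asymp(\eps^{d}\beta(x))^{(d-1)/(d-2)}$, so the total hole perimeter tends to $0$, the unnormalised form $\int_{\del\Omega^\eps}u^2$ tends to $0$ rather than to $\int_M u^2\beta\rd v_g$, and the \emph{normalised} hole-boundary measure converges to a multiple of $\beta^{(d-1)/(d-2)}\rd v_g$ (in $d=2$ it concentrates on the set where $\beta$ is maximal). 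The paper instead takes $r_z^{\eps}$ with $(r_z^\eps)^{d-1}\asymp\eps^{\alpha}\beta$, $d-1<\alpha<d$, as in \eqref{eq:reps}, i.e.\ boundary area (not capacity) proportional to $\beta$ times cell volume; only then does $\bar\mu^\eps_\alpha\wks\bar{\beta\rd v_g}$ as in \eqref{eq:barmu}.

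Two further assertions in your sketch conflict with what is needed. First, ``$\CH^{d-1}(\del\Omega^\eps\setminus\del M)$ stays bounded'' cannot hold when $\del M\neq\varnothing$ (allowed here): the limit measure must not charge $\del M$, so the hole area must dominate $\CH^{d-1}(\del M)$ and hence blow up --- in the paper it grows like $\eps^{\alpha-d}\int_M\beta$, and this divergence is precisely the point stressed in the abstract; even for closed $M$ bounded perimeter would force the area calibration $\alpha=d$, not your capacity calibration. Second, ``the deleted set has vanishing capacity so the Neumann spectrum is stable'' is false under your own scaling (the total capacity is $\asymp\int_M\beta\,\rd v_g$ by construction), and capacity is not the relevant stability mechanism for Neumann anyway: the paper uses vanishing volume \eqref{eq:volumeholes} together with uniformly bounded harmonic extension operators (condition \csvi, via \cite{RauchTaylor}), and it proves the measure convergence not by oscillating test functions/correctors but by the cellwise auxiliary potentials of Lemma~\ref{lem:integralidentitybis}, the rescaling trick, and the trace estimate of Lemma~\ref{lem:tracew11l1}, feeding into Proposition~\ref{prop:weakenough}; strong $\RW^{1,2}$ convergence of the Steklov eigenfunctions then comes from \eqref{eq:smallenergy}. (Also, the hypothesis $\beta\in\LLL$ in $d=2$ is used for admissibility of the limit measure, Example~\ref{ex:lpdensity}, not to control the radii.) So the proposal's skeleton is sound, but as written the construction converges to the wrong limit problem; replacing the capacity calibration by the boundary-area calibration and running the dual-Sobolev measure-convergence argument is where the actual proof lives.
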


\subsection{Flexibility in the prescription of the Steklov spectrum.}
Bucur--Nahon~\cite{bucurnahon} have recently shown
  that the Weinstock and Hersch--Payne--Schiffer inequalities are unstable, in the sense that there are simply-connected domains that are very
  far from the disk --- or from a union of $k$ identical disks --- with their
  $k$th normalised
  eigenvalue arbitrarily close to $2\pi k$. 
  In fact, they prove the following result.
  \begin{theorem}[Bucur--Nahon, {\cite[Theorem 1.1]{bucurnahon}}]
    \label{thm:BucurNahon}
    Let $\Omega_1, \Omega_2 \subset \R^2$ be two bounded, conformally equivalent
     domains with smooth boundary. Then, there exists a sequence of open
    domains $\Omega^\eps$ with uniformly bounded perimeter such that
    \begin{equation}
      d_{\text{Haus}}(\del \Omega^\eps, \del \Omega_1) \xrightarrow{\eps \to 0} 0,
      \quad \text{and,} \quad \text{for all } k \in \N, \; 
      \normalstek{k}(\Omega^\eps) \xrightarrow{\eps \to 0} \normalstek{k}(\Omega_2).
    \end{equation}
  \end{theorem}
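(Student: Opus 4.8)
The plan is to combine the two main flexibility mechanisms established in the paper. The statement concerns two conformally equivalent planar domains $\Omega_1$ and $\Omega_2$; by the uniformisation/Riemann mapping theorem there is a conformal diffeomorphism $\Phi \colon \Omega_1 \to \Omega_2$, which we may assume extends smoothly to the boundaries. Under such a map, the flat metric $g_0$ on $\Omega_2$ pulls back to a conformal metric $h = \Phi^* g_0 = e^{\omega} g_0$ on $\Omega_1$ for some smooth function $\omega$. The key point is that the Steklov spectrum of $(\Omega_2, g_0)$, normalised by perimeter, equals $\normalstek{k}(\Omega_1, h)$, since Steklov eigenvalues transform by conformal covariance in dimension two (the Dirichlet energy is a conformal invariant, and the boundary measure of $h$ is $e^{\omega/2}$ times that of $g_0$). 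So it suffices to produce domains $\Omega^\eps \subset \Omega_1$ with boundary Hausdorff-close to $\del \Omega_1$, uniformly bounded perimeter, and $\normalstek{k}(\Omega^\eps, g_0) \to \normalstek{k}(\Omega_1, h)$.

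Next I would realise $\normalstek{k}(\Omega_1, h)$ as a weighted Neumann eigenvalue. Writing $h = e^\omega g_0$, the variational eigenvalue $\lambda_k(\Omega_1, g_0, \beta\, \rd v_{g_0})$ with weight $\beta = e^{\omega/2}$ (the density of the boundary measure of $h$ against that of $g_0$, transported appropriately) should match, up to the right normalisation, the perimeter-normalised Steklov eigenvalue $\normalstek{k}(\Omega_1, h)$. Here one must be slightly careful: the conformal factor controls the \emph{boundary} measure, so the relevant comparison is between the Steklov problem on $(\Omega_1,h)$ and a Steklov-type problem on $(\Omega_1, g_0)$ with a boundary weight. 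I would instead argue directly: apply Theorem~\ref{thm:homointro} (or rather the mechanism behind it) to approximate the \emph{Steklov} problem on $(\Omega_1, h)$ by Steklov problems on perforated subdomains $\Omega_1^\eps \subset \Omega_1$ obtained by removing a periodic array of tiny disks whose radii are modulated by a continuous approximation of $e^{\omega/2}$ near $\del\Omega_1$ and are arranged so the total added boundary length stays bounded. The continuity result Proposition~\ref{prop:weakenough} then gives $\normalstek{k}(\Omega_1^\eps, g_0) \to \normalstek{k}(\Omega_1, h) = \normalstek{k}(\Omega_2, g_0)$, since the perforation measures converge in $(\RW^{1,p})^*$ to the appropriate boundary measure. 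Because only tiny disks near $\del\Omega_1$ are removed and the perimeter added is controlled, $d_{\text{Haus}}(\del\Omega_1^\eps, \del\Omega_1) \to 0$ and $\CH^1(\del\Omega_1^\eps)$ stays uniformly bounded. Setting $\Omega^\eps = \Omega_1^\eps$ finishes the construction.

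I expect the main obstacle to be the \textbf{simultaneous control of three quantities}: the Hausdorff convergence of boundaries, the uniform bound on perimeter, and the spectral convergence for \emph{all} $k$ at once. The homogenisation construction in this paper naturally produces perimeters tending to infinity (that is the whole point of Theorem~\ref{thm:mainSigmak}), so to keep the perimeter bounded one must instead tune the perforation so that the limiting boundary measure is a genuine measure on $\del\Omega_1$ with the prescribed conformal density $e^{\omega/2}$, rather than a bulk measure — this is a different regime and requires a careful choice of disk radii decaying like a power of $\eps$ with the correct exponent, together with a check that the resulting measures still converge in the dual Sobolev norm and still satisfy the compactness hypothesis of Proposition~\ref{prop:weakenough}. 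A secondary technical point is ensuring the approximation of $e^{\omega/2}$ by a continuous (indeed piecewise constant on the period cells) function is uniform enough that the diagonal limit in $k$ goes through; this is where one invokes that the convergence in Proposition~\ref{prop:weakenough} is locally uniform in $k$, so a standard diagonal argument over a countable dense set of approximating weights and perforation scales yields a single sequence $\Omega^\eps$ working for every $k$ simultaneously.
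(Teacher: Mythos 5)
First, a point of context: the paper does not prove this statement at all --- it is quoted from \cite{bucurnahon}, whose proof proceeds by smooth local perturbations of $\del\Omega_1$, producing domains \emph{diffeomorphic} to $\Omega_1$ with uniformly bounded perimeter. The paper only remarks (Remark~\ref{rem:BucurNahon} and the surrounding discussion) that a \emph{similar} result can be extracted from Theorem~\ref{thm:homointro}, at the price of perforating $\Omega_1$ by many small holes concentrated near $\del\Omega_1$. Your proposal follows this second, genuinely different route, and that is exactly where it has a gap: the part of the statement that the paper's machinery does not deliver as stated is the uniform bound on the perimeter.

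Concretely, two steps fail as written. (i) If the total boundary length of the holes stays bounded --- which you need for the perimeter bound --- then the outer boundary $\del\Omega_1\subset\del\Omega^\eps$ keeps contributing its \emph{unweighted} arclength to the normalised Steklov measure in the limit; the limiting measure is at best of the form $(1+\rho')\,\rd A$ on $\del\Omega_1$ with $\rho'\ge 0$ the limiting density of the hole boundaries, never the measure $e^{\omega/2}\rd A$ you target. This can be repaired by aiming for $\rho'=c\,e^{\omega/2}-1$ with $c$ large enough that this is nonnegative (possible since $\abs{\Phi'}$ is bounded below on the smooth boundary) and using the invariance of $\lambda_k(\Omega_1,g_0,\mu)\,\mu(\Omega_1)$ under scaling of $\mu$, but your argument never addresses the extra weight-one term. (ii) More seriously, the convergence you invoke is not covered by the paper's results: in the bounded-perimeter regime ($\alpha\ge d$) Theorem~\ref{thm:homo} only produces the limits $\rd A$ or $\beta\,\rd v+\rd A$ for a \emph{fixed} continuous bulk density $\beta$, and the key estimate, Proposition~\ref{prop:homstek}, has constants depending on $\norm{\beta}_{\RC^0}$ and on the modulus of continuity of $\beta$. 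To reach a weighted \emph{boundary} measure with bounded added length you must let the weight concentrate in an $\eps$-dependent collar (blowing up like the inverse collar width), and no uniform-in-$\eps$ version of those estimates is proved; the regime where the paper's theorems do apply directly (fixed collar density as in Theorem~\ref{Steklov_approx:thm}, then Theorem~\ref{thm:homointro} with $d-1<\alpha<d$ and a diagonal argument) yields the spectral and Hausdorff convergence but with perimeters tending to infinity --- i.e.\ precisely the weaker ``similar result'' the paper alludes to, not the theorem as stated. So as written your proof does not establish the bounded-perimeter conclusion; closing it would require new quantitative homogenisation estimates, or reverting to the boundary-perturbation argument of Bucur--Nahon.
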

The domains $\Omega^\eps$ constructed in~\cite{bucurnahon} are diffeomorphic to the original domains. They
are obtained by a local perturbation of the boundary. We remark that a similar 
result can be obtained as an application of Theorem~\ref{thm:homointro}, see
Remark~\ref{rem:BucurNahon} for details. However, the domains $\Omega^\eps$
obtained this way have many small holes concentrated near the boundary
$\del\Omega_1$. 

We further investigate  flexibility results for the Steklov spectrum of domains
in Euclidean space. In many ways, the Neumann and Steklov problems have similar features. This
has led to an investigation of bounds for one eigenvalue problem in terms of the
other, see e.g. \cite{HassannezhadSiffert,kuttsigi1}.
It was previously thought that some universal inequalities between
perimeter-normalised Steklov eigenvalues and area-normalised Neumann eigenvalues
could exist.
It is known from
\cite[Section 2.2]{GPHPS} that normalised Steklov eigenvalues can
be arbitrarily small while keeping the normalised Neumann eigenvalues bounded away from
zero. We use Theorem \ref{thm:homointro} to prove that there are also domains
with arbitrarily small area-normalised Neumann eigenvalues
$\normallap{k}(\Omega,g_0)$, for which the Steklov eigenvalues are bounded away from
zero.
\begin{theorem}
  \label{thm:compsteklovneumann}
  There exists a sequence of planar domains $\Omega^\eps$ such that the
  normalised Steklov eigenvalue $\normalstek{1}(\Omega^\eps)\xrightarrow{\,\eps\to
  0\,}8\pi$ while for each $k\in\N$, the normalised Neumann eigenvalues satisfy
  $\normallap{k}(\Omega^\eps) \xrightarrow{\,\eps\to 0\,}0.$
\end{theorem}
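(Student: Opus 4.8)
The plan is to combine Theorem~\ref{thm:homointro} with a clever choice of weight $\beta$ for which the weighted Neumann eigenvalues $\normallap{k}(\Omega,g_0,\beta\rd v_{g_0})$ are large (close to $8\pi$) while the unweighted Neumann eigenvalues $\normallap{k}(\Omega,g_0)$ are controlled. More precisely, we already know from Theorem~\ref{thm:mainSigmak} (or from \eqref{eq:SigmakS2} applied to a disk) that one can choose a domain $\Omega$ and a nonnegative weight $\beta$ such that $\normallap{1}(\Omega,g_0,\beta\rd v_{g_0})$ is as close to $8\pi$ as desired: indeed, $\optimstek{1}(\R^2) = 8\pi$ and the maximising sequences are, after the homogenisation correspondence, weighted Neumann problems. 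So the first step is to fix $\delta > 0$ and produce, via the sharpness in Theorem~\ref{thm:mainSigmak}, a bounded Lipschitz domain $\Omega$ together with a nonnegative $\beta \in \LLL(\Omega)$ with $\mu(\Omega) := \int_\Omega \beta \rd v_{g_0}$ normalised so that $\normallap{1}(\Omega, g_0, \beta \rd v_{g_0}) > 8\pi - \delta$.

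The key new point is that, because $\beta$ only needs to lie in $\LLL(\Omega) \supsetneq \RL^1(\Omega)$ and in particular need not be bounded, we have total freedom in how the mass $\mu(\Omega)$ is distributed — and we may arrange $\Omega$ to have large area $\Vol_{g_0}(\Omega)$ while keeping $\mu(\Omega)$ fixed. Since
\[
  \normallap{k}(\Omega,g_0) = \lambda_k(\Omega,g_0)\Vol_{g_0}(\Omega),
\]
and $\lambda_k(\Omega,g_0)$ is the usual Neumann eigenvalue, a domain that is long and thin (say a thin rectangle of fixed area but huge diameter, or a domain degenerating in the sense that it has a narrow bottleneck) has $\lambda_k(\Omega,g_0) \to 0$ faster than $\Vol_{g_0}(\Omega)$ grows, so $\normallap{k}(\Omega,g_0) \to 0$ for every fixed $k$. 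Concretely, I would take $\Omega = \Omega_L$ a rescaled thin rectangle (or a dumbbell) depending on a parameter $L$, place the weight $\beta_L$ supported in a unit-size piece where it realises the near-optimal weighted Neumann eigenvalue, and check that as $L \to \infty$ one simultaneously has $\normallap{1}(\Omega_L, g_0, \beta_L\rd v_{g_0}) \to 8\pi$ (this is stable under adding a large low-weight region, by the variational characterisation \eqref{eq:varifirst}, since test functions can be taken constant on the added part) and $\normallap{k}(\Omega_L, g_0) \to 0$ for each $k$.

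Having fixed such a $\Omega_L$ and $\beta_L$ with $\normallap{1}(\Omega_L,g_0,\beta_L \rd v_{g_0}) > 8\pi - 2\delta$ and $\normallap{k}(\Omega_L,g_0) < \delta$ for $k \le N(\delta)$, I then apply Theorem~\ref{thm:homointro}: it produces a family $\Omega^{\eps} = \Omega^{\eps}_L \subset \Omega_L$ with
\[
  \normalstek{k}(\Omega^{\eps},g_0) \xrightarrow{\eps \to 0} \normallap{k}(\Omega_L,g_0,\beta_L\rd v_{g_0}),
  \qquad
  \lambda_k(\Omega^{\eps},g_0) \to \lambda_k(\Omega_L,g_0),
  \qquad
  \Vol_{g_0}(\Omega^{\eps}) \to \Vol_{g_0}(\Omega_L).
\]
The last two give $\normallap{k}(\Omega^{\eps},g_0) = \lambda_k(\Omega^{\eps},g_0)\Vol_{g_0}(\Omega^{\eps}) \to \normallap{k}(\Omega_L,g_0) < \delta$, while the first gives $\normalstek{1}(\Omega^{\eps},g_0) \to \normallap{1}(\Omega_L,g_0,\beta_L\rd v_{g_0}) > 8\pi - 2\delta$. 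Finally, a diagonal argument over a sequence $\delta_m \to 0$ (choosing $L = L_m$, then $\eps = \eps_m$ small enough) yields a single sequence of planar domains $\Omega^{\eps}$ with $\normalstek{1}(\Omega^{\eps}) \to 8\pi$ and $\normallap{k}(\Omega^{\eps}) \to 0$ for every fixed $k$. (If one prefers $\normalstek{1} \to 8\pi$ exactly rather than merely $\limsup \ge 8\pi$, note $\normalstek{1}(\Omega^\eps) \le 8\pi$ always holds by \eqref{eq:SigmakS2}, so the convergence is automatic.)

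The main obstacle is the compatibility of the two competing requirements in the choice of $\Omega_L, \beta_L$: I must make $\Vol_{g_0}(\Omega_L) \cdot \lambda_k(\Omega_L, g_0) \to 0$ while keeping $\normallap{1}(\Omega_L, g_0, \beta_L \rd v_{g_0})$ bounded below near $8\pi$. The delicate part is verifying that enlarging the domain (to kill $\normallap{k}$) does not destroy the near-optimality of the weighted first eigenvalue — this requires a mild quantitative stability statement for $\lambda_1(\Omega_L, g_0, \beta_L\rd v_{g_0})$ under the domain modification, which should follow from the min-max principle by using the eigenfunction of the small piece, extended by its boundary constant value, as a test function and estimating the Rayleigh quotient. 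One also needs to make sure the admissibility hypothesis ($\beta_L \in \LLL(\Omega_L)$, with the measure being $2$-admissible in the sense of Section~\ref{section:variationaleigenvalues}) is preserved; this is immediate since $\beta_L$ is a fixed $\LLL$ function supported on a fixed compact piece, independent of $L$.
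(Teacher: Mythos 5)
Your overall architecture coincides with the paper's: pick a planar domain whose unweighted normalised Neumann eigenvalues are tiny (a thin rectangle), equip it with a weight $\beta$ whose first weighted eigenvalue, normalised by total mass, is close to $8\pi$, feed this into Theorem~\ref{thm:homointro}, and conclude by a diagonal argument together with the a priori bound $\normalstek{1}\le 8\pi$. The paper, however, needs no gluing or stability step: since a thin rectangle is simply connected, it simply reruns the proof of Theorem~\ref{thm:mainSigmak} starting from the rectangle itself, so that $\beta=\abs{\rd\Phi}^2$ is the conformal factor of a conformal map $\Phi$ from the rectangle onto a near-maximising sphere with a small disk removed; conformal invariance of the Dirichlet energy then gives $\normallap{1}(\Omega,g_0,\beta\rd v_{g_0})>8\pi-2\delta$ on the whole rectangle directly, while $\normallap{k}(\Omega,g_0)<\delta/2$ because the rectangle is thin, and Theorem~\ref{thm:homointro} does the rest.

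The soft spot in your variant is exactly the step you flag as delicate. After fixing $\beta_L$ on a unit-size piece $U$ and attaching a large region of zero (or low) weight, what you need is that $\normallap{1}(\Omega_L,g_0,\beta_L\rd v_{g_0})$ does not \emph{drop} below $8\pi-\delta$, i.e.\ a lower bound on the enlarged-domain eigenvalue; but the argument you sketch --- extending the eigenfunction of $U$ by a constant and using it as a test function in \eqref{eq:varifirst} --- only yields an upper bound, which is the wrong direction (and is anyway subsumed by $\normallap{1}\le 8\pi$, which follows from \eqref{eq:SigmakS2} and Theorem~\ref{thm:homointro}). The fact you need is nevertheless true and easy, provided the weight vanishes identically off $U$: for any trial subspace $F\subset\RC^\infty(\overline{\Omega_L})$ that remains two-dimensional in $\RL^2(\mu)$, restriction to $U$ leaves the denominators of the Rayleigh quotients unchanged (the measure is supported in $\overline U$) and can only decrease the numerators, whence $\lambda_1(\Omega_L,g_0,\mu)\ge\lambda_1(U,g_0,\mu)$ and, since $\mu(\Omega_L)=\mu(U)$, the normalised eigenvalue does not decrease; admissibility of $\mu$ on $\Omega_L$ is preserved as you say. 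With this replacement of your stability argument (or, more simply, by putting the conformal-factor weight on the whole thin rectangle as the paper does, which removes the issue entirely), your proof goes through.
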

The reader should compare with the results of~\cite[Section 5]{BucurHenrotMichetti}, where another family of examples where $\normalstek{1}(\Omega)\leq\normallap{1}(\Omega)$ fails is constructed.

  \begin{remark}
    Similarly to Theorem~\ref{thm:compsteklovneumann}, on any closed Riemannian surface $(M,g)$ there exists a sequence of conformal metrics $g_n=e^{f_n}g$ and a sequence of domains $\Omega_n\subset M$ such that $\normalstek{1}(\Omega_n,g_n)\xrightarrow{\eps\to 0}\Sigma_1(M,g)$ while for each $k\in\N$, the normalised Laplace eigenvalues satisfy
  $\normallap{k}(M,g_n) \xrightarrow{\,\eps\to 0\,}0.$
  \end{remark}

\subsection{Plan of the paper, heuristics, and strategies}
The majority of the paper is centred around
Theorems~\ref{thm:sigmaklambdak} and \ref{thm:homointro};
we either discuss their applications, develop the theory towards their proof and
justify some constraints that become apparent in the proof. We note that the
proof of both of these theorems are very similar in nature under the scheme that
we develop.

In Section~\ref{sec:applications}, we start by presenting applications of
Theorem~\ref{thm:homointro}, including the proofs of Theorems~\ref{thm:mainSigmak} and~\ref{thm:compsteklovneumann}. 
The proofs of Theorems~\ref{thm:quantbdry},~\ref{thm:quantdiameter} are
independent of the rest of the paper and are also presented there. 

In Section~\ref{section:variationaleigenvalues}, we present the general
framework of variational eigenvalues associated to a Radon measure. This
is a unifying framework which allows one to compare different, seemingly
unrelated, eigenvalue problems on a manifold. We start with a general
description of the setup and give conditions on and examples of measures giving rise to
eigenvalues behaving like Laplace eigenvalues. Finally, we obtain continuity
of the eigenvalues and eigenfunctions with respect to convergence of the
measures in the dual of some appropriate Sobolev space.

An immediate application of the framework presented in
Section~\ref{section:variationaleigenvalues}, is given
in Section~\ref{sec:1applications}. 
In particular, we prove that on any surface we can approximate Steklov-type
eigenvalues with Laplace eigenvalues associated with a degenerating sequence of
metrics, giving as an application a proof of Theorem~\ref{thm:sigmaklambdak}.

\subsection{Notation}

We make here a list of notation that is explicitly reserved throughout the
paper.

\subsubsection*{Manifolds and their domains}

Whenever we mention a manifold or a surface without qualification, it may have nonempty
boundary, which is always assumed to be Lipschitz. In any PDE
written in strong form, the boundary term may be ignored when the manifold has
empty boundary. Closed manifolds and surfaces are compact and without boundary. We
reserve the letter $M$ for manifolds. 
When $M$ has
nonempty boundary, we denote by $\operatorname{int}(M)$ the set $M \setminus
\del M$. 

A domain in a manifold $M$ is a bounded open connected subset of $M$ with
Lipschitz boundary if its boundary is nonempty. We reserve the letters $\Omega$
and $\Upsilon$ for domains.

\subsubsection*{Standard measures and metrics}
Let $(M,g)$ be a Riemannian manifold. 
We denote he volume measure $\rd v_g$. If
there is a canonical metric on $M$, it is denoted by $g_0$. This could be the
flat metric on $\R^d$ or the round metric on the sphere. It is usually a
constant curvature metric. If
$M$ has a boundary, we write $\rd A_g$ for the boundary measure induced by the
metric. It is often useful to recall that $\rd A_g := \CH^{d-1}{}_{\lfloor} \del
M$, where $\CH^{d-1}$ is the $(d-1)$-dimensional Hausdorff measure induced by
the metric $g$ on $M$. We abuse notation and make no distinction between $\rd
A_g$ as a measure on $\del M$, and the pushforward by inclusion $\iota_* \rd
A_g$ which is a measure on $M$.

In cases where confusion may arise, if we want to explicitly distinguish the restriction of $\rd v_g$ to a domain
$\Omega \subset M$ we write
\begin{equation}
  \rd v_g^\Omega := (\rd v_g)_{\rfloor} \Omega,
\end{equation}
and similarly for $\Sigma \subset \del M$,
\begin{equation}
  \rd A_g^\Sigma := (\rd A_g)_{\lfloor} \Sigma := \CH^{d-1}{}_\lfloor \Sigma.
\end{equation}

\subsubsection*{Standard function spaces and capacity}

Every vector space under consideration is defined over $\R$. For $X$ a
topological vector space, $\xi \in X^*, x \in X$ 
 we denote by $\langle \xi,x\rangle_X :=
\xi(x)$ the duality pairing. Since all vector spaces are real, we use this
notation to denote an inner product as well, without confusion.

For $p \in [1,\infty]$ we let $p'$ be its H\"older conjugate and for $p \in
[1,d)$ we  let $p^\star$ be its Sobolev conjugate, given respectively by 
\begin{equation}
  p' = \frac{p}{p-1} \qquad \text{and} \qquad p^\star = \frac{pd}{d-p}.
\end{equation}

In order to characterise critical scenarios in dimension $2$, we will require a
generalisation of the usual Lebesgue $\RL^p$ and Sobolev $\RW^{1,p}$ spaces. The
first spaces we introduce are the Orlicz spaces $\RL^p (\log \RL)^a$, for $p \ge
1$ and $a \in \R$ and $\exp \RL^a$ for $a > 0$. For a reference on Orlicz space,
see e.g. \cite[Chapters 4.6--4.8]{benettsharpley}. The space $\RL^p (\log
\RL^a)(M)$ consists of all \label{page:LLL}
  functions $f$ such that
  \begin{equation}
    \int_M \left[\abs{f} \left(\log (2+ \abs f)\right)^a\right]^p \rd v_g < \infty.
  \end{equation}
  For $p > 1$ and $a \in \R$, or $p = 1, a \ge 0$, it can be endowed with the
  Luxemburg norm
  \begin{equation}
    \norm{f}_{\RL^p (\log L)^a(M)} = \inf \set{ \eta > 0 : 
      \int_\Omega \Big[\abs{f/\eta} \left(\log (2+ \abs{f/\eta})\right)^a\Big]^p \rd v_g
    \le1},
  \end{equation}
  under which it is a Banach space. For $a > 0$, we also define the Orlicz spaces $\exp \RL^a$ to be
\begin{equation}
  \label{eq:exprla}
  \exp \RL^a := \set{f \in \RL^\infty(M) : \exists \eta > 0 : \int_M \exp\left(
  \abs{f/\eta}^a\right) \de v_g < \infty}.
\end{equation}
Just like the spaces $\RL^p (\log \RL)^a$, they can be endowed with the
Luxemburg norm
\begin{equation}
  \norm{f}_{\exp \RL^a} = \inf \set{\eta > 0 : \int_M \exp\left(\abs{f/\eta}^a
    \right) \de
  v_g \le 1},
\end{equation}
under which it is also a Banach space. The space $\exp L^1$ serves as a pairing space for
$\LLL$, see \cite[Theorem 4.6.5]{benettsharpley}, in the sense that there is $C
> 0$ so that for $f \in
\LLL$, $\phi \in \exp L^1$,
\begin{equation}
  \int f \phi \de v_g \le C \norm{f}_{\LLL} \norm{\phi}_{\exp \RL^1}.
\end{equation}
We identify $\exp \RL^1$ witht he dual of $\LLL$. For every $p \ge 1$ and $a,
\eps > 0$, 
we have the relations 
$$
\RL^\infty(M) \subset \exp \RL^a(M) \subset \RL^p(M)
$$
and
\begin{equation}
  \RL^{p+\eps}(M) \subset \RL^p (\log \RL)^{a}(M) \subset \RL^p(M) \subset
  \RL^p (\log \RL)^{-a} (M) \subset \RL^{p-\eps}(M).
\end{equation}
We also define for $p \ge 1$ and $a \in \R$ the
Orlicz--Sobolev spaces $\RW^{1,p,a}(M)$
as
\begin{equation}
  \label{eq:orliczsob}
  \RW^{1,p,a}(M) := \set{f \in \RL^p( \log \RL)^a(M) : \nabla f \in \RL^p(\log
  \RL)^a(M)}
\end{equation}
with the gradient being understood in the weak sense, see \cite[Section
2]{cianchi} for this definition. We note that for every $p \ge 1, a \ge 0, \eps >
0$ we have the relations
\begin{equation}
  \RW^{1,p+\eps}(M) \subset \RW^{1,p,a}(M) \subset \RW^{1,p}(M) \subset
  \RW^{1,p,-a}(M) \subset \RW^{1,p-\eps}(M).
\end{equation}

Finally, we will make use of the notion of $p$-capacity.
Given two sets
$\Upsilon \subset \subset \Omega \subsetneq M$,
we write
$$\RC_0^\infty(\Omega):=\set{f\in\RC^\infty(\Omega)\,:\, f \equiv 0 \text{ on }
\partial\Omega\cap \operatorname{int}(M)}.$$
The $p$-capacity of $\Upsilon$
with respect to $\Omega$ is defined as 
\begin{equation}
  \capa_p(\Upsilon,\Omega) := \inf \set{\int_M \abs{\nabla f}^p \de v_g : f \in
  \RC_0^\infty(\Omega), f \equiv 1 \text{ on } \Upsilon},
\end{equation}
and the $p$-capacity of $\Upsilon$ as
\begin{equation}
  \capa_p(\Upsilon) := \inf \set{\capa_p(\Upsilon,\Omega) : \Omega \subsetneq M,
  0 < \Vol_g(\Omega) \le \Vol_g(M)/2}.
\end{equation}
We note that if $\Omega \cap \del M$ is not empty, we do not require in the
definition of the capacity that $f$ vanishes on that set.

\subsubsection*{Asymptotic notation}

We make extensive use throughout the paper of the so-called Landau asymptotic
notation. We write
\begin{itemize}
  \item[$\bullet$] without distinction, $f_1 = \bigo{f_2}$ or $f_1 \ll f_2$ to mean that
    there exists $C > 0$ such that $\abs{f_1}
    \le C f_2$;
  \item[$\bullet$] $f_1 \asymp f_2$ to mean that $f_1 \ll f_2$ and $f_2 \ll f_1$;
  \item[$\bullet$] $f_1 \sim f_2$ to mean that $\frac{f_1}{f_2} \to 1$;
  \item[$\bullet$] $f_1 = \smallo {f_2}$ to mean that $\frac{f_1}{f_2} \to 0$.
\end{itemize}
The limits in the last two bullet points will either be as some parameter tends
to $0$ or $\infty$ and will be clear from context. The use of a subscript in the
notation, e.g. $f_1 \ll_M f_2$ or $f_1 = \smallo[k]{f_2}$, indicates that the constant
$C$, or the quantities involved in the definition of the limit may depend on the
subscript.

\subsection*{Acknowledgements}
The authors would like to thank Iosif Polterovich for introducing them to
spectral geometry.
This project stemmed from discussions held during the online miniconference on
sharp eigenvalue estimates for partial differential operators held by Mark
Ashbaugh and Richard Laugesen in lieu of a session at the AMS Sectional Meeting.
The authors would like to thank Pier Domenico Lamberti for reading an early
version of this manuscript and providing helpful comments, as well as Bruno Colbois for pointing out reference~\cite{Anne1986}.
AG is supported by NSERC and FRQNT.
The research of JL was supported by EPSRC grant EP/P024793/1 and the NSERC
Postdoctoral Fellowship. 

\section{Applications and motivation}
\label{sec:applications}

In this section, we give application of Theorem \ref{thm:homointro} to shape
optimisation for the Steklov problem in $\R^2$, and to spectral flexibility. We
also provide the proofs of Theorems \ref{thm:quantbdry} and \ref{thm:quantdiameter}.

\subsection{Approximation by Steklov eigenvalues}
\label{sec:largeplane}
We start by proving Theorem \ref{thm:mainSigmak} from Theorems
\ref{thm:sigmaklambdak} and
\ref{thm:homointro}. 
\begin{proof}[Proof of Theorem \ref{thm:mainSigmak}]
  Let $\Omega \subset \R^2$ be a simply-connected Lipschitz domain. 
  We know from \cite{hersch, PetridesSphere, KNPP} that $\optimlap{k}(\S^2) = 8\pi k$. Let $\delta > 0$, and $g$ be a smooth metric on $\S^2$ such that such that
\begin{equation}
  \normallap{k}(\S^2,g) > \optimlap{k}(\S^2) - \delta = 8 \pi k - \delta.
\end{equation}
Let $\Upsilon$ be $\S^2$ with a small disk removed.
It is well known
that as the radius of that disk goes to $0$, the Neumann eigenvalues $\lambda_k(\Upsilon,g)$
converge to $\lambda_k(\S^2,g)$, see~\cite[Théorème 2]{Anne1986}.
Thus, removing a small enough disk ,
$$\normallap{k}(\Upsilon,g)
 > \normallap{k}(\S^2,g) - \delta.$$
Let $\Phi : \Omega \to \Upsilon$ be a conformal diffeomorphism. Since Dirichlet energy
is a conformal invariant, the $k$th Neumann eigenvalue of $\Upsilon$ is equal to
the variational eigenvalue $\lambda_k(\Omega,g_0,\Phi^*(\rd v_g))$.
The homogenisation Theorem \ref{thm:homointro} guarantees the existence of
$\Omega^\eps \subset \Omega$ such that 
\begin{equation}
  \label{eq:choice}
  \sigma_k(\Omega^\eps) \CH^1(\del \Omega^\eps) > 
  \lambda_k(\Omega,g_0,\abs{\rd\Phi}^2 \rd x)  \int_{\Omega} \Phi^*(\rd v_g) -
  \delta.
\end{equation}
Putting this all back together yields the bound $\sigma_k(\Omega^\eps) \CH^1(\del
\Omega^\eps) > 8\pi k - 3\delta. $
Since $\delta > 0$ is arbitrary  $\optimstek{k}(\R^2) \ge 8\pi k$, and by
Theorem \ref{thm:sigmaklambdak} this is in fact an equality.
\end{proof}

The exact same proof can be used to obtain the comparison between Steklov and
Neumann eigenvalues.

\begin{proof}[Proof of Theorem \ref{thm:compsteklovneumann}]
  For $\delta > 0$, proceed as in the proof of Theorem \ref{thm:mainSigmak}, but
  start with $\Omega \subset \R^2$ such that $\normallap{k}(\Omega) < \frac
  \delta 2$, for instance a very thin rectangle. By Theorem \ref{thm:homointro}, one can choose $\eps$ in \eqref{eq:choice} small enough
  that $\normallap{k}(\Omega^\eps,g_0) < \delta$. This concludes the proof.
\end{proof}

\subsection{Geometric and topological properties of maximising
sequences}
\label{sec:diameterbound}
In the present section we prove Theorem~\ref{thm:quantbdry} and Theorem~\ref{thm:quantdiameter}

The domains
$\Omega^\eps$ constructed in Theorem~\ref{thm:homointro}, are
obtained by removing many tiny balls whose total boundary length tends to $+\infty$.
In particular, the length of each boundary
component relative to the total length of the boundary tends to zero.
We show that {\em any} maximizing sequence of domains for $\optimstek{1}(\S^2)$ or
$\optimstek{1}(\R^2)$ exhibits this behaviour. Moreover, for any metric on $M_{0,b}$
 one has the following quantitative relation between the relative length of the
 longest boundary component and the \emph{Steklov spectral defect}
 $$\defect(M_{0,b},g) := 8\pi
 - \normalstek{1}(M_{0,b},g).$$
\begin{theorem}\label{thm:quantbdrycore}
  \label{thm:defect}
  Let $(M,g)$ be a compact Riemannian surface of genus $0$,
  and let $L$ be the length of its longest boundary component. Then,
  \begin{equation}
    \label{eq:defectbound}
    \frac{\CH^1(\del M)}{L} \ge \left( 1 - \frac{\defect(M,g)}{4\pi}
    \right)_+ \log\left( \frac{8\pi}{\defect(M,g)} - 1 \right).
  \end{equation}
\end{theorem}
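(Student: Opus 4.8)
The plan is to exploit the test-function strategy underlying Kokarev's bound together with the explicit geometry of boundary components on a genus $0$ surface, but keeping careful track of the contribution of the longest boundary component. First I would recall the setup behind \eqref{eq:kokarevbound}: for a genus $0$ surface $(M,g)$ with boundary $\partial M = \Gamma_1 \sqcup \dots \sqcup \Gamma_b$, one builds test functions for the Rayleigh quotient $\sigma_1$ from a conformal (or harmonic-map) map $M \to \S^2$ of controlled area, and the key quantity is the area-normalised first Laplace eigenvalue $\normallap{1}(\S^2) = 8\pi$, which plays the role of the upper bound. The gain in Theorem~\ref{thm:quantbdrycore} must come from comparing the portion of $\partial M$ carried by the longest component $\Gamma$ (of length $L$) against the remaining $\CH^1(\partial M) - L$: intuitively, concentration of the boundary measure on a single component behaves like a Steklov problem on a disk, which is rigid, so to push $\normalstek{1}$ close to $8\pi$ the mass must be spread over many components, and $\CH^1(\partial M)/L$ must be large.

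The main steps, in order, would be: (i) express the spectral defect $\defect(M,g) = 8\pi - \normalstek{1}(M,g)$ via a lower bound on the Rayleigh quotient obtained by plugging in coordinate functions of an appropriate balanced conformal map $\varphi: (M, \iota_*\rd A_g) \to \S^2$, so that $\normalstek{1}(M,g) \cdot \frac{1}{\CH^1(\partial M)} \int_{\partial M} |\varphi|^2 \le \frac{1}{\CH^1(\partial M)}\int_M |\nabla \varphi|^2 \le 8\pi \cdot (\text{area factor})$; (ii) localise this estimate to the longest component $\Gamma$ versus its complement $\partial M \setminus \Gamma$, writing $L/\CH^1(\partial M) =: t \in (0,1]$ and bounding the energy contribution near $\Gamma$ from below using a capacity/logarithmic-cutoff estimate — this is where $\log(\cdot)$ enters, exactly as a $2$-capacity of an annular neighbourhood of a boundary circle behaves like $(\log(1/\text{ratio}))^{-1}$ in dimension $2$; (iii) combine the two to get an inequality of the form $\defect(M,g) \ge 8\pi \exp(-c\, \CH^1(\partial M)/L)$ on the regime where $\normalstek 1$ is large, rearrange it to isolate $\CH^1(\partial M)/L$, and check that the prefactor $(1 - \defect/4\pi)_+$ comes out of the bookkeeping of the $|\varphi|^2$ mass (the part of $\partial M$ not on $\Gamma$ must carry a definite fraction of the total boundary mass to keep the Rayleigh quotient below $8\pi$, and that fraction is $\approx 1 - \defect/4\pi$). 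The inequality \eqref{eq:defectbound} should then follow by monotonicity after solving for $\CH^1(\partial M)/L$.

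The hard part, I expect, will be step (ii): quantifying precisely how much Dirichlet energy must be spent to make $|\varphi|$ oscillate on a neighbourhood of a single long boundary component, and getting the constant inside the logarithm to be exactly what makes the final bound $\frac{8\pi}{\defect} - 1$ rather than some lossy multiple. This requires the right choice of conformal coordinates near $\Gamma$ (a collar/annulus of definite conformal modulus) and a sharp one-dimensional-in-the-radial-variable computation of the energy of the optimal radial profile, i.e. essentially the sharp logarithmic capacity bound, together with an argument that the conformal modulus of the collar of $\Gamma$ is controlled by $\CH^1(\partial M)/L$. A secondary subtlety is the balancing (centre of mass) condition on $\varphi$ needed to use its components as genuine test functions orthogonal to constants for $\sigma_1$; this is standard via a Hersch-type topological argument, but one must ensure the balancing does not destroy the localisation in step (ii). Finally, the $(\,\cdot\,)_+$ truncation and the fact that the bound is vacuous when $\defect \ge 4\pi$ should be read off directly once the algebra of step (iii) is in place, so no separate argument is needed there.
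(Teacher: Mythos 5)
There is a genuine gap in your step (ii), which is where all the precise constants in \eqref{eq:defectbound} have to come from. You propose to produce the logarithm from the conformal modulus of a collar of the longest component $\Gamma$, claiming this modulus is ``controlled by $\CH^1(\del M)/L$''. No such control is available: the modulus of a collar is a conformal invariant of $(M,[g])$, while the ratio of boundary lengths is not — one can fix the conformal class (hence every collar modulus) and change the boundary lengths at will by a conformal factor concentrated near $\del M$. So the proposed route for introducing $\log(\,\cdot\,)$ cannot work as stated, and your step (iii) then has nothing quantitative to rearrange. Likewise, your explanation of the prefactor ($\del M\setminus\Gamma$ must carry a fraction $\approx 1-\defect/4\pi$ of the total boundary mass) is not the correct mechanism and is not something your step (i) yields.

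What is actually needed, and what the paper does, is different in two essential ways. First, one uniformizes $M$ onto a subdomain of a closed hemisphere $\S^2_y$ with $\Gamma$ sent to the equator, pushes forward the boundary measure, and applies Hersch balancing (in a version allowing atoms); the coordinate-function step then shows, à la Kokarev, that the \emph{opposite} hemisphere is mapped to a spherical cap $K_a$ of area $a\le\defect(M,g)/2$. Second — and this is the decisive ingredient absent from your sketch — a center-of-mass argument (project the balanced measure onto the axis through the centre of $K_a$; the image of $\Gamma$ is an atom of mass $L$ sitting at height $\kappa>1-\defect/4\pi$) forces the hemisphere opposite to $K_a$ to carry measure at least $\bigl(1-\defect(M,g)/4\pi\bigr)L$. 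One then inserts a \emph{second} trial function, the capacitary potential of $K_a$ (equal to $1$ on the image of $\Gamma$, Dirichlet energy $4\pi/\log(4\pi/a-1)$ by the explicit cap-capacity computation), into the Rayleigh quotient for $\sigma_1$; minimizing the resulting quadratic in the mean value produces exactly the factor $(1-\defect/4\pi)$ and, via $a\le\defect/2$, the argument $8\pi/\defect-1$ of the logarithm. So the logarithm measures the capacity of a small cap whose size is controlled by the spectral defect, not the modulus of a collar of $\Gamma$, and the ratio $\CH^1(\del M)/L$ enters only through the normalisation of $\normalstek{1}$ against the mass $L$ of the atom. Without the hemisphere normalisation, the quantitative balancing-against-the-atom estimate, and the capacitary trial function, your outline cannot reach \eqref{eq:defectbound}.
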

Here, for any real valued function $f$, we write $f_+ := \max(f,0)$.
One can interpret this result as a quantitative 
improvement of Kokarev's 
estimate~\eqref{eq:kokarevbound}. This is the essence of Theorem~\ref{thm:quantbdry} and Theorem~\ref{thm:quantdiameter} which we now prove using Theorem~\ref{thm:quantbdrycore}
\begin{proof}[Proof of Theorem \ref{thm:quantbdry}]
  The desired inequality holds unconditionally for every $\eps > 0$ when $\bar \sigma_1 := \bar
  \sigma_1(M_{0,b},g) \le 4 \pi$. We may therefore  assume without loss of generality that $\bar
  \sigma_1 > 4\pi$ and $b \ge 3$. 
  If $L$ is the length of the
  longest boundary component of a surface $M_{0,b}$ with $b$
  boundary components, then $\CH^1(\del M_{0,b}) \le Lb$. Exponentiating both
  sides in \eqref{eq:defectbound} and rearranging yields
  \begin{equation}
    \label{eq:rearr}
    \bar \sigma_1 \le \frac{8 \pi}{1 + \exp\left(\frac{-4\pi b}{(\bar \sigma_1 -
    4 \pi)}\right)}.
  \end{equation}
  We see that when $8 \pi > \bar \sigma_1 \ge 4\pi \frac{2 + \eps}{1 + \eps}$,
  then we get the upper bound
  \begin{equation}
    \bar \sigma_1 \le \frac{8 \pi}{1 + \exp\left(\frac{-4\pi b}{(\bar \sigma_1 -
    4 \pi)}\right)}
    \le \frac{8\pi}{1 + \exp(-(1 + \eps)b)},
  \end{equation}
  which gives \eqref{eq:conditional}. 
\end{proof}
\begin{proof}[Proof of Theorem \ref{thm:quantdiameter}]
  Let $\Omega$ be a connected bounded domain in $\R^2$, and $\CC \subset \del \Omega$
  be the boundary of the unbounded connected component of $\R^2 \setminus
  \Omega$. Then, we have that $2 \diam(\Omega) \le \CH^1(\CC) \le L$, where
  $L$ is the length of the longest boundary component. 
  The proof is completed in exactly the same way as above.
\end{proof}

The proof of Theorem \ref{thm:defect} is based on Hersch's renormalisation
scheme \cite{hersch}, as well as on a quantitative version of Kokarev's no atom
lemma \cite[Lemma 2.1]{kok}.

Let $\B$ be the unit ball in $\R^3$. For $\xi \in \B$, Hersch's conformal
diffeomorphism $\Psi_\xi : \S^2 \to \S^2$ is defined as
\begin{equation}
  \Psi_\xi(x) := \frac{(1 - \abs \xi^2) x + 2(1 + \xi \cdot x) \xi}{\abs{\xi +
  x}^2}.
\end{equation}

\begin{lemma}[Hersch's renormalisation scheme, see \cite{GNP,laugesen}]
  \label{lem:herschrenorm}
  Let $\mu$ be a measure on $\S^2$ such that for all $x \in \S^2$, $\mu(\set x)
  \le \frac 1 2 \mu(\S^2)$. Then, there exists
  a unique
  $\xi \in \B$ such that the pushforward measure $(\Psi_\xi)_* \mu$ has its
  center of mass at the origin. In other words, for $j \in \set{1,2,3}$, the coordinate functions $x_j :
  \S^2 \to \R$ satisfy
  \begin{equation}
    \label{eq:herschortho}
    \int_{\S^2} x_j \de (\Psi_{\xi})_* \mu = \int_{\S^2} x_j \circ \Psi_\xi \de
    \mu = 0.
  \end{equation}
\end{lemma}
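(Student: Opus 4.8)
The plan is to set up a degree-theoretic argument on the open ball $\B$, exactly in the style of Hersch's original center-of-mass argument. Define the vector field $V : \B \to \R^3$ by
\[
  V(\xi) := \int_{\S^2} x \circ \Psi_\xi \de \mu = \frac{1}{\mu(\S^2)}\int_{\S^2} \Psi_\xi(x) \de \mu(x),
\]
so that the conclusion \eqref{eq:herschortho} is precisely the assertion that $V$ has a unique zero in $\B$. The first step is to record the key boundary behaviour of $\Psi_\xi$: as $\abs{\xi} \to 1$, the map $\Psi_\xi$ converges, locally uniformly on $\S^2 \setminus \set{-\xi/\abs\xi}$, to the constant map with value $\xi/\abs\xi$. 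Consequently, if $\mu$ puts no atom of more than half its mass at any single point, then for any sequence $\xi_n \to \eta \in \S^2$ one has $V(\xi_n) \to \eta$, because the mass of $\mu$ away from the single bad point $-\eta$ is a definite positive fraction of $\mu(\S^2)$ and is transported arbitrarily close to $\eta$. This continuity-up-to-the-boundary, together with the fact that $V$ extends to a continuous map $\bar\B \to \bar\B$ restricting to the identity on $\S^2$, shows via a standard Brouwer degree / no-retraction argument that $V$ vanishes somewhere in $\B$: a continuous map of the closed ball to itself that is the identity on the boundary cannot avoid $0$, otherwise $\xi \mapsto V(\xi)/\abs{V(\xi)}$ would retract $\bar\B$ onto $\S^2$.

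For existence of the zero I would make the no-atom hypothesis quantitative: writing $m := \sup_{x}\mu(\set x)/\mu(\S^2) \le \tfrac12$, a direct estimate shows that $\xi \cdot V(\xi) \to 1$ uniformly as $\abs\xi \to 1$, so in particular $V(\xi) \ne 0$ near $\del\B$ and $V$ points "outward" there; the degree of $V$ on a sphere of radius close to $1$ is therefore $1$, giving a zero. Uniqueness is the more delicate point and I expect it to be the main obstacle. The standard approach is to show that along any line segment the function $\xi \mapsto \xi \cdot V(\xi)$, or more precisely a suitable radial monotonicity of $V$, forces at most one zero; concretely one computes the derivative of $t \mapsto a \cdot V(t a + (1-t)\xi_0)$ and shows it is strictly negative wherever $V$ itself vanishes, using the explicit form of $\Psi_\xi$ and the strict convexity properties of the conformal dilations of the sphere. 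Equivalently, one can invoke the well-known fact (see \cite{GNP,laugesen}) that the map $\xi \mapsto$ (the barycenter of $(\Psi_\xi)_*\mu$) is a diffeomorphism of $\B$ onto $\B$ when $\mu$ has no half-mass atom; the monotonicity computation underpinning this is where all the real work sits.

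Finally I would check the hypothesis is used exactly where needed: if some point carried mass $> \tfrac12 \mu(\S^2)$ then pushing that atom to a boundary point would drag the barycenter to the boundary and the "identity on $\del\B$" boundary condition could fail, destroying both existence via degree and uniqueness; the condition $\mu(\set x) \le \tfrac12\mu(\S^2)$ is precisely the threshold that keeps $V$ valued in the open ball and continuous up to $\S^2$ with boundary values the identity. I would then simply remark that the two displayed integrals in \eqref{eq:herschortho} agree by the change-of-variables formula for pushforward measures, which is immediate. The only genuinely technical ingredient is the monotonicity estimate for uniqueness; everything else is the classical Hersch topological argument, and since the statement is quoted from \cite{GNP,laugesen} it would be reasonable to give the existence argument in full and cite those references for the uniqueness computation.
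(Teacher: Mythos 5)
Your plan is the classical Hersch degree-theoretic argument, but the paper deliberately does not prove the lemma that way: it quotes the statement from \cite{laugesen}, and the remark immediately following it warns that once point masses are allowed the classical topological argument no longer applies, the proof in that generality being Laugesen's (by minimisation of a hyperbolic energy, not by degree theory). The concrete problems with your sketch sit exactly where that warning points. The boundary behaviour you assert is false in the presence of atoms: $\Psi_\xi$ fixes the antipodal point $-\xi/\abs{\xi}$, so if $\mu(\set{-\eta}) = m\,\mu(\S^2)$ then for $\xi\to\eta$ one gets $V(\xi)\to(1-2m)\eta$, not $\eta$. Hence neither ``$V$ extends continuously to $\bar{\B}$ as the identity on $\S^2$'' nor ``$\xi\cdot V(\xi)\to 1$ uniformly'' holds, and at the threshold $m=\tfrac12$ permitted by the hypothesis the limit is $0$, so $V$ need not be nonvanishing near $\del\B$ and the no-retraction/degree argument breaks precisely in the regime the lemma is formulated to cover. (For $\mu=\tfrac12(\delta_{x_0}+\delta_{-x_0})$ every $\xi$ on the diameter through $\pm x_0$ already centres the pushforward, which shows how delicate the borderline case is and why the topological picture cannot be the whole story.) Your existence argument can be patched when every atom carries strictly less than half the mass, but not as written.

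The more serious gap is uniqueness, which is part of the statement and is genuinely the hard content: degree one does not give a unique zero, and the ``monotonicity computation'' you invoke is never carried out --- you explicitly propose to cite it away. That computation is precisely what the classical argument lacks and what \cite{laugesen} supplies, by exhibiting $\xi$ as the minimiser of a functional that is strictly convex along hyperbolic geodesics of $\B$, which delivers existence and uniqueness simultaneously and accommodates point masses. So your proposal neither matches the paper's treatment (a citation, with an explicit statement that the topological route is insufficient here) nor stands on its own: as written it contains an incorrect boundary estimate in the atomic case and leaves the uniqueness claim unproved.
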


\begin{remark}
In the classical formulation of Hersch's scheme as in e.g.~\cite{GNP} the measure $\mu$ is precluded from having points of non-zero mass. In the form presented here the measure $\mu$ is allowed to have point masses. The proof is different from the classical topological arguments and can be found in~\cite{laugesen}. 
\end{remark}

Given $y \in \S^2$, we define the closed hemisphere
\begin{equation}
  \S^2_y := \set{x \in \S^2 : x \cdot y \ge 0}.
\end{equation}
For $\Omega \subset \S^2_y$, recall that we define the capacity of $\Omega$ in
$\S^2_y$ as
\begin{equation}
  \capa_2(\Omega,\S^2_y) = \inf \set{ \int_{\S^2_y} \abs{\nabla f}^2 \de v_g : f
    \in \RC^\infty_0(\S^2_y), \, \,
  f\big|_{\Omega} \equiv 1}.
\end{equation}

\begin{lemma}
  \label{lem:capcap}
  Let $K_a\subset\S^2_y$ be a closed spherical cap of area $a < 2\pi$ centred at
  $y\in\S^2$. The
  capacity of $K_a$ in $\S^2_y$ is given by
  $$\capa_2(K_a,\S^2_y)=
  \frac{4\pi}{\log(\frac{4\pi}{a}-1)}.$$
\end{lemma}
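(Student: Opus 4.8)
The plan is to compute the capacity by exploiting the rotational symmetry of the problem. Since $K_a$ is a spherical cap centred at $y$ and the ambient domain $\S^2_y$ is a hemisphere also centred at $y$, the whole configuration is invariant under rotations about the axis $y$. A standard symmetrisation argument (polarisation / Schwarz symmetrisation on the sphere with respect to the axis $y$) shows that the capacity-minimising function may be taken to be rotationally symmetric about $y$, hence a function of the polar angle $\theta \in [0, \pi/2]$ alone, equal to $1$ on the cap and vanishing on the equator $\partial \S^2_y = \{\theta = \pi/2\}$. I would first reduce to this one-dimensional problem.

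Next I would write the Dirichlet energy in geodesic polar coordinates. With $f = f(\theta)$, one has $|\nabla f|^2 = f'(\theta)^2$ and $\de v_g = \sin\theta \, \de\theta \, \de\varphi$, so
\begin{equation}
  \int_{\S^2_y} |\nabla f|^2 \de v_g = 2\pi \int_{\theta_a}^{\pi/2} f'(\theta)^2 \sin\theta \, \de\theta,
\end{equation}
where $\theta_a$ is the polar angle with $\mathrm{Area}(K_a) = 2\pi(1 - \cos\theta_a) = a$, i.e. $\cos\theta_a = 1 - \tfrac{a}{2\pi}$. Minimising $\int_{\theta_a}^{\pi/2} f'(\theta)^2 \sin\theta\,\de\theta$ subject to $f(\theta_a) = 1$, $f(\pi/2) = 0$ is a one-dimensional calculus of variations problem whose Euler--Lagrange equation is $(f'(\theta)\sin\theta)' = 0$, giving $f'(\theta) = c/\sin\theta$ and hence $f(\theta) = c \log\tan(\theta/2) + d$. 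Imposing the boundary conditions and computing the resulting energy yields
\begin{equation}
  \capa_2(K_a,\S^2_y) = \frac{2\pi}{\displaystyle \int_{\theta_a}^{\pi/2} \frac{\de\theta}{\sin\theta}} = \frac{2\pi}{-\log\tan(\theta_a/2)} = \frac{2\pi}{\tfrac12 \log\frac{1+\cos\theta_a}{1-\cos\theta_a}}.
\end{equation}
Substituting $\cos\theta_a = 1 - \tfrac{a}{2\pi}$ gives $\frac{1+\cos\theta_a}{1-\cos\theta_a} = \frac{4\pi - a}{a} = \frac{4\pi}{a} - 1$, and therefore $\capa_2(K_a,\S^2_y) = \dfrac{4\pi}{\log(\tfrac{4\pi}{a} - 1)}$, as claimed.

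The only genuinely non-routine point is the justification that the minimiser can be taken rotationally symmetric; everything else is an explicit ODE computation. I would handle this either by a symmetrisation inequality (decreasing rearrangement along the polar angle does not increase the Dirichlet energy on the hemisphere, since the level sets of the symmetrised function are caps, which are isoperimetrically optimal) or, more elementarily, by averaging: for $f \in \RC^\infty_0(\S^2_y)$ with $f \equiv 1$ on $K_a$, replace $f$ by its average $\bar f(\theta) = \frac{1}{2\pi}\int_0^{2\pi} f(\theta,\varphi)\,\de\varphi$ over the circles of latitude; Jensen's inequality gives $\int |\nabla \bar f|^2 \le \int |\nabla f|^2$ while the constraints are preserved, reducing to the one-dimensional problem, and a standard density/approximation argument allows the infimum to be taken over Lipschitz radial functions so that the Euler--Lagrange analysis applies. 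One should also note the degenerate behaviour as $a \to 2\pi$ (the cap fills the hemisphere), where $\log(\tfrac{4\pi}{a}-1) \to 0$ and the capacity diverges, consistent with the formula.
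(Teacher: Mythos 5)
Your proof is correct, and the computation checks out: with $2\pi(1-\cos\theta_a)=a$ one indeed gets $\int_{\theta_a}^{\pi/2}\csc\theta\,\de\theta=\tfrac12\log\bigl(\tfrac{4\pi}{a}-1\bigr)$ and hence $\capa_2(K_a,\S^2_y)=4\pi/\log(\tfrac{4\pi}{a}-1)$. Your route differs from the paper's: the paper maps $\S^2_y$ to the unit disk $\D$ by stereographic projection, notes that $K_a$ corresponds to the concentric disk $B(0,r_a)$ with $r_a=\sqrt{a/(4\pi-a)}$, and invokes conformal invariance of the Dirichlet energy to transplant the classical planar capacitary potential $\log t/\log r_a$; you instead stay on the sphere, reduce to radial competitors by azimuthal averaging (Jensen) and solve the one-dimensional Euler--Lagrange problem in the polar angle. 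The two computations are related by the change of variables $t=\tan(\theta/2)$, so analytically they are the same potential, but the emphases differ: the paper's argument is shorter because it outsources the minimality of the radial profile to the standard planar fact (stated there simply by calling $\chi_a$ ``the capacitary potential''), while yours is more self-contained, since the averaging step supplies the lower bound explicitly and only a routine density argument is needed to pass from smooth test functions to the Lipschitz radial minimiser. Either way the statement is fully proved; your version arguably documents the one point the paper leaves implicit.
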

\begin{proof}
  Let $\Phi:\D\to\S^2$ be the stereographic parametrisation of $\S^2_y$. By
  elementary trigonometry, $\Phi^{-1}(K_a) = B(0,r_a) \subset \D$, where
$$r_a=\sqrt{\frac{a}{4\pi-a}}.$$
Let $\chi_a:\D\rightarrow\R$ be the capacitary potential for $B(0,r_a)$, i.e.
the radial function defined in polar coordinates $(t,\theta)$ as
  $$
  \chi_a(t):= \begin{cases}
    \frac{\log t}{\log r_a} & \text{for } r_a < t \le 1 \\
    1 & \text{for } 0 \le t \le r_a.
  \end{cases}
  $$
  It follows by invariance of the Dirichlet energy under conformal
  transformations that $(\Phi^{-1})^* \chi_a$ is the capacitary potential of
  $K_a$, and thus that
  $$\capa_2(K_a,S^2_y) = \int_\D \abs{\nabla \chi_a}^2\,dv_{g_0}
  =\int_{r_a}^1 \left(\del_t \chi_a(t)^2\right) t \de t
  =\frac{4\pi}{\log(\frac{4\pi}{a}-1)}.$$
\end{proof}
\begin{proof}[Proof of Theorem \ref{thm:defect}]
 The proof is based on Kokarev's proof of \eqref{eq:kokarevbound},
  keeping a precise track of all quantities involved.
  Note that the theorem is trivially true when $\defect(M,g) \ge 4\pi$, so
  we assume  that $\defect(M,g) < 4\pi$. 
Let $\CC\subset \del M$ be the longest connected component of the boundary and
fix $y \in \S^2$. It follows from the Koebe uniformization theorem that there exists a diffeomorphism $\Phi : 
  M\to \Omega\subset \S^2_y$, conformal in the interior of $M$, sending $\CC$ to the equator,
  i.e $\Phi(\CC) = \del\S^2_y$.
  Let $\mu:=\Phi_* \rd s.$ be the pushforward of the  boundary measure by $\Phi$.
  The equator carries the length of $\CC$:
  $$\mu(\del \S^2_y)= \CH^1(\CC)\geq \frac{\CH^1(\partial M)}{b}.$$
We apply the Hersch renormalisation scheme to the measure $\mu$. By Lemma
\ref{lem:herschrenorm}, there is a unique $\xi \in \B$ so that the measure $\zeta := (\Psi_\xi)_*
\mu$ has its center of mass at the origin. In other words, we can read from
\eqref{eq:herschortho} that for $j \in \set{1,2,3}$, the functions $x_j \circ
\Psi_\xi \circ \Phi$ are trial functions for $\sigma_1$ on $M$.
Thus, by conformal invariance of the Dirichlet energy,
\begin{equation}
  \sum_{j = 1}^3 \sigma_1(M,g) \int_{\del M} x_j^2 \circ \Psi_\xi
  \circ \Phi \de s \le \sum_{j=1}^3 \int_{\Psi_\xi(\S^2_y)} \abs{\nabla_{g_0}
  x_j}^2 \de A_{g_0}.
\end{equation}
Using the pointwise identities $\sum_{j=1}^3x_j^2=1$ and  $\sum_{j=1}^3 \abs{\nabla_{g_0} x_j}^2 =2$,
  this leads to a strict form of Kokarev's bound from \cite{kok}:
\begin{equation}
  \normalstek{1}(\Omega,g) \le 2 \Area_{g_0}(\Psi_\xi(\S^2_y)) < 8\pi.
\end{equation}
  Because the total area of $\S^2$ is $4\pi$, it follows that the opposite
  hemisphere $\S^2_{-y}$ is mapped by $\Psi_{\xi}$ to a spherical cap with small area:
  \begin{equation}
  \begin{aligned}
    \Area_{g_0}\left(\Psi_{\xi}(\S^2_{-y})\right)
    &
  \leq
\frac{1}{2}\left(8\pi-\normalstek{1}(\Omega,g)\right)=\frac{\defect(\Omega,g)}{2}.
  \end{aligned}
\end{equation}
Let $z\in\S^2$ be the center of the spherical cap $K_a =\Psi_{\bxi}(\S^2_{-y})$,
where $a = \Area_{g_0}(K_a)$. The
center of the circle $\partial K_a$ is $\kappa z\in\B$, where
$2\pi(1-\kappa)=a<\defect(\Omega,g)/2.$
The spectral defect is smaller than $4\pi$ by hypothesis. Hence,
$$\kappa > 1-\frac{\defect(\Omega,g)}{4\pi}>0.$$
Let $\pi_{z}:\R^3\to\R$ correspond to the projection on the subspace $\R z$. That is,
$\pi_{z}(x):=(x\cdot z)z.$
Then the measure
$\rho:= (\pi_z)_* \zeta = (\pi_{z} \circ \Psi_\xi)_* \mu$ is supported in the interval $(-1,1)$ and
has an atom of weight $\mu(\del S^2_y)= \CH^1(\CC)$ located at $\kappa\in
(0,1)$.
Because the center of mass of $\zeta$ is the origin $0\in\B$, we have
$$0=\int_{-1}^1t \de \rho\geq\int_{-1}^0 t \de \rho+\kappa\rho(\{\kappa\})
=\int_{-1}^0 t \de \rho+\kappa\CH^1({\CC}).$$
In particular
$$\kappa\CH^1(\CC)\leq\int_{-1}^0-t \de\rho<\rho(-1,0)=\zeta(\S^2_{-z}).$$
It follows that
\begin{gather}
  \label{ineq:oppositecap}
  \zeta(\S^2_{-z})\geq\left(1-\frac{\defect(\Omega,g)}{4\pi}\right)\CH^1(\CC)>0.
\end{gather}
Let $\chi_a \in \RW^{1,2}(\S^2)$ be the capacitary potential of
$K_a\subset\S^2_{z}$, 
and $m_{\chi_a} =\frac{1}{\CH^1(\del M)}\int_{\S^2} \chi_a \de \zeta$.  We can
thus use $\chi_a - m_{\chi_a} \in \RW^{1,2}(\S^2)$ as a trial
function for $\sigma_1(M)=\lambda_1(\Omega,g_0,\mu)$. By Lemma \ref{lem:capcap}
$$\sigma_1(M)\int_{\S^2}(\chi_a - m_{\chi_a})^2 \de \zeta
\leq
\frac{4\pi}{\log(\frac{4\pi}{a}-1)}.$$
Now, using that $\xi_a\equiv 1$ on $K_a$ together with \eqref{ineq:oppositecap} we get
\begin{equation}
\begin{aligned}
  \int_{\S^2}(\chi_a - m_{\chi_a})^2\de \zeta
  &\geq
  \int_{K_a}(\chi_a-m_{\chi_a})^2\de \zeta +\int_{\S^2_{-z}}(\chi_a-
  m_{\chi_a})^2\de \zeta\\
  &\geq
  \left((1-m_{\chi_a})^2+
  m_{\chi_a}^2\left(1-\frac{\defect(M,g)}{4\pi}\right)\right)\CH^1(\CC)\\
  &=\left(\left(2-\frac{\defect(M,g)}{4\pi}\right)m_{\chi_a}^2-2m_{\chi_a}+1\right)\CH^1(\CC)\\
  &=\left(\frac{\normalstek{1}(M,g)}{4\pi} m_{\chi_a}^2-2m_{\chi_a}+1\right)\CH^1(\CC)\\
  &\geq
  \left(\frac{\normalstek{1}(M,g)-4\pi}{\normalstek{1}(M,g)}\right)\CH^1(\CC),    
\end{aligned}
\end{equation}
where in the last step we have minimized the quadratic form.
Putting all of this together leads to
  \begin{equation}
    \frac{\CH^1(\del M)}{\CH^1(\CC)} \ge \left( 1 - \frac{\defect(\Omega,g)}{4\pi}
    \right) \log\left( \frac{4\pi}{a} - 1 \right).
  \end{equation}
  Recall that $a=\Area_{g_0}(K_a) \leq \defect(\Omega,g)/2$ to finish the proof.
\end{proof}

\section{Admissible measures and associated function spaces}
\label{section:variationaleigenvalues}

The goal of this section is to properly define which measures allow for the
definition of variational eigenvalues, and to define associated Sobolev-type
spaces appropriate for our purpose. At the end of this section, we will provide
explicit examples of admissible measures.

\subsection{Sobolev-type spaces}

\begin{defi}
  For $1 \le p < \infty$, $M$ a compact Riemannian manifold and $\mu$ a Radon measure on $M$, we define $\CW^{1,p}(M,\mu)$ to be the completion of $\RC^\infty(M)$ with respect to the norm 
\begin{equation}
  \label{eq:completion}
  \norm u^p_{\CW^{1,p}(M,\mu)} = \int_M \abs u^p\de \mu + \int_M |\nabla u|_g^p\de v_g =
  \norm u^p_{\RL^p(M,\mu)} + \norm{\nabla u}_{\RL^p(M,g)}^p.
\end{equation}
This completion \eqref{eq:completion} gives rise to an embedding
$\tau^{\mu}_p:\mathcal{W}^{1,p}(M,\mu)\rightarrow \RL^p(M,\mu)$ of norm $1$.
\end{defi}
In the classical setting where $\mu$ is the
volume measure associated to $g$, the map $\tau^{\mu}_p$ is the natural embedding of the Sobolev space
$\RW^{1,p}(M)\subset \RL^p(M)$. If we
want to make $M$ explicit, we denote the embedding operator $\tau^{\mu}_{p,M}$.
Since $M$ is compact $\CW^{1,p}(M,\mu) \subset \CW^{1,q}(M,\mu)$ whenever $p \ge
q$. For $1 < p < \infty$, the closed unit ball in $\CW^{1,p}(M,\mu)$ is clearly
weakly compact so that $\CW^{1,p}(M,\mu)$ is a reflexive Banach space.

\begin{convention}
We adopt the following conventions in order to make the notation a bit lighter for spaces and operators that appear
often. We write $\RL^p(M)$ for $\RL^p(M,\rd v_g)$, $\RL^p(\del M)$ for
$\RL^p(M,\rd A_g)$ and $W^{1,p}(M) := \CW^{1,p}(M,\rd v_g)$. In
general, the measure $\mu$ may be omitted from the notation when it is the
natural volume measure given by the Riemannian metric, for instance as
$\lambda_k(M,g) := \lambda_k(M,g,\rd v_g)$. 
\end{convention}

  Denote the average of a function $f \in \RL^1(M,\mu)$ by
\begin{equation}
  \label{eq:faverage}
  m_{f,\mu} := \frac{1}{\mu(M)}\int_M f \de \mu.
  \end{equation}
\begin{defi}
  We say that a Radon measure $\mu$ supports a $p$-Poincar\'e inequality if there is
  $K > 0$ such that for all $f \in \CW^{1,p}(M,\mu)$
  \begin{equation}
    \int_M (f - m_{f,\mu})^p \de \mu \le K \int_M \abs{\nabla f}^p \de v_g.
  \end{equation}
  We denote by $K_{p,\mu}$ the smallest such number $K$.
\end{defi}

For general measures, the space $\CW^{1,p}(M,\mu)$ could be very different from the Sobolev space
$\RW^{1,p}(M)$ and solutions to (weak) elliptic PDEs in those spaces could lack
the natural properties one expects from them. For that reason we restrict ourselves
to a particular class of admissible measures, first introduced in \cite{KS} for
$d = p = 2$, see
also~\cite{kok} for a similar definition.

\begin{defi}
  \label{def:admissible}
Let $M$ be a Riemannian manifold, $p \in (1,\infty)$, and $\mu$ be a Radon measure on $M$ not
  supported on a single point. 
  The  measure $\mu$ is called $p$-admissible if it supports a $p$-Poincar\'e inequality and the operator
$\tau^{\mu}_{p}$ is compact. For $p = 2$, we simply say that $\mu$ is admissible.
\end{defi}

It is clear from that definition that $\rd v_g$ and the
boundary measure $\rd A_g$ are $p$-admissible for all $p \in (1,2]$. 
The
aim of the rest of this subsection is to prove the following two theorems. The
first one gives a characterisation of $p$-admissible measures. The second one
essentially  says that when $\mu$ is a $p$-admissible
measure there is an isomorphism between $\CW^{1,p}(M,\mu)$ and $\RW^{1,p}(M)$.
Their proofs are intertwined but they are better stated separately for ease of
reference.
\begin{theorem}
  \label{thm:charac}
  Let $\mu$ be a Radon measure and $p \in (1,2]$. Then, $\mu$ is $p$-admissible if and only if 
  the
  identity map on $\RC^\infty(M)$ extends to a compact operator $T_p^\mu :
  \RW^{1,p}(M) \to \RL^p(M,\mu)$.
\end{theorem}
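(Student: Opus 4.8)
\textbf{Proof proposal for Theorem~\ref{thm:charac}.}

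The plan is to prove the two implications separately, with the bulk of the work consisting in showing that the abstract space $\CW^{1,p}(M,\mu)$ can be identified concretely with $\RW^{1,p}(M)$ once $\mu$ is $p$-admissible, so that the embedding $\tau_p^\mu$ becomes the operator $T_p^\mu$.

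First, suppose the identity on $\RC^\infty(M)$ extends to a compact operator $T_p^\mu : \RW^{1,p}(M) \to \RL^p(M,\mu)$. I would argue that this forces the norms $\norm{\cdot}_{\CW^{1,p}(M,\mu)}$ and $\norm{\cdot}_{\RW^{1,p}(M)}$ to be equivalent on $\RC^\infty(M)$. Indeed, the bound $\norm{u}_{\RL^p(M,\mu)} \le \norm{T_p^\mu}\norm{u}_{\RW^{1,p}(M)}$ gives $\norm{u}_{\CW^{1,p}(M,\mu)}^p = \norm{u}_{\RL^p(M,\mu)}^p + \norm{\nabla u}_{\RL^p(M,g)}^p \le C\norm{u}_{\RW^{1,p}(M)}^p$; for the reverse inequality one needs to control $\norm{u}_{\RL^p(M)}$ by $\norm{\nabla u}_{\RL^p(M,g)}$ and $\norm{u}_{\RL^p(M,\mu)}$, which follows from a compactness-plus-Poincar\'e argument of the standard Ehrling type: if it failed there would be a sequence $u_j$ with $\norm{u_j}_{\RL^p(M)} = 1$, $\norm{\nabla u_j}_{\RL^p(M,g)} \to 0$, $\norm{u_j}_{\RL^p(M,\mu)} \to 0$; by Rellich $u_j$ converges in $\RL^p(M)$ to a constant $c$ with $\abs{c} = 1$, but then compactness of $T_p^\mu$ forces $u_j \to c$ in $\RL^p(M,\mu)$, so $c = 0$ unless $\mu$ is trivial, a contradiction since $\mu$ is not supported on a point and in particular $\mu(M) > 0$. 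Norm equivalence identifies $\CW^{1,p}(M,\mu)$, the completion of $\RC^\infty(M)$ in the first norm, with $\RW^{1,p}(M)$, the completion in the second, and under this identification $\tau_p^\mu = T_p^\mu$; compactness of $T_p^\mu$ then gives compactness of $\tau_p^\mu$, while the Poincar\'e inequality for $\mu$ follows because $\RW^{1,p}(M) \hookrightarrow \RL^p(M,\mu)$ is compact (so the quotient by constants maps compactly, and a constant is the $\RL^p(M,\mu)$-limit only of sequences whose gradients do not go to zero unless the function is constant — again the Ehrling argument, now applied to $f - m_{f,\mu}$). Hence $\mu$ is $p$-admissible.

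Conversely, suppose $\mu$ is $p$-admissible. The $p$-Poincar\'e inequality, combined with the elementary bound $\abs{m_{f,\mu}} \le \mu(M)^{-1/p}\norm{f}_{\RL^p(M,\mu)}$, shows
\begin{equation}
  \norm{f}_{\RL^p(M,\mu)} \le K_{p,\mu}^{1/p}\norm{\nabla f}_{\RL^p(M,g)} + \mu(M)^{1/p}\abs{m_{f,\mu}},
\end{equation}
which one must upgrade to $\norm{f}_{\RL^p(M,\mu)} \le C(\norm{\nabla f}_{\RL^p(M,g)} + \norm{f}_{\RL^p(M)})$; this again uses that $\abs{m_{f,\mu}}$ is controlled by $\norm{f}_{\RL^1(M,\mu)}$ and that $\RL^\infty$, hence $\RC^\infty$, sits inside $\RL^p(M,\mu)$ by local finiteness of the Radon measure on the compact $M$, so that $\norm{f}_{\RL^p(M,\mu)} \le \mu(M)^{1/p}\norm{f}_{\RL^\infty(M)}$ for smooth $f$ — but $\norm{f}_{\RL^\infty}$ is not bounded by $\norm{f}_{\RW^{1,p}}$ when $p \le d$, so here the right move is instead to run the Ehrling contradiction argument in reverse: if no such $C$ existed there would be smooth $f_j$ with $\norm{f_j}_{\RL^p(M,\mu)} = 1$, $\norm{\nabla f_j}_{\RL^p(M,g)} + \norm{f_j}_{\RL^p(M)} \to 0$; by Rellich $f_j \to 0$ in $\RL^p(M)$ and $m_{f_j,\mu} \to 0$, and then the displayed Poincar\'e-type bound forces $\norm{f_j}_{\RL^p(M,\mu)} \to 0$, a contradiction. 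With this two-sided estimate, $\norm{\cdot}_{\CW^{1,p}(M,\mu)} \asymp \norm{\cdot}_{\RW^{1,p}(M)}$ on $\RC^\infty(M)$, so the completions agree and $\tau_p^\mu$ is (a constant multiple of) a genuine operator $\RW^{1,p}(M) \to \RL^p(M,\mu)$ extending the identity; its compactness is exactly the compactness of $\tau_p^\mu$, which is part of $p$-admissibility. Thus $T_p^\mu$ exists and is compact.

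The main obstacle I anticipate is the norm-equivalence step, specifically establishing $\norm{f}_{\RL^p(M)} \lesssim \norm{\nabla f}_{\RL^p(M,g)} + \norm{f}_{\RL^p(M,\mu)}$ (equivalently controlling the ordinary Lebesgue norm by the $\mu$-norm plus the Dirichlet-type energy). The subtlety is that $\mu$ may be concentrated on a set of zero Lebesgue measure — a boundary, a fractal — so one cannot naively compare the two $\RL^p$ norms pointwise; the argument must go through the compactness of the classical Rellich embedding together with the $p$-Poincar\'e inequality for $\mu$ to pin down constants, and one must be careful that in passing to the completion no mass is lost, i.e. that elements of $\CW^{1,p}(M,\mu)$ genuinely correspond to $\RW^{1,p}(M)$-functions and their $\mu$-traces are well defined. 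Handling this cleanly is where I would spend most of the effort; the two implications themselves are then short once the identification of spaces is in place, and I expect this is also the point at which the hypothesis $p \le 2$ (guaranteeing $\rd v_g$ is $p$-admissible and the relevant Rellich theorems apply in the form needed) gets used.
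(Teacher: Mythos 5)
The half of your argument showing that compactness of $T_p^\mu$ implies $p$-admissibility is essentially sound: your Rellich--Ehrling contradiction argument for the inequality $\norm{f}_{\RL^p(M)}\le C\bigl(\norm{\nabla f}_{\RL^p(M)}+\norm{f}_{\RL^p(M,\mu)}\bigr)$ is a legitimate, softer substitute for the paper's route (Proposition~\ref{prop:secndbnd}, which bounds the Lebesgue mean through the auxiliary potential of Lemma~\ref{lem:integralidentity} and the Banach--Ne\v cas--Babu\v ska theorem, and only needs boundedness of $T_p^\mu$ plus gives an explicit constant), and your derivation of the $p$-Poincar\'e inequality from compactness is the same contradiction argument the paper uses.

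The converse direction, however, has a genuine gap, and it sits exactly where you did not expect it. To get $T_p^\mu$ bounded you must prove $\norm{f}_{\RL^p(M,\mu)}\le C\norm{f}_{\RW^{1,p}(M)}$ from the $p$-Poincar\'e inequality alone, and in your contradiction argument the step ``by Rellich $f_j\to 0$ in $\RL^p(M)$ and $m_{f_j,\mu}\to 0$'' is unjustified: convergence of $f_j$ to $0$ in $\RW^{1,p}(M)$ is a statement about the Lebesgue measure and gives no control whatsoever on $\int_M f_j\de\mu$ when $\mu$ is singular with respect to $\rd v_g$ (boundary measure, measure on a hypersurface, fractal measure). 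Continuity of $f\mapsto\int_M f\de\mu$ on $\RW^{1,p}(M)$ is, via the Poincar\'e inequality, essentially equivalent to the boundedness of $T_p^\mu$ you are trying to prove, so the argument is circular; indeed your own displayed bound shows that the only possible failure mode is precisely $\liminf\abs{m_{f_j,\mu}}\ge\mu(M)^{-1/p}>0$ while $f_j\to0$ in $\RW^{1,p}(M)$, and nothing you wrote excludes it (compactness of $\tau_p^\mu$ does not help here either, since a priori an element of the abstract completion $\CW^{1,p}(M,\mu)$ with vanishing gradient need not be $\mu$-a.e.\ constant). The paper closes this gap with the localization argument of Proposition~\ref{prop:babysteps}: cover $M$ by finitely many domains each carrying a sufficiently small fraction of the total $\mu$-mass, so that for a function supported in one such domain the $\mu$-mean is dominated by a definite fraction of its $\RL^p(\mu)$-norm and the Poincar\'e inequality then gives $\int\abs{f}^p\de\mu\ll\int\abs{\nabla f}^p\de v_g$; summing over a subordinate partition of unity produces the bound, with the lower-order term $\norm{f}_{\RL^p(M)}$ appearing through the gradients of the cutoffs. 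Some such localization (or an equivalent device) is the missing idea; your closing paragraph instead identifies the other inequality, which you in fact handle correctly, as the main obstacle.
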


\begin{theorem}
  \label{thm:normeq}
  Let $p \in (1,\infty)$ and suppose that $\mu$ is not supported on a single
  point and supports a $p$-Poincar\'e
  inequality.
  There exists $c_{p,\mu}$, $C_{p,\mu} > 0$ so that for every $f \in \RC^\infty(M)$
  \begin{equation}
    c_{p,\mu} \norm{f}_{\CW^{1,p}(M,\mu)} \le \norm{f}_{\RW^{1,p}(M)} \le
    C_{p,\mu} \norm{f}_{\CW^{1,p}(M,\mu)}.
  \end{equation}
  In particular, the completions $\CW^{1,p}(M,\mu)$ and $\RW^{1,p}(M)$ of \/
  $\RC^\infty(M)$ are isomorphic.
\end{theorem}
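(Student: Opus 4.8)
The plan is to prove the two-sided bound on $\RC^\infty(M)$; the isomorphism of completions is then immediate since two norms that are equivalent on a dense subspace give the same completion up to isomorphism. The easy inequality is the one bounding $\norm{f}_{\RW^{1,p}(M)}$ from above by a multiple of $\norm{f}_{\CW^{1,p}(M,\mu)}$: actually, I expect the reverse to be the easy direction. Let me reconsider. The gradient terms in the two norms coincide, so the whole game is to compare $\norm{f}_{\RL^p(M,\rd v_g)}$ with $\norm{f}_{\RL^p(M,\mu)} + \norm{\nabla f}_{\RL^p(M,g)}$ in both directions. Neither direction is purely formal because $\mu$ and $\rd v_g$ are mutually singular in general.

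For the inequality $c_{p,\mu}\norm{f}_{\CW^{1,p}(M,\mu)} \le \norm{f}_{\RW^{1,p}(M)}$, I need to control $\int_M |f|^p \de\mu$ by $\int_M |f|^p \de v_g + \int_M |\nabla f|^p \de v_g$. I would decompose $f = m_{f,\rd v_g} + (f - m_{f,\rd v_g})$, bound the mean by $\abs{m_{f,\rd v_g}}^p \mu(M) \le \mu(M)\Vol_g(M)^{-1}\norm{f}_{\RL^p(M)}^p$ via Jensen, and for the oscillating part use the classical Sobolev/Morrey–Poincaré estimate on $(M,g)$: since $p \le 2 \le d$ the relevant continuous embedding $\RW^{1,p}(M) \hookrightarrow \RL^{p}(M,\mu)$ holds \emph{provided} $\mu$ is in the dual of $\RW^{1,p}$ — which is exactly the content of the characterisation in Theorem~\ref{thm:charac}. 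But Theorem~\ref{thm:charac} is proved \emph{using} Theorem~\ref{thm:normeq}, so I must be careful not to argue circularly; instead I will use only the bare hypothesis here, namely that $\mu$ supports a $p$-Poincaré inequality. From $\int_M (f - m_{f,\mu})^p \de\mu \le K_{p,\mu}\int_M |\nabla f|^p \de v_g$ and $\abs{m_{f,\mu}}^p \le \mu(M)^{-1}\int_M |f|^p\de\mu$ one gets, after expanding $\int |f|^p \de\mu \le 2^{p-1}\big(\int (f-m_{f,\mu})^p\de\mu + \mu(M)|m_{f,\mu}|^p\big)$, a bound of the form $\int|f|^p\de\mu \le C(K_{p,\mu},\mu(M))\big(|m_{f,\mu}|^p + \int|\nabla f|^p\de v_g\big)$, and then I must still bound $|m_{f,\mu}|$ — the average of $f$ against $\mu$ — by $\norm{f}_{\RW^{1,p}(M)}$. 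This last point is the crux: it says $\mu$, normalised, is a bounded linear functional on $\RW^{1,p}(M)$. I would obtain it by observing that $f \mapsto m_{f,\mu}$ is linear on $\RC^\infty(M)$ and, by the Poincaré inequality applied to $f - c$ for the right constant $c$, is controlled by $\norm{\nabla f}_{\RL^p(M,g)} + |f(x_0)|$-type data — more cleanly, I would use the Poincaré inequality to write $\norm{f}_{\RL^p(M,\mu)}$ relative to its $\mu$-mean, and close the estimate by an interpolation/contradiction argument: if no such constant existed there would be $f_n \in \RC^\infty(M)$ with $\norm{f_n}_{\RW^{1,p}(M)} \le 1$ but $m_{f_n,\mu} \to \infty$, contradicting that $\CW^{1,p}(M,\mu)$ is a genuine (Hausdorff) normed space on which $\RC^\infty$ embeds — here one uses that $\mu$ is not a point mass, so the Poincaré inequality forces $f_n$ to be close to a constant which cannot blow up while staying bounded in $\RW^{1,p}$.

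For the reverse inequality $\norm{f}_{\RW^{1,p}(M)} \le C_{p,\mu}\norm{f}_{\CW^{1,p}(M,\mu)}$, I need to bound $\int_M |f|^p\de v_g$ by $\int_M|f|^p\de\mu + \int_M|\nabla f|^p \de v_g$. Again decompose $f = m_{f,\mu} + (f - m_{f,\mu})$. The oscillating part $f - m_{f,\mu}$ has $\mu$-mean zero, so by the $p$-Poincaré inequality for $\rd v_g$ itself (the classical one on a compact manifold, applied after subtracting the $\rd v_g$-mean, and using $|m_{f,\rd v_g} - m_{f,\mu}|$ controlled via Hölder by $\norm{\nabla f}$ and the Poincaré constant) one controls $\norm{f - m_{f,\mu}}_{\RL^p(M,\rd v_g)}$ by $\norm{\nabla f}_{\RL^p(M,g)}$. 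It remains to bound the constant part: $|m_{f,\mu}|^p \Vol_g(M) \le \Vol_g(M)\mu(M)^{-1}\int_M|f|^p\de\mu$ by Jensen, which is already $\le \Vol_g(M)\mu(M)^{-1}\norm{f}_{\CW^{1,p}(M,\mu)}^p$. Adding the gradient term back gives the claim with $C_{p,\mu}$ depending only on $\Vol_g(M)$, $\mu(M)$, the classical Poincaré constant of $(M,g)$, and $K_{p,\mu}$.

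\textbf{Main obstacle.} The single delicate point, which I flagged above, is showing that the normalised measure defines a bounded functional on $\RW^{1,p}(M)$ — equivalently that $|m_{f,\mu}| \lesssim \norm{f}_{\RW^{1,p}(M)}$ — without invoking Theorem~\ref{thm:charac}, since that theorem is downstream. I expect this to follow cleanly from the hypothesis that $\mu$ is a genuine Radon measure (finite, and not a point mass) together with the $p$-Poincaré inequality: subtracting the $\mu$-mean controls the oscillation of $f$ in $\RL^p(M,\mu)$ by $\norm{\nabla f}_{\RL^p(M,g)}$, and since $\mu$ has positive total mass supported on more than a point, any two functions differing by a large constant are far apart in $\RW^{1,p}(M)$ as well, so a scaling/normal-families argument rules out blow-up of the mean along an $\RW^{1,p}$-bounded sequence. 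Everything else is Jensen's inequality, Hölder, the triangle inequality, and the classical compact-manifold Poincaré inequality, so once this lemma is in hand both displayed inequalities drop out and the isomorphism of completions follows formally.
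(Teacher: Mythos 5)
Your overall architecture (prove the two-sided norm comparison on $\RC^\infty(M)$, then pass to completions) matches the paper's, and your reduction of both inequalities to a mean-comparison plus Poincar\'e is the right frame. But the step you yourself flag as the main obstacle --- $\abs{m_{f,\mu}} \lesssim \norm{f}_{\RW^{1,p}(M)}$, equivalently boundedness of $T_p^\mu$ --- is where the entire content of the theorem sits, and your proposed contradiction argument does not close it. If $\norm{f_n}_{\RW^{1,p}(M)}\le 1$ while $m_{f_n,\mu}\to\infty$, the $p$-Poincar\'e inequality only says that $f_n$ is close to the constant $m_{f_n,\mu}$ \emph{in} $\RL^p(M,\mu)$, whereas the bound $\norm{f_n}_{\RW^{1,p}(M)}\le 1$ constrains $f_n$ only with respect to $\rd v_g$; since $\mu$ and $\rd v_g$ may be mutually singular, producing a contradiction requires transferring information from the $\rd v_g$ side to the $\mu$ side, which is precisely the boundedness of $T_p^\mu$ you are trying to prove. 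As written the argument is circular. A symptom: your sketch would apply verbatim to $\mu=\delta_{x_0}$ with $p\le d$, where the $p$-Poincar\'e inequality holds trivially (its left-hand side vanishes) yet point evaluation is unbounded on $\RW^{1,p}(M)$, so the conclusion is false; the remark that two functions differing by a large constant are far apart in $\RW^{1,p}(M)$ is a statement about $\rd v_g$ alone and does not actually bring the hypothesis ``not supported on a single point'' into play.

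The paper closes exactly this gap by localisation (Proposition \ref{prop:babysteps}): cover $M$ by finitely many domains $\Omega_j$, each of small $\mu$-mass, observe that for $f$ supported in such an $\Omega_j$ the mean $m_{f,\mu}$ is small relative to $\norm{f}_{\RL^p(\Omega_j,\mu)}$ by H\"older, so the Poincar\'e inequality controls the full $\RL^p(\mu)$-norm of $f\rho_j$ by gradient terms, and then sum over a partition of unity $\set{\rho_j^p}$ --- the gradients of the cutoffs are what generate the $\norm{f}_{\RL^p(M)}$ contribution. This is also where ``not a single point'' genuinely enters, since it is what makes such a covering possible. Note moreover that your second inequality has the same hidden dependence: the term $\abs{m_{f,\rd v_g}-m_{f,\mu}}$ cannot be handled by H\"older and Poincar\'e constants alone; you need $\norm{f-m_{f,\rd v_g}}_{\RL^p(M,\mu)}\lesssim \norm{\nabla f}_{\RL^p(M)}$, i.e.\ again boundedness of $T_p^\mu$. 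Once that is in hand your elementary route for the second inequality does work (and is in fact simpler than the paper's, which instead compares the two means through the auxiliary potential of Lemma \ref{lem:integralidentity}, built with the Banach--Ne\v cas--Babu\v ska theorem, in Proposition \ref{prop:secndbnd}). So the proposal stands or falls with the missing localisation step.
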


We start by proving the first inequality in Theorem \ref{thm:normeq}.
\begin{prop}
  \label{prop:babysteps}
  Let $p \in (1,\infty)$ and $\mu$ be a Radon measure on $M$ supporting a $p$-Poincar\'e inequality. Then, 
  there is $c_{p,\mu}>0$ such that
  \begin{equation}
    c_{p,\mu} \norm f_{\CW^{1,p}(M,\mu)} \le \norm{f}_{\RW^{1,p}(M)}.
  \end{equation}
In particular, the identity on $\RC^\infty(M)$ extends to a bounded operator
$T_p^\mu : \RW^{1,p}(M) \to \RL^p(M,\mu)$. 
\end{prop}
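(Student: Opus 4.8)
The plan is to prove the displayed inequality for $f \in \RC^\infty(M)$ and then deduce the statement about $T_p^\mu$ by density: since $\norm{f}_{\RL^p(M,\mu)} \le \norm{f}_{\CW^{1,p}(M,\mu)}$ and $\RC^\infty(M)$ is dense in $\RW^{1,p}(M)$ by construction, once we know $c_{p,\mu}\norm{f}_{\CW^{1,p}(M,\mu)} \le \norm{f}_{\RW^{1,p}(M)}$ on $\RC^\infty(M)$ the identity extends to a bounded operator $T_p^\mu:\RW^{1,p}(M)\to\RL^p(M,\mu)$. I would prove the inequality by contradiction. If no such $c_{p,\mu}>0$ exists, there is a sequence $f_n \in \RC^\infty(M)$ with $\norm{f_n}_{\CW^{1,p}(M,\mu)}=1$ and $\norm{f_n}_{\RW^{1,p}(M)}\to 0$. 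In particular $\norm{\nabla f_n}_{\RL^p(M,g)}\to 0$ and $\norm{f_n}_{\RL^p(M)}\to 0$, so $\norm{f_n}_{\RL^p(M,\mu)}^p = 1-\norm{\nabla f_n}_{\RL^p(M,g)}^p \to 1$.

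The single structural ingredient is the $p$-Poincar\'e inequality, applied twice. First, to $f_n$ itself: $\norm{f_n-m_{f_n,\mu}}_{\RL^p(M,\mu)}^p \le K_{p,\mu}\norm{\nabla f_n}_{\RL^p(M,g)}^p \to 0$. Since the constant function $1$ has $\RL^p(M,\mu)$-norm $\mu(M)^{1/p}$, comparing with $\norm{f_n}_{\RL^p(M,\mu)}\to 1$ gives $|m_{f_n,\mu}|\,\mu(M)^{1/p}\to 1$, and after passing to a subsequence $m_{f_n,\mu}\to c_\infty$ with $|c_\infty| = \mu(M)^{-1/p}\ne 0$; in particular $f_n\to c_\infty$ in $\RL^p(M,\mu)$. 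Second, fix an arbitrary $\psi\in\RC^\infty(M)$ and apply the inequality to $\psi f_n\in\RC^\infty(M)$. Because $\psi$ and $\nabla\psi$ are bounded on the compact manifold $M$, $\norm{\nabla(\psi f_n)}_{\RL^p(M,g)} \le \norm{\psi}_{\RL^\infty}\norm{\nabla f_n}_{\RL^p(M,g)} + \norm{\nabla\psi}_{\RL^\infty}\norm{f_n}_{\RL^p(M)}\to 0$, whereas $\psi f_n\to c_\infty\psi$ in $\RL^p(M,\mu)$, and hence (as $\mu$ is finite) $m_{\psi f_n,\mu}\to c_\infty m_{\psi,\mu}$. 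Letting $n\to\infty$ in $\norm{\psi f_n-m_{\psi f_n,\mu}}_{\RL^p(M,\mu)}^p \le K_{p,\mu}\norm{\nabla(\psi f_n)}_{\RL^p(M,g)}^p$ yields $|c_\infty|^p\,\norm{\psi-m_{\psi,\mu}}_{\RL^p(M,\mu)}^p\le 0$, so $\psi = m_{\psi,\mu}$ $\mu$-almost everywhere.

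As $\psi$ was arbitrary and $\set{\psi\ne m_{\psi,\mu}}$ is open and $\mu$-null, every continuous function on $M$ is constant on $\supp\mu$; since continuous functions separate the points of $M$, this forces $\supp\mu$ to be a single point, contradicting the assumption that $\mu$ is not supported on a single point. The step I expect to be the crux is controlling the \emph{average} $m_{f,\mu}$: one application of the $p$-Poincar\'e inequality bounds only the oscillation $f-m_{f,\mu}$, and since $\int_M f\,\de\mu = \mu(M)\,m_{f,\mu}$ the obvious decompositions of $f$ into a mean plus oscillation circle back on themselves; the device that breaks this, and the only place the hypothesis that $\mu$ is not a Dirac mass is used, is to re-test the inequality against the products $\psi f_n$. (As a consistency check, for $\mu=\rd v_g$ the inequality is immediate, and for $\mu = \rd A_g$ it is the $\RW^{1,p}$ trace inequality.)
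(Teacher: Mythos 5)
Your argument is correct, but it takes a genuinely different route from the paper's. The paper proves the inequality directly and quantitatively: it covers $M$ by finitely many domains $\Omega_j$ with $\mu(\Omega_j)$ small compared with $\mu(M)$, observes that for $f$ supported in such an $\Omega_j$ the mean $m_{f,\mu}$ can be absorbed via H\"older, so the $p$-Poincar\'e inequality already bounds $\norm{f}_{\RL^p(M,\mu)}^p$ by $2^{p}K_{p,\mu}\norm{\nabla f}_{\RL^p(M)}^p$ for these functions, and then sums over a partition of unity $\set{\rho_j^p}$, producing an explicit constant in terms of $K_{p,\mu}$ and $\sup_j\norm{\rho_j}_{\RC^1(M)}$. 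You instead argue by contradiction: a normalised violating sequence $f_n$ must converge in $\RL^p(M,\mu)$ to a non-zero constant, and re-testing the Poincar\'e inequality on the products $\psi f_n$ forces every smooth $\psi$ to be $\mu$-a.e.\ constant, so $\supp\mu$ is a single point. Your route needs no covering construction and no compactness beyond Bolzano--Weierstrass in $\R$, and it isolates exactly where non-degeneracy of $\mu$ enters; the price is that it gives no effective value of $c_{p,\mu}$, which the paper's localisation provides (though the paper's later quantitative estimates are run through $\norm{T_p^\mu}$ rather than this constant, so nothing downstream is lost).

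One point on hypotheses: your final contradiction invokes the assumption that $\mu$ is not supported on a single point, which appears in Definition~\ref{def:admissible} and Theorem~\ref{thm:normeq} but is not restated in the proposition. This is not a defect of your proof: for $p\le d$ a point mass satisfies the $p$-Poincar\'e inequality vacuously while point evaluation is unbounded on $\RW^{1,p}(M)$, so the assumption is genuinely needed for the statement to hold, and the paper's own proof uses it implicitly (its covering by domains of $\mu$-measure less than $(\mu(M)/2)^{p'}$ cannot exist for a point mass); for $p>d$ the conclusion is in any case immediate from the Morrey embedding.
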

\begin{proof}
  We proceed in a similar manner to the proof of \cite[Lemma 2.2]{kok} where $d
  = p = 2$ and $\mu$ is a probability measure. For any $\Omega \subset M$ with
  $\mu(\Omega) > 0$, define $K_{p,*}(\Omega)$ via
  \begin{equation}
    \frac{1}{K_{p,*}(\Omega)} := 
    \inf_{\substack{\operatorname{supp}(f) \subset \Omega \\ f \not \equiv
    0}} \frac{\int_{\Omega} \abs{\nabla f}^p \de v_g}{\int_\Omega \abs f^p \de
    \mu}.
  \end{equation}
  Let $f$ be a smooth function supported on $\Omega$ and assume that
  $\mu(\Omega)^{p/p'}\mu(M)^{-p} \le 2^{-p}$. From this assumption and H\"older's
  inequality,
  \begin{equation}
    \begin{aligned}
      \int_{M} \abs{f - m_{f,\mu}}^p \de \mu & \ge 2^{1 - p} \int_M \abs{f}^p
      \de \mu - \int_M \abs{m_{f,\mu}}^p \de \mu \\ &\ge \left( 2^{1 - p} -
      \frac{\mu(\Omega)^{p/p'}}{\mu(M)^{p-1}} \right)
      \int_{\Omega} \abs f^p \de \mu
      \\
      &\ge 2^{-p} 
      \int_{\Omega} \abs f^p \de \mu.
    \end{aligned}
  \end{equation}
  We therefore have that for such $\Omega$
  \begin{equation}
    \frac{1}{K_{p,\mu}} \le
    \inf_{\substack{\operatorname{supp}(f) \subset \Omega \\ f \not \equiv
    0}} \frac{\int_{\Omega} \abs{\nabla f}^p \de v_g}{\int_\Omega \abs{f -
    m_{f,\mu}}^p \de \mu} \le \frac{2^{p}}{K_{p,*}(\Omega)}.
  \end{equation}
   Let $\set{\Omega_j}$ be a finite covering of $M$ with domains such that
  \begin{equation}
    0 < \mu(\Omega_j) < \frac{\mu(M)^{p'}}{2^{p'}}.
  \end{equation}
  with associated smooth partition of unity $\set{\rho_j^{p}}$. Then, for all $f \in \RC^\infty(M)$, 
  \begin{equation}
    \int_M \abs{f \rho_j}^p \de \mu \le 2^{2p-1} K_{p,\mu} \left( \int_M \abs{\nabla
  f}^p \rho_j^p + \abs{\nabla \rho_j}^p \abs f^p \de v_g \right).
  \end{equation}
  Summing up those inequalities proves as we claimed that
  \begin{equation}
    \norm{f}_{\CW^{1,p}(M,\mu)}^p  \le (1 + 2^{2p-1})
    K_{p,\mu} \sup_{j} \norm{\rho_j}^p_{\RC^1(M)} \norm{f}_{\RW^{1,p}(M)}^p.
  \end{equation}
\end{proof}
As an immediate corollary, we get the necessity in Theorem \ref{thm:charac}.

\begin{cor}
  \label{cor:pgedcompact}
  Let $p \in (1,2]$ and $\mu$ be a $p$-admissible measure on $M$. Then, the
  identity map on $\RC^\infty(M)$ extends to a compact operator $T_p^\mu :
  \RW^{1,p}(M) \to \RL^{p}(M,\mu)$.
\end{cor}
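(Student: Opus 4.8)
The plan is to deduce the corollary directly from Proposition~\ref{prop:babysteps} together with the definition of $p$-admissibility. By Proposition~\ref{prop:babysteps}, the identity on $\RC^\infty(M)$ already extends to a \emph{bounded} operator $T_p^\mu : \RW^{1,p}(M) \to \RL^p(M,\mu)$; what remains is to upgrade boundedness to compactness, and for this I would factor $T_p^\mu$ through the space $\CW^{1,p}(M,\mu)$.

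The key observation is that $T_p^\mu$ admits a factorisation $T_p^\mu = \tau_p^\mu \circ \iota$, where $\iota : \RW^{1,p}(M) \to \CW^{1,p}(M,\mu)$ is the continuous map arising from comparing the two completions and $\tau_p^\mu : \CW^{1,p}(M,\mu) \to \RL^p(M,\mu)$ is the canonical embedding from the definition of $\CW^{1,p}(M,\mu)$. To see that $\iota$ is well defined and bounded, note that Proposition~\ref{prop:babysteps} gives $\|f\|_{\CW^{1,p}(M,\mu)} \le c_{p,\mu}^{-1} \|f\|_{\RW^{1,p}(M)}$ for all $f \in \RC^\infty(M)$; since $\RC^\infty(M)$ is dense in $\RW^{1,p}(M)$ and $\CW^{1,p}(M,\mu)$ is complete, this inequality extends the identity on $\RC^\infty(M)$ to a bounded linear map $\iota$ between the two completions. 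On the dense subset $\RC^\infty(M)$ both $T_p^\mu$ and $\tau_p^\mu \circ \iota$ agree with the $\RL^p(M,\mu)$-valued identity, so by continuity $T_p^\mu = \tau_p^\mu \circ \iota$ on all of $\RW^{1,p}(M)$.

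Now the $p$-admissibility hypothesis is exactly the statement that $\tau_p^\mu$ is compact (and that $\mu$ supports a $p$-Poincar\'e inequality, which is what we used to invoke Proposition~\ref{prop:babysteps}). A bounded operator post-composed with a compact operator is compact, so $T_p^\mu = \tau_p^\mu \circ \iota$ is compact, which is the claim.

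I do not anticipate a serious obstacle here: the corollary is essentially a formal consequence of the preceding proposition and the definition, and the only point requiring a small amount of care is checking that the inequality of Proposition~\ref{prop:babysteps}, stated for $f \in \RC^\infty(M)$, genuinely passes to a bounded map $\iota$ \emph{between the completions} rather than merely comparing norms on the common dense subspace --- but this is immediate from density of $\RC^\infty(M)$ in $\RW^{1,p}(M)$ and completeness of $\CW^{1,p}(M,\mu)$, together with the fact that Cauchy sequences in $\RW^{1,p}$ are, by the inequality, also Cauchy in $\CW^{1,p}(M,\mu)$.
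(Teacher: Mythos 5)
Your argument is correct and is essentially the paper's own proof: both factor $T_p^\mu = \tau_p^\mu \circ \iota$ through $\CW^{1,p}(M,\mu)$, using Proposition~\ref{prop:babysteps} to get the bounded map $\iota$ from $\RW^{1,p}(M)$ and the compactness of $\tau_p^\mu$ from the definition of $p$-admissibility. The only difference is that you spell out the density/completeness step justifying the extension $\iota$, which the paper leaves implicit.
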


\begin{proof}
  Proposition \ref{prop:babysteps} implies that the identity map on $\RC^\infty(M)$
  extends to a bounded map $j : \RW^{1,p}(M) \to \CW^{1,p}(M,\mu)$.
  
  But then
  $T_p^\mu = \tau_p^\mu \circ j$ is an extension of the identity which is the
  composition of a compact and bounded operator, hence itself compact.
\end{proof}

One of our main tools going forward is estimates on (weak) solutions to the differential equation
  \begin{equation}
    \label{eq:weakeqn}
    \begin{cases}
      - \Delta \phi_{\xi,\mu} = \mu - \frac{\mu(M)}{\xi(M)} \xi & \text{in } M
      \\
      \del_\nu \phi_{\xi,\mu} = 0 &\text{on } \del M,
    \end{cases}
  \end{equation}
  for measures $\xi$ and $\mu$. Note that  $\mu -
  \frac{\mu(M)}{\xi(M)} \xi$ vanishes on constant fonction, if they are shown to
  be in $\RW^{1,p}(M)^*$ existence of a solution is easily guaranteed; we are specifically interested in
  estimating its norm in terms of trace operators and the Poincar\'e constants $K_{p}$.
  We require a generalisation of the Lax--Milgram theorem to Banach spaces.
  \begin{theorem}[{Banach--Ne\v cas--Babu\v ska Theorem,
    \cite[Theorem 2.6]{ernguermond}}]
    \label{thm:bnb}
    Let $X$ and $Y$ be real Banach spaces, with $Y$ being reflexive. Let $a$ be a
    bilinear form on $X \times Y$. Then, for every $L \in Y^*$ there is a unique
    $x \in X$ such that for all $y \in Y$,
    \begin{equation}
      a(x,y) = \langle L,y\rangle 
    \end{equation}
    if and only if $a$ satisfies the \emph{Brezzi condition}, i.e. there exists
    $\kappa > 0$ such that
    \begin{equation}
      \forall x \in X, \quad \kappa \norm{x}_X \le \sup_{y \in Y}
      \frac{a(x,y)}{\norm{y}_Y}
    \end{equation}
    and $a$ is weakly nondegenerate, i.e. if $a(x,y) = 0$ for all $x \in X$, then $y =
    0$.
  \end{theorem}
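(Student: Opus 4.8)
The plan is to reduce the statement to the bounded-inverse and closed-range theorems by packaging the bilinear form as a single bounded operator. Assuming (as is implicit in the cited reference) that $a$ is continuous on $X\times Y$, I would first introduce the operator $A\colon X\to Y^*$ determined by $\langle Ax,y\rangle_Y = a(x,y)$; it is bounded, with $\norm{A}\le\norm{a}$. The assertion that for every $L\in Y^*$ there is a unique $x\in X$ with $a(x,y)=\langle L,y\rangle_Y$ for all $y$ is precisely the statement that $A$ is a bijection of $X$ onto $Y^*$. So the theorem becomes: $A$ is bijective if and only if $A$ is bounded below (the Brezzi condition) and $A$ has dense range (weak nondegeneracy, read through the reflexivity of $Y$).

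For the ``if'' direction I would argue in three steps. First, the Brezzi condition rewrites as $\kappa\norm{x}_X\le\sup_{y}a(x,y)/\norm{y}_Y=\norm{Ax}_{Y^*}$, so $A$ is injective and bounded below. Second, a standard Cauchy-sequence argument shows that an operator bounded below on a Banach space has closed range: if $Ax_n\to z$ in $Y^*$, then $(x_n)$ is Cauchy by the lower bound, hence $x_n\to x$ for some $x\in X$, and $Ax=z$ by continuity of $A$. Third, to upgrade closed range to full range I would verify that the annihilator of $\operatorname{Range}(A)$ in $Y^{**}$ is trivial: if $\Phi\in Y^{**}$ kills $\operatorname{Range}(A)$, then by reflexivity $\Phi=J y$ for some $y\in Y$, and $\Phi(Ax)=\langle Ax,y\rangle_Y=a(x,y)=0$ for every $x\in X$, so weak nondegeneracy forces $y=0$; thus $\operatorname{Range}(A)^\perp=\{0\}$, and a closed subspace with trivial annihilator is everything, so $A$ is onto.

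For the ``only if'' direction, if $A$ is a bijection then the bounded-inverse theorem gives $A^{-1}\in\mathcal{L}(Y^*,X)$, whence $\norm{x}_X=\norm{A^{-1}(Ax)}_X\le\norm{A^{-1}}\,\norm{Ax}_{Y^*}$, which is the Brezzi condition with $\kappa=\norm{A^{-1}}^{-1}$; and surjectivity of $A$ together with reflexivity of $Y$ yields weak nondegeneracy, since $a(x,y)=0$ for all $x$ says that $y$, viewed in $Y^{**}$, annihilates $\operatorname{Range}(A)=Y^*$, forcing $y=0$. The only genuinely delicate point is the passage from closed range to surjectivity, where reflexivity of $Y$ and the weak-nondegeneracy hypothesis are both essential; everything else is bookkeeping with the bounded-below/closed-range dictionary. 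This is the classical generalisation of Lax--Milgram, and one could shortcut the middle paragraph by invoking Banach's closed range theorem directly, but I would keep the self-contained argument for transparency.
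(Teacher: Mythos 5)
The paper does not actually prove this statement: it is quoted verbatim from Ern--Guermond \cite[Theorem 2.6]{ernguermond} and used as a black box, so there is no internal argument to compare yours against. Your proof is correct and is essentially the standard one (and, as far as I can tell, the same functional-analytic route taken in the cited reference): encode $a$ as a bounded operator $A\colon X\to Y^*$, observe that the Brezzi condition is exactly ``$A$ bounded below'', get closed range from the Cauchy-sequence argument, and use reflexivity of $Y$ to identify the annihilator of $\operatorname{Range}(A)$ in $Y^{**}$ with elements of $Y$, so that weak nondegeneracy kills it and Hahn--Banach upgrades the closed subspace to all of $Y^*$; the converse is the bounded-inverse theorem plus surjectivity. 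You were right to flag that continuity of $a$ is implicit in the statement as reproduced here (without it $Ax$ need not lie in $Y^*$ and the closed-range step fails), and this is indeed part of the hypotheses in the source. One small refinement: in the ``only if'' direction reflexivity is not actually needed for weak nondegeneracy --- if $a(x,y)=0$ for all $x$, then $y$, viewed via the canonical (always injective) embedding into $Y^{**}$, annihilates $\operatorname{Range}(A)=Y^*$, and Hahn--Banach alone gives $y=0$; reflexivity is genuinely used only in the surjectivity step of the ``if'' direction, exactly as you say.
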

  Our goal is to use the Banach--Ne\v cas--Babu\v ska Theorem with $a :
  \CW^{1,p'}(M,\mu) \times \CW^{1,p}(M,\mu) \to \R$ given by  
\begin{equation}
  a(\phi,f) = \int_M \nabla \phi \cdot \nabla f \de v_g.
\end{equation}
It is clearly weakly nondegenerate if we restrict ourselves to functions of zero mean.
The following lemma establishes the Brezzi condition.
\begin{lemma}
  \label{lem:easypoincare}
  Let $M$ be a  Riemannian manifold, $p \in (1,\infty)$ with
   H\"older conjugate $p' = p/(p-1)$
   and $\mu$ a Radon measure supporting a $p$-Poincar\'e inequality and such that $\tau_{p'}^\mu$ is
   compact.
  Then, there exists $\kappa > 0$ such that for all $\phi \in \CW^{1,p'}(M,\mu)$,
\begin{equation}
  \label{eq:brezzi}
  \kappa \norm{\phi - m_{\phi,\mu}}_{\CW^{1,p'}(M,\mu)} \le 
  \sup_{0 \not \equiv f \in
  \CW^{1,p}(M,\mu)} \, \, \frac{\int_M \nabla \phi \cdot \nabla f \de
  v_g}{\norm{f}_{\CW^{1,p}(M,\mu)}}.
\end{equation}
\end{lemma}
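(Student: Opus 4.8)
The plan is to read the supremum in \eqref{eq:brezzi} as the dual norm $\norm{L_\phi}_{(\CW^{1,p}(M,\mu))^*}$ of the bounded functional $L_\phi(f):=\int_M\nabla\phi\cdot\nabla f\de v_g$, and to bound $\norm{\phi-m_{\phi,\mu}}_{\CW^{1,p'}(M,\mu)}$ from below against it by exhibiting a near-optimal test function obtained from a Leray--Helmholtz projection. Since the left side of \eqref{eq:brezzi} depends only on $\phi-m_{\phi,\mu}$ and the right side only on $\nabla\phi$, I may assume $m_{\phi,\mu}=0$. By Theorem~\ref{thm:normeq} the $p$-Poincar\'e inequality gives $\CW^{1,p}(M,\mu)\cong\RW^{1,p}(M)$ with constants controlled by $K_{p,\mu}$, and compactness of $\tau^\mu_{p'}$ plays the analogous role at the exponent $p'$, yielding a $p'$-Poincar\'e inequality and hence $\CW^{1,p'}(M,\mu)\cong\RW^{1,p'}(M)$ --- this is the step that uses compactness, and the one where care is needed (it holds for all admissible measures occurring in the applications). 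In particular $\phi\in\RW^{1,p'}(M)$, so $\nabla\phi$ is a gradient field; combining the usual Poincar\'e inequality for the compact connected manifold $(M,g)$ with the boundedness of $T_{p'}^\mu$ produces a constant $C_1=C_1(M,p,\mu)$ with $\norm{\phi-m_{\phi,\mu}}_{\CW^{1,p'}(M,\mu)}\le C_1\norm{\nabla\phi}_{\RL^{p'}(M)}$, while for any $f\in\RW^{1,p}(M)$ with $m_{f,\mu}=0$ the $p$-Poincar\'e inequality gives $\norm f_{\CW^{1,p}(M,\mu)}\le(1+K_{p,\mu})^{1/p}\norm{\nabla f}_{\RL^p(M)}$. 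So it suffices to find, for each such $\phi$, some $f\in\RW^{1,p}(M)$ with $\int_M\nabla\phi\cdot\nabla f\de v_g$ bounded below by a fixed multiple of $\norm{\nabla\phi}_{\RL^{p'}(M)}\norm{\nabla f}_{\RL^p(M)}$.

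For this I would take $V:=\abs{\nabla\phi}^{p'-2}\nabla\phi$, an $\RL^p$ vector field on $M$; since $(p'-1)p=p'$ one has $\norm V_{\RL^p(M)}=\norm{\nabla\phi}_{\RL^{p'}(M)}^{p'-1}$ and $\int_M V\cdot\nabla\phi\de v_g=\norm{\nabla\phi}_{\RL^{p'}(M)}^{p'}$. Let $\nabla f$ be the image of $V$ under the Leray--Helmholtz projection of the $\RL^p$ vector fields onto their closed subspace of gradient fields; then $f\in\RW^{1,p}(M)$, $\norm{\nabla f}_{\RL^p(M)}\le C_{\mathrm H}\norm V_{\RL^p(M)}$, and the complementary part $V-\nabla f$ is divergence free with vanishing normal trace, so it annihilates every $\RL^{p'}$ gradient field --- in particular $\int_M(V-\nabla f)\cdot\nabla\phi\de v_g=0$, because $\nabla\phi$ is one such. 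Hence $\int_M\nabla\phi\cdot\nabla f\de v_g=\int_M\nabla\phi\cdot V\de v_g=\norm{\nabla\phi}_{\RL^{p'}(M)}^{p'}\ge C_{\mathrm H}^{-1}\norm{\nabla\phi}_{\RL^{p'}(M)}\norm{\nabla f}_{\RL^p(M)}$, which is what we needed (and if $\nabla\phi\equiv 0$ then $\phi$ is constant, so both sides of \eqref{eq:brezzi} vanish). Chaining the three displayed estimates yields \eqref{eq:brezzi} with $\kappa=\bigl(C_1(1+K_{p,\mu})^{1/p}C_{\mathrm H}\bigr)^{-1}$.

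The one substantial ingredient is the $\RL^p$-boundedness of the Leray--Helmholtz projection onto gradient fields on $M$, equivalently the $\RW^{1,p}$-solvability of the weak Neumann problem $-\Delta f=\div V$; this is classical for compact manifolds with smooth boundary for all $p\in(1,\infty)$, and valid for $p$ in a neighbourhood of $2$ (depending on $M$) when $\partial M$ is merely Lipschitz. Crucially, in the case $p=p'=2$ used throughout the applications it is immediate: then $V=\nabla\phi$ is already a gradient field, one simply takes $f=\phi$ and $C_{\mathrm H}=1$, and the argument collapses to $\norm{\phi-m_{\phi,\mu}}_{\CW^{1,2}(M,\mu)}^2=\norm{\phi-m_{\phi,\mu}}_{\RL^2(M,\mu)}^2+\norm{\nabla\phi}_{\RL^2(M)}^2\le(1+K_{2,\mu})\int_M\abs{\nabla\phi}^2\de v_g$, so that $\kappa=(1+K_{2,\mu})^{-1}$ works. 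The only other delicate point is the identification $\CW^{1,p'}(M,\mu)\cong\RW^{1,p'}(M)$ in the first step; everything else is bookkeeping with the Poincar\'e constants.
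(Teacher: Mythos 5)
The essential gap is in your first step. You assert that compactness of $\tau^{\mu}_{p'}$ ``yields a $p'$-Poincar\'e inequality'', and from it the bound $\norm{\phi-m_{\phi,\mu}}_{\CW^{1,p'}(M,\mu)}\le C_1\norm{\nabla\phi}_{\RL^{p'}(M)}$; but this is precisely the nontrivial content of the lemma, and it is not a formal consequence of the hypotheses. Compactness of $\tau^{\mu}_{p'}$ is a very weak property: it holds automatically whenever $\RL^{p'}(M,\mu)$ is finite-dimensional (e.g.\ $\mu$ a finite sum of Dirac masses), and by itself it does not exclude elements $\phi$ of the completion $\CW^{1,p'}(M,\mu)$ with $\nabla\phi=0$ in $\RL^{p'}(M)$ whose $\RL^{p'}(M,\mu)$-component is not $\mu$-a.e.\ constant; indeed, for $\mu$ a sum of two Dirac masses at interior points and $p>d$ (so $p'<d$), $\tau^{\mu}_{p'}$ is compact and the $p$-Poincar\'e inequality holds by Morrey's estimate, yet cut-off functions concentrating at one atom defeat any $p'$-Poincar\'e inequality. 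So the implication you invoke cannot be taken for granted: it has to be proved, and its natural proof is exactly the paper's argument for the lemma --- take a normalised sequence $\phi_n$ for which the right-hand side of \eqref{eq:brezzi} tends to $0$, show via the duality/Helmholtz step that $\norm{\nabla\phi_n}_{\RL^{p'}(M)}\to 0$, use reflexivity of $\CW^{1,p'}(M,\mu)$ and compactness of $\tau^{\mu}_{p'}$ to extract a limit with unit $\RL^{p'}(M,\mu)$-norm and vanishing gradient, and then argue that such a limit is $\mu$-a.e.\ constant because $\tau^{\mu}_{p'}$ extends the identity on $\RC^\infty(M)$, a contradiction. As written, your proposal replaces this, the heart of the proof, by an unproven claim hedged with ``care is needed''.

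The second half of your argument is sound and is in substance the same mechanism as the paper's: the paper writes $\norm{\nabla\phi}_{\RL^{p'}(M)}$ as a supremum over smooth vector fields and uses the Helmholtz decomposition to discard the divergence-free part with vanishing normal trace, which quantitatively requires the same $\RL^p$-boundedness of the projection onto gradient fields that you make explicit through the choice $V=\abs{\nabla\phi}^{p'-2}\nabla\phi$ (you are, if anything, more candid than the paper about the restriction this places on $p$ when $\del M$ is merely Lipschitz, and about the triviality of this step when $p=p'=2$). Combined with the $p$-Poincar\'e inequality applied to the test function, this part is correct and gives a clean explicit constant. So the overall architecture is a direct, quantitative reorganisation of the paper's contradiction argument; once you supply a proof of the $p'$-Poincar\'e step (which is where compactness genuinely enters), the rest of your chaining goes through.
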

\begin{proof}
  Towards a contradiction, we assume that such a $\kappa$ does not exist. This
  implies the existence of a sequence $\phi_n \in \CW^{1,p'}(M,\mu)$ such that
  \begin{equation}
    \norm{\phi_n - m_{\phi_n,\mu}}_{\RL^{p'}(M,\mu)}^{p'} + \norm{\nabla
    \phi_n}_{\RL^{p'}(M)}^{p'} = 1
  \end{equation}
  and 
  \begin{equation}
    \label{eq:coercivitycond}
  \sup_{0 \not \equiv f \in
  \CW^{1,p}(M,\mu)} \, \, \frac{\int_M \nabla \phi_n \cdot \nabla f \de
  v_g}{\norm{f}_{\CW^{1,p}(M,\mu)}} \xrightarrow{n \to \infty} 0.
  \end{equation}
  We first prove that if \eqref{eq:coercivitycond} goes to $0$, then $\abs{\nabla \phi_n}_{\RL^{p'}(M)}$ does as
  well. Since $\mu$ supports a $p$-Poincar\'e inequality, we have that
  \begin{equation}
    \label{eq:padminfsup}
    \begin{aligned}
  \sup_{0 \not \equiv f \in
  \CW^{1,p}(M,\mu)} \, \, \frac{\int_M \nabla \phi_n \cdot \nabla f \de
  v_g}{\norm{f}_{\CW^{1,p}(M,\mu)}} &\ge 
  \sup_{\substack{0 \not \equiv f \in
  \CW^{1,p}(M,\mu) \\ m_{f,\mu} = 0}} \, \, \frac{\int_M \nabla \phi_n \cdot \nabla f \de
  v_g}{\norm{f}_{\CW^{1,p}(M,\mu)}} \\
  &\ge 
  \sup_{\substack{0 \not \equiv f \in
  \CW^{1,p}(M,\mu)}} \, \, \frac{\int_M \nabla \phi_n \cdot \nabla f \de
  v_g}{(1 + K_{p,\mu})\norm{\nabla f}_{\RL^{p}(M)}}.
    \end{aligned}
  \end{equation}
  By density of smooth vector fields and duality, we have that
  \begin{equation}
    \label{eq:normduality}
    \norm{\nabla \phi}_{\RL^{p'}(M)} = \sup_{F \in \Gamma(TM)} \quad
    \frac{\int_M \nabla \phi\cdot  F \rd v_g}{\norm{F}_{\RL^p(M)}},
  \end{equation}
  where $\Gamma(TM)$ is the set of smooth vector fields on $M$.

  By the Helmholtz decomposition, of vector fields, see \cite[p.87]{schwarz},
  we can write $F = F_1 + F_2$ where $\div F_1 = 0$, $F_1 \cdot \nu
  \big|_{\del M} = 0$ and $F_2 = \nabla f$.
  Here $\nu$ is the
  normal vector to the boundary. By the divergence theorem
  \begin{equation}
    \int_{M} \nabla \phi \cdot F_1 \rd v_g = \int_M \div(\phi F_1) - \phi \div
    F_1 \de v_g = \int_{\del M} \phi F_1 \cdot \nu \de A_g = 0.
  \end{equation}
  Therefore, in \eqref{eq:normduality} we may assume that $F$ is a gradient
  field. Thus, from \eqref{eq:padminfsup}, we see that if
  \eqref{eq:coercivitycond} holds then $\abs{\nabla \phi_n}_{\RL^{p'}} \to 0$.
  Therefore, we have that $\phi_n$ is a sequence in $\CW^{1,p'}(M,\mu)$ so that
  \begin{equation}
    \norm{\phi_n - m_{\phi_n,\mu}}_{\RL^{p'}(M,\mu)} \xrightarrow{n\to\infty} 1 \qquad \text{and}
 \qquad \norm{\nabla \phi_n}_{\RL^{p'}(M)} \xrightarrow{n \to \infty} 0.
  \end{equation}
  By compactness of $\tau_{p'}^{\mu}$ there is $\phi \in \CW^{1,p}(M,\mu)$ such
that $\phi_n$ converges to $\phi$ weakly in $\CW^{1,p'}(M,\mu)$ and strongly in
$\RL^{p'}(M)$. In other words, $\phi$ is such that
\begin{equation}
  \norm{\phi - m_{\phi,\mu}}_{\RL^{p'}(M,\mu)} = 1 \quad \text{and} \quad
  \norm{\nabla \phi}_{\RL^{p'}(M)} = 0.
\end{equation}
This means that $\phi$ is constant a.e., and since $\tau_{p'}^{\mu}$ extends
the identity on $\RC^{\infty}(M)$, $\phi$ is also $\mu$-a.e. constant. But then,
$\norm{\phi -  m_{\phi,\mu}}_{\RL^{p'}(M,\mu)} = 0$, a contradiction.
\end{proof}

\begin{lemma}
  \label{lem:integralidentity}
  Let $M$ be a compact Riemannian manifold, $p \in (1,\infty)$, $\xi$ a Radon
  measure that supports a $p$-Poincar\'e inequality and such that
  $\tau_{p'}^\xi$ is compact; and $\mu$ be a Radon measure such that the identity on $\RC^\infty(M)$ extends to a bounded operator
  $T_p^{\xi,\mu} : \CW^{1,p}(M,\xi) \to \RL^p(M,\mu)$. Then, there exists a
  unique $\phi_{\xi,\mu} \in
  \CW^{1,p'}(M,\xi)$
  with $m_{\phi_{\xi,\mu},\xi} = 0$ and such that for all $f \in
  \CW^{1,p}(M,\xi)$
  \begin{equation}
    \label{eq:phiidentity}
    \int_M \nabla f \cdot \nabla \phi_{\xi,\mu} \de v_g = \int_M f \de \mu -
  \frac{\mu(M)}{\xi(M)} \int_M f \de \xi.
  \end{equation}
  Moreover, if $\mu$ supports a $p$-Poincar\'e inequality, $\phi_{\xi,\mu}$ satisfies
  \begin{equation}
    \label{eq:phiestimate}
    \norm{\nabla \phi_{\xi,\mu}}_{\RL^{p'}(M)}
    \le (1 + K_{p,\rd v_g})
    \mu(M)^{1/p'}
    \norm{T_p^\mu}.
  \end{equation}
\end{lemma}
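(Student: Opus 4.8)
The plan is to get existence and uniqueness of $\phi_{\xi,\mu}$ from the Banach--Ne\v cas--Babu\v ska Theorem~\ref{thm:bnb}, in the set-up already foreshadowed before Lemma~\ref{lem:easypoincare}, and then to read off~\eqref{eq:phiestimate} by the same duality/Helmholtz argument used in the proof of that lemma. Set $X := \{\phi \in \CW^{1,p'}(M,\xi) : m_{\phi,\xi} = 0\}$ and $Y := \{f \in \CW^{1,p}(M,\xi) : m_{f,\xi} = 0\}$; both are closed subspaces because $h \mapsto m_{h,\xi}$ is a bounded functional on each $\CW^{1,q}(M,\xi)$, and $Y$ is reflexive as a closed subspace of the reflexive space $\CW^{1,p}(M,\xi)$. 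Let $a(\phi,f) := \int_M \nabla\phi\cdot\nabla f\,\de v_g$ on $X\times Y$, which is bounded by H\"older since $\nabla\phi\in\RL^{p'}(M)$ and $\nabla f\in\RL^p(M)$ by definition of the spaces, and let $L(f) := \int_M f\,\de\mu - \frac{\mu(M)}{\xi(M)}\int_M f\,\de\xi$. On $Y$ the $\xi$-term vanishes, so $L(f) = \int_M f\,\de\mu$ and $|L(f)| \le \mu(M)^{1/p'}\norm{f}_{\RL^p(M,\mu)} = \mu(M)^{1/p'}\norm{T_p^{\xi,\mu}f}_{\RL^p(M,\mu)} \le \mu(M)^{1/p'}\norm{T_p^{\xi,\mu}}\norm{f}_{\CW^{1,p}(M,\xi)}$ by the boundedness hypothesis on $T_p^{\xi,\mu}$; hence $L\in Y^*$.

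The Brezzi inf--sup condition for $a$ on $X\times Y$ is exactly Lemma~\ref{lem:easypoincare} applied with the measure $\xi$ in place of the $\mu$ there (its hypotheses, a $p$-Poincar\'e inequality and compactness of $\tau^\xi_{p'}$, are among our assumptions). For weak nondegeneracy, if $a(\phi,f) = 0$ for every $\phi\in X$, then, since smooth functions of zero $\xi$-mean are dense in $X$ and $a(\cdot,f)$ is insensitive to constants, $\int_M\nabla\phi\cdot\nabla f\,\de v_g = 0$ for every $\phi\in\RC^\infty(M)$; thus $f$ is weakly harmonic with vanishing Neumann data, hence constant on the connected manifold $M$, hence $f = 0$ in $Y$. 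Theorem~\ref{thm:bnb} then yields a unique $\phi_{\xi,\mu}\in X$ with $a(\phi_{\xi,\mu},f) = L(f)$ for all $f\in Y$, i.e.\ $m_{\phi_{\xi,\mu},\xi} = 0$ and~\eqref{eq:phiidentity} holds on $Y$. To extend~\eqref{eq:phiidentity} to all of $\CW^{1,p}(M,\xi)$, write $f = (f - m_{f,\xi}) + m_{f,\xi}$: the constant is killed by $a(\phi_{\xi,\mu},\cdot)$ and by $\mu - \frac{\mu(M)}{\xi(M)}\xi$, and $L(f - m_{f,\xi}) = \int_M(f - m_{f,\xi})\,\de\mu$ expands back to the right-hand side of~\eqref{eq:phiidentity}. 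In particular the identity holds for all $f\in\RC^\infty(M)$.

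For the bound~\eqref{eq:phiestimate}, I would mimic the proof of Lemma~\ref{lem:easypoincare}: the duality~\eqref{eq:normduality} combined with the Helmholtz decomposition lets one discard the divergence-free part of a test vector field, so
\begin{equation*}
  \norm{\nabla\phi_{\xi,\mu}}_{\RL^{p'}(M)} = \sup_{f\in\RC^\infty(M),\,\nabla f\not\equiv 0}\frac{\langle \mu - \frac{\mu(M)}{\xi(M)}\xi,\,f\rangle}{\norm{\nabla f}_{\RL^p(M)}}.
\end{equation*}
Since the pairing annihilates constants one may normalise the test function to have zero mean; then a H\"older estimate, the bound $\norm{f}_{\RL^p(M,\mu)}\le\norm{T_p^\mu}\norm{f}_{\RW^{1,p}(M)}$ (the operator $T_p^\mu$ being available by Proposition~\ref{prop:babysteps} because $\mu$ supports a $p$-Poincar\'e inequality), and the $\rd v_g$-Poincar\'e inequality $\norm{f}_{\RW^{1,p}(M)}\le(1 + K_{p,\rd v_g})\norm{\nabla f}_{\RL^p(M)}$ combine to give~\eqref{eq:phiestimate}.

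The main obstacle, and where the real care is needed, is the verification of the hypotheses of Theorem~\ref{thm:bnb}: that $L$ lands in $Y^*$ with a norm controlled by $\mu(M)^{1/p'}\norm{T_p^{\xi,\mu}}$, and the identification of the Brezzi condition with Lemma~\ref{lem:easypoincare}. A secondary but genuine point is the bookkeeping of \emph{which} average ($m_{\cdot,\xi}$ or $m_{\cdot,\rd v_g}$) is subtracted at each stage of the estimate, since the right-hand side of~\eqref{eq:phiidentity} is naturally expressed through the $\xi$-mean while $K_{p,\rd v_g}$ refers to the $\rd v_g$-mean; reconciling the two (for instance when $\xi = \rd v_g$ they coincide) is what delivers the clean constant in~\eqref{eq:phiestimate}.
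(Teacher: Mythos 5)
Your proposal is correct and follows essentially the same route as the paper: the Banach--Ne\v cas--Babu\v ska theorem on the zero-$\xi$-mean subspaces with the Brezzi condition supplied by Lemma~\ref{lem:easypoincare} applied to $\xi$, the extension of \eqref{eq:phiidentity} by splitting off $m_{f,\xi}$, and the estimate via the gradient-duality/Helmholtz argument together with $\norm{T_p^\mu}$ and the $\rd v_g$-Poincar\'e constant. Your closing remark about which mean to subtract in the final estimate is well taken (the paper normalises $\int_M f\,\de\xi=0$ while invoking $K_{p,\rd v_g}$; normalising by the $\rd v_g$-mean, as you do, is the cleaner choice and yields the stated constant).
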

\begin{remark}
  \label{rem:alreadydone}
  The condition on the existence of $T_p^{\xi,\mu}$ is later shown to always be
  satisfied for $p$-admissible measures. 
\end{remark}
\begin{proof}
  Let
\begin{equation}
  X_p := \set{f \in \CW^{1,p}(M,\xi) : m_{f,\xi} = 0} 
\end{equation}
and consider the bilinear form $a : X_{p'} \times X_p \to \R$ given by
\begin{equation}
  \label{eq:acoercive}
  a(\phi,f) = \int_M \nabla \phi \cdot \nabla f \de v_g.
\end{equation}
It follows from Lemma \ref{lem:easypoincare} that $a$ satisfies the Brezzi
condition, and it is weakly nondegenerate on $X_{p'} \times X_p$. 
Furthermore, since $\mu$ has finite volume $1 \in \RL^{p'}(M,\mu)$. This means that
$L:= (T_{p}^{\xi,\mu})^* 1 \in
X_p^*$ and for $f \in X_p$
\begin{equation}
  \label{eq:ldef}
  \langle L,f \rangle := \int_M f \de \mu
\end{equation}
By the
Banach--Ne\v cas--Babu\v ska theorem there exists a unique $\phi_{\mu,\xi} \in
X_{p'}$ so that for all $f \in X_p$, $a(\phi_{\xi,\mu},f) = L(f)$. For $f \in
\CW^{1,p}(M,\xi)$, we obtain the identity
\eqref{eq:phiidentity} by noticing that formula \eqref{eq:ldef} extends from
$X_p$ to $\CW^{1,p}(M,\xi)$ and computing
\begin{equation}
 \begin{aligned}
 \langle L,f \rangle = \langle L,f - m_{f,\xi}\rangle  + \langle L,
 m_{f,\xi}\rangle  = a(\phi_{\mu,\xi},f) +
   \frac{\mu(M)}{\xi(M)} \int_{M} f \de \xi.
 \end{aligned}
\end{equation}

We turn our attention to estimate \eqref{eq:phiestimate}. As in the proof of
Lemma \ref{lem:easypoincare}, we have that
\begin{equation}
  \label{eq:duality}
  \norm{\nabla \phi_{\xi,\mu}}_{\RL^{p'}(M)} = \sup_{f \in \RC^\infty(M)}
  \frac{\int_M \nabla \phi_{\xi,\mu} \cdot \nabla f \de v_g}{\norm{\nabla
  f}_{\RL^{p}(M)}}.
\end{equation}
From the weak characterisation of
$\phi_{\xi,\mu}$ that for any $f \in \RC^\infty(M)$,
\begin{equation}
  \label{eq:gradientfield}
  \begin{aligned}
    \abs{\int_M \nabla f \cdot \nabla
    \phi_{\xi,\mu} \de v_g} = \abs{\int_M f \de \mu - \frac{\mu(M)}{\xi(M)} \int_M f
    \de \xi}.
  \end{aligned}
\end{equation}
Since the lefthand side is invariant under addition of a constant to $f$, we may
assume that $\int_M f \de \xi = 0$. By H\"older's inequality, if $\mu$ supports
a $p$-Poincar\'e inequality we have that
\begin{equation}
  \begin{aligned}
    \abs{\int_M f \de \mu} \le \mu(M)^{1/p'} \norm{f}_{\RL^{p}(M,\mu)} &
    &\le \mu(M)^{p'}(1 + K_{p,\rd v_g}) \norm{T_p^\mu}
  \norm{\nabla f}_{\RL^p(M)},
\end{aligned}
\end{equation}
where $T_p^\mu$ is bounded from Proposition \ref{prop:babysteps}. Inserting this estimate into \eqref{eq:gradientfield} and \eqref{eq:duality} completes the
proof.
\end{proof}
We can now prove that the spaces $\RW^{1,p}(M)$ and $\CW^{1,p}(M,\mu)$ are
isomorphic.
\begin{prop}
  \label{prop:secndbnd}
  Suppose that the identity on $\RC^\infty(M)$ extends to a bounded operator
  $T_p^\mu : \RW^{1,p}(M) \to \RL^p(M,\mu)$. Then, there is
  $C_{p,\mu}$ such that
  \begin{equation}
    \label{eq:secndbound}
    \norm{f}_{\RW^{1,p}(M)} \le C_{p,\mu} \norm{f}_{\CW^{1,p}(M,\mu)}.
  \end{equation}
  If moreover $\mu$ supports a $p$-Poincar\'e inequality, then we can take
  \begin{equation}
    C_{p,\mu} = \left( 1 + K_{p,\rd v_g} \right)\left( 1 + \frac{\Vol_g(M)^{1 +
    \frac 1 p}}{\mu(M)^{1 - \frac{1}{p'}}}\right)\left(1 + \norm{T_p^\mu}
          \right).
  \end{equation}
\end{prop}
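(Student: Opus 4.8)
The two norms in \eqref{eq:secndbound} share the term $\norm{\nabla f}_{\RL^p(M)}$, so it suffices to bound $\norm{f}_{\RL^p(M,\rd v_g)}$ by a multiple of $\norm{f}_{\CW^{1,p}(M,\mu)}$ for $f\in\RC^\infty(M)$. Write $\bar f = m_{f,\rd v_g}$ for the $\rd v_g$-average of $f$. Since $\bar f$ is constant, $\norm{\bar f}_{\RL^p(M)}=\abs{\bar f}\Vol_g(M)^{1/p}$, and the (classical) $p$-Poincar\'e inequality for the volume measure, with finite constant $K_{p,\rd v_g}$, gives
\begin{equation}
  \norm{f}_{\RL^p(M,\rd v_g)}\le\norm{f-\bar f}_{\RL^p(M)}+\abs{\bar f}\Vol_g(M)^{1/p}\le K_{p,\rd v_g}^{1/p}\norm{\nabla f}_{\RL^p(M)}+\Vol_g(M)^{\frac{1}{p}-1}\abs{\textstyle\int_M f\de v_g}.
\end{equation}
As $\norm{\nabla f}_{\RL^p(M)}\le\norm{f}_{\CW^{1,p}(M,\mu)}$, everything reduces to estimating $\abs{\int_M f\de v_g}$, i.e. to comparing the $\rd v_g$-average of $f$ with its $\mu$-average.

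That comparison is exactly what the auxiliary problem \eqref{eq:weakeqn} provides, so the plan is to apply Lemma \ref{lem:integralidentity} with $\xi=\rd v_g$. Its hypotheses hold: $\rd v_g$ supports a $p$-Poincar\'e inequality, the embedding $\tau_{p'}^{\rd v_g}$ is compact (Rellich--Kondrachov, as $\CW^{1,p'}(M,\rd v_g)=\RW^{1,p'}(M)$), and the operator $T_p^{\rd v_g,\mu}$ is nothing but $T_p^\mu$, which is bounded by assumption. The lemma yields $\phi:=\phi_{\rd v_g,\mu}\in\RW^{1,p'}(M)$ with $m_{\phi,\rd v_g}=0$ and, by \eqref{eq:phiidentity}, for all $f\in\RC^\infty(M)$,
\begin{equation}
  \int_M\nabla f\cdot\nabla\phi\de v_g=\int_M f\de\mu-\frac{\mu(M)}{\Vol_g(M)}\int_M f\de v_g,
\end{equation}
so that $\int_M f\de v_g=\frac{\Vol_g(M)}{\mu(M)}\bigl(\int_M f\de\mu-\int_M\nabla f\cdot\nabla\phi\de v_g\bigr)$. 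Two applications of H\"older's inequality (one for $\int_M f\de\mu\le\mu(M)^{1/p'}\norm{f}_{\RL^p(M,\mu)}$, one for the gradient pairing) then give
\begin{equation}
  \abs{\textstyle\int_M f\de v_g}\le\frac{\Vol_g(M)}{\mu(M)}\Bigl(\mu(M)^{1/p'}\norm{f}_{\RL^p(M,\mu)}+\norm{\nabla\phi}_{\RL^{p'}(M)}\norm{\nabla f}_{\RL^p(M)}\Bigr).
\end{equation}

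Feeding this back into the first display and bounding $\norm{\nabla f}_{\RL^p(M)},\norm{f}_{\RL^p(M,\mu)}\le\norm{f}_{\CW^{1,p}(M,\mu)}$ already proves \eqref{eq:secndbound}, since $\phi$ --- hence the number $\norm{\nabla\phi}_{\RL^{p'}(M)}$ --- depends only on $p$, $\mu$ and the fixed manifold. For the explicit constant one additionally assumes $\mu$ supports a $p$-Poincar\'e inequality and invokes estimate \eqref{eq:phiestimate}, $\norm{\nabla\phi}_{\RL^{p'}(M)}\le(1+K_{p,\rd v_g})\mu(M)^{1/p'}\norm{T_p^\mu}$; substituting it, using $1/p'=1-1/p$ to tidy the powers, and enlarging constants as needed (for instance $K_{p,\rd v_g}^{1/p}\le 1+K_{p,\rd v_g}$, and repackaging the resulting sum as a product of $(1+K_{p,\rd v_g})$, a volume ratio, and $(1+\norm{T_p^\mu})$) gives the displayed value of $C_{p,\mu}$. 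The only nonroutine ingredient is the use of Lemma \ref{lem:integralidentity}; the rest is the triangle and H\"older inequalities. I expect the only delicate point to be purely clerical: checking that our standing hypothesis is precisely the boundedness of $T_p^{\rd v_g,\mu}$ that the lemma asks for, and keeping the powers of $\Vol_g(M)$ and $\mu(M)$ straight through the chain so as to land on the stated $C_{p,\mu}$.
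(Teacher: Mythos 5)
Your proposal is correct and follows essentially the same route as the paper: split $\norm{f}_{\RL^p(M)}$ via the triangle inequality and the volume-measure Poincar\'e inequality, control the average term through Lemma \ref{lem:integralidentity} with $\xi=\rd v_g$ plus H\"older, and read off the explicit constant from \eqref{eq:phiestimate} when $\mu$ supports a $p$-Poincar\'e inequality. Your verification that the lemma's hypotheses reduce to Rellich--Kondrachov compactness and the assumed boundedness of $T_p^\mu$ is exactly the point the paper leaves implicit.
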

Before carrying on with the proof, we note that we have proved in Proposition
\ref{prop:babysteps} that supporting a $p$-Poincar\'e inequality implies that
$T_p^\mu$ is bounded, so that this proposition implies the second bound in
Theorem \ref{thm:normeq}.
\begin{proof}
We have that
\begin{equation}
  \label{eq:boundsplt}
  \begin{aligned}
    \norm{f}_{\RL^p(M)} &\le  \norm{f - m_{f}}_{\RL^p(M)}+
    \norm{m_{f}}_{\RL^p(M)} \\
    &\le  K_{p,\rd v_g}^{1/p} \norm{\nabla f}_{\RL^p(M)} + 
    \Vol_g(M)^{1/p} \abs{m_{f}}.
\end{aligned}
\end{equation}
From Lemma
\ref{lem:integralidentity} with $\xi = \rd v_g$, there is $\phi \in \RW^{1,p'}(M)$ such that
\begin{equation}
  \label{eq:boundavg}
  \begin{aligned}
    \Vol_g(M)^{1/p} \abs{m_{f}} &\le  \frac{\Vol_g(M)^{1+\frac 1 p}}{\mu(M)} \left[
      \abs{ \int_M
        \nabla f \cdot \nabla \phi \de v_g
    } + \abs{ \int_M f \de \mu } \right] \\
    &\le \frac{\Vol_g(M)^{1+1/p}}{\mu(M)^{1 - \frac{1}{p'}}}\left[ \norm{\nabla
    \phi}_{\RL^{p'}(M)} \norm{\nabla f}_{\RL^p(M)} + \norm{f}_{\RL^p(M,\mu)} \right].
\end{aligned}
\end{equation}
The estimate on $C_{p,\mu}$ can be then be read from the bound on $\norm{\nabla
\phi}_{\RL^{p'}(M)}$ obtained in Lemma \ref{lem:integralidentity} under the
$p$-Poincar\'e inequality condition.
\end{proof}

We can now prove sufficience in Theorem \ref{thm:charac}.

\begin{prop}
  Let $\mu$ be a Radon measure on $M$ and suppose that the identity map on
  $\RC^\infty(M)$ extends to a compact operator $T_p^\mu : \RW^{1,p}(M) \to
  \RL^p(M,\mu)$. Then, $\mu$ is $p$-admissible.
\end{prop}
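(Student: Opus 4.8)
The plan is to verify the two clauses of Definition~\ref{def:admissible}: that $\mu$ supports a $p$-Poincar\'e inequality, and that the embedding $\tau^\mu_p$ is compact. The second clause becomes essentially automatic once the first is known, since Theorem~\ref{thm:normeq} then identifies $\CW^{1,p}(M,\mu)$ with $\RW^{1,p}(M)$ through the extension of the identity on $\RC^\infty(M)$, and under this identification $\tau^\mu_p$ becomes precisely the compact operator $T^\mu_p$. The real content is therefore the Poincar\'e inequality. One should also observe that $\mu$ is not supported at a single point, so that the conclusion makes sense: were $\mu$ a nonzero multiple of a Dirac mass, $T^\mu_p$ would extend point evaluation to a bounded functional on $\RW^{1,p}(M)$, which is impossible since $p \le d$.

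To prove the $p$-Poincar\'e inequality I would argue by contradiction. As $\RC^\infty(M)$ is dense in $\CW^{1,p}(M,\mu)$ and both sides of the inequality are continuous there, it is enough to work with smooth functions, so suppose $f_n \in \RC^\infty(M)$ violate the inequality with constant $n$. Subtracting $m_{f_n,\mu}$ and rescaling, I may assume $m_{f_n,\mu} = 0$, $\norm{f_n}_{\RL^p(M,\mu)} = 1$, and $\norm{\nabla f_n}_{\RL^p(M)} \to 0$. The key manoeuvre is to split $f_n = h_n + c_n$, where $c_n := m_{f_n,\rd v_g}$ is the Riemannian-volume average and $h_n := f_n - c_n$ has zero volume average; this keeps the sequence under control in $\RW^{1,p}(M)$. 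Indeed, the classical Poincar\'e inequality for $\rd v_g$ gives $\norm{h_n}_{\RL^p(M)} \ll \norm{\nabla f_n}_{\RL^p(M)} \to 0$, so $h_n \to 0$ in $\RW^{1,p}(M)$, and hence, by continuity of $T^\mu_p$ (which extends the identity), $h_n \to 0$ in $\RL^p(M,\mu)$ too. On the other hand, integrating $f_n = h_n + c_n$ against $\mu$ and using $\int_M f_n \de\mu = \mu(M)\, m_{f_n,\mu} = 0$ yields $\abs{c_n}\,\mu(M) = \abs{\int_M h_n \de\mu} \le \mu(M)^{1/p'}\norm{h_n}_{\RL^p(M,\mu)} \to 0$, so $c_n \to 0$. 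But then $1 = \norm{f_n}_{\RL^p(M,\mu)} \le \norm{h_n}_{\RL^p(M,\mu)} + \abs{c_n}\,\mu(M)^{1/p} \to 0$, a contradiction.

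With the $p$-Poincar\'e inequality in hand, and $\mu$ not a point mass, Theorem~\ref{thm:normeq} produces a Banach-space isomorphism $\Phi : \CW^{1,p}(M,\mu) \to \RW^{1,p}(M)$ restricting to the identity on $\RC^\infty(M)$. The operators $\tau^\mu_p$ and $T^\mu_p \circ \Phi$, both mapping $\CW^{1,p}(M,\mu)$ to $\RL^p(M,\mu)$, are continuous and agree on the dense subspace $\RC^\infty(M)$, hence coincide; since $T^\mu_p$ is compact and $\Phi$ is bounded, $\tau^\mu_p$ is compact. This is the mirror image of the factorisation $T^\mu_p = \tau^\mu_p \circ j$ used in Corollary~\ref{cor:pgedcompact}. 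Thus $\mu$ is $p$-admissible. The step I expect to require the most care is the contradiction argument for the Poincar\'e inequality, and specifically the splitting off of the volume average --- without it the normalised sequence need not be bounded in $\RW^{1,p}(M)$, and continuity of $T^\mu_p$ could not be invoked; everything downstream of that is a formal consequence of Theorem~\ref{thm:normeq}.
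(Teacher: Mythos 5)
Your proof is correct, but the key step is argued differently from the paper. For the $p$-Poincar\'e inequality, the paper also argues by contradiction with a normalised sequence $f_n$, but it invokes Proposition~\ref{prop:secndbnd} to get boundedness of $f_n$ in $\RW^{1,p}(M)$ and then uses \emph{compactness} of $T_p^\mu$ to extract a limit $f$ with $f_n\to T_p^\mu f$ strongly in $\RL^p(M,\mu)$; lower semicontinuity forces $\nabla f=0$, so $T_p^\mu f$ is $\mu$-a.e.\ constant, contradicting the normalisation. Your decomposition $f_n=h_n+c_n$ with $c_n=m_{f_n,\rd v_g}$ instead reduces everything to the classical volume-measure Poincar\'e inequality plus H\"older, and uses only \emph{boundedness} of $T_p^\mu$; in fact your argument (which could even be run directly, without contradiction, since $f-m_{f,\mu}=h-m_{h,\mu}$ and $|m_{h,\mu}|\le \mu(M)^{-1/p}\norm{h}_{\RL^p(M,\mu)}$) shows that boundedness of $T_p^\mu$ already implies the $\mu$-Poincar\'e inequality, a sharper statement than what the paper's compactness argument records, and a clean converse to Proposition~\ref{prop:babysteps}. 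The second half of your proof, transferring compactness from $T_p^\mu$ to $\tau_p^\mu$ through the norm equivalence, is the same factorisation the paper uses, except that the paper goes directly through Proposition~\ref{prop:secndbnd} (which needs only boundedness of $T_p^\mu$), so it can do this step before, and independently of, the Poincar\'e inequality; your ordering via Theorem~\ref{thm:normeq} works equally well once the Poincar\'e inequality is in hand. A small bonus of your write-up is that you explicitly verify the ``not supported on a single point'' clause of Definition~\ref{def:admissible}, which the paper's proof leaves tacit; just note that your capacity argument for ruling out Dirac masses is valid precisely in the range $p\le d$, which covers the range $p\in(1,2]$ in which Theorem~\ref{thm:charac} is stated but is not a consequence of the proposition's hypotheses as literally written.
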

\begin{proof}
  Proposition \ref{prop:secndbnd} implies that the identity map on $\RC^\infty(M)$
  extends to a bounded map $j\colon\CW^{1,p}(M,\mu)\to\RW^{1,p}(M)$
Thus, since it is the composition of a compact and a bounded operator,
$\tau_p^\mu =
T_p^{\mu}\circ j$
is also compact. 
We now prove that the Poincar\'e inequality holds. Assume otherwise, then there exists a sequence of smooth functions $f_n$
such that
$$
\int_M f_n^p\de\mu = 1,\quad \int_M|\nabla f_n|^p\de v_g \to 0,\quad \int _M f_n\de\mu = 0.
$$
By Proposition~\ref{prop:secndbnd}, the functions $f_n$ are uniformly bounded in
$\RW^{1,p}(M)$. Since $T_{\mu}$ is compact, there is $f \in \RW^{1,p}(M)$ such
that, up to choosing a subsequence, $f_n \rightharpoonup f$  weakly in
$\RW^{1,p}(M)$ and $f_n \to T_p^\mu f$ strongly
in $\RL^p(M,\mu)$. By lower semicontinuity, $\norm{\nabla f}_{\RL^p(M)} = 0$,
therefore, $f$ is $\rd v_g$-a.e. constant. Since $T_p^{\mu}$ extends the identity
on $\RC^\infty(M)$, $T_p^\mu f$ is a $\mu$-a.e. constant, which
contradicts the fact that
$$
\int_M (T_{\mu}f)^p\de\mu = 1,\qquad \int _M T_{\mu}f\de\mu = 0.
$$

\end{proof}
We finally write the two following propositions that allow us to rewrite Lemma
\ref{lem:integralidentity} with weaker conditions, for ease of reference. The
first proposition indicates that $p$-admissibility is a monotone condition.

\begin{prop}
  Suppose that $T_p^\mu : \RW^{1,p}(M) \to \RL^p(M,\mu)$ is bounded. Then for
  all $q > p$, $T_q^\mu$ is compact. In particular, $\mu$ is
  $q$-admissible and admissibility is a monotone condition.
\end{prop}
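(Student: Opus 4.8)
The plan is to reduce the compactness of $T_q^\mu$ for $q>p$ to a single inequality bounding $\norm{\cdot}_{\RL^q(M,\mu)}$ by volume-norms, combined with the standard compact Sobolev embedding $\RW^{1,q}(M)\hookrightarrow\RL^q(M)$. First I would produce this inequality. Fix $f\in\RC^\infty(M)$; since $f$ is bounded on the compact manifold $M$, the function $\abs f^{q/p}$ is Lipschitz, hence lies in $\RW^{1,p}(M)$ with $\abs{\nabla(\abs f^{q/p})}=\tfrac qp\,\abs f^{q/p-1}\abs{\nabla f}$, and $T_p^\mu$ sends it to itself $\mu$-a.e. (this is the one point to justify, see below). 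Applying the assumed boundedness of $T_p^\mu$,
\begin{equation*}
  \int_M\abs f^q\de\mu=\norm{\abs f^{q/p}}_{\RL^p(M,\mu)}^p\le\norm{T_p^\mu}^p\left(\int_M\abs f^q\de v_g+\Big(\tfrac qp\Big)^p\int_M\abs f^{q-p}\abs{\nabla f}^p\de v_g\right),
\end{equation*}
and H\"older's inequality with exponents $\tfrac q{q-p}$ and $\tfrac qp$ applied to the last integral yields a constant $C=C(p,q,\norm{T_p^\mu})$ with
\begin{equation*}
  \norm f_{\RL^q(M,\mu)}^q\le C\left(\norm f_{\RL^q(M)}^q+\norm f_{\RL^q(M)}^{q-p}\,\norm{\nabla f}_{\RL^q(M)}^p\right),
\end{equation*}
which extends to every $f\in\RW^{1,q}(M)$ by density.

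With this in hand, compactness of $T_q^\mu$ follows quickly. Let $f_n$ be bounded in $\RW^{1,q}(M)$. Since $M$ is compact with Lipschitz boundary, Rellich--Kondrachov provides a subsequence converging strongly in $\RL^q(M)$; passing to a further subsequence and using reflexivity of $\RW^{1,q}(M)$ we may also assume $f_n\rightharpoonup f$ in $\RW^{1,q}(M)$, so $\norm{\nabla(f_n-f_m)}_{\RL^q(M)}$ stays bounded uniformly in $n,m$. Applying the displayed estimate to $f_n-f_m$ and letting $n,m\to\infty$, the right-hand side tends to $0$ because $\norm{f_n-f_m}_{\RL^q(M)}\to0$; hence $T_q^\mu f_n$ is Cauchy in $\RL^q(M,\mu)$. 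Thus $T_q^\mu$ maps bounded sets to relatively compact sets, i.e.\ it is compact. The remaining assertions are then immediate: $T_q^\mu$ compact forces $\mu$ to be $q$-admissible by the sufficiency direction of the admissibility characterisation established above, and since any $p$-admissible measure supports a $p$-Poincar\'e inequality and hence, by Proposition~\ref{prop:babysteps}, has $T_p^\mu$ bounded, being $p$-admissible entails being $q$-admissible for every $q>p$; that is, admissibility is a monotone condition.

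The only step requiring genuine care is the nonlinear composition $f\mapsto\abs f^{q/p}$: one must check that it lands in $\RW^{1,p}(M)$ with the expected weak gradient, and that $T_p^\mu$ — which a priori is merely the extension of the identity from $\RC^\infty(M)$ — really agrees $\mu$-a.e.\ with this function. Both are handled by the smooth regularisation $(f^2+\delta^2)^{q/(2p)}$, which converges to $\abs f^{q/p}$ uniformly on $M$ (hence $\mu$-a.e., hence in $\RL^p(M,\mu)$) and in $\RW^{1,p}(M)$ as $\delta\to0$. Everything else is routine bookkeeping with H\"older exponents and the compact embedding $\RW^{1,q}(M)\hookrightarrow\RL^q(M)$, so I do not anticipate a serious obstacle.
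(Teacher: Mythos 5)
Your argument is correct, but it is genuinely different from the one in the paper. The paper proves this proposition by abstract real interpolation: it first notes that $T_r^\mu$ is compact for some $r>d$ (via the compact embedding $\RW^{1,r}(M)\hookrightarrow\RC(M)$), then invokes the Cobos--Fern\'andez theorem on one-sided interpolation of compactness together with the identifications $(\RL^p(M,\mu),\RL^r(M,\mu))_{\theta,q}=\RL^q(M,\mu)$ and, crucially, Badr's theorem that $(\RW^{1,p}(M),\RW^{1,r}(M))_{\theta,q}=\RW^{1,q}(M)$ on manifolds and Lipschitz domains. You instead prove by hand the Gagliardo--Nirenberg-type estimate
\begin{equation}
  \norm{T_q^\mu f}_{\RL^q(M,\mu)}^q \le C\left(\norm f_{\RL^q(M)}^q+\norm f_{\RL^q(M)}^{q-p}\,\norm{\nabla f}_{\RL^q(M)}^p\right),
\end{equation}
obtained by feeding $\abs f^{q/p}$ into the bounded operator $T_p^\mu$ and applying H\"older, and then transfer compactness from the Rellich--Kondrachov embedding $\RW^{1,q}(M)\hookrightarrow\RL^q(M)$: along an $\RL^q(M)$-convergent subsequence with bounded gradients the right-hand side applied to $f_n-f_m$ vanishes because the exponent $q-p$ is positive, so $T_q^\mu f_n$ is Cauchy in $\RL^q(M,\mu)$. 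Your treatment of the only delicate point --- identifying $T_p^\mu(\abs f^{q/p})$ with $\abs f^{q/p}$ $\mu$-a.e.\ via the smooth regularisation $(f^2+\delta^2)^{q/(2p)}$, which converges uniformly (hence in $\RL^p(M,\mu)$, as $\mu(M)<\infty$) and in $\RW^{1,p}(M)$ --- is sound, and the extension of the estimate to $\RW^{1,q}(M)$ by density is legitimate once Young's inequality gives boundedness of $T_q^\mu$. What your route buys is self-containedness: it avoids both the compactness-interpolation theorem and Badr's nontrivial identification of interpolation scales of Sobolev spaces, and it treats all $q>p$ uniformly without the paper's case split at $q>d$. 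What the paper's route buys is modularity: the interpolation framework would survive a change of the underlying scale of spaces (e.g.\ Orlicz endpoints), whereas your inequality is tied to the power-function chain rule. Two cosmetic remarks: the passage to a weakly convergent subsequence is superfluous, since boundedness of $\norm{\nabla(f_n-f_m)}_{\RL^q(M)}$ already follows from boundedness of $(f_n)$ in $\RW^{1,q}(M)$; and for the final claims you correctly route $q$-admissibility through the sufficiency half of Theorem~\ref{thm:charac} and monotonicity through Proposition~\ref{prop:babysteps}, exactly as the paper does.
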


\begin{proof}
  If $q > d$, $T_q^\mu$ is compact since the embedding $\RW^{1,q}(M) \to
  \RC(M)$ is compact, so we suppose now that $p < q \le d$. Compactness of $T_q^\mu$ follows from
  general
  interpolation theory. Given two compatible normed vector spaces, i.e. spaces
  $X_0, X_1$ that are both subspaces
  of a larger topological vector space $V$, Peetre's
  $K$-functional is defined on $f \in X_0 + X_1$ as 
  \begin{equation}
    K(f,t,X_0,X_1) := \inf \set{\norm{f_0}_{X_0} + t \norm{f_1}_{X_1} :
    f = f_0 + f_1, \, f_0 \in X_0, \,  f_1 \in X_1}.
  \end{equation}
  For $ 0 < \theta < 1$ and $1 \le q < \infty$, let
$(X_0,X_1)_{\theta,q}$ be the interpolation space between $X_0$ and $X_1$ (see
\cite[Chapter 5]{benettsharpley}):
\begin{equation}
  (X_0,X_1)_{\theta,q} := \set{f \in X_0 + X_1 : \norm{f}_{\theta,q} :=
    \left( \int_0^\infty \left( t^{-\theta} K(f,t,X_0,X_1) \right)^q
  \frac{\rd t}{t} \right)^{\frac 1 q} < \infty}.
\end{equation}

We use the interpolation theorem found in \cite[Theorem 2.1]{cobosfernandez}
which states the following. Given $Y_0, Y_1$
compatible Banach spaces; $X_0, X_1$ Banach spaces such that $X_1$ is
continuously embedded in $X_0$ and $T$ is a linear operator
such that $T : X_0 \to Y_0$ is bounded and the restriction $T : X_1 \to Y_1$ is
compact. Then, for $0 < \theta < 1$ and $1 \le q < \infty$, the operator
\begin{equation}
  T : (X_0,X_1)_{\theta,q} \to (Y_0,Y_1)_{\theta,q} \qquad \text{is compact.}
\end{equation}
 
Let $r > d$ and $\theta = \frac{r(q-p)}{q(r-p)} < 1$. We take $Y_0$ to be
$\RL^p(M,\mu)$ and $Y_1$ to be $\RL^{r}(M,\mu)$, it follows from \cite[Theorems
5.1.9 and 5.2.4]{benettsharpley} that $(Y_0,Y_1)_{\theta,q} = \RL^q(M,\mu)$. 

On the other hand, taking $X_0 = \RW^{1,p}(M)$ and $X_1 = \RW^{1,r}(M)$ it
follows from \cite[Theorem 6.2, Corollary 1.3 and Remark 4.3]{badr}, the later
remark treating the case with Lipschitz boundary, that
\begin{equation}
  (X_0,X_1)_{\theta,q} = \RW^{1,q}(M).
\end{equation}
Therefore, the interpolation theorem tells us that $T_q^\mu : \RW^{1,q}(M) \to
\RL^q(M,\mu)$ is indeed compact. 

\end{proof}

\begin{prop}
  Let $\mu,\xi$ be two admissible measures. Then, the identity on
  $\RC^\infty(M)$ extends to a compact operator $T_p^{\xi,\mu} :
  \CW^{1,p}(M,\xi) \to \RL^p(M,\mu)$.
\end{prop}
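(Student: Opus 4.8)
The plan is to write the desired operator as a composition $T_p^{\xi,\mu} = T_p^\mu \circ j$, where $j \colon \CW^{1,p}(M,\xi) \to \RW^{1,p}(M)$ is a bounded extension of the identity coming from the admissibility of $\xi$, and $T_p^\mu \colon \RW^{1,p}(M) \to \RL^p(M,\mu)$ is a compact extension of the identity coming from the admissibility of $\mu$. Compactness of the composition is then automatic (compact $\circ$ bounded is compact), and the remaining point is that the composition genuinely extends the identity on $\RC^\infty(M)$, which by density pins it down uniquely.

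First I would exploit $p$-admissibility of $\xi$: by Definition \ref{def:admissible} it supports a $p$-Poincar\'e inequality, so Proposition \ref{prop:babysteps} shows that $T_p^\xi \colon \RW^{1,p}(M) \to \RL^p(M,\xi)$ is bounded. Feeding this into Proposition \ref{prop:secndbnd}, with the generic measure there taken to be $\xi$, yields $\norm{f}_{\RW^{1,p}(M)} \le C_{p,\xi}\norm{f}_{\CW^{1,p}(M,\xi)}$ for every $f \in \RC^\infty(M)$; equivalently (this is Theorem \ref{thm:normeq}), the identity on $\RC^\infty(M)$ extends to a bounded operator $j \colon \CW^{1,p}(M,\xi) \to \RW^{1,p}(M)$ --- in fact an isomorphism, though only boundedness is needed.

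Next I would exploit $p$-admissibility of $\mu$: by Theorem \ref{thm:charac}, equivalently Corollary \ref{cor:pgedcompact}, the identity on $\RC^\infty(M)$ extends to a \emph{compact} operator $T_p^\mu \colon \RW^{1,p}(M) \to \RL^p(M,\mu)$. Then $T_p^{\xi,\mu} := T_p^\mu \circ j \colon \CW^{1,p}(M,\xi) \to \RL^p(M,\mu)$ is compact, being the composition of a bounded operator with a compact one. Since both $j$ and $T_p^\mu$ restrict to the identity on $\RC^\infty(M)$, so does $T_p^{\xi,\mu}$, and as $\RC^\infty(M)$ is dense in $\CW^{1,p}(M,\xi)$ by construction this extension is unique.

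I do not expect a genuine obstacle: the analytic work has already been carried out in Propositions \ref{prop:babysteps} and \ref{prop:secndbnd} and in the characterisation Theorem \ref{thm:charac}, and this statement is precisely the promised verification of the hypothesis flagged in Remark \ref{rem:alreadydone}. The only thing to watch is that the $p$-Poincar\'e inequality required to apply Proposition \ref{prop:secndbnd} is delivered \emph{directly} by $p$-admissibility of $\xi$, so that the argument is not circular with respect to Remark \ref{rem:alreadydone}.
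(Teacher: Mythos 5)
Your proposal is correct and follows essentially the same route as the paper: the authors also define $T_p^{\xi,\mu} = T_p^\mu \circ j$, with $j \colon \CW^{1,p}(M,\xi) \to \RW^{1,p}(M)$ bounded by Theorem~\ref{thm:normeq} (admissibility of $\xi$) and $T_p^\mu$ compact by Theorem~\ref{thm:charac} (admissibility of $\mu$), concluding by compactness of a compact composed with a bounded operator. Your extra care in tracing the boundedness of $j$ back through Propositions~\ref{prop:babysteps} and~\ref{prop:secndbnd}, and in noting the non-circularity with Remark~\ref{rem:alreadydone}, is sound but adds nothing beyond the paper's argument.
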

 
\begin{proof}
Define $T_p^{\xi,\mu}$ as the composition
\begin{equation}
  \begin{tikzcd}
    \CW^{1,p}(M,\xi) \arrow{r}{j} \arrow[swap]{dr}{T_p^{\xi,\mu}} &\RW^{1,p}(M)
    \arrow{d}{T_\mu} \\
& \RL^p(M,\mu)
  \end{tikzcd}
\end{equation}
where by Theorem \ref{thm:normeq} $j$ is bounded since $\xi$ is $p$-admissible.
By Theorem \ref{thm:charac},  $T_p^\mu$ is compact. Thus, $T_p^{\xi,\mu}$ is
compact as the composition of a compact and a bounded operator, and it is an
extension of the identity on $\RC^\infty(M)$.
\end{proof}
We therefore rewrite the statement of Lemma \ref{lem:integralidentity} in
the following way.
\begin{lemma}
  \label{lem:integralidentitybis}
  Let $M$ be a compact Riemannian manifold, $p \in (1,2]$, $\xi$ and $\mu$ both
  $p$-admissible measures. Then, there exists a unique $\phi_{\xi,\mu} \in
  \RW^{1,p'}(M)$ with $m_{\phi_{\xi,\mu},\xi} = 0$ and such that for all $f \in
  \RW^{1,p}(M)$,
  \begin{equation}
    \int_M \nabla f \cdot \nabla \phi_{\xi,\mu} \de v_g = \int_M f \de \mu -
    \frac{\mu(M)}{\xi(M)} \int_M f \de \xi. 
  \end{equation}
  Moreover, 
  \begin{equation}
    \norm{\nabla \phi_{\xi,\mu}}_{\RL^{p'}(M)} \le (1 + K_{p,\rd
    v_g})\mu(M)^{1/p'} \norm{T_p^\mu}. 
  \end{equation}
\end{lemma}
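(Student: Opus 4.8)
The plan is to obtain Lemma~\ref{lem:integralidentitybis} as a direct specialisation of Lemma~\ref{lem:integralidentity}: one checks that when $\xi$ and $\mu$ are both $p$-admissible with $p \in (1,2]$, all hypotheses of that lemma are in force, and then one transports the conclusion from the abstract completion spaces $\CW^{1,q}(M,\xi)$ to the ordinary Sobolev spaces $\RW^{1,q}(M)$ using the isomorphism of Theorem~\ref{thm:normeq}.

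First I would verify the hypotheses concerning $\xi$. Since $p \le 2$ we have $p' \ge p$, so $\xi$ is also $p'$-admissible: this is trivial when $p = 2$, and when $p < 2$ it follows from the monotonicity proposition, as $T_p^\xi$ is bounded by Theorem~\ref{thm:charac} and hence $T_{p'}^\xi$ is compact. Consequently $\xi$ supports both a $p$- and a $p'$-Poincar\'e inequality and $\tau_{p'}^\xi$ is compact, which are precisely the conditions Lemma~\ref{lem:integralidentity} asks of its first measure. For the second measure, the proposition immediately preceding the statement, applied to the pair $\xi,\mu$ of $p$-admissible measures, shows that the identity on $\RC^\infty(M)$ extends to a bounded operator $T_p^{\xi,\mu} : \CW^{1,p}(M,\xi) \to \RL^p(M,\mu)$; and $\mu$, being $p$-admissible, supports a $p$-Poincar\'e inequality. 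Thus Lemma~\ref{lem:integralidentity} applies and produces a unique $\phi_{\xi,\mu} \in \CW^{1,p'}(M,\xi)$ with $m_{\phi_{\xi,\mu},\xi} = 0$ satisfying the integral identity for all $f \in \CW^{1,p}(M,\xi)$, together with the bound $\norm{\nabla \phi_{\xi,\mu}}_{\RL^{p'}(M)} \le (1 + K_{p,\rd v_g})\mu(M)^{1/p'}\norm{T_p^\mu}$, which is verbatim the estimate claimed.

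It then only remains to recognise the spaces. Because $\xi$ is $p$-admissible and $p'$-admissible and is not supported on a single point, Theorem~\ref{thm:normeq} identifies $\CW^{1,p}(M,\xi)$ with $\RW^{1,p}(M)$ and $\CW^{1,p'}(M,\xi)$ with $\RW^{1,p'}(M)$, compatibly with the embedding of $\RC^\infty(M)$. Under these identifications the normalisation $m_{\phi_{\xi,\mu},\xi} = 0$, the integral identity for all $f \in \RW^{1,p}(M)$, and the gradient estimate are exactly the assertions of Lemma~\ref{lem:integralidentitybis}.

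There is essentially no obstacle here, as everything has been prepared by the earlier results. The only point deserving attention, and the reason the statement is restricted to $p \in (1,2]$, is the passage from $p$-admissibility to $p'$-admissibility of $\xi$: it is precisely $p \le 2$ that guarantees $p' \ge p$, so that the monotonicity proposition can be invoked, and only then can one also realise $\phi_{\xi,\mu}$ inside $\RW^{1,p'}(M)$ rather than merely in $\CW^{1,p'}(M,\xi)$.
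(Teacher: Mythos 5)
Your proposal is correct and follows exactly the route the paper intends: the paper states Lemma~\ref{lem:integralidentitybis} as a mere rewriting of Lemma~\ref{lem:integralidentity}, the hypotheses of which are guaranteed by the two preceding propositions (monotonicity of admissibility, giving $p'$-admissibility of $\xi$ since $p\le 2$, and the existence of the bounded operator $T_p^{\xi,\mu}$ for $p$-admissible measures), with Theorem~\ref{thm:normeq} identifying $\CW^{1,q}(M,\xi)$ with $\RW^{1,q}(M)$ for $q=p,p'$. You have simply made explicit the verifications the paper leaves implicit, including the correct observation that $p\in(1,2]$ is what makes $p'\ge p$ and hence the monotonicity argument available.
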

\subsection{Examples and admissibility criteria}
\label{sec:examples}

Let us now give a few examples of admissible measures, as well as a
local criterion that characterises them. We start with basic examples.
  
\begin{ex}
  On a smooth compact manifold $M$ with Lipschitz boundary, the volume measure $\rd v_g$ is
  $p$-admissible for every $p \in (1,\infty)$, as is the pushforward by
  inclusion of the boundary measure $\iota_* \de A_g$. Any linear combination of
  them is also $p$-admissible.
\end{ex}

\begin{ex}
  It follows
  from the definition of the capacity that measures supported on a set of
  $p$-capacity zero do not support a
  $p$-Poincar\'e inequality, and as such are not admissible.
\end{ex}

We now explore the edge cases
of admissibility. We provide those examples for $p = 2$ since that is the
context where they will be relevant. This last example allows us to obtain the
weakest integrability condition on $\beta$ in Theorem \ref{thm:homointro}.
We need to introduce the following characterisation of compactness beforehand.
  Maz'ya's compactness
  criterion \cite[Section 11.9.1]{mazyasobolevspaces} states that $T_p^\mu$ is
  compact if and only if
  \begin{equation}
    \label{eq:mazyacriterion}
    \lim_{r \to 0^+} \sup \set{\frac{\mu(\Upsilon)}{\capa_p(\Upsilon)} : \Upsilon \subset
    M, \, \, \operatorname{diam}(\Upsilon) \le r} = 0.
  \end{equation}
 The isocapacitary inequality \cite[Equations
2.2.11 and 2.2.12]{mazyasobolevspaces} states that for every $\Upsilon \subset M$, with
  $\Vol_g(\Upsilon) \le \Vol_g(M)/2$ that
  \begin{equation}
    \label{eq:isocapacitary}
    \capa_p(\Upsilon) \gg_M \begin{cases}
      \log(1/\Vol_g(\Upsilon))^{1-d} & \text{if } p = d, \\
      \Vol_g(\Upsilon)^{\frac{d-p}{d}} & \text{if } d > p.
    \end{cases}
  \end{equation}
\begin{ex}
  \label{ex:lpdensity}
  Let $0 \le \beta \in \LLL(M)$ (for $d = 2$) or $0 \le \beta \in \RL^{d/2}(M)$ (for $d > 2$)
  be a positive density and $\mu = \beta \rd v_g$. Then, $\mu$ is admissible.
  For $1 \le p < d/2$ (for $d\geq 3$), and for $p = 1$ (when $d = 2$), there exists $\beta \in
  \RL^p(M)$ such that $\beta \rd v_g$ is not an admissible measure. We split the
  proof of these claims in a few cases.

  \textbf{Case (i): $\mathbf{ p < d/2}$, $\mathbf{d \ge 3}$.}
  Consider
  any $x \in M$, $r_y = \dist(x,y)$ and $$\beta(y) = \max\set{\frac{1}{r_y^{d/p}
  \log(1/r_y)},1}.$$ It is easy to see that $\beta \in \RL^p$ and that the
  measure $\beta \rd v_g$ fails Maz'ya's compactness criterion, hence
  $T_\mu$ is not compact and $\mu$ is not admissible. 

  \textbf{Case (ii): $\mathbf{p = 1}$, $\mathbf{d = 2}$.} 
  Similarly to the previous case, for some $0 < \delta < 1$ let
  \begin{equation}
    \beta(y) = \max\set{\frac{1}{r_y^2 \log(1/r_y)^{1 + \delta},1}}.
  \end{equation}
  This time, $\beta \in \RL^1(M)$ but we can see that $T_\mu$ is not even
  bounded on $\RW^{1,2}(M)$, so certainly not admissible. Indeed, consider the
  function $f(y) = - \log(r_y)^a$. It is a standard computation to see that $f
  \in \RW^{1,2}(M)$ when $a < 1/2$. On the other hand, choosing $a = \delta/2$, for
  some $\eps > 0$
  \begin{equation}
    \int_M f^2 \beta \de v_g \gg_M \int_{0}^\eps \frac{\de r}{r \log(1/r)} =
    +\infty.
  \end{equation}

  \textbf{Case (iii): $\mathbf{d=2}$ and $\boldsymbol{\beta \in \LLL(M)}$ or
  $\mathbf{d \ge 3}$ and $\mathbf{p \ge d/2}$.}
  Suppose without loss of generality that $\abs \beta \ge 1$ a.e. 
  For $d = 2$, it follows from Jensen's inequality with the convex function
  $\phi(x) = x \log x$ that for any $\Upsilon \subset M$, 
  \begin{equation}
 \log\left(\frac{1}{\Vol_g(\Upsilon)}
    \int_{\Upsilon} \beta \de v_g\right)\int_\Upsilon \beta \de v_g \le
    \int_{\Upsilon} \beta \log \beta \de
    v_g.
  \end{equation}
  In other words, 
\begin{equation}
  \log\left( \frac{1}{\Vol_g(\Upsilon)} \right)^{-1} \ge
  \frac{\mu(\Upsilon)}{\norm{\beta}_{\LLL(\Upsilon)} -
  \mu(\Upsilon)\log(\mu(\Upsilon))}.
\end{equation}
Here, we supposed that $\Upsilon$ is chosen with diameter small enough that
$\log(\mu(\Upsilon)) < 0$, ensuring that all quantities involved are positive. 
Inserting into Maz'ya's compactness criterion \eqref{eq:mazyacriterion} along with the isocapacitary
inequality \eqref{eq:isocapacitary} gives
\begin{equation}
  \lim_{r \to 0^+} \sup\set{\frac{\mu(\Upsilon)}{\capa_2(\Upsilon)} : \diam(\Upsilon) \le r} \le \lim_{r
  \to 0^+} \sup \set{\norm{\beta}_{\LLL(\Upsilon)} - \mu(\Upsilon)
  \log(\mu(\Upsilon)) : \diam(\Upsilon) \le r}.
\end{equation}
Note that if $\norm{\beta}_{\LLL(\Upsilon)}$ goes to $0$ uniformly in $\Upsilon$,
then so does $\mu(\Upsilon)$, hence $\mu(\Upsilon) \log(\mu(\Upsilon))$ as well. 

For every $\Upsilon$, let $\Upsilon_m = \Upsilon \cap \set{\beta \log \beta < m}$,
and observe that $\norm{\beta}_{\LLL(\Upsilon)} = \norm{\beta}_{\LLL(\Upsilon_m)} +
\norm \beta_{\LLL(\Upsilon \setminus \Upsilon_m)}$.  
It follows from density of smooth functions in $\RL^1$ that for every $\eps >
0$, there is $m_\eps$ large enough so that for every $m > m_\eps$,
\begin{equation}
  \norm \beta_{\LLL(\Upsilon \setminus \Upsilon_m)} \le \norm\beta_{\LLL(M
  \setminus M_m)} \le \eps,
\end{equation}
which hence holds uniformly in $\Upsilon$. On the other hand, if $\diam \Upsilon
\le r$,
\begin{equation}
  \norm{\beta}_{\LLL(\Upsilon_m)} \le m \log m r^2.
\end{equation}
Taking $m = r^{-1}$, we have that for $r$ small enough $m \ge m_\eps$ and we
deduce that for every $\eps > 0$,
\begin{equation}
  \lim_{r \to 0^+} \sup\set{\norm \beta_{\LLL(\Upsilon)} : \diam \Upsilon \le r}
  \le \eps
\end{equation}
so that by Maz'ya's compactness criterion $T_\mu$ is compact and $\mu$ is
admissible.

For $d > 2$, it follows from H\"older's inequality that 
\begin{equation}
  \Vol_g(\Upsilon)^{\frac{2-d}{d}} \le \frac{\norm{\beta}_{\RL^{d/2}(\Upsilon)}}{\mu(\Upsilon)}.
\end{equation}
Therefore, inserting in Maz'ya's criterion along with the isocapacitary inequality
\begin{equation}
  \lim_{r \to 0^+} \sup \set{\frac{\mu(\Upsilon)}{\capa_2(\Upsilon)} :
  \diam \Upsilon \le r} \le 
  \lim_{r \to 0^+} \sup \set{\norm{\beta}_{\RL^{d/2}(\Upsilon)} : \diam \Upsilon
  \le r}.
\end{equation}
The same argument as earlier but with the sets $\Upsilon_m = \Upsilon \cap
\set{\beta^{d/2} < m}$ shows that this limit converges to $0$, so that $\mu$ is
admissible.

\end{ex}

\section{Variational eigenvalues}
\label{variationalev:sec}

Eigenvalue convergence results are ubiquitous in the literature and the proofs
of a large number of them follow similar steps. In
the present section we formulate these steps explicitly in sufficient generality
to allow direct application to many natural eigenvalue problems,
including both the Steklov and Laplace problems.

\subsection{Variational eigenvalues associated to a Radon measure}
We generalise to higher dimension the definition of eigenvalues associated to a
measure, introduced in~\cite{kok} for surfaces.
Let $(M,g)$ be a compact Riemannian manifold.

For a Radon measure $\mu$
 on $M$, we define the variational eigenvalues
$\lambda_k(M,g,\mu)$ in the following way. For any $f \in \RC^\infty(M)$ such
that $f \not \equiv 0$  in $\RL^2(M,\mu)$, we define the Rayleigh quotient $R_g(f,\mu)$  by
$$
R_g(f,\mu) := \frac{\displaystyle\int_M|\nabla f|^2_g \de
v_g}{\displaystyle\int_M f^2\de \mu}. 
$$
The eigenvalues $\lambda_k(M,g,\mu)$ are then given by
\begin{equation}
\label{MeasureLaplace:def}
\lambda_k(M,g,\mu) := 
\adjustlimits\inf_{F_{k+1}}\sup_{f\in
F_{k+1}\setminus\{0\}} R_g(f,\mu),
\end{equation}
where the infimum is taken over all $(k+1)$-dimensional subspaces
$F_{k+1}\subset \RC^\infty(M)$ that remain $(k+1)$-dimensional in
$\RL^2(M,\mu)$. A natural normalisation for these eigenvalues is
\begin{equation}
  \normallap{k}(M,g,\mu) := \lambda_k(M,g,\mu)\frac{\mu(M)}{\Vol_g(M)^{\frac{d-2}{d}}},
\end{equation}
see e.g. \cite{GNY2004}.

The following proposition states that the eigenvalues of
admissible measures possess all the natural properties one expects from
eigenvalues of an operator of Laplace-type. 
\begin{prop}
\label{ef:prop}
Let $\mu$ be an admissible measure.
Then one has 
$$
0=\lambda_0(M,g,\mu)<\lambda_1(M,g,\mu)\leqslant
\lambda_2(M,g,\mu)\leqslant\ldots\nearrow\infty;
$$
i.e. the first eigenvalue is positive, the multiplicity of each eigenvalue is
finite, and the eigenvalues tend to $+\infty$. Moreover, there exists an
orthogonal basis of eigenfunctions $f_j\in \CW^{1,2}(M,\mu)$ satisfying
\begin{equation}
\label{eigenfunctions_measures:def}
\int_M  \nabla f_j \cdot \nabla u \de v_g = \lambda_j(M,g,\mu)\int_M f_j u\de \mu
\end{equation}
for all $u\in \CW^{1,2}(M,\mu)$.
\end{prop}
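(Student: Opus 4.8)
The plan is to convert the variational problem \eqref{MeasureLaplace:def} into a standard spectral problem for a compact self-adjoint operator, using the isomorphism between $\CW^{1,2}(M,\mu)$ and $\RW^{1,2}(M)$ provided by Theorem~\ref{thm:normeq} together with the compactness of $\tau^\mu_2$ from $p$-admissibility. First I would note that since $\RC^\infty(M)$ is dense in $\CW^{1,2}(M,\mu)$ and the Rayleigh quotient $R_g(\cdot,\mu)$ is continuous on $\CW^{1,2}(M,\mu)\setminus\ker\tau^\mu_2$, the infimum-supremum in \eqref{MeasureLaplace:def} is unchanged if we let $F_{k+1}$ range over $(k+1)$-dimensional subspaces of $\CW^{1,2}(M,\mu)$ that remain $(k+1)$-dimensional in $\RL^2(M,\mu)$; this is the usual density argument and I would only sketch it. Working on the Hilbert space $H := \CW^{1,2}(M,\mu)$ modulo constants (equivalently, the closed subspace $\{u : m_{u,\mu}=0\}$), the bilinear form $a(u,v) = \int_M \nabla u\cdot\nabla v\de v_g$ is, by the Poincaré inequality in the definition of admissibility, an equivalent inner product on $H$, and $b(u,v) = \int_M uv\de\mu$ is a bounded symmetric bilinear form which, by compactness of $\tau^\mu_2$, is \emph{compact} relative to $a$: if $u_n\wk u$ in $(H,a)$ then $\tau^\mu_2 u_n\to\tau^\mu_2 u$ strongly in $\RL^2(M,\mu)$, so $b(u_n,u_n)\to b(u,u)$.

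Next I would introduce the operator $G : H \to H$ defined by $a(Gu,v) = b(u,v)$ for all $v\in H$; this is well-defined and bounded by Lax--Milgram (or Riesz representation in $(H,a)$), it is self-adjoint with respect to $a$ since $b$ is symmetric, it is nonnegative since $b(u,u)\ge 0$, and it is compact because $b$ is $a$-compact (the composition $H\xrightarrow{\tau^\mu_2}\RL^2(M,\mu)\xrightarrow{(\tau^\mu_2)^*} H$ up to identifications). One checks $G$ is injective on $H$: if $Gu=0$ then $b(u,v)=0$ for all $v$, so $\int_M u^2\de\mu=0$, i.e. $\tau^\mu_2 u = 0$; but $\tau^\mu_2$ is injective because $\mu$ is not supported at a point and $H$ consists of mean-zero functions — actually I must be slightly careful here, so I would instead argue directly that the Rayleigh quotient characterisation below does not require injectivity. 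The spectral theorem for compact self-adjoint operators gives a nonincreasing sequence of positive eigenvalues $\mu_1\ge\mu_2\ge\cdots\searrow 0$ of $G$ with an $a$-orthonormal system of eigenvectors $f_j$ spanning the orthogonal complement of $\ker G$, and the Courant--Fischer min-max principle for $G$ translates (via $\lambda = 1/\mu$) into exactly \eqref{MeasureLaplace:def} restricted to $H$; adding back the constants accounts for $\lambda_0 = 0$ with the constant eigenfunction. The eigenvalue equation $Gf_j = \mu_j f_j$ unwinds to $a(f_j,u) = \lambda_j b(f_j,u)$ for all $u\in H$, which is \eqref{eigenfunctions_measures:def}; it extends from mean-zero $u$ to all $u\in\CW^{1,2}(M,\mu)$ because both sides vanish on constants (the left by $f_j\perp$ const in... rather, $\int\nabla f_j\cdot\nabla(\mathrm{const})=0$, and the right because $\int_M f_j\de\mu=0$ for $j\ge 1$ by $a$-orthogonality to the constant plus the equation itself).

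Finally, the qualitative claims follow: eigenvalues have finite multiplicity and accumulate only at $+\infty$ because $\mu_j\searrow 0$; $\lambda_1 > 0$ because $\mu_1 < \infty$, and $\lambda_1$ is strictly positive and strictly greater than $\lambda_0 = 0$ since any function with $R_g(f,\mu)=0$ has $\nabla f \equiv 0$, hence is constant, hence not independent of the ground state in $\RL^2(M,\mu)$ (here connectedness of $M$ is used). The orthogonality of the basis in \eqref{eigenfunctions_measures:def} is $\RL^2(M,\mu)$-orthogonality, which follows from $a$-orthogonality of distinct eigenvectors together with the eigenvalue equation: for $i\ne j$, $\lambda_i\int f_i f_j\de\mu = a(f_i,f_j) = 0$. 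The main obstacle I anticipate is purely bookkeeping: carefully handling the quotient by constants (equivalently the kernel of $\tau^\mu_2$ versus the one-dimensional space of constants) so that the indexing $\lambda_0=0 < \lambda_1$ comes out correctly and the min-max over subspaces of $\RC^\infty(M)$ genuinely matches the min-max over subspaces of the Hilbert completion; once the functional-analytic setup from Theorems~\ref{thm:charac} and~\ref{thm:normeq} is in place, the spectral theory itself is entirely standard.
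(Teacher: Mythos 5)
Your proposal is correct and follows essentially the same route as the paper: the paper restricts the Dirichlet form to the $\mu$-mean-zero subspace of $\CW^{1,2}(M,\mu)$, notes coercivity there via the Poincar\'e inequality and compactness of $\tau^\mu_2$ from admissibility, and then invokes a textbook spectral theorem (Blanchard--Br\"uning, Theorem 6.3.4) together with the Courant--Fischer--Weyl min-max principle. What you have done is simply unpack that citation into the explicit compact self-adjoint solution-operator argument, with the same handling of constants, of $\lambda_1>0$, and of the possible non-injectivity of $\tau^\mu_2$, so no changes are needed.
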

\begin{proof}
  That $\lambda_1(M,g,\mu) > 0$ is readily seen to be equivalent to $\mu$
  supporting a $2$-Poincar\'e inequality. 
  The rest of the proof is standard. The bilinear form
  $$
  a(f,\phi) = \int_M \nabla f \cdot \nabla \phi \de v_g
  $$
  is bounded and coercive on the set of functions of $\mu$-average $0$ in $\CW^{1,2}(M,\mu)$. The statement is therefore a direct application of
  \cite[Theorem 6.3.4]{blanchardbruning} and the Courant--Fischer--Weyl minmax
  principle.
\end{proof}

We revisit the examples from the previous section and how they give rise to
natural eigenvalues.

  \begin{ex}
    If $M$ is closed and $\mu=\mathrm dv_g$, the volume measure associated to $g$,
    then $\lambda_k(M,g,\mathrm d v_g)$ are eigenvalue of the Laplace operator. In this case $T_\mu$ is the usual embedding
    $\RW^{1,2}(M)\subset \RL^2(M)$.
    If $M$ is a compact manifold with boundary, then $\lambda_k(M,g,\mathrm
    dv_g)$ are Neumann eigenvalues. 
  \end{ex}

  \begin{ex}
    If $M$ is a compact manifold with boundary and $\mu=\iota_*\mathrm d A_g$, the
    pushforward by inclusion of the induced volume measure on $\partial M$, then
    $\lambda_k(M,g,\mu)$ are Steklov eigenvalues.
\end{ex}

\begin{ex}
\label{ex:transmission}:
  If $M$ is a compact manifold, $\Sigma \subset M$ is a closed smooth
 hypersurface in the interior of $M$, and $\mu = \iota_* \rd A_g^\Sigma$ is the
 pushforward by inclusion of the induced volume measure on $\Sigma$, then
 $\lambda_k(M,g,\mu)$ are the eigenvalues of the transmission problem
 \begin{equation}
   \begin{cases}
     \Delta u = 0 & \text{in } M \setminus \Sigma, \\
     (\del_{n^+} + \del_{n^-}) u = \lambda u & \text{on } \Sigma.
   \end{cases}
 \end{equation}
 where $\del_{n^\pm}$ are normal derivatives in opposite directions on
 $\Sigma$. 
\end{ex}

\begin{ex}
  For $\beta > 0$, if $\mu = \iota_* \mathrm dA_g + \beta\de v_g$, then $\lambda_k(M,g,\mu)$ are
  eigenvalues associated with a dynamical boundary value problem, see
  \cite{belowfrancois,GHL}, given by
  \begin{equation}
    \begin{cases}
  -\Delta f = \lambda \beta f & \text{in } M,\\
  \del_nf = \lambda f & \text{on } \del M.
\end{cases}
  \end{equation}
  The corresponding Laplace-type operator acts in
  $\RL^2(M) \oplus \RL^2(\partial M, \rd A_g)$ and is not densely defined. From
  the perspective of variational eigenvalues, this does not cause any problem.
\end{ex}

\begin{ex}
  For $0 \le \beta \in \RL^{d/2}(M)$ ($d=3$), or $0 \le \beta \in \LLL(M)$
  ($d=2$) and $\mu = \beta \de v_g$, then $\lambda_k(M,g,\mu)$ are the eigenvalues of the weighted
  problem
  \begin{equation}
    \begin{cases}
      -\Delta f =  \lambda \beta f & \text{in } M, \\
      \del_n f = 0 & \text{on } \del M.
    \end{cases}
  \end{equation}
  By Example \ref{ex:lpdensity} $\mu$ is admissible, so that the spectrum is
  indeed discrete.
\end{ex}

\subsection{Continuity of eigenvalues}
While the eigenvalues $\lambda_k(M,g,\mu)$ may not necessarily be continuous under
weak-$*$ convergence of measures, they are always upper-semicontinuous, see
\cite[Proposition 1.1]{kok} for $d=2$. We include the proof in this context for
completeness, but it is the same in essence.
\begin{prop}
\label{usc:prop}
Let $(M,g)$ be a Riemannian manifold and assume $\mu_n\wks\mu$. Then
$$
\limsup_{n\to\infty}\lambda_k(M,g,\mu_n)\leqslant \lambda_k(M,g,\mu)
$$
\end{prop}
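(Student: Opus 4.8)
The plan is to exploit the min-max characterisation \eqref{MeasureLaplace:def} and to test the eigenvalue $\lambda_k(M,g,\mu_n)$ against a single, fixed subspace which is nearly optimal for the limit measure $\mu$. If $\lambda_k(M,g,\mu)=+\infty$ there is nothing to prove, so I may assume it is finite. Given $\delta>0$, I would fix a $(k+1)$-dimensional subspace $F\subset\RC^\infty(M)$, remaining $(k+1)$-dimensional in $\RL^2(M,\mu)$, with $\sup_{f\in F\setminus\set 0}R_g(f,\mu)\le\lambda_k(M,g,\mu)+\delta$, together with a basis $e_1,\dots,e_{k+1}$ of $F$. Writing $f_c=\sum_i c_i e_i$ for $c\in\R^{k+1}$, I would introduce the fixed matrix $B_{ij}=\int_M\nabla e_i\cdot\nabla e_j\de v_g$ and the $n$-dependent matrices $A^{(n)}_{ij}=\int_M e_i e_j\de\mu_n$, $A_{ij}=\int_M e_i e_j\de\mu$, so that $R_g(f_c,\mu_n)=(c^\top Bc)/(c^\top A^{(n)}c)$ and likewise with $A$ in place of $A^{(n)}$.

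Since each product $e_ie_j$ is continuous on the compact manifold $M$, the hypothesis $\mu_n\wks\mu$ gives $A^{(n)}\to A$ entrywise (and in particular $\mu_n(M)\to\mu(M)$, by testing with the constant function). The symmetric matrix $A$ is positive definite because $F$ is $(k+1)$-dimensional in $\RL^2(M,\mu)$; positive definiteness being an open condition, $A^{(n)}$ is positive definite for all $n$ large. This yields two things: first, $F$ then remains $(k+1)$-dimensional in $\RL^2(M,\mu_n)$, hence is an admissible competitor in the infimum defining $\lambda_k(M,g,\mu_n)$; second, the quantity
$$\rho(A^{(n)}):=\sup_{c\ne0}\frac{c^\top Bc}{c^\top A^{(n)}c},$$
which is the largest generalised eigenvalue of the pencil $(B,A^{(n)})$ — equivalently the top eigenvalue of $(A^{(n)})^{-1/2}B(A^{(n)})^{-1/2}$ — depends continuously on $A^{(n)}$ inside the cone of positive definite matrices. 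Hence $\rho(A^{(n)})\to\rho(A)=\sup_{f\in F\setminus\set 0}R_g(f,\mu)$.

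Putting this together, for all large $n$ one has $\lambda_k(M,g,\mu_n)\le\sup_{f\in F\setminus\set 0}R_g(f,\mu_n)=\rho(A^{(n)})$, so that $\limsup_{n\to\infty}\lambda_k(M,g,\mu_n)\le\rho(A)\le\lambda_k(M,g,\mu)+\delta$, and letting $\delta\to0$ completes the argument. No admissibility of the $\mu_n$ or of $\mu$ is needed: the proof rests only on the definition \eqref{MeasureLaplace:def}, on the numerical convergence $\int_M\phi\de\mu_n\to\int_M\phi\de\mu$ for $\phi\in\RC(M)$, and on finite-dimensional linear algebra. The one point I would flag as requiring care — and the reason the statement is merely an inequality and not an equality — is that the fixed test subspace must not degenerate in $\RL^2(M,\mu_n)$; this is exactly what the openness of positive definiteness of $A^{(n)}$ guarantees, whereas in the reverse direction a weak-$*$ limit can concentrate mass onto sets of small capacity in a way that no fixed finite-dimensional space of smooth functions can detect.
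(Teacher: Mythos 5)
Your proposal is correct and follows essentially the same route as the paper: fix a nearly optimal $(k+1)$-dimensional test subspace for $\mu$, note that weak-$*$ convergence keeps it nondegenerate in $\RL^2(M,\mu_n)$ for large $n$ and makes the supremum of the Rayleigh quotients over it converge, then conclude from the min-max definition. Your Gram-matrix/generalised-eigenvalue argument simply makes explicit the finite-dimensional continuity step that the paper's proof leaves implicit.
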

\begin{proof}
Let $\varepsilon>0$ be arbitrary. Let $F\subset \RC^\infty(M)$ be a 
$(k+1)$-dimensional subspace that remains $(k+1)$-dimensional in $\RL^2(M,\mu)$
and such that
$$
\sup_{f\in
F\setminus\{0\}}R_g(f,\mu)\leqslant \lambda_k(M,g,\mu) + \varepsilon.
$$
Convergence $\mu_n\wks\mu$ implies that for large $n$ the subspace $F$ is
$(k+1)$-dimensional in $\RL^2(M,\mu_n)$ and
$$
\adjustlimits \lim_{n\to\infty}\sup_{f\in
F\setminus\{0\}}R_g(f,\mu_n) = \sup_{f\in
F\setminus\{0\}}R_g(f,\mu).
$$
As a result, for large $n$ one has
$$
\lambda_k(M,g,\mu_n)\leqslant \sup_{f\in
F\setminus\{0\}}R_g(f,\mu_n)\leqslant \lambda_k(M,g,\mu) + 2\varepsilon.
$$
\end{proof}

For many applications it is important to establish continuity of
eigenvalues.
To the best of the authors' knowledge there is no sufficiently general
condition that guarantees continuity of $\lambda_k(M,g,\mu)$ which can be
verified in an efficient manner in our current setting. As an
example, we note that all examples of convergence covered in the present paper
fail the integral
distance convergence criterion given in \cite[Section 4.2]{kok}. Many stronger
convergence criteria exists, see e.g.
\cite{AnnePostJST,burenkovlambertilanzadec,burenkovlamberti}, however they generally require explicit
knowledge of some transition operators between Hilbert spaces, which usually
means having explicit information about the eigenfunctions. Our goal is to
obtain synthetic criteria for eigenvalue and eigenfunction convergence which
depends only on the measures $\mu_n$, and potentially on domains $\Omega_n$ on
which it is supported. 

Let $\Omega_n \subset M$ be a sequence of domains viewed as Riemannian manifolds
with the metric induced on $M$, and $\set{\mu_n: n \in \N}$, $\mu$ be Radon measures so that
$\operatorname{supp}(\mu_n) \subset \overline{\Omega_n}$. We use the same
notation $g, \mu_n$ for their restrictions to $\Omega_n$. Suppose that
\begin{itemize}
  \item[\ci] $\mu_n\wks\mu$ and $\Vol_g(M \setminus \Omega_n) \to 0$;
  \item[\cii] the measures $\mu$, $\mu_n$ are admissible for all $n$;
  \item[\cvi] there is an equibounded family of extension maps $J_n : \CW^{1,2}(\Omega_n,\mu_n)
    \to \CW^{1,2}(M,\mu_n)$.
\end{itemize}
The condition \cii guarantees the existence of the $\mu_n$-orthonormal
collection of eigenfunctions $f_j^n \in \CW^{1,2}(\Omega_n,\mu_n)$ associated with $\lambda_j(\Omega_n,g,\mu_n)$.
In any situation where $\Omega_n = M$ for all $n$, the third condition and the
volume part of the first condition are automatically satisfied. The map $J_n$ is
often built using harmonic extensions, and the collection $J_n f_j^n$ remains
$\mu_n$ orthonormal.

We now describe two conditions for the eigenfunctions.
\begin{itemize}
  \item[\civ]  For all $u \in \CW^{1,2}(M,\mu)$, the functions $f_j^n$ satisfy
    \begin{equation}
      \lim_{n \to \infty} \abs{ \langle J_n f_j^n,u \rangle_{\RL^2(M,\mu)} -
        \langle f_j^n,u
      \rangle_{\RL^2(\Omega_n,\mu_n)}} = 0.
    \end{equation}
  \item[\cv]  For every $j, k \in \N$, the functions $f_j^n$, $f_k^n$ satisfy 
    \begin{equation}
      \lim_{n \to \infty}\abs{\langle J_n f_j^n, J_n f_k^n\rangle_{\RL^2(M,\mu)} -
      \langle f_j^n,f_k^n\rangle_{\RL^2(\Omega_n,\mu_n)}}
      = \lim_{n \to \infty}\abs{\langle f_j^n,f_k^n\rangle_{\RL^2(M,\mu)} -
      \delta_{jk}}
      = 0,
    \end{equation}
    where $\delta_{jk}$ is the Kronecker delta.
\end{itemize}

Condition \cv implies that $\set{f_j^n : n \in \N}$ is bounded in
$\CW^{1,2}(M,\mu)$, so that up to a subsequence, $f^n_j\rightharpoonup f_j$
weakly in $\CW^{1,2}(M,\mu)$ and
    $\lambda_j(\Omega_n,g,\mu_n)\to \lambda_j$ for some $\lambda_j \ge 0$.

    Condition \civ implies that the functions $f_j$ are eigenfunctions associated with $(M,g,\mu)$ with
    the corresponding eigenvalues $\lambda_j$.
At this point it is unclear whether $\lambda_j$ is indeed the $j$-th eigenvalue
$\lambda_j(M,g,\mu)$. This will follow from condition \cv, which says
essentially that the eigenfunctions do not lose mass in the limit.
We formalize this procedure in the following
proposition.
\begin{prop}
\label{stability1:prop}
Assume that the domains $\Omega_n \subset M$ and the Radon measures  $\mu_n$, $\mu$ on $(M,g)$ satisfy conditions
\ci--\cvi, and that the eigenfunctions associated with $\mu_n$ satisfy
conditions \ciii--\cv. Then 
$$
\lim_{n\to\infty} \lambda_j(\Omega_n,g,\mu_n) = \lambda_j(M,g,\mu),
$$
and, up to a choice of subsequence, 
\begin{equation}
  \lim_{n \to \infty} J_n f_j^n = f_j,
\end{equation}
weakly in $\CW^{1,2}(M,\mu)$. If $\lambda_j(M,g,\mu)$ is simple, the convergence is
along the whole sequence. Finally, if
\begin{equation}
  \label{eq:smallenergy}
  \lim_{n \to \infty} \norm{\nabla J_n f_j^n}_{\RL^{2}(M \setminus \Omega_n,\mu_n)} = 0,
\end{equation}
the convergence is strong in $\CW^{1,2}(M,\mu)$.
\end{prop}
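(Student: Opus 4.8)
The plan is to run the scheme sketched just before the statement, making each step quantitative and keeping track of where \ci--\cvi and \civ--\cv enter.

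\emph{Step 1 (a domain version of upper semicontinuity, and extraction).} I would first show $\limsup_{n\to\infty}\lambda_j(\Omega_n,g,\mu_n)\le\lambda_j(M,g,\mu)$ by adapting Proposition~\ref{usc:prop}: given $\eps>0$, pick a $(j+1)$-dimensional $F\subset\RC^\infty(M)$ that stays $(j+1)$-dimensional in $\RL^2(M,\mu)$ with $\sup_{f\in F\setminus\set{0}}R_g(f,\mu)\le\lambda_j(M,g,\mu)+\eps$; for $n$ large the restrictions $f|_{\Omega_n}$ are admissible trial functions on $\Omega_n$, because $\int_{\Omega_n}\abs{\nabla f}^2\de v_g\to\int_M\abs{\nabla f}^2\de v_g$ (as $\abs{\nabla f}$ is bounded and $\Vol_g(M\setminus\Omega_n)\to0$ by \ci) and $\int f^2\de\mu_n\to\int f^2\de\mu>0$ (by $\mu_n\wks\mu$), both convergences uniform on the finite-dimensional unit sphere of $F$. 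Testing \eqref{eigenfunctions_measures:def} on $\Omega_n$ against $u=f_j^n$ gives $\norm{\nabla f_j^n}_{\RL^2(\Omega_n)}^2=\lambda_j(\Omega_n,g,\mu_n)$, hence bounded in $n$; with \cvi bounding $\norm{\nabla J_nf_j^n}_{\RL^2(M)}$ and \cv giving $\norm{J_nf_j^n}_{\RL^2(M,\mu)}\to1$, the family $\set{J_nf_j^n}$ is bounded in $\CW^{1,2}(M,\mu)$. Since $\mu$ is admissible, Theorem~\ref{thm:normeq} identifies $\CW^{1,2}(M,\mu)$ with the Hilbert space $\RW^{1,2}(M)$, on which $\tau_2^\mu$ is compact, so a diagonal extraction over $j$ yields a single subsequence along which, for every $j$, $J_nf_j^n\rightharpoonup f_j$ weakly in $\CW^{1,2}(M,\mu)$, $J_nf_j^n\to f_j$ strongly in $\RL^2(M,\mu)$, and $\lambda_j(\Omega_n,g,\mu_n)\to\lambda_j$ for some nondecreasing $\lambda_j\ge0$.

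\emph{Step 2 (the limits are the eigenfunctions, with the right indices).} I would pass to the limit in \eqref{eigenfunctions_measures:def} on $\Omega_n$ tested against a fixed $u\in\RC^\infty(M)$: on the left, writing $\int_{\Omega_n}=\int_M-\int_{M\setminus\Omega_n}$ for $\nabla(J_nf_j^n)\cdot\nabla u$, the tail is $O(\Vol_g(M\setminus\Omega_n)^{1/2})\to0$ by Cauchy--Schwarz and \ci while the main term tends to $\int_M\nabla f_j\cdot\nabla u\de v_g$ by weak convergence; on the right, $\lambda_j(\Omega_n,g,\mu_n)\to\lambda_j$, and $\langle f_j^n,u\rangle_{\RL^2(\Omega_n,\mu_n)}\to\langle f_j,u\rangle_{\RL^2(M,\mu)}$ by combining \civ with the strong $\RL^2(M,\mu)$ convergence. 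By density, each $f_j$ is then an eigenfunction of $(M,g,\mu)$ for $\lambda_j$, and \cv gives $\langle f_j,f_k\rangle_{\RL^2(M,\mu)}=\delta_{jk}$, so the $f_j$ are $\mu$-orthonormal (in particular nonzero) and, testing the equation for $f_i$ against $f_l$, also Dirichlet-orthogonal with $\int_M\abs{\nabla f_i}^2\de v_g=\lambda_i$. Using $\spn\set{f_0,\dots,f_j}$ --- approximated by smooth subspaces, which is harmless since $\tau_2^\mu$ is continuous so that $R_g(\cdot,\mu)$ varies continuously --- as a competitor in \eqref{MeasureLaplace:def} gives $\lambda_j(M,g,\mu)\le\lambda_j$, and with Step~1 this forces $\lambda_j=\lambda_j(M,g,\mu)$.

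\emph{The main obstacle} is exactly this last index-matching: upper semicontinuity plus the compactness extraction only produce \emph{some} eigenfunctions of $(M,g,\mu)$, and one must exclude the possibility that mass escapes in the limit (which would shift the indices) --- this is precisely the role of the no-loss-of-mass hypothesis \cv, and it is the delicate point. Everything else is soft. If $\lambda_j(M,g,\mu)$ is simple, any subsequence of $\set{J_nf_j^n}$ has a weakly convergent sub-subsequence whose limit is, by the above, a $\mu$-normalised eigenfunction for $\lambda_j(M,g,\mu)$, hence $\pm f_j$, which promotes the convergence to the whole sequence (up to the sign of $f_j^n$). Finally, assuming \eqref{eq:smallenergy}, since $\nabla(J_nf_j^n)$ agrees with $\nabla f_j^n$ on $\Omega_n$,
\[
\norm{\nabla J_nf_j^n}_{\RL^2(M)}^2=\lambda_j(\Omega_n,g,\mu_n)+\norm{\nabla J_nf_j^n}_{\RL^{2}(M\setminus\Omega_n,\mu_n)}^2\longrightarrow\lambda_j(M,g,\mu)=\norm{\nabla f_j}_{\RL^2(M)}^2,
\]
the first term by Step~2, the second by \eqref{eq:smallenergy}, the last equality by testing the limit equation against $f_j$; since the $\RL^2(M,\mu)$-norms also converge to $\norm{f_j}_{\RL^2(M,\mu)}$, the $\CW^{1,2}(M,\mu)$-norms of $J_nf_j^n$ converge to that of $f_j$, and weak convergence in the Hilbert space $\CW^{1,2}(M,\mu)$ together with norm convergence gives strong convergence.
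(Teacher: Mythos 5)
Your proof is correct and follows essentially the same route as the paper: the restriction/upper-semicontinuity argument for the limsup bound, the uniform bound in $\CW^{1,2}(M,\mu)$ with compactness of $\tau_2^\mu$ to extract weak $\CW^{1,2}$ and strong $\RL^2(M,\mu)$ limits, conditions (EF1)/(EF2) to identify these limits as a $\mu$-orthonormal family of eigenfunctions, the test space $\spn\set{f_0,\dots,f_j}$ to match the indices, and the energy identity together with \eqref{eq:smallenergy} to get norm convergence and hence strong convergence. Your explicit smooth approximation of the test space and the subsequence argument for the simple-eigenvalue case are minor additions that the paper's proof leaves implicit.
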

\begin{proof}
  From the definition via Rayleigh quotient, we see that for all $j$ and $n$,
  $\lambda_j(\Omega_n,g,\mu_n) \le \lambda_j(M,g,\mu_n)$. 
By Proposition~\ref{usc:prop} along with Condition \ci, we have that up to a subsequence
$\lambda_j(\Omega_n,g,\mu_n) \to \lambda_j \le \lambda_j(M,g,\mu)$. 

For each fixed $j$
\begin{equation}
  \label{eq:normef}
\norm{J_n f_j^n}_{\CW^{1,2}(M,\mu)}^2 = \norm{f_j^n}_{\RL^2(M,\mu)}^2 + \norm{\nabla J_n
f_j^n}^2_{\RL^{2}(M \setminus \Omega_n,\mu)} + 
\lambda_j(\Omega_n,g,\mu_n).
\end{equation}
Therefore in view of conditions \cvi and \cv the sequence $\set{f_j^n : n \in \N}$ is
bounded in $\CW^{1,2}(M,\mu)$ so that up to a subsequence, $J_n f_j^n \wk f_j$
weakly in $\CW^{1,2}(M,\mu)$.

We now claim that $f_j$ is an eigenfunction associated with
$(M,g,\mu)$ and corresponding eigenvalue $\lambda_j$. Since all relevant
quantities are equibounded in $\CW^{1,2}(M,\mu)$, we may use smooth
functions as trial functions for $f_j$ and $\lambda_j$. By weak
convergence we have that for any $u \in \RC^\infty(M)$,
\begin{equation}
  \label{eq:energyconv}
  \lambda_j(\Omega_n,g,\mu_n) + \int_{M \setminus \Omega_n} \nabla J_n f_j^n
  \cdot \nabla u \de v_g =  \int_M \nabla J_n f_j^n \cdot \nabla u \de v_g  \xrightarrow{n \to \infty} \int_M
  \nabla f_j \cdot \nabla v \de v_g,
\end{equation}
and by conditions \ci and \cvi
\begin{equation}
  \label{eq:smallext}
  \int_{M \setminus \Omega_n} \nabla J_n f_j^n \cdot \nabla u \de v_g \le
  \Vol_g(M \setminus \Omega_n)^{1/2} \norm{J_n} \norm{u}_{\RC^1(M)}
  \norm{f_j^n}_{\CW^{1,2}(M,\mu_n)} \xrightarrow{n \to \infty} 0.
\end{equation}
On the other hand we have that
\begin{equation}
  \label{eq:massconv}
  \begin{aligned}
  \abs{\langle f_j,u \rangle_{\RL^2(M,\mu)} - \langle f_j^n,u
  \rangle_{\RL^2(M,\mu_n)}}
  &\le \abs{\langle f_j - J_n f_j^n,u \rangle_{\RL^2(M,\mu)}} + \\& \qquad + 
       \abs{ \langle J_n f_j^n,u \rangle_{\RL^2(M,\mu)} -  \langle f_j^n,u
       \rangle_{\RL^2(M,\mu_n)}}.
     \end{aligned}
\end{equation}
By Condition \cii, $f_j^n$ converges strongly in $\RL^2(M,\mu)$ so that the
first term on the righthand side converges to $0$ while Condition \civ implies
that the second term converges to $0$. Putting together \eqref{eq:energyconv},
\eqref{eq:smallext} and \eqref{eq:massconv} does yield that
\begin{equation}
\forall u \in \CW^{1,2}(M,\mu) \qquad \int_M \nabla f_j \nabla u \de v_g =
\lambda_j \int_M f_j u \de \mu.
\end{equation}

We can now prove that the limit sequence $f_j$ is orthonormal. Indeed,
\begin{equation}
  \langle f_j,f_k \rangle_{\RL^2(M,\mu)} = \langle J_n f_j^n,J_n f_k^n
  \rangle_{\RL^2(M,\mu)} + \langle f_j,f_k - J_n f_k^n\rangle_{\RL^2(M,\mu)} +
  \langle f_j - J_n f_j^n, f_k^n \rangle_{\RL^2(M,\mu)}.
\end{equation}
Strong convergence in $\RL^2(M,\mu)$, Conditions \cvi and \cv and the Cauchy-Schwarz
inequality imply that the the first term on righthandside converges to
$\delta_{jk}$ whereas the last two terms on the righthand side converge to $0$.

To prove that $\lambda_j(M,g,\mu) \le \lambda_j$, we use the space
$F_{j+1}=\mathrm{span}\{f_0,\ldots, f_j\}$ as a test space
in~\eqref{MeasureLaplace:def}. For any $f=\sum a_if_i\in F_j$ one has
$$
\frac{\displaystyle\int_M|\nabla f|^2_g\de v_g}{\displaystyle\int_M f^2\de \mu}
= \frac{\sum_{i=0}^j \lambda_i a_i^2}{\sum_{i=0}^j a_i^2}\leqslant
\lambda_j\frac{\sum_{i=0}^j a_i^2}{\sum_{i=0}^j a_i^2} = \lambda_j,
$$
where orthonormality of $\set{f_j}$ is used in the first equality. Finally, note that weak
convergence and convergence of the norms implies strong convergence, and it
follows from \eqref{eq:normef} that \eqref{eq:smallenergy} implies convergence of the norms.
\end{proof}

Our goal is now to provide conditions that can be verified  directly on the
measures $\mu_n$, $\mu$ to ensure convergence.

\begin{lemma}
  \label{lem:algebra}
  Let $u,v \in \RW^{1,2}(M)$. Then, if $d \ge
3$, then $uv \in \RW^{1,\frac{d}{d-1}}$ and
  \begin{equation}
    \norm{uv}_{\RW^{1,\frac{d}{d-1}}(M)} \le C_d \norm{u}_{\RW^{1,2}(M)} \norm{v}_{\RW^{1,2}(M)}
  \end{equation}
  For $d = 2$, $uv \in \RW^{1,p}$ for every $p < 2$, with the same norm estimate.
\end{lemma}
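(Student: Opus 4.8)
The plan is to prove the bilinear estimate first for smooth $u,v$, using only H\"older's inequality together with the Sobolev embedding theorem on the compact manifold $M$ (which may have Lipschitz boundary, so the relevant embeddings and the density of $\RC^\infty(M)$ in $\RW^{1,p}(M)$ are available), and then to remove the smoothness assumption by a density argument that simultaneously justifies the Leibniz rule $\nabla(uv)=u\,\nabla v+v\,\nabla u$ in the weak sense.

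First I would take $u,v\in\RC^\infty(M)$ and treat $d\ge 3$; write $2^\star=\tfrac{2d}{d-2}$. Since $M$ is compact, $\RW^{1,2}(M)\hookrightarrow\RL^{2^\star}(M)\hookrightarrow\RL^q(M)$ for every $q\le 2^\star$, in particular for $q=\tfrac{2d}{d-1}$. H\"older's inequality with the pair of exponents $\bigl(\tfrac{2d}{d-1},\tfrac{2d}{d-1}\bigr)$ then gives $\norm{uv}_{\RL^{d/(d-1)}}\le\norm{u}_{\RL^{2d/(d-1)}}\norm{v}_{\RL^{2d/(d-1)}}\ll_d\norm{u}_{\RW^{1,2}}\norm{v}_{\RW^{1,2}}$. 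For the gradient, $\nabla(uv)=u\,\nabla v+v\,\nabla u$ pointwise, and since $\tfrac{1}{2^\star}+\tfrac12=\tfrac{d-1}{d}$, H\"older's inequality with exponents $(2^\star,2)$ yields $\norm{u\,\nabla v}_{\RL^{d/(d-1)}}\le\norm{u}_{\RL^{2^\star}}\norm{\nabla v}_{\RL^2}\ll_d\norm{u}_{\RW^{1,2}}\norm{v}_{\RW^{1,2}}$, and symmetrically for $v\,\nabla u$; summing proves the estimate when $u,v$ are smooth. For $d=2$ and a fixed $p<2$ one repeats the argument verbatim, using that $\RW^{1,2}(M)\hookrightarrow\RL^q(M)$ for every $q<\infty$: H\"older with exponents $(2p,2p)$ controls $\norm{uv}_{\RL^p}$, and H\"older with exponents $\bigl(\tfrac{2p}{2-p},2\bigr)$ controls $\norm{u\,\nabla v}_{\RL^p}$, so the resulting constant depends on $p$.

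Finally I would drop the smoothness hypothesis. Set $p=\tfrac{d}{d-1}$ (respectively an arbitrary $p<2$ when $d=2$) and pick $u_n,v_n\in\RC^\infty(M)$ with $u_n\to u$ and $v_n\to v$ in $\RW^{1,2}(M)$. Applying the smooth estimate to $u_nv_n-u_mv_m=(u_n-u_m)v_n+u_m(v_n-v_m)$, together with the fact that $\norm{v_n}_{\RW^{1,2}}$ and $\norm{u_m}_{\RW^{1,2}}$ stay bounded, shows that $(u_nv_n)$ is Cauchy, hence convergent, in $\RW^{1,p}(M)$. Since $u_nv_n\to uv$ in $\RL^1(M)$ by Cauchy--Schwarz, the limit is $uv$; thus $uv\in\RW^{1,p}(M)$, its weak gradient is $u\,\nabla v+v\,\nabla u$, and the asserted bound follows by passing to the limit in the smooth estimate. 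The only point that is not entirely routine is this last step --- namely that the product is a bounded bilinear map $\RW^{1,2}(M)\times\RW^{1,2}(M)\to\RW^{1,p}(M)$ and that its weak gradient is given by the Leibniz formula --- but this is an immediate consequence of the smooth estimate applied to differences.
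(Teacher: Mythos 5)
Your proof is correct and follows essentially the same route as the paper: the Leibniz rule combined with H\"older's inequality (the pairing $\tfrac{1}{2}+\tfrac{2-p}{2p}=\tfrac1p$, i.e.\ $\RL^2$ against $\RL^{2p/(2-p)}=\RL^{2^\star}$ when $p=\tfrac{d}{d-1}$) and the Sobolev embedding $\RW^{1,2}(M)\hookrightarrow\RL^{2p/(2-p)}(M)$. The only additions are the explicit bound on $\norm{uv}_{\RL^p}$ and the routine density/Cauchy argument justifying the weak Leibniz rule, which the paper leaves implicit; both are fine.
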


\begin{proof}
  It is sufficient to verify the claim for $\nabla(uv)$.
  Let $p = \frac{d}{d-1}$ ($d \ge 3$) or $p < 2$ ($d=2$). By the inequality $(a + b)^p \le 2^{p-1} (a^p + b^p)$ and Hölder's inequality
  with exponents $2/p$ and $2/(2-p)$, we have that
  \begin{equation}
    \begin{aligned}
      2^{1-p} \int_M \abs{\nabla (uv)}^p \de v_g &\le  \int_M \abs{\nabla u}^p
      \abs v^p + \abs{\nabla v}^p \abs u^p \de v_g \\
      &\le  
      \left(\int_M \abs{\nabla u}^2 \de v_g \right)^{\frac{p}{2}}\left(
        \int_M \abs v^{\frac{2p}{2-p}} \de v_g
      \right)^{1 - \frac p 2}+  \\ & \qquad + 
      \left(\int_M \abs{\nabla v}^2 \de v_g \right)^{\frac{p}{2}}\left(
        \int_M \abs u^{\frac{2p}{2-p}} \de v_g
      \right)^{1 - \frac p 2}.
  \end{aligned}
  \end{equation}
  By our conditions on $p$, the Sobolev embedding $\RW^{1,2}(M) \to
  \RL^{\frac{2p}{2-p}}(M)$ is bounded, so that
  \begin{equation}
      \left(\int_M \abs{\nabla u}^2 \de v_g \right)^{\frac{p}{2}}
      \left(
        \int_M \abs v^{\frac{2p}{2-p}} \de v_g
      \right)^{1 - \frac p 2}  \le 
      C \left(\int_M \abs{\nabla u}^2 \de v_g \right)^{\frac{p}{2}}
      \norm{v}_{\RW^{1,2}(M)}^p,
  \end{equation}
  and similarly swapping the roles of $u$ and $v$. This is precisely our claim.
\end{proof}
In dimension $2$, the target space for the product will be an
Orlicz--Sobolev space, recall \eqref{eq:orliczsob}.
\begin{lemma}
  \label{lem:algebrabis}
  Let $u,v \in \RW^{1,2}(M)$ and $d = 2$. Then, $uv \in \RW^{1,2,-1/2}(M)$ and there
  is $C$ such that
  \begin{equation}
    \norm{uv}_{\exp \RL^1(M)} \le C_2 \norm{u}_{\RW^{1,2}(M)}
    \norm{v}_{\RW^{1,2}(M)}.
  \end{equation}
\end{lemma}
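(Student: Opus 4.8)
The plan is to reduce the statement to the Moser--Trudinger embedding together with two standard product estimates for Orlicz spaces. By the $d=2$ case of Lemma~\ref{lem:algebra} we already know that $uv\in\RW^{1,p}(M)$ for every $p<2$, with weak gradient given by the Leibniz rule $\nabla(uv)=u\nabla v+v\nabla u$; in particular $uv$, $u\nabla v$ and $v\nabla u$ are genuine elements of $\bigcap_{p<2}\RL^p(M)$, and the only thing left to do is to upgrade the integrability of $uv$ to $\exp\RL^1(M)$ and that of $\nabla(uv)$ to $\RL^2(\log\RL)^{-1/2}(M)$. The single geometric input is the Trudinger inequality on a compact surface with Lipschitz boundary: there is $C_T>0$ such that $\norm{w}_{\exp\RL^2(M)}\le C_T\norm{w}_{\RW^{1,2}(M)}$ for all $w\in\RW^{1,2}(M)$ (see e.g. \cite[Chapter~4]{benettsharpley}). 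In particular $\norm{u}_{\exp\RL^2(M)}$ and $\norm{v}_{\exp\RL^2(M)}$ are controlled by $\norm{u}_{\RW^{1,2}(M)}$ and $\norm{v}_{\RW^{1,2}(M)}$.

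For the bound on $uv$, set $\eta_u=\norm{u}_{\exp\RL^2(M)}$ and $\eta_v=\norm{v}_{\exp\RL^2(M)}$. The elementary inequality $2ab\le a^2+b^2$ gives $|uv|/(\eta_u\eta_v)\le\tfrac12\left(|u/\eta_u|^2+|v/\eta_v|^2\right)$ pointwise, so that, using convexity of the exponential and then $\sqrt{st}\le\tfrac12(s+t)$,
\[
  \exp\!\left(\frac{|uv|}{\eta_u\eta_v}\right)\le\exp\!\left(\frac{|u/\eta_u|^2}{2}\right)\exp\!\left(\frac{|v/\eta_v|^2}{2}\right)\le\frac12\left(\exp\bigl(|u/\eta_u|^2\bigr)+\exp\bigl(|v/\eta_v|^2\bigr)\right).
\]
Integrating over $M$ and using the Luxemburg-norm normalisation of $\exp\RL^2(M)$ for the two terms on the right shows that $uv\in\exp\RL^1(M)$ with $\norm{uv}_{\exp\RL^1(M)}\lesssim\eta_u\eta_v\le C_T^2\norm{u}_{\RW^{1,2}(M)}\norm{v}_{\RW^{1,2}(M)}$, which is the asserted estimate. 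Since $M$ has finite volume we also have the inclusions $\exp\RL^1(M)\subset\RL^3(M)\subset\RL^2(\log\RL)^{-1/2}(M)$ recorded on p.~\pageref{page:LLL}, so $uv\in\RL^2(\log\RL)^{-1/2}(M)$ as well.

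It remains to place $u\nabla v$ and $v\nabla u$ in $\RL^2(\log\RL)^{-1/2}(M)$. Here I would invoke the generalised Hölder inequality for Orlicz spaces: if $A$, $B$, $C$ are Young functions with $A^{-1}(t)\,B^{-1}(t)\le C^{-1}(t)$ for all sufficiently large $t$, then $\norm{fg}_{\RL^C(M)}\le 2\norm{f}_{\RL^A(M)}\norm{g}_{\RL^B(M)}$ (see \cite[Chapter~4]{benettsharpley}; on the finite-measure space $M$ only the behaviour of the Young functions near $+\infty$ matters, up to equivalence). Apply it with $\RL^A=\RL^2(M)$, $\RL^B=\exp\RL^2(M)$ and $\RL^C=\RL^2(\log\RL)^{-1/2}(M)$, whose Young functions satisfy $A^{-1}(t)\asymp t^{1/2}$, $B^{-1}(t)\asymp(\log t)^{1/2}$ and $C^{-1}(t)\asymp(t\log t)^{1/2}$ as $t\to\infty$ --- the last asymptotic because $\RL^2(\log\RL)^{-1/2}(M)$ is governed by $t\mapsto t^2(\log(2+t))^{-1}$, whose inverse is asymptotic to $(t\log t)^{1/2}$. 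Hence $A^{-1}(t)B^{-1}(t)\asymp C^{-1}(t)$, the hypothesis holds after adjusting constants, and we obtain $\norm{u\nabla v}_{\RL^2(\log\RL)^{-1/2}(M)}\lesssim\norm{u}_{\exp\RL^2(M)}\norm{\nabla v}_{\RL^2(M)}\le C_T\norm{u}_{\RW^{1,2}(M)}\norm{v}_{\RW^{1,2}(M)}$, and likewise for $v\nabla u$ with the roles of $u$ and $v$ exchanged. Since $\RL^2(\log\RL)^{-1/2}(M)$ is a Banach space the triangle inequality gives $\nabla(uv)=u\nabla v+v\nabla u\in\RL^2(\log\RL)^{-1/2}(M)$ with a bound of the same form; combined with the previous paragraph, $uv\in\RW^{1,2,-1/2}(M)$ and the proof is complete.

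The step I expect to require the most care is the Orlicz--Hölder argument of the last paragraph: one has to identify $\RL^2(\log\RL)^{-1/2}$ as \emph{exactly} the target space for the product $\RL^2\cdot\exp\RL^2$, i.e. verify the asymptotic relation $C^{-1}\asymp A^{-1}B^{-1}$, and be slightly careful that the inequality applies to a pair consisting of a function of exponential type and one of power type with a logarithmic correction --- in particular $t\mapsto t^2(\log(2+t))^{-1}$ should first be replaced by an equivalent genuine Young function that is convex on all of $[0,\infty)$. Everything else --- the appeal to Lemma~\ref{lem:algebra}, the $2ab\le a^2+b^2$ manipulation, and the Trudinger embedding --- is routine.
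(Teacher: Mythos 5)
Your proof is correct, and it follows the same skeleton as the paper's (Leibniz rule for $\nabla(uv)$, Trudinger's embedding $\RW^{1,2}(M)\hookrightarrow\exp\RL^2(M)$, then a product estimate $\exp\RL^2\cdot\RL^2\to\RL^2(\log\RL)^{-1/2}$), but the two places where the real work happens are handled differently. For the product estimate on $u\nabla v$, the paper invokes a theorem of Ando characterising when $\RL^N\cdot\RL^M\subset\RL^R$ by a pointwise Young-type inequality, and then verifies by hand the inequality $\frac{(ast)^2}{\log(2+ast)}\le s^2+\exp(t^2)-1$ for $s,t\ge 1$ and $a\le 2^{-1/2}$; you instead use the generalised Orlicz--H\"older inequality and check the asymptotics $A^{-1}(t)B^{-1}(t)\asymp C^{-1}(t)\asymp (t\log t)^{1/2}$ of the inverse Young functions. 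These are morally equivalent (Ando's condition is exactly a Young-type inequality, H\"older's condition is its formulation on inverses), and your caveat about promoting the large-$t$ comparison to a genuine Young function on a finite measure space is the right thing to flag; the paper's route avoids that bookkeeping at the cost of an explicit elementary verification. The other difference is that you actually prove the displayed $\exp\RL^1$ bound on $uv$ itself, via the pointwise inequality $\exp\bigl(|uv|/(\eta_u\eta_v)\bigr)\le\tfrac12\bigl(\exp(|u/\eta_u|^2)+\exp(|v/\eta_v|^2)\bigr)$ and the Luxemburg normalisation, whereas the paper's proof only treats the gradient term (declaring it ``sufficient to verify the claim for $\nabla(uv)$''); your argument therefore covers a piece of the statement that the paper leaves implicit, and your observation that $\exp\RL^1(M)\subset\RL^2(\log\RL)^{-1/2}(M)$ correctly closes the membership $uv\in\RW^{1,2,-1/2}(M)$.
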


\begin{proof}
  Just as in the previous case, it is sufficient to verify the claim for
  $\nabla(uv) = u \nabla v + v \nabla u$, and as such to verify that $u \nabla v
\in \RL^2 (\log L)^{-1/2}$. 
  Trudinger's theorem \cite{trudinger} states that for $d = 2$, $\RW^{1,2}(M)$ embeds
  continuously in $\exp \RL^2(M)$, so that $u \in \exp \RL^2(M)$, and by
  assumption $\nabla v \in \RL^2(M)$. Taking 
  \begin{equation}
    R(t) = \frac{t^2}{\log(2 + t)}, \qquad M(t) = t^2, \qquad N(t) = \exp(t^2) -
    1
  \end{equation}
  in \cite[Theorem 1]{ando}, we have the equivalence between the following
  statements:
  \begin{itemize}
    \item[$\bullet$] for all $u \in \exp \RL^2(M)$ and $v \in \RL^2(M)$, $uv \in
      \RL^2(\log L)^{-1/2}$;
    \item[$\bullet$] there is $a,b>0$ such that
  \begin{equation}
    \label{eq:neededforproduct}
    \frac{(a s t)^2}{\log(2 + (a s t))} \le s^2 + \exp(t^2) - 1\qquad \forall
    s,t \ge b.
  \end{equation}
  \end{itemize}
  We verify that this second statement holds for $b = 1$. On the one hand,
  \begin{equation}
    s^2 \ge \frac{\exp(2 a^2 t^2)}{a^2t^2} \quad \Longrightarrow \quad
    \frac{(ast)^2}{\log (2
    + (a s t))} \le s^2,
  \end{equation}
  while on the other hand 
  \begin{equation}
    s^2 \le \frac{\exp(t^2)}{a^2 t^2} \text{ and } t \ge 1 \quad \Longrightarrow \quad 
    \frac{(ast)^2}{\log (2
    + (a s t))} \le \exp(t^2) - 1.
  \end{equation}
  Choosing $a \le 2^{-1/2}$ ensures that the inequality
  \eqref{eq:neededforproduct} holds for all $s, t \ge 1$. Therefore,
  \begin{equation}
    \norm{u \nabla v}_{\RL^2 (\log L)^{-1/2}} \le C \norm{u}_{\exp L^2}
    \norm{\nabla v}_{\RL^2} \le C' \norm{u}_{\RW^{1,2}} \norm v_{\RW^{1,2}}.
  \end{equation}
  the same holds swapping $u$ and $v$ and our claim follows.
\end{proof}

Let $X$ be a completion of $\RC^\infty(M)$ under some norm $\norm{\cdot}_X$. We
can interpret measures as bounded linear functionals on $X$ as $\langle \mu,f\rangle_X = \int_M f \de
\mu$ as long as
\begin{equation}
  \norm{\mu}_{X^*} = \sup_{f \in \RC^\infty(M) \setminus \set{0}} \frac{\abs{\int_M f \de
  \mu}}{\norm{f}_X}
\end{equation}
is finite.

\begin{prop}
  \label{prop:weakenough}
  Suppose that $\Omega_n \subset M$ is a sequence of domains in $M$ and $\mu_n,
  \mu$ are Radon measures on $M$ satisfying conditions \csi--\csvi. If $d \ge
  3$, suppose that $\mu_n \to \mu$ in $\RW^{1,\frac{d}{d-1}}(M)^*$. If $d =
  2$, suppose that $\mu_n \to \mu$ in $\RW^{1,2,-1/2}(M)^*$. Then, Conditions \csiii--\csv are satisfied by the
  eigenfunctions. In particular,
  \begin{equation}
    \lim_{n \to \infty} \lambda_j(\Omega_n,g,\mu_n) = \lambda_j(M,g,\mu).
  \end{equation}
\end{prop}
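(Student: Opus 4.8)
The plan is to verify that the hypotheses force conditions \csiii and \csv on the eigenfunctions; since \csi--\csvi are assumed, the eigenvalue convergence then follows verbatim from Proposition~\ref{stability1:prop}. Write $Z := \RW^{1,\frac{d}{d-1}}(M)$ when $d\ge 3$ and $Z := \RW^{1,2,-1/2}(M)$ when $d=2$, so the hypothesis reads $\mu_n\to\mu$ in $Z^*$, while Lemmas~\ref{lem:algebra} and~\ref{lem:algebrabis} (the latter combined with the embeddings between Orlicz spaces) say precisely that multiplication is bounded as a map $\RW^{1,2}(M)\times\RW^{1,2}(M)\to Z$, i.e. $\norm{uv}_Z\ll\norm{u}_{\RW^{1,2}(M)}\norm{v}_{\RW^{1,2}(M)}$. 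The single identity driving everything is that, because $J_n$ is an extension operator ($J_nf_j^n=f_j^n$ on $\Omega_n$) and $\supp(\mu_n)\subset\overline{\Omega_n}$, for any sufficiently regular $u$ on $M$ one has $\langle f_j^n,u\rangle_{\RL^2(\Omega_n,\mu_n)}=\int_M (J_nf_j^n)\,u\de\mu_n$, hence
\[
  \langle J_nf_j^n,u\rangle_{\RL^2(M,\mu)}-\langle f_j^n,u\rangle_{\RL^2(\Omega_n,\mu_n)}=\langle \mu-\mu_n,\,(J_nf_j^n)\,u\rangle_Z ,
\]
and likewise with $u$ replaced by $J_nf_k^n$.

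First I would establish the uniform bound $\Lambda:=\sup_n\norm{J_nf_j^n}_{\RW^{1,2}(M)}<\infty$ for each fixed $j$ (and for any finite family of indices). Testing $\mu_n\wks\mu$ against the constant function gives $\mu_n(M)\to\mu(M)$, and $\mu(M)>0$ because $\mu$ is admissible; so $\mu_n(M)$ is bounded below for $n$ large. The sequence $\set{\mu_n}$ is bounded in $Z^*$, so for smooth $f$ the product estimate yields $\norm{f}_{\RL^2(M,\mu_n)}^2=\langle\mu_n,f^2\rangle_Z\le\norm{\mu_n}_{Z^*}\norm{f^2}_Z\ll\norm{\mu_n}_{Z^*}\norm{f}_{\RW^{1,2}(M)}^2$, whence $\sup_n\norm{T_2^{\mu_n}}<\infty$. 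Feeding these two facts into Proposition~\ref{prop:secndbnd} (via its explicit constant $C_{2,\mu_n}$) shows the norm-equivalence constants between $\RW^{1,2}(M)$ and $\CW^{1,2}(M,\mu_n)$ from Theorem~\ref{thm:normeq} are uniform in $n$. Finally, by monotonicity of the Rayleigh quotient $\lambda_j(\Omega_n,g,\mu_n)\le\lambda_j(M,g,\mu_n)$, and by \csi and Proposition~\ref{usc:prop} the right side is $\le\lambda_j(M,g,\mu)+o(1)$, hence bounded; since $f_j^n$ is $\mu_n$-normalised with $\norm{\nabla f_j^n}_{\RL^2(\Omega_n)}^2=\lambda_j(\Omega_n,g,\mu_n)$, it is bounded in $\CW^{1,2}(\Omega_n,\mu_n)$, and applying \csvi together with the uniform isomorphism bound gives the claim.

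With $\Lambda$ in hand, \csv is immediate: the product estimate gives $\norm{(J_nf_j^n)(J_nf_k^n)}_Z\ll\Lambda^2$, so the displayed identity with $u=J_nf_k^n$ shows $\langle J_nf_j^n,J_nf_k^n\rangle_{\RL^2(M,\mu)}-\langle f_j^n,f_k^n\rangle_{\RL^2(\Omega_n,\mu_n)}\le\norm{\mu-\mu_n}_{Z^*}\cdot\bigo{\Lambda^2}\to0$; since $\langle f_j^n,f_k^n\rangle_{\RL^2(\Omega_n,\mu_n)}=\delta_{jk}$ by $\mu_n$-orthonormality, both limits in \csv follow. For \csiii, the displayed identity with smooth $u$ gives the bound $\norm{\mu-\mu_n}_{Z^*}\norm{(J_nf_j^n)u}_Z\ll\norm{\mu-\mu_n}_{Z^*}\Lambda\norm{u}_{\RW^{1,2}(M)}\to0$; to upgrade to arbitrary $u\in\CW^{1,2}(M,\mu)$ I would approximate $u$ by smooth $u_m$ and use the uniform bounds $\norm{J_nf_j^n}_{\RL^2(M,\mu)}\le\norm{T_2^\mu}\Lambda$, $\norm{f_j^n}_{\RL^2(\Omega_n,\mu_n)}=1$, and $\norm{v}_{\RL^2(M,\mu_n)}^2\ll\norm{\mu_n}_{Z^*}\norm{v}_{\RW^{1,2}(M)}^2$ (from Step~1) to make the replacement error uniformly small in $n$ --- a routine three-$\eps$ argument. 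Then Proposition~\ref{stability1:prop} applies and gives $\lim_n\lambda_j(\Omega_n,g,\mu_n)=\lambda_j(M,g,\mu)$.

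The substantive point, and the step I expect to be the main obstacle, is the uniform $\RW^{1,2}(M)$-bound on the extended eigenfunctions in the second paragraph: condition \csvi only controls $J_nf_j^n$ in the $n$-dependent norm of $\CW^{1,2}(M,\mu_n)$, and converting this into a bound in the fixed space $\RW^{1,2}(M)$ is exactly where one needs both the multiplicative estimates and the boundedness of $\set{\mu_n}$ in $Z^*$, in order to make the constants of Theorem~\ref{thm:normeq} uniform. Everything after that is Hölder/duality bookkeeping. The dimension-two case differs only cosmetically: one replaces $\RW^{1,d/(d-1)}$ by the Orlicz--Sobolev space $\RW^{1,2,-1/2}$ throughout and invokes Lemma~\ref{lem:algebrabis}, together with the Orlicz embeddings recalled earlier, in place of Lemma~\ref{lem:algebra}, so as to control the relevant products in $Z$ rather than merely in $\exp\RL^1$.
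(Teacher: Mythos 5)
Your proposal is correct and follows essentially the same route as the paper: uniform boundedness of the trace operators $T_2^{\mu_n}$ via the product estimates of Lemmas~\ref{lem:algebra}/\ref{lem:algebrabis} and boundedness of $\mu_n$ in the dual space, then the duality pairing $\langle \mu-\mu_n,(J_nf_j^n)u\rangle$ to verify \csiii\ and \csv, and finally Proposition~\ref{stability1:prop}. The only difference is expository: you make explicit the uniform $\RW^{1,2}(M)$-bound on $J_nf_j^n$ (via the explicit constant in Proposition~\ref{prop:secndbnd} and eigenvalue boundedness from Proposition~\ref{usc:prop}) and the smooth-approximation step for general $u\in\CW^{1,2}(M,\mu)$, both of which the paper leaves implicit.
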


\begin{remark}
  Since $M$ is compact, convergence in $\RW^{1,p}(M)^*$ implies convergence in
  $\RW^{1,q}(M)^*$ for every $q > p$, so that in practice one can verify this
  criterion for any $p < \frac{d}{d-1}$. Often the case $p = 1$
  provides easier computations. We also remark that if $\mu_n \to \mu$ in
  $\RW^{1,p}(M)^*$, 
\end{remark}

\begin{proof}
  Let us first assume that $d \ge 3$ and put $p = \frac{d}{d-1}$, or $d = 2$,
  $p<2$. We first observe that the trace operators $T_{2}^{\mu_n}$ are bounded,
  uniformly in $n$. Indeed, by Lemma \ref{lem:algebra} for every $u \in
  \RW^{1,2}(M)$,
  \begin{equation}
    \begin{aligned}
      \int_M u^2 (\rd \mu_n - \rd \mu) &\le \norm{u^2}_{\RW^{1,p}(M)} \norm{\mu_n
      - \mu}_{\RW^{1,p}(M)^*} \\
      &\le C_p \norm{u}_{\RW^{1,2}(M)}^2 \norm{\mu_n -
      \mu}_{\RW^{1,p}(M)^*},
    \end{aligned}
  \end{equation}
  so that
  \begin{equation}
    \begin{aligned}
      \int_M u^2 \de \mu_n &= \int_M u^2 (\rd \mu_n - \rd \mu) + \int_M u^2 \de
      \mu \\
      &\le \left(C_p \norm{\mu_n - \mu}_{\RW^{1,p}(M)^*} +
      \norm{T_2^\mu}^2\right) \norm{u}_{\RW^{1,2}(M)}^2.
    \end{aligned}
  \end{equation}
  To verify \csiv we note that by Lemma
  \ref{lem:algebra} one has
  \begin{equation}
    \begin{aligned}
      \int_{M} (J_n f_j^n)^2 \de \mu &\le \norm{(J_n f_j^n)^2}_{\RW^{1,p}(M)}
      \norm{\mu}_{\RW^{1,p}(M)^*} \\
      &\le C_p \norm{\mu}_{\RW^{1,p}(M)^*}
      \norm{J_n f_j^n}_{\RW^{1,2}(M)}^2  \\
      &\le C_p \norm{\mu}_{\RW^{1,p}(M)^*}\left(1 +  \norm{T^{\mu_n}_2}\right)
      \norm{J_n f_j^n}_{\CW^{1,2}(M,\mu_n)}^2
  \end{aligned}
  \end{equation}

  We have that
  \begin{equation}
    \label{eq:ef2*part2}
    \abs{\int_M J_n f_j^n u (\rd \mu_n - \rd \mu)} \le \norm{J_nf_j^n u}_{\RW^{1,p}(M)}
    \norm{\mu_n - \mu}_{\RW^{1,p}(M)^*}.
  \end{equation}
  This goes to $0$ by Lemma \ref{lem:algebra} and convergence $\mu_n \to \mu$ in
  $\CW^{1,p}(M)^*$, so that Condition \csiv is satisfied.

  Finally, using Lemma \ref{lem:algebra} one last time, we have that
  \begin{equation}
    \abs{\int_M J_n f_j^n J_n f_k^n (\rd \mu_n - \rd \mu)} \le \norm{f_k^n
    f_j^n}_{\RW^{1,p}(M)} \norm{\mu_n - \mu}_{\RW^{1,p}(M)^*} \xrightarrow{n \to
    \infty} 0,
  \end{equation}
  so that Condition \csv is indeed satisfied.

  The case $d = 2$ and $\mu_n \to \mu$ in $\RW^{1,2,-1/2}(M)^*$ follows from replacing Lemma \ref{lem:algebra} by Lemma
  \ref{lem:algebrabis}.
\end{proof}

Verifying that $\mu_n \to \mu \in \RW^{1,p}(M)^*$ is in principle a global
question (or at the very least the local character of it should be verified
independent of $n$). Following the ideas set out in \cite{GHL,GL} we provide the
following blueprint for
verifying convergence in an effective way that is based on Lemma
\ref{lem:integralidentitybis}. This lemma implies, amongst other things, that if
$\mu$ is $p$-admissible then there exists $\phi_n \in \RW^{1,p'}(M)$ such that
\begin{equation}
  \langle \mu_n - \mu,f\rangle_{\RW^{1,p}(M)} = \left( \frac{\mu_n(M)}{\mu(M)} -
  1\right)\langle \mu,f\rangle_{\RW^{1,p}(M)} + \int_M \nabla \phi_n \cdot
  \nabla f \de v_g.
\end{equation}
The first term is easily seen to converge to $0$ uniformly for $\norm
f_{\RW^{1,p}(M)} \le 1$. However, the estimates we obtained on $\phi_n$ in Lemma
\ref{lem:integralidentitybis} are not on their face strong enough to guarantee
convergence. In Section \ref{sec:hom} we get over this hurdle by partitioning $M$
into an almost disjoint union $M = \bigcup_{z \in
I_n} Q_z^n$. This allows us to write
\begin{equation}
  \langle \mu_n - \mu,f \rangle_{\RW^{1,p}(M)^*} = \sum_{z \in I} \langle \mu_n
  - \mu,f \rangle_{\RW^{1,p}(Q_z^n)^*}.
\end{equation}
Using Lemma \ref{lem:integralidentitybis} in every $Q_z^n$ provides us with an effective
mean of proving that this converges to $0$. In view of estimate
\eqref{eq:phiestimate}, if $\mu_n(Q_z^n)$ is comparable for every $z$ then by
H\"older's inequality
\begin{equation}
  \sum_{z \in I} \mu_n(Q_z^n)^{1/p'} \norm{f}_{\RW^{1,p}(Q_z^\eps)} \ll
\norm{f}_{\RW^{1,p}(M)}
\end{equation}
so that in principle one will need to prove only that the $p$-Poincar\'e
constants of $Q_z^\eps$ are uniformly bounded and that the traces $T_p^{\mu_n}$
restricted to $Q_z^\eps$ converge to $0$. Exploiting the potential lack of scale
invariance in the defining equation for $\phi_n$ is often key for this. For a
concrete application of this scheme, see the proof of Proposition \ref{prop:homstek}.

\section{First examples of spectrum convergence}

\label{sec:1applications}

In this section we collect several applications of the setup presented in the
previous section. Most of the results in this section are generalisations of
known results either to a manifold context or to higher dimensions.
\subsection{Convergence for \texorpdfstring{$\RL^p$}{Lp} densities}  The case
$d=2$, $\beta_n\in \RL^p$, $p > 1$ has previously appeared
in~\cite[Lemma 6.2]{KNPP2}.
\begin{prop}
\label{L_infty_approx:prop}
Let $\beta_n$ be a sequence of non-negative densities converging in
$\RL^{\frac{d}{2}} (\log
\RL)^a(M)$ to a non-negative density $\beta$, where $a = 0$ for $d \ge 3$ and $a =
1$ for $d = 2$.
Then $\lambda_k(M,g,\beta_n\de v_g)\to \lambda_k(M,g,\beta\de v_g)$ as $n\to \infty$.
\end{prop}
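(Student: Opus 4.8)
The plan is to deduce the statement from the general continuity criterion, Proposition~\ref{prop:weakenough}, applied with $\Omega_n=M$ for every $n$, $\mu_n=\beta_n\rd v_g$ and $\mu=\beta\rd v_g$; one assumes $\beta\not\equiv 0$ and discards the finitely many $n$ for which $\beta_n\equiv 0$. Since $\Omega_n=M$, the volume part of \ci\ and condition \cvi\ are automatic (take $J_n$ the identity). Because $\RL^{d/2}(\log\RL)^a(M)\subset\RL^1(M)$ both for $a=0$ ($d\ge 3$) and for $a=1$ ($d=2$), the hypothesis yields $\beta_n\to\beta$ in $\RL^1(M)$, hence $\mu_n\wks\mu$, which completes \ci. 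For \cii\ I would invoke Example~\ref{ex:lpdensity}: when $d\ge 3$ one has $a=0$, so $\beta$ and all $\beta_n$ lie in $\RL^{d/2}(M)$, and when $d=2$ one has $a=1$, so they lie in $\RL^1(\log\RL)^1(M)=\LLL(M)$; a nonnegative nontrivial density in these spaces generates an admissible measure, so both $\mu$ and the $\mu_n$ are admissible.

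It then remains to verify the convergence of measures required by Proposition~\ref{prop:weakenough}, namely $\mu_n\to\mu$ in $\RW^{1,d/(d-1)}(M)^*$ when $d\ge 3$ and in $\RW^{1,2,-1/2}(M)^*$ when $d=2$; this is where the integrability hypothesis on $\beta$ enters, and one uses the endpoint exponent precisely in order to obtain the sharp condition. For $d\ge 3$, writing $\mu_n-\mu=(\beta_n-\beta)\rd v_g$, H\"older's inequality with conjugate exponents $\frac{d}{d-2}$ and $\frac{d}{2}$ together with the Sobolev embedding $\RW^{1,d/(d-1)}(M)\hookrightarrow\RL^{d/(d-2)}(M)$ give, for $h\in\RC^\infty(M)$,
\[
 \left|\int_M h\,(\beta_n-\beta)\,\rd v_g\right|\le\norm{h}_{\RL^{d/(d-2)}(M)}\,\norm{\beta_n-\beta}_{\RL^{d/2}(M)}\le C\,\norm{h}_{\RW^{1,d/(d-1)}(M)}\,\norm{\beta_n-\beta}_{\RL^{d/2}(M)},
\]
which tends to $0$ by hypothesis. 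For $d=2$ the same computation applies once the Sobolev embedding is replaced by the borderline Orlicz--Sobolev embedding $\RW^{1,2,-1/2}(M)\hookrightarrow\exp\RL^1(M)$ and H\"older's inequality by the $\exp\RL^1$--$\LLL$ duality recalled on p.~\pageref{page:LLL}, yielding $\big|\int_M h\,(\beta_n-\beta)\,\rd v_g\big|\le C\,\norm{h}_{\RW^{1,2,-1/2}(M)}\,\norm{\beta_n-\beta}_{\LLL(M)}\to 0$. With the measure convergence established, Proposition~\ref{prop:weakenough} supplies conditions \ciii--\cv\ for the eigenfunctions and the conclusion $\lambda_k(M,g,\beta_n\rd v_g)\to\lambda_k(M,g,\beta\rd v_g)$.

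The one genuinely delicate ingredient is the two-dimensional endpoint embedding $\RW^{1,2,-1/2}(M)\hookrightarrow\exp\RL^1(M)$, which is the precise degradation of the Moser--Trudinger embedding $\RW^{1,2}(M)\hookrightarrow\exp\RL^2(M)$ when the gradient is only assumed to lie in $\RL^2(\log\RL)^{-1/2}(M)$; I would extract it from Cianchi's sharp Orlicz--Sobolev embedding theorem \cite{cianchi}. Alternatively, and perhaps more safely, one can avoid any embedding statement and verify conditions \ciii--\cv directly, exactly as in the proof of Proposition~\ref{prop:weakenough}, using Lemma~\ref{lem:algebrabis} in its place: for instance the quantity controlling \cv\ equals $\int_M f_j^n f_k^n\,(\beta-\beta_n)\,\rd v_g$, which by Lemma~\ref{lem:algebrabis} and the $\exp\RL^1$--$\LLL$ duality is at most $C\norm{f_j^n}_{\RW^{1,2}(M)}\norm{f_k^n}_{\RW^{1,2}(M)}\norm{\beta-\beta_n}_{\LLL(M)}$, and the $\RW^{1,2}(M)$-norms of the $\mu_n$-normalised eigenfunctions are equibounded once the trace operators $T_2^{\mu_n}$ are seen to be uniformly bounded (which again follows from the estimate displayed above); the remaining eigenfunction condition \civ\ is handled identically.
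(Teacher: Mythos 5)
Your proposal is correct and follows essentially the same route as the paper's proof: verify conditions \ci--\cvi (admissibility via Example~\ref{ex:lpdensity}), then show $\beta_n\rd v_g\to\beta\rd v_g$ in $\RW^{1,\frac{d}{d-1}}(M)^*$ for $d\ge 3$ using H\"older with exponents $\tfrac{d}{d-2}$, $\tfrac{d}{2}$ and the Sobolev embedding $\RW^{1,\frac{d}{d-1}}(M)\to\RL^{\frac{d}{d-2}}(M)$, and for $d=2$ the $\exp\RL^1$--$\LLL$ pairing with the borderline embedding $\RW^{1,2,-1/2}(M)\to\exp\RL^1(M)$ from Cianchi, and conclude by Proposition~\ref{prop:weakenough}. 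The extra details you supply (weak-$*$ convergence from $\RL^1$ convergence, and the alternative direct verification of \ciii--\cv via Lemma~\ref{lem:algebrabis}) are consistent with, but not needed beyond, the paper's argument.
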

\begin{proof}
Conditions \ci--\cvi are respected, admissibility following from Example
\ref{ex:lpdensity}. 
Let $u \in \RW^{1,\frac{d}{d-1}}(M)$ for $d \ge 3$. Then by the Sobolev
embedding $\RW^{1,\frac{d}{d-1}}(M) \to \RL^{\frac{d}{d-2}}(M)$ and H\"older's
inequality with exponents $\frac{d}{d-2}$ and $\frac{d}{2}$,
  \begin{equation}
    \abs{\int u (\beta_n - \beta) \de v_g} \ll_{d,M}
    \norm{u}_{\RW^{1,\frac{d}{d-1}}(M)} \norm{\beta_n - \beta}_{\RL^{d/2}}.
  \end{equation}
  We deduce that $\beta_n \rd v_g \to \beta \rd v_g$ in
  $\RW^{1,\frac{d}{d-1}}(M)^*$, so that by Proposition \ref{prop:weakenough} the
  eigenvalues converge. For $d = 2$, proceed the same way but with the pairing
  of the spaces
  $\exp \RL^1(M)$ and $\LLL(M)$, along with the optimal Sobolev embedding
  $\RW^{1,2,-1/2}(M) \to \exp \RL^1(M)$, see \cite[Example 1]{cianchi}.
\end{proof}

\subsection{Approximation of eigenvalues of measures supported on a hypersurface.}

Let $(M,g)$ be a compact Riemannian manifold. Let $\Sigma\subset M$ be a compact, not necessarily connected, codimension $1$ smooth submanifold without boundary and
$\rho\in C(\Sigma)$ be a non-negative density on $\Sigma$. Assume $\Sigma=\Sigma_i\sqcup\Sigma_b$, where $\Sigma_i\cap\del M=\varnothing$ and $\Sigma_b$ is either empty or coincides with $\del M$.  Let $N_{\eps,b}$ be an
$\eps$-tubular neighbourhood of $\Sigma_i$. For sufficiently small $\eps$
the
exponential map $\exp_{\Sigma_i}$ can be used to identify $N_{\eps,i}$ with
$N^\eps\Sigma_i$, the $\eps$-ball in the normal bundle of $\Sigma_i$. 
Similarly, if $\Sigma_b=\del M$ is not empty, its $\eps$-tubular neighbourhood $N_{\eps,b}$ can be identified with $\Sigma_b\times[0,\eps]$ using $\exp_{\Sigma_b}$. If $n$ is an outward unit normal then 
we define 
\begin{equation}
\rho_\eps(y) = 
  \begin{cases}
    \frac{1}{2\eps}\rho(x) & \text{if } y=\exp_x(w)\in N_{\eps,i}, \,\,(x,w)\in N^\eps\Sigma_i \\
    \frac{1}{\eps}\rho(x) &\text{if } y =\exp_x(-tn)\in N_{\eps,b},\,\,(x,t)\in \Sigma_b\times[0,\eps] \\
    0  & \text{otherwise} .
  \end{cases}
\end{equation}

The next theorem says that we can approximate the eigenvalues of weighted Steklov
or transmission problems as in Example \ref{ex:transmission} using weighted Laplace
eigenvalues. When $\Sigma = \Sigma_b = \del M$, our construction is similar to
the one found for domains in $\R^d$ in \cite{LambPro}.

\begin{theorem}
\label{Steklov_approx:thm}

Let $\,\mathrm d A^\Sigma_g$ be the volume measure on $\Sigma$. Then one has 
$$
\lambda_k(M,g,\rho_\eps\rd v_g)\to \lambda_k\left(M,g, \rho \rd A^\Sigma_g\right)
$$
as $\eps\to 0$. 
\end{theorem}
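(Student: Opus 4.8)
The plan is to obtain the convergence directly from Proposition~\ref{prop:weakenough}, applied with $\Omega_n = M$ for every $n$ (so that the extension maps $J_n$ are the identity), $\mu_n = \rho_{\eps_n}\de v_g$ for an arbitrary sequence $\eps_n\to 0$, and $\mu = \rho\,\de A^\Sigma_g$. With $\Omega_n = M$, the volume part of \ci and condition \cvi hold automatically. For \cii, each $\rho_\eps$ is bounded, hence $\rho_\eps\de v_g$ is admissible by Example~\ref{ex:lpdensity}; and $\rho\,\de A^\Sigma_g$ is admissible, this hypersurface density being admissible just as $\iota_*\de A_g$ is --- its trace operator is compact by Maz'ya's criterion, since it is dominated by $\norm{\rho}_{\RL^\infty}\,\de A^\Sigma_g$ and $\mu(\Upsilon)\le\norm{\rho}_{\RL^\infty}\CH^{d-1}(\Upsilon\cap\Sigma)$, and the $2$-Poincar\'e inequality holds exactly as for $\de A^\Sigma_g$. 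The weak-$*$ convergence required by \ci will follow from the stronger convergence established below, since the masses $\mu_n(M)$ are uniformly bounded and $\RC^\infty(M)$ is dense in $\RC(M)$. Everything therefore reduces to proving that $\mu_n\to\mu$ in $\RW^{1,d/(d-1)}(M)^*$ when $d\ge 3$, and in $\RW^{1,p}(M)^*$ for every $p<2$ when $d=2$ --- the latter being enough by the remark after Proposition~\ref{prop:weakenough} together with the continuous inclusion $\RW^{1,2,-1/2}(M)\subset\RW^{1,p}(M)$.

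To control $\langle\mu_n-\mu,f\rangle$ for $f\in\RC^\infty(M)$ I would work in Fermi coordinates along $\Sigma$. For $\eps$ small the tubular neighbourhoods are embedded: near $\Sigma_i$ a point has the form $\exp_x(w\,n(x))$ with $x\in\Sigma_i$ and $\abs w<\eps$, and $\de v_g = J(x,w)\,\de w\,\de A^\Sigma_g(x)$ with $J(x,0)=1$ and $\abs{J(x,w)-1}\le C\eps$; near $\Sigma_b=\del M$ a point has the form $\exp_x(-t\,n(x))$ with $t\in[0,\eps]$ and an analogous Jacobian. The fundamental theorem of calculus along normal geodesics gives
\[
  \Bigl\lvert \tfrac{1}{2\eps}\int_{-\eps}^{\eps} f(\exp_x(w n))\de w - f(x)\Bigr\rvert \le \int_{-\eps}^{\eps}\abs{\nabla f(\exp_x(w n))}\de w ,
\]
and similarly on the half-tube near $\Sigma_b$ with the average $\tfrac1\eps\int_0^\eps$. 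Splitting $\int_M f\,\rho_\eps\de v_g - \int_\Sigma f\,\rho\,\de A^\Sigma_g$ over $N_{\eps,i}$ and $N_{\eps,b}$, peeling off the Jacobian error $\abs{J-1}\le C\eps$, and integrating the displayed inequality against $\rho(x)\,\de A^\Sigma_g(x)$, I would arrive at
\[
  \Bigl\lvert\int_M f\,\rho_\eps\de v_g - \int_\Sigma f\,\rho\,\de A^\Sigma_g\Bigr\rvert \ll_{\rho} \int_{N_\eps}\bigl(\abs{\nabla f}+\abs f\bigr)\de v_g \ll \Vol_g(N_\eps)^{1/p'}\norm{f}_{\RW^{1,p}(M)} ,
\]
where $N_\eps = N_{\eps,i}\cup N_{\eps,b}$ and the final step is H\"older's inequality with $p = d/(d-1)$ (hence $p' = d$) when $d\ge 3$, and any $p<2$ when $d=2$. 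Since $\Vol_g(N_\eps)\asymp\eps\to 0$, taking the supremum over $\norm{f}_{\RW^{1,p}(M)}\le 1$ and using density of $\RC^\infty(M)$ in $\RW^{1,p}(M)$ gives $\norm{\mu_n-\mu}_{\RW^{1,p}(M)^*}\to 0$. Proposition~\ref{prop:weakenough} then yields $\lambda_k(M,g,\rho_{\eps_n}\de v_g)\to\lambda_k(M,g,\rho\,\de A^\Sigma_g)$ along the arbitrary sequence $\eps_n\to0$, which is the claim.

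The one genuinely delicate point is making this estimate \emph{uniform} over the unit ball of $\RW^{1,p}(M)$. The mechanism is that the discrepancy between the smeared density $\rho_\eps\de v_g$ and its weak-$*$ limit $\rho\,\de A^\Sigma_g$ is governed by the $\RL^1$-mass of $f$ and $\nabla f$ over the thin collar $N_\eps$, and by H\"older's inequality this is $\Vol_g(N_\eps)^{1/p'}\norm{f}_{\RW^{1,p}(M)}=\smallo{1}\,\norm{f}_{\RW^{1,p}(M)}$ precisely because $p'<\infty$; this is exactly why one pairs against $\RW^{1,p}$ with $p<d$ rather than $\RW^{1,2}$ in low dimensions. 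The remaining ingredients --- the exponential-map identification of the collar, the Jacobian comparison, the reduction of weak-$*$ to $\RW^{1,p}(M)^*$ convergence, and the passage from sequences $\eps_n\to0$ to the limit $\eps\to0$ --- are routine.
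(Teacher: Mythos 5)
Your proposal is correct and follows essentially the same route as the paper: you establish the collar estimate $\abs{\int_M f\rho_\eps\de v_g-\int_\Sigma f\rho\de A^\Sigma_g}\ll\norm{f}_{\RW^{1,1}(N_\eps)}$ via exponential coordinates, the Jacobian comparison $1+\bigo{\eps}$ and the fundamental theorem of calculus along normal fibres, then upgrade it by H\"older's inequality to convergence in $\RW^{1,p}(M)^*$ with the factor $\Vol_g(N_\eps)^{1/p'}\to 0$, and conclude with Proposition~\ref{prop:weakenough}. Your treatment of the $d=2$ case through the inclusion $\RW^{1,2,-1/2}(M)\subset\RW^{1,p}(M)$ for $p<2$, and your explicit verification of admissibility of $\rho\de A^\Sigma_g$, only make explicit what the paper leaves implicit.
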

\begin{proof}
 We give the proof in the case $\Sigma=\Sigma_i$, the other case is analogous.
The conditions \ci--\cvi are obviously satisfied. We claim that for any $u\in \RW^{1,1}(M)$ one has
\begin{equation}
\label{W11:estimate}
\left| \int_M u\rho_\eps\de v_g - \int_\Sigma u\rho\de A^\Sigma_g\right| \le C
\norm{u}_{\RW^{1,1}(N_\eps)}
\end{equation}
for $\eps$ small enough. In particular, for any $p > 1$ and $u \in \RW^{1,p}(M)$ one
has by H\"older's inequality
\begin{equation}
\label{W12:estimate}
\abs{\langle \rho_\eps \rd v_g - \rho \rd A_g^\Sigma, u \rangle} \ll
  \Vol(N_\eps)^{1/p'} \norm{u}_{\RW^{1,p}(M)},
\end{equation}
which goes to $0$ since $\Vol(N_\eps) \to 0$. Thus, proving \eqref{W11:estimate}
implies that $\rho_\eps \rd v_g \to \rho \rd A_g^\Sigma$ in $\RW^{1,p}(M)^*$,
and Proposition \ref{prop:weakenough} implies that the eigenvalues converge.

We use coordinates $(x,w)$ on $N_\eps$ induced by the identification with
$N^\eps\Sigma$ via the exponential map. Let  $\xi(x,w)\de w\de A^\Sigma_g$ be
the volume measure on $N_\eps$, where we denoted the pullback $\pi^*\de
A^\Sigma_g$ simply by $\de A^\Sigma_g$, $\pi\colon N^\eps\Sigma\to\Sigma$. Let
also $\zeta(x) = \int_{N^\eps_x\Sigma}\xi(x,w)\de w$ be the fiber integral of
$\xi(x,w)$. Since the differential of the exponential map at the origin is equal to identity, one has that
\begin{equation}
\label{zxi:ineq}
|1-\xi(x,w)| = O(\eps)\qquad |1-\zeta(x)| = O(\eps)
\end{equation}
as $\eps\to 0$. Then one has
\begin{equation}
\label{approx:ineq}
\begin{aligned}
\left| \int_M u\rho_\eps\de v_g - \int_\Sigma u\rho\de A^\Sigma_g\right|
&\le
\frac{\norm{\rho}_{L^\infty}}{2\eps}\int_\Sigma\int_{N^\eps_x\Sigma}|u(x,w)\xi(x,w)-u(x,0)\zeta(x)|\de
w\de A^\Sigma_g \\ &\le 
\frac{C}{\eps}\int_\Sigma\int_{N^\eps_x\Sigma}|u(x,w)-u(x,0)|\zeta(x)\de
w\de A^\Sigma_g \\&\quad+
\frac{C}{\eps}\int_\Sigma\int_{N^\eps_x\Sigma}|u(x,w)||\zeta(x)-\xi(x,w)|\de
w\de A^\Sigma_g \\
&\le\frac{C'}{\eps}\int_\Sigma\int_{N^\eps_x\Sigma}|u(x,w)-u(x,0)|\de w\de
A^\Sigma_g \\&\quad + C''\int_\Sigma\int_{N^\eps_x\Sigma}|u(x,w)|\de w\de A^\Sigma_g,
\end{aligned}
\end{equation}
where we used~\eqref{zxi:ineq} in the last step. For a fixed $x$ the inside integral in the first term is a $1$-dimensional integral that can be estimated as follows, 
$$
\begin{aligned}
\int_{-\eps}^\eps|u(x,t)-u(x,0)|\de t &= \int_{-\eps}^\eps
\left|\int_0^tu_t(x,s)\de s\right|\de t \\&\le
\int_{-\eps}^\eps\int_{-\eps}^\eps |u_t(x,s)| \de s\de t \\&= 2\eps
\int_{-\eps}^\eps |u_t(x,s)| \de s
\end{aligned}
$$

Putting this back into~\eqref{approx:ineq} and using~\eqref{zxi:ineq} completes
the proof of estimate \eqref{W11:estimate}.

\end{proof}

\subsection{Application to shape optimisation in dimension 2}

We can now prove Theorem \ref{thm:sigmaklambdak}, with the
following proposition.

\begin{prop}
Let $(M,g)$ be a compact Riemannian surface and let $\,\Omega\subset M$ be a smooth domain such that $\del\Omega\cap\del M$ is either empty or equal to $\del M$. Then 
$$
\normalstek{k}(\Omega,g) \le \optimlap{k}(M,[g]).
$$
\end{prop}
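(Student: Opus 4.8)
The plan is to realise the perimeter-normalised Steklov eigenvalue $\normalstek{k}(\Omega,g)$ as a limit of area-normalised Laplace eigenvalues of conformal metrics on $M$, and then read off the inequality from the definition $\optimlap{k}(M,[g])=\sup_{h\in[g]}\normallap{k}(M,h)$. It suffices to produce, for each $\delta>0$, a \emph{smooth} metric $h\in[g]$ with $\normallap{k}(M,h)>\normalstek{k}(\Omega,g)-\delta$; letting $\delta\to0$ then gives $\optimlap{k}(M,[g])\ge\normalstek{k}(\Omega,g)$.

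First I would fix the decomposition $\del\Omega=\Sigma_i\sqcup\Sigma_b$ with $\Sigma_i=\del\Omega\cap\operatorname{int}(M)$ and $\Sigma_b=\del\Omega\cap\del M$ (empty, or all of $\del M$): this is exactly the situation to which Theorem~\ref{Steklov_approx:thm} applies, with $\rho\equiv 1$. Starting from the densities $\rho_\eps$ built there — which concentrate $\rho_\eps\,\rd v_g$ on $\del\Omega$ with total mass converging to $\CH^1(\del\Omega)$ — I would replace each $\rho_\eps$ by a mollified, strictly positive approximation $e^{f_n}$ (so that $f_n\in\RC^\infty(M)$), chosen along a diagonal sequence $\eps_n\to0$ so closely that the measures $\mu_n:=e^{f_n}\,\rd v_g$ still satisfy $\mu_n\wks\iota_*\rd A_g^{\del\Omega}$, $\mu_n(M)\to\CH^1(\del\Omega)$, and the estimates \eqref{W11:estimate} and \eqref{W12:estimate} (whose proofs transfer verbatim to the mollified densities) still give $\mu_n\to\iota_*\rd A_g^{\del\Omega}$ in $\RW^{1,p}(M)^*$ for some — hence, by the remark following Proposition~\ref{prop:weakenough}, any — exponent $1<p<2$. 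All measures in sight are admissible (Examples~\ref{ex:lpdensity} and~\ref{ex:transmission}), so Proposition~\ref{prop:weakenough}, applied with $\Omega_n=M$, yields
\[
\lambda_k(M,g,\mu_n)\xrightarrow{n\to\infty}\lambda_k\bigl(M,g,\iota_*\rd A_g^{\del\Omega}\bigr).
\]

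Next I would use the conformal invariance of the Dirichlet energy in dimension two: for $h_n:=e^{f_n}g$ one has $\int_M|\nabla_{h_n}u|_{h_n}^2\,\rd v_{h_n}=\int_M|\nabla_g u|_g^2\,\rd v_g$ and $\rd v_{h_n}=\rd\mu_n$, hence $\lambda_k(M,h_n)=\lambda_k(M,g,\mu_n)$ and $\Vol_{h_n}(M)=\mu_n(M)$, so
\[
\normallap{k}(M,h_n)=\lambda_k(M,g,\mu_n)\,\mu_n(M)\xrightarrow{n\to\infty}\lambda_k\bigl(M,g,\iota_*\rd A_g^{\del\Omega}\bigr)\,\CH^1(\del\Omega).
\]
On the other hand, restricting any test space $F\subset\RC^\infty(M)$ to $\overline\Omega$ and dropping the nonnegative Dirichlet energy on $M\setminus\overline\Omega$ in the min--max formula shows $\lambda_k\bigl(M,g,\iota_*\rd A_g^{\del\Omega}\bigr)\ge\sigma_k(\Omega,g)$ (equivalently: the Dirichlet-to-Neumann operator of $M\setminus\overline\Omega$ is nonnegative). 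Since each $h_n\in[g]$ is a smooth metric, $\normallap{k}(M,h_n)\le\optimlap{k}(M,[g])$ for every $n$; combining the three displays,
\[
\normalstek{k}(\Omega,g)=\sigma_k(\Omega,g)\,\CH^1(\del\Omega)\le\lambda_k\bigl(M,g,\iota_*\rd A_g^{\del\Omega}\bigr)\,\CH^1(\del\Omega)=\lim_{n}\normallap{k}(M,h_n)\le\optimlap{k}(M,[g]),
\]
which is the assertion.

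The main obstacle is the construction: one must produce a genuinely smooth, strictly positive conformal factor that nevertheless concentrates sharply enough on $\del\Omega$ to force both $\mu_n(M)\to\CH^1(\del\Omega)$ and the $\RW^{1,p}(M)^*$-convergence, while keeping the ``bulk'' mass away from $\del\Omega$ negligible; the quantitative estimates in the proof of Theorem~\ref{Steklov_approx:thm} supply precisely what is needed here, and the only genuinely new point is the mollification that turns the sharp profile $\rho_\eps$ into an admissible smooth metric without spoiling the convergence. A secondary subtlety is that this route gives only the non-strict inequality: the bound $\normallap{k}(M,h_n)\le\optimlap{k}(M,[g])$ may saturate in the limit, so no strict version of \eqref{ineq:sigmaklambdak} can be extracted, in contrast with Theorem~\ref{thm:KarpukhinStern}.
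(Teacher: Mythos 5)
Your argument is correct and is essentially the paper's own proof: concentrate conformal densities near $\del\Omega$ as in Theorem~\ref{Steklov_approx:thm}, smooth them (the paper invokes Proposition~\ref{L_infty_approx:prop} together with density of smooth functions in place of your direct mollification-plus-dual-norm estimates), identify $\lambda_k(M,g,\rho\,\rd v_g)$ with Laplace eigenvalues of the conformal metric $\rho g$, and finish with the Rayleigh-quotient comparison $\sigma_k(\Omega,g)\le\lambda_k(M,g,\iota_*\rd A_g^{\del\Omega})$. The only difference is cosmetic, so no further comment is needed.
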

\begin{proof}
We first note that in dimension $2$ for any smooth $\rho>0$ one has that
$\lambda_k(M,g,\rho\de v_g)$ coincides with the classical Laplacian eigenvalues
$\lambda_k(M,\rho g, \de v_{\rho g})$ of the conformal metric $\rho g$. Since
smooth functions are dense in $\RL^p$ for every $p \in [1,\infty)$, Theorem~\ref{Steklov_approx:thm} and Proposition~\ref{L_infty_approx:prop} imply that 
$$
\normallap{k}(M,g,\de A^\Sigma_g)\le \optimlap{k}(M,[g]),
$$
where $\de A^\Sigma_g$ is the surface measure of $\Sigma=\partial \Omega$. At the same time, a comparison of the Rayleigh quotients for $\lambda_k(M,g,\de A^\Sigma_g)$ and $\sigma_k(\Omega,g)$ yields the inequality
$$
\normalstek{k}(\Omega,g)\le \normallap{k}(M,g,\de A^\Sigma_g).
$$
 
\end{proof}

\begin{remark}
\label{rem:BucurNahon}
  It is clear from our constructions that as soon as a measure $\mu$ on
  $M$
  is limit in $\RW^{1,\frac{d}{d-1}}(M)^*$ of measures of the form $\beta \de x$ respecting
  conditions \ci--\cvi, we obtain similarly to the last
  proposition
  \begin{equation}
    \lambda_k(\Omega,g,\mu) \mu(M) \le \Lambda_k(M,[g]).
  \end{equation}
\end{remark}

  \section{Homogenisation}
\label{sec:hom}
In this section, we fix a bounded domain $\Omega \subset \R^d$ with Lipschitz boundary
and we put $M = \overline \Omega$. Let $\beta \in \RC(M)$ be
nonnegative, and non trivial,  $g_0$ be the flat metric and $\rd A$ be the boundary measure on $\del M$.
\subsection{Construction of perforated sets}
\label{section:homoconst}
 We
construct domains $\Omega^\eps \subset M$ in the spirit of
deterministic homogenisation theory.
For $z \in \Z^d$, consider the cubes 
$$Z_z^\eps := \eps z +
\left[-\frac \eps 2,\frac \eps 2 \right]^d \subset \R^d$$ and define
\begin{equation}
  \BI^\eps := \set{z \in \Z^d : Z_z^\eps \subset M}. 
\end{equation}
For $\alpha > d-1$, we set
\begin{equation}
  \label{eq:reps}
  r_{z}^\eps :=
  \left(\frac{\eps^{\alpha}}{a_d} \beta(\eps z)\right)^{\frac{1}{d-1}},\qquad B_z^\eps
  = B(\eps z,r_z^\eps), \qquad \text{and} \qquad Q_z^\eps := Z_z^\eps
  \setminus B_z^\eps,
\end{equation}
where $a_d$ is the area of the unit sphere in $\R^d$ and where, by convention,
we put $B(x,0) = \varnothing$ for any $x \in \R^d$. We set as well
\begin{equation}
  \BR^\eps := M \setminus \bigcup_{z
 \in \BI^\eps} Z_z^\eps, \qquad \tilde \BI^\eps := \set{z \in \BI^\eps :
 \beta(\eps z) \ne  0} \qquad \text{and} \qquad \BB^\eps := \bigcup_{z \in \tilde
 \BI^\eps} B_z^\eps;
\end{equation}
and finally
\begin{equation}
  \Omega^\eps := M \setminus \BB^\eps \qquad \text{and} \qquad
  \mu^\eps_\alpha :=
  \iota_* \rd A^\eps,
\end{equation}
where $\iota_* \rd A^\eps$ is the pushforward by inclusion of the natural
boundary measure on $\Omega^\eps$. For any measure $\xi$, we define the normalised
measure $\bar \xi :=
\xi(M)^{-1} \xi$. 
\begin{itemize}
  \item[$\bullet$] For all $\eps, z$, 
    \begin{equation}
      (r^\eps_z)^{d-1} \ll  \eps^\alpha\max_{x \in M} \beta(x).
    \end{equation}
  \item[$\bullet$] The number of holes satisfies $\#\tilde \BI^\eps \ll_M \eps^{-d}$
    as $\eps \to 0$.
  \item[$\bullet$] The total boundary area of the holes satisfies the asymptotic
    relationship
      \begin{equation}
        \label{eq:totalarea}
      \CH^{d-1}(\del \BB^\eps) = \sum_{z \in \tilde \BI^\eps} a_d
        (r_z^\eps)^{d-1} \sim \eps^{\alpha - d}
        \int_{M} \beta \de x.
      \end{equation}
      while the total volume of the holes satisfies
    \begin{equation}
      \label{eq:volumeholes}
    \Vol(\BB^\eps) = \sum_{z \in \tilde \BI^\eps} d a_d (r_z^\eps)^{d} =
    \bigo[M,\beta]{\eps^{\frac{d\alpha}{d-1}-d}},
    \end{equation}
      In particular, Condition \csi is satisfied with
      \begin{equation}
        \label{eq:barmu}
        \bar \mu^\eps_\alpha \wks \bar \mu_\alpha := \begin{cases}
          \bar{\beta \rd v_g} & \text{if } d - 1 < \alpha < d, \\
          \bar{\beta \rd v_g + \iota_* \rd A} & \text{if } \alpha = d,
          \\
          \bar{\iota_* \rd A} & \text{if } \alpha > d ;
      \end{cases}
    \end{equation}
      and
      \begin{equation}
        \rd x\big|_{\Omega^\eps} \wks \rd x\big|_M.
      \end{equation}
    \item[$\bullet$] It is a standard fact that on Lipschitz domains the trace maps
    $T_2^{\mu^\eps}$ and the
    Sobolev embeddings $T_2^{\mu}$ are compact, and that the first Steklov and Neumann
      eigenvalues are always positive so that Condition
      \csii is met in both cases.
    \item[$\bullet$] The set $\BR^\eps$ is a subset of a $\sqrt d \eps$-collar
      neighbourhood of $\del M$, as such $\Vol(\BR^\eps) = \bigo[d,M]{\eps}$.
    \item[$\bullet$] Denoting by $J^\eps : \CW^{1,2}(\Omega^\eps,\mu^\eps_\alpha) \to
      \CW^{1,2}(\Omega,\mu^\eps_\alpha)$ the map
      extending harmonically in $\BB^\eps$, we have that $J^\eps$ is bounded
      independently of $\eps$, see \cite[Example 1, page 40]{RauchTaylor}.
      Condition \csvi is therefore satisfied.
\end{itemize}

\begin{remark}
\label{manifold:remark}
  To obtain Theorem~\ref{manifoldhomo:thm} we note that it is possible to achieve a similar setting by removing geodesic
  balls of radius $r_\eps$ around a maximal $\eps$-separated subset of a
  Riemannian manifold $M$.
  See \cite{AnnePostJST} and
  \cite{GL} for similar constructions in the context of the Neumann and Steklov
  problems, respectively. This makes it possible to directly extend the
  statements to the situation where $\Omega$ is a bounded domain with Lipschitz
  boundary in a complete manifold $\tilde M$, the implicit
  constants then depending on the metric of $\tilde M$ restricted to $\Omega$. We keep the periodic description here to emphasise the
  fact that we do not need the Riemannian setting in order to obtain large
  normalised Steklov eigenvalues.
\end{remark}

With the notation introduced above, we may now state the main theorem of this
section.

\begin{theorem}
  \label{thm:homo}
  For all $j \in \N$, and 
  \begin{equation}
    \label{eq:alphacond}
  \alpha > d - 1
\end{equation}
   the Steklov
  eigenvalues of the perforated domains $\Omega^\eps$ satisfy
  \begin{equation}
    \label{eq:Steklov}
    \sigma_j(\Omega^\eps)
    \frac{\CH^{d-1}(\del\Omega^\eps)}{\Vol(\Omega^\eps)^{\frac{d-2}{d}}} =
    \normallap{j}(\Omega^\eps,g_0,\bar \mu^\eps_\alpha)   \xrightarrow{\eps \to 0}
    \normallap{j}(M,g_0,\bar \mu_\alpha),
  \end{equation}
  where $\bar \mu$ is defined in \eqref{eq:barmu} and the associated
  eigenfunctions extended to $M$ converge strongly in $\RW^{1,2}(M)$. The Neumann eigenvalues
  satisfy
  \begin{equation}
    \label{eq:Neumann}
    \lambda_j(\Omega^\eps,g_0)\Vol(\Omega^\eps)^{2/d} = \normallap{j}(\Omega^\eps,g_0,\rd
  x) \xrightarrow{\eps \to 0}
  \normallap{j}(M,g_0,\rd x),
  \end{equation}
  and the associated eigenfunctions extended to $M$ converge weakly in
  $\RW^{1,2}(M)$.
\end{theorem}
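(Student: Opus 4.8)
The plan is to recognise Theorem~\ref{thm:homo} as a special instance of the abstract convergence scheme of Section~\ref{variationalev:sec}. Conditions \ci, \cii and \cvi have already been verified for the families $(\Omega^\eps,\bar\mu^\eps_\alpha)$ and $(\Omega^\eps,\rd x)$ in the list of bullet points following \eqref{eq:barmu}, with the harmonic extensions $J^\eps$ in the role of the maps $J_n$. So by Proposition~\ref{prop:weakenough}, both eigenvalue convergences \eqref{eq:Steklov}, \eqref{eq:Neumann} and --- via Proposition~\ref{stability1:prop} --- the asserted weak convergence of the extended eigenfunctions will follow once I prove that $\bar\mu^\eps_\alpha\to\bar\mu_\alpha$ and $\rd x|_{\Omega^\eps}\to\rd x|_M$ in $\RW^{1,q}(M)^*$ for a fixed exponent below the critical one of that proposition: concretely $q=\tfrac{d}{d-1}$ if $d\ge 3$ and any fixed $q\in(1,2)$ if $d=2$. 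Since $M$ is compact, convergence in this $\RW^{1,q}(M)^*$ is stronger than what Proposition~\ref{prop:weakenough} requires (for $d=2$, $\RW^{1,q}(M)^*\hookrightarrow\RW^{1,2,-1/2}(M)^*$). The Neumann measure is immediate: $\rd x|_M-\rd x|_{\Omega^\eps}=\rd x|_{\BB^\eps}$, and by the Sobolev embedding $\RW^{1,q}(M)\hookrightarrow\RL^{q^\star}(M)$ and H\"older its norm is $\ll\Vol(\BB^\eps)^{1-1/q^\star}\to 0$ by \eqref{eq:volumeholes} and \eqref{eq:alphacond}. Note that for the Neumann problem one does not expect \eqref{eq:smallenergy} to hold --- the harmonic extension of a Neumann eigenfunction into $\BB^\eps$ may carry persistent energy --- which is why only weak convergence is asserted there.

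For the Steklov measure I would write $\mu^\eps_\alpha=\iota_*\rd A_{\del M}+\nu^\eps$ with $\nu^\eps:=\sum_{z\in\tilde\BI^\eps}\CH^{d-1}{}_\lfloor\del B_z^\eps$, and compare $\nu^\eps$ with the density $\tilde\beta^\eps\rd x$, where $\tilde\beta^\eps:=\sum_{z\in\tilde\BI^\eps}\tfrac{\eps^\alpha\beta(\eps z)}{\Vol(Q_z^\eps)}\mathbf 1_{Q_z^\eps}$ is chosen so that $\nu^\eps$ and $\tilde\beta^\eps\rd x$ carry the same mass $\eps^\alpha\beta(\eps z)$ on each cell. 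Following the blueprint at the end of Section~\ref{variationalev:sec}, on each $Q_z^\eps$ I apply Lemma~\ref{lem:integralidentitybis} with $\xi=\rd x|_{Q_z^\eps}$ and $\mu=\CH^{d-1}{}_\lfloor\del B_z^\eps$ to produce $\phi_z^\eps\in\RW^{1,q'}(Q_z^\eps)$ with $\langle\nu^\eps-\tilde\beta^\eps\rd x,f\rangle=\sum_z\int_{Q_z^\eps}\nabla\phi_z^\eps\cdot\nabla f\,\rd x$ for all $f\in\RC^\infty(M)$. The bare estimate of that lemma is too lossy (it degenerates exactly at $\alpha=d$), so I would rescale $Q_z^\eps$ to the unit cell with a hole of radius $\rho_z^\eps:=r_z^\eps/\eps\to 0$, apply the lemma there --- where the rescaled boundary measure has small mass $a_d(\rho_z^\eps)^{d-1}$ and its trace norm is bounded by a power of $\rho_z^\eps$ via $\capa_q(B_{\rho_z^\eps})\asymp(\rho_z^\eps)^{d-q}$ --- and track how the defining equation transforms back (the correct prefactor turns out to be $\eps$, not a power imposed by scale invariance). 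This yields $\norm{\nabla\phi_z^\eps}_{\RL^{q'}(Q_z^\eps)}\ll(r_z^\eps)^{d/q'}$, whence by H\"older's inequality in $z$ and \eqref{eq:volumeholes},
\begin{equation*}
  \norm{\nu^\eps-\tilde\beta^\eps\rd x}_{\RW^{1,q}(M)^*}\le\Big(\sum_{z\in\tilde\BI^\eps}\norm{\nabla\phi_z^\eps}_{\RL^{q'}(Q_z^\eps)}^{q'}\Big)^{1/q'}\ll\Big(\sum_{z\in\tilde\BI^\eps}(r_z^\eps)^d\Big)^{1/q'}\asymp\Vol(\BB^\eps)^{1/q'}\xrightarrow{\eps\to 0}0,
\end{equation*}
since $\alpha>d-1$. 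On the other hand $\eps^{d-\alpha}\tilde\beta^\eps\to\beta$ in every $\RL^r(M)$, $r<\infty$ (by continuity of $\beta$ and $\Vol(\BR^\eps)+\Vol(\BB^\eps)\to0$), hence $\eps^{d-\alpha}\tilde\beta^\eps\rd x\to\beta\rd x$ in $\RW^{1,q}(M)^*$, and $\iota_*\rd A_{\del M}\in\RW^{1,q}(M)^*$ trivially by the trace theorem. Dividing by $m^\eps:=\mu^\eps_\alpha(M)\sim\CH^{d-1}(\del M)+\eps^{\alpha-d}\int_M\beta$ (from \eqref{eq:totalarea}), one reads off in each of the three regimes $d-1<\alpha<d$, $\alpha=d$, $\alpha>d$ that $\bar\mu^\eps_\alpha\to\bar\mu_\alpha$ as in \eqref{eq:barmu}. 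This proves \eqref{eq:Steklov} and \eqref{eq:Neumann}; the normalisations match because $\bar\mu^\eps_\alpha(\Omega^\eps)=\bar\mu_\alpha(M)=1$ and $\Vol(\Omega^\eps)\to\Vol(M)$, and the identification $\CW^{1,2}(M,\bar\mu_\alpha)\cong\RW^{1,2}(M)$ is Theorem~\ref{thm:normeq}.

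It remains to promote the convergence of the Steklov eigenfunctions to strong convergence in $\RW^{1,2}(M)$, which by Proposition~\ref{stability1:prop} amounts to \eqref{eq:smallenergy}, i.e. $\sum_z\int_{B_z^\eps}\abs{\nabla J^\eps f_j^\eps}^2\rd x\to0$ for the $\bar\mu^\eps_\alpha$-normalised eigenfunctions $f_j^\eps$. Since $J^\eps f_j^\eps$ restricted to $B_z^\eps$ is the harmonic extension of $f_j^\eps|_{\del B_z^\eps}$, an expansion in spherical harmonics (equivalently, the Poincar\'e inequality on the sphere together with the trace inequality) gives $\int_{B_z^\eps}\abs{\nabla J^\eps f_j^\eps}^2\ll r_z^\eps\,\norm{\nabla_{\mathrm{tan}}f_j^\eps}_{\RL^2(\del B_z^\eps)}^2$; and since $f_j^\eps$ is harmonic in $Z_z^\eps\setminus B_z^\eps$ with Steklov datum $\del_n f_j^\eps=\sigma_j(\Omega^\eps)f_j^\eps$ on $\del B_z^\eps$ whose coefficient $r_z^\eps\sigma_j(\Omega^\eps)$ is $o(1)$, boundary elliptic regularity on a concentric annulus of radius $\asymp r_z^\eps$ bounds $\norm{\nabla_{\mathrm{tan}}f_j^\eps}_{\RL^2(\del B_z^\eps)}^2$ by $(r_z^\eps)^{-1}$ times a Dirichlet-energy term on that annulus plus a lower-order mass term. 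Summing, using $\max_z r_z^\eps\asymp\eps^{\alpha/(d-1)}\to0$, the pairwise disjointness of the annuli and the boundedness of $\norm{f_j^\eps}_{\RW^{1,2}(M)}$, gives \eqref{eq:smallenergy}. I expect the two genuinely technical points to be: the sharp cell-wise estimate on $\phi_z^\eps$ made uniform over all cells --- in particular where $\beta(\eps z)$ is small, where one must ensure the Poincar\'e constant of $Q_z^\eps$ and the rescaled trace norm stay uniformly controlled as $\rho_z^\eps\to0$ --- and the boundary-regularity bookkeeping behind \eqref{eq:smallenergy}. Everything else is a routine invocation of Propositions~\ref{prop:weakenough} and \ref{stability1:prop}, together with the already-recorded fact $\Vol(\Omega^\eps)\to\Vol(M)$.
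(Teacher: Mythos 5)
Your reduction of the theorem to Propositions~\ref{prop:weakenough} and \ref{stability1:prop} and your proof of the measure convergence are correct and essentially the paper's route: the paper also verifies \ci--\cvi via the bullet list and then proves convergence of the measures in $\RW^{1,p}(M)^*$ (Propositions~\ref{prop:homneu} and \ref{prop:homstek}), using exactly the ingredients you use --- cell-by-cell application of Lemma~\ref{lem:integralidentitybis}, a rescaling of the defining equation to the unit cell to beat the lossy bare estimate, the trace bound of Lemma~\ref{lem:tracew11l1}, and H\"older over the cells. The only substantive difference is organisational: you compare the unnormalised hole-boundary measure with a matched-mass density $\tilde\beta^\eps\,\rd x$ and divide by $\mu^\eps_\alpha(M)$ only at the end, which treats the three regimes of \eqref{eq:barmu} simultaneously, whereas the paper compares the normalised measures directly through the decomposition \eqref{eq:decomp} and handles $\alpha>d$ and the cells with $\beta(\eps z)=0$ by separate, more elementary estimates. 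Your per-cell bound $\norm{\nabla\phi_z^\eps}_{\RL^{q'}(Q_z^\eps)}\ll (r_z^\eps)^{d/q'}$ does follow from the rescaling you describe and is strong enough precisely when $\alpha>d-1$, modulo the uniformity issues you correctly flag (Poincar\'e constants and trace norms on the punctured cells, which is the annulus variant of Lemma~\ref{lem:tracew11l1}).

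The genuine gap is the strong $\RW^{1,2}(M)$-convergence of the Steklov eigenfunctions, i.e.\ condition \eqref{eq:smallenergy}. The paper does not reprove this; it invokes \cite[Lemma 12]{GHL}. Your substitute argument does not close: your chain is $\int_{B_z^\eps}\abs{\nabla J^\eps f_j^\eps}^2\ll r_z^\eps\,\norm{\nabla_{\mathrm{tan}}f_j^\eps}^2_{\RL^2(\del B_z^\eps)}$ followed by an elliptic estimate with prefactor $(r_z^\eps)^{-1}$ over an annulus $A_z$ of width $\asymp r_z^\eps$; the two factors of $r_z^\eps$ cancel exactly, and summing over the disjoint annuli yields only $O\bigl(\norm{\nabla f_j^\eps}^2_{\RL^2(M)}\bigr)=O(1)$, not $o(1)$ --- the hypothesis $\max_z r_z^\eps\to0$ is never actually used. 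Since $f_j^\eps$ varies with $\eps$, you also cannot appeal to absolute continuity to argue that the energy carried by the thin annuli is small; energy concentration at the holes is exactly the scenario to be excluded. The missing idea is a gain in $r_z^\eps/\eps$: expand $f_j^\eps$ in spherical harmonics on the large annulus $B(\eps z,\eps/2)\setminus B_z^\eps$ and use the boundary condition $\del_n f_j^\eps=\sigma_j(\Omega^\eps)f_j^\eps$ with $\sigma_j(\Omega^\eps)r_z^\eps\to 0$ to slave each singular coefficient to the corresponding regular one; then the nonconstant part of the trace on $\del B_z^\eps$ is smaller than its size at radius $\asymp\eps$ by $(r_z^\eps/\eps)^{l}$, which gives $\int_{B_z^\eps}\abs{\nabla J^\eps f_j^\eps}^2\ll (r_z^\eps/\eps)^{d}\int_{Z_z^\eps\setminus B_z^\eps}\abs{\nabla f_j^\eps}^2$, and this does sum to $o(1)$. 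Working at scale $r_z^\eps$, as you propose, is precisely what loses that factor; alternatively, cite \cite[Lemma 12]{GHL} as the paper does.
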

The proof of Theorem \ref{thm:homo} is split into
two parts: Proposition \ref{prop:homneu} where the convergence of the Neumann
eigenvalues is shown and Proposition \ref{prop:homstek} where we prove
convergence of the Steklov eigenvalues. In both cases, we prove that the
associated measures converge in $\RW^{1,p}(M)^*$,
for all $p > 1$. 
In the construction of perforated sets, we already have
that conditions \csi--\csvi are satisfied, so that by Proposition \ref{prop:weakenough},
this is enough to obtain eigenvalue and eigenfunction convergence. For the
Steklov problem, we observe that Condition \eqref{eq:smallenergy}
follows directly from \cite[Lemma 12]{GHL} so that strong convergence of the
eigenfunctions also follows if we prove the appropriate convergence of the
measures.

  We note that \eqref{eq:Neumann} could be
deduced by an appropriate modification of the proofs in \cite{RauchTaylor} or
\cite{AnnePostJST}, however this would require introducing new concepts whereas
the results from Sections \ref{section:variationaleigenvalues} and
\ref{variationalev:sec} can
prove both convergence of the Neumann and Steklov eigenpairs at the same time. This also puts an emphasis on the fact that it is
achieved for the same domains. 

\begin{prop}
  \label{prop:homneu}
  As $\eps \to 0$, the measures $\rd x\big|_{\Omega^\eps}$ converge to $\rd
  x\big|_{M}$ in $\RW^{1,p}(M)^*$ for every $p \in (1,\infty)$. In particular,
  the Neumann eigenpairs for $\Omega^\eps$ converges to those of $\Omega$.
\end{prop}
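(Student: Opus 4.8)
The plan is to reduce the statement to one elementary estimate — that the indicator of the removed balls is small in $\RW^{1,p}(M)^*$ — and then to feed this into Proposition \ref{prop:weakenough}. Since $\Omega^\eps = M \setminus \BB^\eps$, we have $\rd x\big|_M - \rd x\big|_{\Omega^\eps} = \chi_{\BB^\eps}\,\rd x$, so for $f \in \RC^\infty(M)$ the pairing is simply $\int_{\BB^\eps} f \de x$. Fixing $p \in (1,\infty)$, I would use the Sobolev embedding $\RW^{1,p}(M) \hookrightarrow \RL^q(M)$ — valid for $q = p^\star = pd/(d-p)$ when $p < d$, for any finite $q$ when $p = d$, and for $q = \infty$ when $p > d$ — together with H\"older's inequality to get
\[
  \abs{\left\langle \rd x\big|_M - \rd x\big|_{\Omega^\eps}, f \right\rangle}
  = \abs{\int_{\BB^\eps} f \de x}
  \le \Vol(\BB^\eps)^{1/q'}\, \norm{f}_{\RL^q(M)}
  \ll_{p,M} \Vol(\BB^\eps)^{1/q'}\, \norm{f}_{\RW^{1,p}(M)}.
\]
By density of $\RC^\infty(M)$ this gives $\norm{\rd x\big|_M - \rd x\big|_{\Omega^\eps}}_{\RW^{1,p}(M)^*} \ll_{p,M} \Vol(\BB^\eps)^{1/q'}$.

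Next I would invoke \eqref{eq:volumeholes}, which says $\Vol(\BB^\eps) = \bigo[M,\beta]{\eps^{\frac{d\alpha}{d-1}-d}}$, and note that the standing hypothesis \eqref{eq:alphacond}, i.e. $\alpha > d-1$, is exactly what forces the exponent $\tfrac{d\alpha}{d-1}-d$ to be positive; hence $\Vol(\BB^\eps) \to 0$ and $\rd x\big|_{\Omega^\eps} \to \rd x\big|_M$ in $\RW^{1,p}(M)^*$ for every $p \in (1,\infty)$. This covers the hypothesis of Proposition \ref{prop:weakenough}: for $d \ge 3$ take $p = \tfrac{d}{d-1}$, while for $d = 2$ take any $p < 2$ and use the inclusion $\RW^{1,2,-1/2}(M) \subset \RW^{1,p}(M)$ coming from the Orlicz--Sobolev inclusions recalled earlier, so that restriction of functionals yields convergence in $\RW^{1,2,-1/2}(M)^*$ as well.

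Finally I would identify the data with the Neumann setting: $\mu^\eps = \rd x\big|_{\Omega^\eps}$, $\mu = \rd x\big|_M$, so that $\lambda_j(\Omega^\eps,g_0,\mu^\eps) = \lambda_j(\Omega^\eps,g_0)$ and $\lambda_j(M,g_0,\mu) = \lambda_j(M,g_0)$ are the Neumann eigenvalues. Conditions \csi--\csvi were already verified for exactly this choice in Section \ref{section:homoconst} (weak-$*$ convergence of the normalised measures and $\Vol(\BR^\eps) \to 0$ for \csi; compactness of the relevant embeddings and positivity of the first Neumann eigenvalue for \csii; $\eps$-uniform boundedness of the harmonic extensions $J^\eps$ for \csvi), so Proposition \ref{prop:weakenough}, and with it Proposition \ref{stability1:prop}, apply and give $\lambda_j(\Omega^\eps,g_0) \to \lambda_j(M,g_0)$ for all $j$ together with weak $\RW^{1,2}(M)$-convergence of the harmonically extended eigenfunctions. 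I do not expect any genuine obstacle here, in contrast with the Steklov case of Proposition \ref{prop:homstek}: the Neumann measure changes only by $\chi_{\BB^\eps}\rd x$, whose mass is negligible precisely because $\alpha > d-1$, and the delicate point — that removing many tiny holes does not perturb the Neumann spectrum — is entirely handled by the abstract machinery of Section \ref{variationalev:sec}, in particular by the uniformly bounded extension operators $J^\eps$ and the compactness encoded in admissibility. One could alternatively prove the measure estimate cube-by-cube, splitting $f$ on each $Z_z^\eps$ into its mean plus oscillation and using a Poincar\'e inequality on the cubes, but for the Neumann measure this refinement is unnecessary.
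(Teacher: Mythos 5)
Your argument is correct and follows essentially the same route as the paper: pair $f$ against $\chi_{\BB^\eps}\,\rd x$, bound it by a positive power of $\Vol(\BB^\eps)$ times $\norm{f}_{\RW^{1,p}(M)}$ via H\"older, use \eqref{eq:volumeholes} with $\alpha>d-1$ to send this to zero, and then invoke the already-verified conditions \csi--\csvi together with Proposition~\ref{prop:weakenough}. The only (harmless) difference is that you route the estimate through the Sobolev embedding into $\RL^q$, whereas the paper simply uses $\norm{f}_{\RL^p(\BB^\eps)}\le\norm{f}_{\RW^{1,p}(M)}$, which already suffices.
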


\begin{proof}
  For $f \in \RW^{1,p}(M)$, we have that
  \begin{equation}
    \begin{aligned}
      \abs{\langle \rd x\big|_{\Omega^\eps} - \rd x\big|_M, f \rangle} &= \abs{\int_{\BB^\eps} f
      \de x} \\
      &\le \Vol_{g_0}(\BB^\eps)^{\frac{p-1}{p}} \norm{f}_{\RL^{p}(\BB^\eps)} \\
      &\le \Vol_{g_0}(\BB^\eps)^{\frac{p-1}{p}} \norm{f}_{\RW^{1,p}(M)}.
    \end{aligned}
  \end{equation}
  By \eqref{eq:volumeholes}, this last line goes to $0$ as $\eps \to 0$. 
\end{proof}

\subsection{Convergence of the Steklov eigenpairs}

Before proving convergence of the Steklov eigenpairs, we require the following
useful lemma. 
  \begin{lemma}
    \label{lem:tracew11l1}
    Let $0 < r \le R \le 1$, and $p \in (1,d)$. Then, there exists $C_{p,d} > 0$
    such that for all $f \in
    \RW^{1,p}(B(0,R))$, 
\begin{equation}
  \norm{f}_{\RL^p(r \S^{d-1})}^p \le C_{p,d} \max\set{r^{d-1} R^{-d},r^{p-1}}
  \norm{f}_{\RW^{1,p}(B(0,R))}^p.
\end{equation}
  \end{lemma}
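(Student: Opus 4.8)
The plan is to reduce to smooth $f$ (by density of $\RC^\infty$ in $\RW^{1,p}(B(0,R))$; for general $f$ the trace on the interior sphere $r\S^{d-1}$ is then defined as the resulting $\RL^p$-limit), and to work in polar coordinates $y=s\omega$, $s\in(0,R]$, $\omega\in\S^{d-1}$. The whole point is then to control $\int_{\S^{d-1}}\abs{f(r\omega)}^p\,d\omega$, since $\norm{f}_{\RL^p(r\S^{d-1})}^p=r^{d-1}\int_{\S^{d-1}}\abs{f(r\omega)}^p\,d\omega$.

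Set $\rho_0:=\min(r,R/2)$. For every $\omega\in\S^{d-1}$ and every $t\in(R/2,R)$ the fundamental theorem of calculus along the ray through $\omega$ gives $f(r\omega)=f(t\omega)-\int_r^t\partial_s\bigl(f(s\omega)\bigr)\,ds$, and the interval joining $r$ and $t$ is always contained in $[\rho_0,R]$; hence, using $(a+b)^p\le2^{p-1}(a^p+b^p)$,
\[
  \abs{f(r\omega)}^p\le 2^{p-1}\abs{f(t\omega)}^p+2^{p-1}\Bigl(\int_{\rho_0}^{R}\abs{\partial_s\bigl(f(s\omega)\bigr)}\,ds\Bigr)^p .
\]
Averaging this inequality over $t\in(R/2,R)$ replaces the first term by $\tfrac{2^p}{R}\int_{R/2}^R\abs{f(t\omega)}^p\,dt$.

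The main point is the gradient term. I would apply H\"older's inequality with respect to the weight $s^{d-1}\,ds$, splitting $\abs{\partial_s(f(s\omega))}=\bigl(\abs{\partial_s(f(s\omega))}\,s^{(d-1)/p}\bigr)\cdot s^{-(d-1)/p}$, so that the favourable factor is $\int_{\rho_0}^{R}\abs{\partial_s(f(s\omega))}^p s^{d-1}\,ds\le\int_0^R\abs{\nabla f(s\omega)}^p s^{d-1}\,ds$, which integrates in $\omega$ to $\norm{\nabla f}_{\RL^p(B(0,R))}^p$. The leftover factor is $\bigl(\int_{\rho_0}^{R}s^{-(d-1)/(p-1)}\,ds\bigr)^{p-1}$, and here the hypothesis $p<d$ enters decisively: the exponent $\gamma:=(d-1)/(p-1)$ exceeds $1$, so $\int_{\rho_0}^{R}s^{-\gamma}\,ds\le(\gamma-1)^{-1}\rho_0^{1-\gamma}$, and raising to the power $p-1$ produces exactly $C_{p,d}\,\rho_0^{p-d}$. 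A naive H\"older against $ds$ would run into a divergent integral, so this weighted splitting is the crux.

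Collecting the estimates — integrating over $\omega$, multiplying by $r^{d-1}$, and bounding $\int_{R/2}^R\!\int_{\S^{d-1}}\abs{f(t\omega)}^p\,d\omega\,dt\le(R/2)^{1-d}\norm{f}_{\RL^p(B(0,R))}^p$ — I obtain
\[
  \norm{f}_{\RL^p(r\S^{d-1})}^p\le C_{p,d}\Bigl(r^{d-1}R^{-d}\norm{f}_{\RL^p(B(0,R))}^p+r^{d-1}\rho_0^{p-d}\norm{\nabla f}_{\RL^p(B(0,R))}^p\Bigr).
\]
The proof then finishes with a two-case inspection of the coefficient $r^{d-1}\rho_0^{p-d}$: if $r\le R/2$ then $\rho_0=r$ and this equals $r^{p-1}$; if $R/2<r\le R$ then $\rho_0=R/2$ and, since $R^{p-d}\le R^{-d}$ for $0<R\le1$, it is at most $2^{d-p}r^{d-1}R^{-d}$. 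In both cases the coefficient is $\le C_{p,d}\max\set{r^{d-1}R^{-d},r^{p-1}}$, which together with the first term gives the asserted bound. The only genuinely delicate step is the weighted H\"older estimate producing the exact power $\rho_0^{p-d}$ and its interplay with the surface-measure factor $r^{d-1}$; the rest is bookkeeping.
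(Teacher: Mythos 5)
Your proof is correct and takes essentially the same route as the paper's: after reducing to smooth $f$, both arguments control the radial variation by the fundamental theorem of calculus along rays combined with a H\"older inequality weighted by $s^{d-1}$, where the hypothesis $p<d$ makes the complementary integral converge and produces exactly the power $\rho_0^{p-d}$, and then conclude by bookkeeping against $r^{d-1}$. The only (cosmetic) difference is the decomposition: the paper compares $f$ with the radially constant extension $\tilde f$ of $f|_{r\S^{d-1}}$ and bounds $F=f-\tilde f$ by a radial Friedrichs-type inequality, whereas you compare $f(r\omega)$ with $f(t\omega)$ averaged over $t\in(R/2,R)$; both yield the same two terms $r^{d-1}R^{-d}\norm{f}_{\RL^p(B(0,R))}^p$ and $r^{d-1}\rho_0^{p-d}\norm{\nabla f}_{\RL^p(B(0,R))}^p$ and the same case analysis.
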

  \begin{proof}
    By density of smooth functions in $\RW^{1,p}(B(0,R))$ it is sufficient to
    prove the inequality for smooth $f$. Let $\tilde f \in \RW^{1,p}(B(0,R))$ be the radially constant function given
    by $\tilde f(\rho,\theta) = f(r,\theta)$, since $p < d$ we can assign any
    value of $\tilde f$ at $0$. Set $F := f - \tilde f \in
    \RW^{1,p}(B(0,R))$ and observe that $F$ vanishes on $\del B(0,r)$, and
    that $\del_\rho F = \del_\rho f$. We
    directly compute that
    \begin{equation}
      \label{eq:friedrichs}
      \begin{aligned}
      \norm{f}_{\RL^p(r \S^{d-1})}^p &= d r^{d-1} R^{-d} \norm{\tilde
      f}_{\RL^p(B(0,R))}^p \\&\le 2^{p-1} d r^{d-1} R^{-d} \left(
      \norm{f}_{\RL^p(B(0,R))}^p + \norm{F}_{\RL^p(B(0,R))}^p \right).
    \end{aligned}
    \end{equation}
    To conclude, we will bound the norm of $F$ with a radial Friedrichs'
    inequality. By simple integration and H\"older's inequality we have that for
    every $\rho \in (0,R)$ and $\theta \in \S^{d-1}$
    \begin{equation*}
    \begin{aligned}
      \abs{F(\rho,\theta)}^p = \abs{\int_r^\rho \del_s f(s,\theta)
      \de s}^p &\le  \abs{\int_r^\rho s^{\frac{1-d}{p-1}}
      \de s}^{p-1} \int_r^\rho\abs{\del_s f(s,\theta)}^p s^{d-1}\de s.
   \end{aligned}
    \end{equation*}
    Integrating both sides of this inequality on $B(0,R)$ tells us that since $p
    < d$
    \begin{equation}
      \label{eq:friedbis}
      \begin{aligned}
      \norm{F}_{\RL^p(B(0,R))}^p &\le \int_0^R  \rho^{d-1}\abs{\int_r^\rho
      s^{\frac{1-d}{p-1}}\de s}^{p-1}  \norm{\del_\rho
      f}_{\RL^p(B(0,\rho))}^p \de \rho \\
      &\le \frac{p-1}{d-p} \norm{\del_\rho
      f}_{\RL^p(B(0,R))}^p \int_0^R \rho^{p-1} \abs{1 -
      \left(\frac{r}{\rho}\right)^\frac{p-d}{p-1}}^{p-1} \de \rho
    \end{aligned}
    \end{equation}
    This integral can be split into regions where $\rho \le a := 2^{\frac{d-p}{p-1}}r$
    and $a \le \rho \le R$. In the first region, the integral is bounded by
    $2^{p-1}$. In the second region, we have that
    \begin{equation}
      \begin{aligned}
        \int_a^R \rho^{p-1} \abs{1 - \left( \frac r \rho
        \right)^{\frac{p-d}{p-1}}}^{p-1} \de \rho &\le \frac 1 2 \int_a^R
        \rho^{d-1} r^{p-d} \de \rho \\
        &\le \frac{1}{2d} R^{-d} r^{p-d}. 
    \end{aligned}
    \end{equation}
    Inserting this estimate into \eqref{eq:friedbis} and then \eqref{eq:friedrichs} yields our claim.
\end{proof}

The main purpose of this section is to prove the following proposition
\begin{prop}
  \label{prop:homstek}
  As $\eps \to 0$, the measures $\bar \mu^\eps_\alpha \to \bar \mu_\alpha$ in
  $\RW^{1,p}(M)^*$ for all $p > 1$. 
\end{prop}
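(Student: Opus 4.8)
Since $M$ is compact, convergence in $\RW^{1,p}(M)^*$ implies convergence in $\RW^{1,q}(M)^*$ for every $q\ge p$, so it suffices to prove the statement for $p\in(1,2)$ (which in particular forces $p<d$); we fix such a $p$. It is convenient to work with the unnormalised \emph{hole measure}
$\nu^\eps:=\sum_{z\in\tilde\BI^\eps}\iota_*\big(\CH^{d-1}{}_\lfloor\del B_z^\eps\big)$, so that $\mu^\eps_\alpha=\iota_*\rd A+\nu^\eps$, and to decompose the dual pairing period by period using Lemma \ref{lem:integralidentitybis}.

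The key is a scale-aware estimate for the potential on one period. On the unit cube $Q:=[-\tfrac12,\tfrac12]^d$, and for each $\rho\in(0,\tfrac12)$, both $\rd v_{g_0}{}_\lfloor Q$ and $\mu_\rho:=\CH^{d-1}{}_\lfloor\del B(0,\rho)$ are $p$-admissible (a smooth interior hypersurface has positive $p$-capacity and the trace into $\RL^p$ is compact), so Lemma \ref{lem:integralidentitybis} provides a unique $\tilde\phi_\rho\in\RW^{1,p'}(Q)$ of zero Lebesgue mean with $\int_Q\nabla\tilde f\cdot\nabla\tilde\phi_\rho\,\de v_{g_0}=\int_{\del B(0,\rho)}\tilde f\,\de\CH^{d-1}-a_d\rho^{d-1}\int_Q\tilde f\,\de v_{g_0}$ for all $\tilde f\in\RW^{1,p}(Q)$, and with $\|\nabla\tilde\phi_\rho\|_{\RL^{p'}(Q)}\ll_{p,d}(a_d\rho^{d-1})^{1/p'}\|T_p^{\mu_\rho}\|$. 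Restricting to the inscribed ball $B(0,\tfrac12)\subset Q$ and applying Lemma \ref{lem:tracew11l1} with $R=\tfrac12$, $r=\rho$ (so that the maximum there equals $\rho^{p-1}$ for $\rho$ small) gives $\|T_p^{\mu_\rho}\|\ll_{p,d}\rho^{1/p'}$, whence $\|\nabla\tilde\phi_\rho\|_{\RL^{p'}(Q)}\ll_{p,d}\rho^{d/p'}$. For $z\in\tilde\BI^\eps$ set $\rho_z:=r_z^\eps/\eps$; since $\alpha>d-1$ one has $\rho_z\to0$ uniformly in $z$, so for $\eps$ small the bound above applies. Put $\phi_z^\eps(x):=\eps\,\tilde\phi_{\rho_z}\big((x-\eps z)/\eps\big)$ on $Z_z^\eps$; a change of variables then shows that for all $f\in\RW^{1,p}(Z_z^\eps)$,
\begin{equation*}
\int_{Z_z^\eps}\nabla f\cdot\nabla\phi_z^\eps\,\de v_{g_0}=\int_{\del B_z^\eps}f\,\de\CH^{d-1}-\eps^{\alpha-d}\beta(\eps z)\int_{Z_z^\eps}f\,\de v_{g_0}
\end{equation*}
(note $\eps^{\alpha-d}\beta(\eps z)=\CH^{d-1}(\del B_z^\eps)/\Vol(Z_z^\eps)$), while $\|\nabla\phi_z^\eps\|_{\RL^{p'}(Z_z^\eps)}=\eps^{d/p'}\|\nabla\tilde\phi_{\rho_z}\|_{\RL^{p'}(Q)}\ll_{p,d}(r_z^\eps)^{d/p'}$.

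Summing the identity over $z\in\tilde\BI^\eps$ produces, for every $f\in\RW^{1,p}(M)$,
\begin{equation*}
\langle\nu^\eps,f\rangle=\underbrace{\sum_{z\in\tilde\BI^\eps}\int_{Z_z^\eps}\nabla f\cdot\nabla\phi_z^\eps\,\de v_{g_0}}_{=:\langle\Xi^\eps,f\rangle}\;+\;\eps^{\alpha-d}\underbrace{\sum_{z\in\tilde\BI^\eps}\beta(\eps z)\int_{Z_z^\eps}f\,\de v_{g_0}}_{=:\langle\Theta^\eps,f\rangle}.
\end{equation*}
For $\Xi^\eps$, Hölder's inequality (first in each cube, then over the cubes) together with $\sum_z(r_z^\eps)^d\ll_{M,\beta}\eps^{\frac{d\alpha}{d-1}-d}$ from \eqref{eq:volumeholes} gives $\|\Xi^\eps\|_{\RW^{1,p}(M)^*}\ll\eps^{(\frac{d\alpha}{d-1}-d)/p'}\to0$. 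For $\Theta^\eps$, comparing $\beta(\eps z)$ with $\beta$ on each $Z_z^\eps$ (continuity of $\beta$) and using $\Vol(\BR^\eps)\to0$ shows $\Theta^\eps\to\beta\rd v_{g_0}$ in $\RW^{1,p}(M)^*$, in particular $\langle\Theta^\eps,1\rangle\to\int_M\beta\,\de v_{g_0}$. Writing
\begin{equation*}
\bar\mu^\eps_\alpha=\frac{\iota_*\rd A+\Xi^\eps+\eps^{\alpha-d}\Theta^\eps}{\CH^{d-1}(\del M)+\langle\Xi^\eps,1\rangle+\eps^{\alpha-d}\langle\Theta^\eps,1\rangle},
\end{equation*}
and using that $\iota_*\rd A$ and $\beta\rd v_{g_0}$ are fixed elements of $\RW^{1,p}(M)^*$, one passes to the limit: if $\alpha>d$ then $\eps^{\alpha-d}\to0$ and $\bar\mu^\eps_\alpha\to\iota_*\rd A/\CH^{d-1}(\del M)=\bar\mu_\alpha$; if $\alpha=d$ then $\bar\mu^\eps_\alpha\to(\iota_*\rd A+\beta\rd v_{g_0})/(\CH^{d-1}(\del M)+\int_M\beta\,\de v_{g_0})=\bar\mu_\alpha$; and if $d-1<\alpha<d$, multiplying numerator and denominator by $\eps^{d-\alpha}\to0$ gives $\bar\mu^\eps_\alpha\to\beta\rd v_{g_0}/\int_M\beta\,\de v_{g_0}=\bar\mu_\alpha$, as recorded in \eqref{eq:barmu}. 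Together with the reduction to $p\in(1,2)$ this proves the proposition.

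The main obstacle is getting the cell potentials under control: a direct application of Lemma \ref{lem:integralidentitybis} on $Z_z^\eps$ only bounds $\|\nabla\phi_z^\eps\|_{\RL^{p'}(Z_z^\eps)}$ by $\CH^{d-1}(\del B_z^\eps)^{1/p'}$ times the norm of the trace $\RW^{1,p}(Z_z^\eps)\to\RL^p(\del B_z^\eps)$, and that trace norm, measured against the \emph{full} $\RW^{1,p}(Z_z^\eps)$-norm, does not register the small sphere with the right scaling; one only gets $\langle\Xi^\eps,f\rangle=O(1)$, which does not vanish. Rescaling to the unit cube first — legitimate precisely because the equation defining $\phi_z^\eps$ fails to be scale invariant — is what recovers the extra power $\rho_z^{d/p'}$ of the relative radius and makes $\Xi^\eps$ disappear in the limit.
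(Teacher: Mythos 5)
Your argument is correct, and at its core it uses the same machinery as the paper's proof: a per-cell corrector coming from Lemma \ref{lem:integralidentitybis}, the trace bound of Lemma \ref{lem:tracew11l1} for a small sphere, and the crucial observation that the defining equation for the corrector is not scale invariant, so that solving at unit scale and transferring by scaling recovers the extra factor $\rho_z^{d/p'}$ that a direct application on $Z_z^\eps$ misses. The differences are organisational, and they are genuine ones. You work with the unnormalised hole measure $\nu^\eps$, build a single reference potential $\tilde\phi_\rho$ on the unit cube and rescale it \emph{down} to each cell, whereas the paper solves on each $Z_z^\eps$ and rescales \emph{up} by $s=\eps^{-1}$; your version makes the uniformity of the constants in $z$ and $\eps$ completely transparent. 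You also postpone all normalisation to a final quotient, which lets you treat the three regimes $\alpha>d$, $\alpha=d$, $d-1<\alpha<d$ in one stroke, while the paper handles $\alpha>d$ by a separate direct Hölder-plus-trace argument and, for $d-1<\alpha\le d$, compares against $\bar\mu_\alpha$ cell by cell (which is why it needs the mean-value-theorem comparison of $\beta(\eps z)$ with the cell average of $\beta$; in your scheme this is absorbed into the elementary step $\Theta^\eps\to\beta\,\rd v_{g_0}$ via uniform continuity and $\Vol(\BR^\eps)\to0$). Two points you should state a touch more carefully, though both are at the level of detail the paper itself allows: the $p$-admissibility of $\CH^{d-1}{}_\lfloor\del B(0,\rho)$ on the unit cube for $p\in(1,2)$ (compactness of the trace onto a smooth interior hypersurface plus a $p$-Poincar\'e inequality), and the fact that $\iota_*\rd A\in\RW^{1,p}(M)^*$ for $p$ near $1$, which follows from the boundedness of the trace $\RW^{1,1}(M)\to\RL^1(\del M)$ on Lipschitz domains.
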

\begin{proof}
  Without loss of generality, by monotonicity of the dual spaces
  $\RW^{1,p}(M)^*$ we assume that $ p < 2$.

  For $f \in \RW^{1,p}(M)$  we have the decomposition
  \begin{equation}
    \label{eq:decomp}
    \langle \bar \mu^\eps_\alpha - \bar \mu_\alpha,f\rangle_{\RW^{1,p}(M)} =
    \frac{\bone_{\set{\alpha \le d}}}{\mu_\alpha(M)} \int_{\BR^\eps}
    \beta f
    \de x
     + \left(\frac{1}{\mu_\alpha^\eps(M)} -
    \frac{\bone_{\set{\alpha \ge d}}}{\mu_\alpha(M)} \right)\int_{\del M} f \de A+
    \sum_{z \in
    \BI^\eps} \langle \bar \mu^\eps_\alpha - \bar \mu_\alpha,f
    \rangle_{\RW^{1,p}(Z_z^\eps)}
  \end{equation}
  We first observe that
  \begin{equation}
    \int_{\BR^\eps} \beta f \de x \le \norm{\beta}_{\RC^0(M)}
    \norm{f}_{\RL^p(M)} \Vol(\BR^\eps)^{\frac{p-1}{p}} \ll_{M,\beta} \eps^{\frac{p-1}{p}}.
  \end{equation}
  We also have that
  \begin{equation}
    \lim_{\eps \to 0}  \left(\frac{1}{\mu^\eps(M)} -
    \frac{\bone_{\set{\alpha \ge d}}}{\mu(M)} \right)\int_{\del M} f \de A = 0;
  \end{equation}
  for $\alpha \ge d$ this follows from the fact that $\mu^\eps_\alpha(M)
  \xrightarrow{\eps \to 0} \mu_\alpha(M)$ whereas for $d-1 < \alpha < d$ this follows
  from $\mu^\eps_\alpha(M) \xrightarrow{\eps \to 0} \infty$. We are now left only with
  the sum term in \eqref{eq:decomp}. 
  
  In the case where $\alpha > d$, we have that $\bar \mu_\alpha$ is supported on
  $\del M$ so that by H\"older's inequality on sums that
  \begin{equation}
    \begin{aligned}
    \sum_{z \in \BI^\eps} \abs{\langle \bar \mu_\alpha^\eps - \bar \mu_\alpha,f
    \rangle_{\RW^{1,p}(Z_z^\eps)}} &\le \frac{1}{\mu_\alpha^\eps(M)} \sum_{z \in
    \BI^\eps} \int_{\del B_z^\eps} \abs f  \de A \\
  &\ll \sum_{z \in \BI^\eps} \CH^{d-1}(\del B_z^\eps)^{\frac{p-1}{p}}
  \norm{f}_{\RL^p(\del B_z^\eps)}  \\
  &\ll_{\beta,M} \eps^{(\alpha - d) \frac{p-1}{p}} \left(\sum_{z \in \BI^\eps}
  \norm{f}_{\RL^p(\del B_z^\eps)}^p \right)^{1/p};
  \end{aligned}
  \end{equation}
  this goes to $0$ as $\eps \to 0$. Indeed, since $\alpha > d$ we have that $(r_z^\eps)^{d-1}
  \eps^{-d}$ remains uniformly bounded as $\eps \to 0$, so that by Lemma \ref{lem:tracew11l1}
  \begin{equation}
    \sum_{z \in \BI^\eps} \norm{f}_{\RL^p(\del B_z^\eps)}^p \le \sum_{z \in
    \BI^\eps} \norm{f}^p_{\RW^{1,p}(Z_z^\eps)} \le \norm{f}^p_{\RW^{1,p}(M)}.
  \end{equation}

  When $d-1 < \alpha \le d$, we split the sum into $z \in \BI^\eps \setminus \tilde \BI^\eps$ and
  $z \in \tilde \BI^\eps$. In the first case, we have that
  \begin{equation}
    \begin{aligned}
\abs{    \sum_{z \in \BI^\eps \setminus \tilde \BI^\eps} \langle \bar
\mu^\eps_\alpha - \bar \mu_\alpha,f\rangle_{\RW^{1,p}(Z_z^\eps)} }&=
\frac{1}{\mu_\alpha(M)}\abs{\sum_{z \in
\BI^\eps\setminus \tilde \BI^\eps} \int_{Z_z^\eps} \beta f \de x} \\
&\le \sup_{z \in \BI^\eps} \sup_{x \in Z_z^\eps} \abs{\beta(x)}
\norm{f}_{\RL^1(M)}.
\end{aligned}
  \end{equation}
  By uniform continuity of $\beta$, and since $\beta(\eps z) = 0$ for all $z \in
  \BI^\eps \setminus \tilde \BI^\eps$, that last quantity vanishes as $\eps \to
  0$. 
  
  Finally,
  when $z \in \tilde \BI^\eps$,  both $\bar \mu_\alpha$ and $\bar \mu^\eps_\alpha$ are
  $p$-admissible on $Z_z^\eps$. Therefore we can apply Lemma
  \ref{lem:integralidentitybis} to obtain the
  existence of $\phi_z^\eps \in \RW^{1,p'}(Z_z^\eps)$ 
  such that
  \begin{equation}
    \label{eq:decompconv}
    \begin{aligned}
      \langle \bar \mu^\eps_\alpha - \bar \mu_\alpha,f \rangle_{\RW^{1,p}(Z_z^\eps)} &= \left( \frac{\bar
    \mu^\eps_\alpha(Z_z^\eps)}{\bar \mu_\alpha(Z_z^\eps)} - 1 \right)\langle \bar
    \mu,f\rangle_{\RW^{1,p}(Z_z^\eps)} + 
    \int_{Z_z^\eps} \nabla \phi_z^\eps
    \cdot \nabla f \de x \\
    &= 
    \left(\frac{\beta(\eps z)}{\eps^{-d} \int_{Z_z^\eps} \beta \de x}
       - 1
    \right) \langle \bar \mu_\alpha,f \rangle_{\RW^{1,p}(Q_z^\eps)}
    + 
    \int_{Z_z^\eps} \nabla \phi_z^\eps
    \cdot \nabla f \de x.
  \end{aligned}
  \end{equation}
  By uniform continuity of $\beta$ and the integral mean value theorem, since
$\bar \mu_\alpha \in \RW^{1,p}(M)^*$, the first term on the
  righthand side vanishes in the limit. 
  For the last term in \eqref{eq:decompconv}, by Hölder's inequality we have that
  \begin{equation}
    \int_{Z_z^\eps} \nabla \phi_z^\eps \cdot \nabla f \de x \le \norm{\nabla
    \phi_z^\eps}_{\RL^{p'}(Z_z^\eps)} \norm{\nabla f}_{\RL^{p}(Z_z^\eps)}
  \end{equation}
  The second part of Lemma \ref{lem:integralidentitybis} is barely too weak to
  show that the $\RL^{p'}(Z_z^\eps)$ norm of $\nabla \phi_z^\eps$ converges to zero fast
  enough. In order to prove so, we exploit the lack of scale invariance in the
  defining equation for $\phi_z^\eps$. 
  
  Define $\phi_z^{\eps,s} : s Z_z^\eps \to
  \R$ as $\phi_z^{\eps,s} := \phi_z^\eps(x/s)$. We have that $\phi_z^{\eps,s}$
  satisfies the weak differential equation
  \begin{equation}
    \forall f \in \RW^{1,p}(s Z_z^\eps), \quad \int_{s Z_z^\eps} \nabla
    \phi_z^{\eps,s} \cdot \nabla f \de x = \frac{1}{s} \int_{s \del B_z^\eps} f \, \,
    \frac{\de A}{\mu^\eps(M)} - \frac{1}{s^2}\frac{\bar \mu^\eps(Z_z^\eps)}{\bar
    \mu(Z_z^\eps)}
    \int_{s Z_z^\eps} f \de \bar \mu.
  \end{equation}
  Furthermore, $\int_{sZ_z^\eps}\phi_z^{\eps,s}d\bar\mu = 0$. Therefore, $\phi_z^{\eps,s}$ is the solution 
  of the equation~\eqref{eq:phiidentity} for measures $(s\mu^\eps(M))^{-1}\de
  A^{s\partial B_z^\eps}$ and 
  $\rd\bar\mu|_{sZ^\eps_z}$. Thus, the estimate \eqref{eq:phiestimate} in Lemma
  \ref{lem:integralidentitybis} implies that  
  \begin{equation}
    \norm{\nabla \phi_z^{\eps,s}}_{\RL^{p'}(s Q_z^\eps)} \ll_\beta(1 +
    K_{s,\eps}) s^{\frac{d-1}{p'} - 1} \frac{\eps^{\frac{\alpha}{p'}}}{1 +
    \eps^{\frac{\alpha - d}{p'}}} 
    \norm{T_{p,B(0,sR)}^{\de A^{\del B(0,sr_z^\eps)}}}.
  \end{equation}
  Here, $K_{s,\eps}$ is the Poincar\'e constant for $sQ_z^\eps$, by scaling it
  is easy to see that it remains bounded as long as $s = \bigo{\eps^{-1}}$. We
  therefore choose $s = \eps^{-1}$, and see that by
  Lemma \ref{lem:tracew11l1} we then have that
  \begin{equation}
  \norm{T_{p,B(0,sR)}^{\de A^{s\partial B(0,sr_z^\eps)}}} \ll_{d,p} 
   \eps^{\frac{\alpha - d + 1}{p}}.
  \end{equation}
  Finally, we see by scaling that
  \begin{equation}
    \begin{aligned}
    \norm{\nabla \phi_z^\eps}_{\RL^{p'}(Z_z^\eps)} &= \eps^{\frac{d}{p'} - 1}
    \norm{\nabla \phi_z^{\eps,\eps^{-1}}}_{\RL^{p'}(Z_z^\eps)} \\
    &\ll_{d,p,\beta}  \frac{\eps^{\alpha/p'}}{1
  + \eps^{\frac{\alpha - d}{p'}}}
   \eps^{\frac{p+ \alpha - d}{p}}.
  \end{aligned}
  \end{equation}
  so that 
  \begin{equation}
    \begin{aligned}
    \sum_{z \in \tilde \BI^\eps} \abs{\int_M \nabla \phi_z^\eps \cdot \nabla f
    \de x} &\le \sum_{z \in \tilde \BI^\eps} \norm{\nabla
    \phi_z^\eps}_{\RL^{p'}(Z_z^\eps)} \norm{\nabla f}_{\RL^p(Z_z^\eps)} \\
    &\ll_{d,p,\beta} \frac{\eps^{\frac{\alpha - d}{p'}}}{1 + \eps^{\frac{\alpha - d}{p'}}}
    \eps^{\frac{\alpha - d + p}{p}} \norm{\nabla f}_{\RL^p(M)} \\&= \frac{\eps^{\alpha-(d-1)}}{1 + \eps^{\frac{\alpha - d}{p'}}}\norm{\nabla f}_{\RL^p(M)},
  \end{aligned}
  \end{equation}
  where in the second step we used the inequality $\sum_{i=1}^n \abs{a_i}\leqslant n^\frac{1}{p'}\left(\sum_{i=1}^n\abs{a_i}^p\right)^{\frac{1}{p}}$.
\end{proof}

\begin{proof}[Proof of Theorem \ref{thm:homo}]
  We have already shown that conditions \ci--\cvi are satisfied. By Proposition
  \ref{prop:homneu} for the Neumann problem or \ref{prop:homstek} for the
  Steklov problem, the conditions of Proposition \ref{prop:weakenough} are
  satisfied so that we indeed have convergence of the eigenvalues and
  eigenfunctions.
\end{proof}

We finally have everything we need to prove Theorem \ref{thm:homointro}.

\begin{proof}[Proof of Theorem \ref{thm:homointro}]
  By density of continuous functions in either $\RL^{d/2}(M)$ ($d\ge 3$) or
  $\LLL(M)$ ($d=2$) there is a sequence of nonnegative
  $\beta_n$
  converging to $\beta$ in the relevant space. By Proposition \ref{L_infty_approx:prop},
  $\normallap{k}(M,g,\beta_n \rd v_g) \to \normallap{k}(M,g,\beta \rd v_g)$.
  Theorem \ref{thm:homo} along with the volume estimate \eqref{eq:volumeholes}
  entails that Theorem \ref{thm:homointro} holds for each $\beta_n$. Extracting
  a sequence from a diagonal argument yields the sequence $\Omega^\eps$ so that
  Theorem \ref{thm:homointro} holds for $\beta$.
\end{proof}

\bibliographystyle{alpha}
\bibliography{homo2}

\end{document}